\newtheorem{theorem}{Theorem}[section]
\newtheorem{lemma}[theorem]{Lemma}
\newtheorem{proposition}[theorem]{Proposition}
\newtheorem{corollary}[theorem]{Corollary}
\theoremstyle{definition}
\newtheorem{definition}[theorem]{Definition}
\newtheorem{example}[theorem]{Example}
\theoremstyle{remark}
\newtheorem{openproblem}[theorem]{Open Problem}
\numberwithin{equation}{section}
\newcommand{\abs}[1]{\lvert#1\rvert}
\newcommand{\F}{\mathcal{F}}
\newcommand{\G}{\mathcal{G}}
\newcommand{\Z}{\mathbb{Z}}
\newcommand{\R}{\mathds{R}}
\newcommand{\N}{\mathbb{N}}
\newcommand{\Pp}{\mathcal{P}}
\newcommand{\real}{\mathds{R}}
\newcommand{\rr}{\mathds{R}}
\newcommand{\vi}{{\sf V}}
\DeclareMathOperator{\vol}{vol}
\DeclareMathOperator{\conv}{conv}
\DeclareMathOperator{\diam}{diam}
\def\N{\mathbb{N}}
\def\R{\mathds{R}}
\def\Z{\mathbb{Z}}
\newcommand{\D}{\mathcal{D}}
\newcommand{\B}{\mathcal{B}}
\newcommand{\HH}{\mathcal{H}}
\begin{document}

\title[Helly Today]{Helly's Theorem: New Variations and Applications}

\author{Nina Amenta}
\address{Department of Computer Science, University of California, Davis, One Shields Avenue, Davis, California 95616}
\email{amenta@cs.ucdavis.edu}

\author{Jes\'us~A.~De~Loera}
\address{Department of Mathematics, University of California, Davis, One Shields Avenue, Davis, California 95616}
\email{deloera@math.ucdavis.edu}

\author{Pablo Sober\'on}
\address{Department of Mathematics, Northeastern University, 360 Huntington Ave., Boston, Massachusetts 02115}
\email{p.soberonbravo@neu.edu}
\thanks{}

\subjclass[2010]{ 52B05}

\date{}

\begin{abstract} This survey presents recent Helly-type geometric theorems published since the appearance 
of the last comprehensive survey, more than ten years ago. We discuss how such theorems continue to be 
influential in computational geometry and in optimization. The survey contains several open problems.
\end{abstract}

\maketitle

\section{Introduction}

Eduard Helly's theorem is now a century old\footnote{The theorem was stated by Eduard Helly already in 1913, but his participation in the first world war delayed the publication of his proof until 1923. Alas, Johann Radon published a now classical proof a few years earlier too.} \cite{Helly:1923wr, Radon:1921vh} and it is firmly established as a fundamental result in combinatorial geometry, one with a large number of applications. In its original form it states.

\begin{theorem}[Helly's theorem]
	Let $\F$ be a finite family of convex sets in $\R^d$.  If every $d+1$ or fewer elements of $\F$ intersects, then all the sets in family $\F$ intersect.
\end{theorem}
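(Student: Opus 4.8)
The plan is to argue by induction on the size $n=\abs{\F}$ of the family, with Radon's partition theorem \cite{Radon:1921vh} as the engine. When $n\le d+1$ there is nothing to do: the hypothesis applied to all of $\F$ already asserts $\bigcap\F\ne\emptyset$. So I would fix $n\ge d+2$ and assume the statement for every family of $n-1$ convex sets in $\R^d$ satisfying the $(d+1)$-wise intersection condition.

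Write $\F=\{C_1,\dots,C_n\}$. For each index $i$, the subfamily obtained by deleting $C_i$ has $n-1$ members and still meets the hypothesis, so by the inductive assumption $\bigcap_{j\ne i}C_j\ne\emptyset$; pick a point $p_i$ in it. This yields $n\ge d+2$ points $p_1,\dots,p_n$ in $\R^d$. Now I invoke Radon's theorem: any $d+2$ or more points in $\R^d$ admit a partition of their index set into $A\sqcup B$ with $\conv\{p_i:i\in A\}\cap\conv\{p_j:j\in B\}\ne\emptyset$. Let $p$ be a point in that intersection.

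The claim is that $p\in C_i$ for every $i$, which finishes the induction. Fix $i$; since $A$ and $B$ are disjoint, $i$ lies outside at least one of them, say $i\notin A$. Then for every $j\in A$ we have $j\ne i$, so $p_j\in\bigcap_{k\ne j}C_k\subseteq C_i$. Thus all the points spanning $\conv\{p_j:j\in A\}$ lie in the convex set $C_i$, hence so does the whole hull, and in particular $p\in C_i$. As $i$ was arbitrary, $p\in\bigcap_{i=1}^n C_i$.

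The step I expect to carry the real weight is Radon's theorem itself — it is the only place the dimension $d$ genuinely enters — and the standard route to it is linear algebra: lifting the $d+2$ points to $\R^{d+1}$ by appending a coordinate $1$ forces a nontrivial linear dependence, and separating that dependence according to the sign of its coefficients (then normalizing) produces the common point of the two hulls. Beyond that, the only thing to watch is the bookkeeping in the last paragraph, namely that each auxiliary point $p_i$ is chosen to miss only $C_i$; everything else is routine once Radon's theorem is available.
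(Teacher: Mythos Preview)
Your argument is correct; it is precisely Radon's classical induction proof, which the paper cites \cite{Radon:1921vh} in the introduction but never writes out. The only place the paper actually derives Helly's theorem is as the diagonal case $\F_1=\cdots=\F_{d+1}$ of Lov\'asz's Colorful Helly theorem, and that proof proceeds by a different mechanism: fix a generic direction $v$, take the $d$-tuple whose $v$-directional minimum $p$ is highest, and show every remaining set contains $p$ by producing $d+1$ points in the hyperplane $\{x:\langle x,v\rangle=\langle p,v\rangle\}$ and applying Radon's lemma there in dimension $d-1$. Both routes ultimately lean on Radon, but yours applies it once, globally, inside a clean induction on $\abs{\F}$, while the paper's route uses a directional-extremality trick that simultaneously delivers the colorful strengthening (and, as the paper notes, even the Kalai--Meshulam and ``very colorful'' versions). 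Your proof is the more elementary and self-contained one; the paper's is tuned to get the colorful statement and recovers the classical theorem only as a byproduct.
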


 We recommend \cite{Matousek:2002td} for an in-depth introduction to combinatorial geometry.
Since its discovery, Helly's theorem has found a plethora of generalizations, extensions and applications in many areas of mathematics
(see  \cite{Danzer:1963ug,Eckhoff:1993uy, Matousek:2002td, Wenger:2004uf} and the references therein). Our survey  focuses on recent advances, roughly after the year 2004, 
regarding new versions of Helly's theorem and its generalizations. We also discuss applications of Helly theorems in computational geometry and optimization.
The developments around this area have been tremendous. 

\section{The many variations and manifestations of Helly's theorem}

One of the reasons for the popularity of Helly's theorem is its versatility; there are many ways to change the 
framework that yield interesting theorems.  Changing the following aspects of Helly's theorem are common ways to get different versions.
\begin{itemize}
	\item \textbf{Convexity of the sets.}  The convexity hypothesis on the sets in
	$\F$ can be replaced by either stronger conditions, such as being axis-parallel boxes or translates of a convex set, 
	or weaker conditions such as conditions on the homology groups of the sets and their intersections.
	
	\item \textbf{Quality of intersection as a conclusion.}  One can replace the goal of having the whole family intersect by other conditions.  This could include containing some integer points, having an intersection of a certain dimension or volume, having a bounded piercing number, etc.
	
	\item \textbf{Local intersection conditions.}  Instead of requiring that all subfamilies of a certain size intersect, one can ask for other combinatorial conditions on families of a fixed size.  Some examples are the $(p,q)$ property, fractional conditions, or colorful conditions.
\end{itemize}

Given a property $\Pp$, it is standard to call $\Pp$ a \textit{Helly-type property} if there is an integer $h$ such that for a family $\F$ of sets, \textit{if every $h$ elements of $\F$ satisfy $\Pp$, so does the entire family $\F$}.  The smallest such integer $h(\Pp)$ is the Helly number of property $\Pp$. Helly's theorem essentially says that \textit{having non-empty intersection} is a Helly-type property for all finite families of convex sets in $\R^d$, with $d+1$ as the Helly number.  In this survey we restrict ourselves to Helly-type properties related to geometric intersection of sets, sometimes with additional conditions.
In this context, there is a natural way to define Helly numbers for a given family $\F$ of sets, bypassing the specification of a property.
Given a family $\F$ of sets, we define the $\F$-helly number $h(\F)$ (if it exists) as the smallest integer satisfying the following.  For any finite subfamily $\G\subset \F$, if every $h(\F)$ sets of $\G$ intersect, all of the sets in $\G$ intersect too.	
If $h(\G)$ is undefined, we simply say $h(\G) = \infty$.

With more than 300 papers published in the last fifteen years with relation to Helly's theorem it is not a surprise that we were 
forced to restrict ourselves to Helly-type theorems related to geometric intersection of sets.  Sadly, we left detailed discussion of many 
interesting geometric Helly-type result out of this survey 
(see e.g.,  \cite{1gen,2gen,3gen,4gen,5gen,6gen,7gen,8gen,9gen,10gen,11gen,12gen,13gen,14gen,15gen,16gen} and the references therein) and we omitted other very active areas,
with less emphasis in geometry, such as Helly theorems over graphs. 

In each of the following subsections we will discuss a different family of new variants of Helly's theorem related to the intersections of 
geometric sets.  
They include coordinates restrictions and colorful results, as well as quantitative, topological, and fractional versions.  
We conclude the section with results regarding transversals to convex sets.  Many results fall into more than one category, 
so we have made some arbitrary choices in the presentation.
We include a few short proofs to help provide insight, and we establish some new results generalizing
or combining existing theorems as we go along.

\subsection{Coordinates restrictions and S-Helly numbers} \label{Shelly}

Let us motivate what $S$-Helly numbers are with an example.

\begin{theorem}[Boxes-only Helly Theorem; Halman \cite{Halman:2007gu}]\label{theorem-boxes-helly}
Given a finite set $D$ of at least $2d$ axis parallel boxes in $\R^d$, and a finite set $S$ of points, the boxes in $D$ have a common point in $S$ if every $2d$ of its elements do.
\end{theorem}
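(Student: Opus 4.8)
The plan is to reduce the statement to the one‑dimensional case, coordinate by coordinate, exploiting the two facts that an axis‑parallel box is a product of intervals and that intervals on a line have Helly number $2$. Write each box $B\in D$ as $B = I_1(B)\times\cdots\times I_d(B)$ with $I_k(B)\subseteq\R$ an interval. For a fixed coordinate $k$, the set $\bigcap_{B\in D} I_k(B)$ is again an interval, and since $D$ is finite its left endpoint is attained by $I_k(L_k)$ for some box $L_k\in D$ and its right endpoint by $I_k(R_k)$ for some box $R_k\in D$; consequently $I_k(L_k)\cap I_k(R_k) = \bigcap_{B\in D} I_k(B)$.

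Next I would assemble the subfamily $E = \{L_1,R_1,\dots,L_d,R_d\}\subseteq D$, which has at most $2d$ elements. The key claim is that $\bigcap_{B\in E} B = \bigcap_{B\in D} B$. Indeed, taking products coordinatewise, the $k$-th factor of $\bigcap_{B\in E} B$ contains $\bigcap_{B\in D} I_k(B)$ because every box of $E$ belongs to $D$, and it is contained in $I_k(L_k)\cap I_k(R_k) = \bigcap_{B\in D} I_k(B)$; hence equality holds in every coordinate, and therefore for the products. So $E$ is a subfamily of $D$ of size at most $2d$ with the same common intersection as the whole family.

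Now I would invoke the hypothesis. Since $\abs{D}\ge 2d\ge\abs{E}$, enlarge $E$ to a subfamily $E'\subseteq D$ with exactly $2d$ boxes. By assumption these $2d$ boxes have a common point $p\in S$. Because $E\subseteq E'$, the point $p$ lies in $\bigcap_{B\in E} B = \bigcap_{B\in D} B$, so $p$ is a point of $S$ that belongs to every box of $D$, which is exactly the desired conclusion.

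The argument is short, so the only points requiring care are bookkeeping. The delicate one is the choice of the representative boxes $L_k,R_k$ when boxes are permitted to be open or half‑open: among the boxes attaining the extreme value of the $k$-th coordinate one must select one whose endpoint type matches that of the intersection, and then recheck that the product identity of the second paragraph still holds verbatim. A secondary point to emphasize is where the cardinality assumption $\abs{D}\ge 2d$ enters — it is exactly what makes the padding step $E\subseteq E'$ possible — and, separately, that $2d$ cannot be lowered, which one would certify with a small example of $2d$ boxes together with a point set $S$ for which every $2d-1$ of the boxes share a point of $S$ while the whole family does not.
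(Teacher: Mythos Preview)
Your argument is correct: by selecting for each coordinate $k$ boxes $L_k,R_k$ realizing the extremal endpoints of $\bigcap_{B\in D}I_k(B)$, you obtain a subfamily $E$ of size at most $2d$ with $\bigcap_{B\in E}B=\bigcap_{B\in D}B$, and padding $E$ to size exactly $2d$ lets you invoke the hypothesis to find a point of $S$ in this common intersection. The verification that $I_k(L_k)\cap I_k(R_k)=\bigcap_{B\in D}I_k(B)$ is the heart of the matter, and your justification (via the two containments) is fine for closed boxes; you rightly flag the bookkeeping needed for open or half-open endpoints.

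As for comparison with the paper: the paper does not supply a proof of this theorem---it is stated with a citation to Halman and used only as motivation for the notion of $S$-Helly numbers. So there is no ``paper's own proof'' to compare against. Your argument is the standard one and would be perfectly acceptable as a proof here.
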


The result above is highly dependent on the set of points $S$.  Notice we get a radical simplification when $S= \R^d$,  namely,

\begin{theorem}\label{theorem-simple-boxes}\label{theorem-boxes-helly-simple}
Given a finite set $D$ of axis parallel boxes in $\R^d$, the boxes in $D$ have a common point if every two of its elements do.
\end{theorem}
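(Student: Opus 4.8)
The plan is to reduce the $d$-dimensional statement to $d$ independent one-dimensional problems by exploiting the product structure of axis-parallel boxes. First I would write each box $B \in D$ as $B = I_1^B \times \cdots \times I_d^B$, where $I_j^B \subseteq \R$ is an interval (possibly unbounded, possibly half-open — this will not matter), namely the image of $B$ under projection onto the $j$-th coordinate axis. The crucial elementary observation is that two boxes $B, B' \in D$ intersect if and only if $I_j^B \cap I_j^{B'} \neq \emptyset$ for every $j \in \{1,\dots,d\}$; this is immediate from the fact that a point lies in a Cartesian product of sets exactly when each of its coordinates lies in the corresponding factor.

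Next I would invoke the one-dimensional case of Helly's theorem: a finite family of intervals on the real line has a common point provided every two of them intersect. Since this is exactly the $d=1$ instance of the statement being proved, I would include its short proof for completeness. Writing the intervals as $[a_i,b_i]$ (allowing $a_i = -\infty$ or $b_i = +\infty$ for unbounded ones), pairwise intersection forces $a_i \le b_k$ for all indices $i,k$, hence $\sup_i a_i \le \inf_k b_k$, and any point lying between these two quantities belongs to every interval.

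Finally I would combine the two ingredients. Fix a coordinate $j \in \{1,\dots,d\}$. By hypothesis every two boxes in $D$ intersect, so by the observation above the family $\{I_j^B : B \in D\}$ of intervals is pairwise intersecting; hence by the one-dimensional case there exists a point $x_j \in \R$ lying in $I_j^B$ for every $B \in D$. Carrying this out for each $j$ produces a point $x = (x_1,\dots,x_d)$ with $x \in I_1^B \times \cdots \times I_d^B = B$ for all $B \in D$, so $\bigcap_{B \in D} B \neq \emptyset$, as desired.

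There is essentially no hard step: the whole content is the recognition that pairwise intersection of axis-parallel boxes is equivalent to pairwise intersection of each of their one-dimensional coordinate projections, together with the fact that the Helly number of intervals on the line is $2$. The only point requiring mild care is the bookkeeping with unbounded or half-open intervals, which is handled uniformly by permitting endpoints in $\R \cup \{\pm\infty\}$ and observing that the argument never uses closedness of the intervals.
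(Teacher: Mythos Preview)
Your argument is correct and is the standard proof of this classical fact. The paper itself does not supply a proof of Theorem~\ref{theorem-boxes-helly-simple}; it is stated as a well-known simplification of Halman's Theorem~\ref{theorem-boxes-helly} obtained by taking $S = \R^d$, and the paper later remarks only that in dimension one the reader can easily verify $h(S) \le 2$ for any $S \subset \R$. Your reduction to coordinate projections together with the one-dimensional Helly theorem is exactly the expected argument, so there is nothing to compare.

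One very minor stylistic point: your closing remark that ``the argument never uses closedness of the intervals'' sits a little awkwardly next to writing the intervals as $[a_i,b_i]$. If you want to cover non-closed boxes explicitly, it is cleaner to note that for a \emph{finite} family of arbitrary intervals, pairwise intersection gives $a_i < b_k$ (or $a_i \le b_k$, depending on endpoint types) for all $i,k$, and since the supremum and infimum are attained one can always find a point strictly between $\max_i a_i$ and $\min_k b_k$ when needed. This is not a gap, just a place where the exposition could be tightened.
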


There is a wide collection of Helly-type results, where, as in Theorem \ref{theorem-boxes-helly}, 
the hypotheses and conclusions of intersection are required to contain points from a specific proper subset $S$ of $\R^d$.  
A classic result of this sort is Doignon's theorem, in which the point set $S$ is more strictly specified to be $S=\Z^d$. The theorem
is a variation of Helly's theorem.

\begin{theorem}[Doignon \cite{Doignon:1973ht}] \label{doignon}
Given a finite family $D$ of at least $2^d$ convex sets in $\R^d$, the sets in $D$ have a common point in $\Z^d$ if every $2^d$ of its elements do.
\end{theorem}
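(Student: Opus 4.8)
The plan is to adapt the Radon-type proof of Helly's theorem, letting the pigeonhole observation that \emph{among any $2^d+1$ points of $\Z^d$ some two are congruent modulo $2$, so that their midpoint again lies in $\Z^d$} play the role of Radon's partition theorem. Suppose the statement is false, and let $\F=\{C_1,\dots,C_n\}$ be a counterexample of minimum size: $n\ge 2^d$, every $2^d$-element subfamily of $\F$ has a common point in $\Z^d$, yet $\F$ has none. If $n=2^d$ then $\F$ is its own only $2^d$-element subfamily and so has a common lattice point, a contradiction; hence $n\ge 2^d+1$, and then every proper subfamily of $\F$ has a common point in $\Z^d$ (this is immediate from the hypothesis for subfamilies of at most $2^d$ sets, and follows from minimality of $\F$ for larger ones). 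We may assume $K:=\bigcap_i C_i\ne\emptyset$: if $K=\emptyset$ then Helly's theorem produces $d+1$ among the $C_i$ with empty intersection, and since $d+1\le 2^d$ we may enlarge them to a $2^d$-element subfamily, which then has no common point in $\Z^d$, contradicting the hypothesis. Finally, for each $i$ fix $z_i\in\Z^d$ lying in every $C_j$ with $j\ne i$; then $z_i\notin C_i$, and the $z_i$ are pairwise distinct.

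A merging step then forces $n=2^d+1$. If $n\ge 2^d+2$, pick (pigeonhole) $a\ne b$ with $z_a\equiv z_b$ modulo $2$ and replace $C_a,C_b$ in $\F$ by the single convex set $C_a\cap C_b$; this yields a family $\F'$ of $n-1\ge 2^d$ convex sets with $\bigcap\F'=\bigcap\F$, and one verifies using the witnesses that every $2^d$-element subfamily of $\F'$ still has a common point in $\Z^d$ (for such a subfamily containing $C_a\cap C_b$, use some omitted witness $z_{k_0}$ with $k_0\notin\{a,b\}$, which lies in $C_a\cap C_b$; the remaining subfamilies inherit a common point from $\F$). Then $\F'$ is a smaller counterexample, contradicting minimality, so $n=2^d+1$.

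What remains, and where the real work lies, is to rule out a counterexample of exactly $2^d+1$ convex sets. The pigeonhole principle again gives $a\ne b$ with $z_a\equiv z_b$ modulo $2$, so $m:=\frac{1}{2}(z_a+z_b)\in\Z^d$; since every $C_k$ with $k\notin\{a,b\}$ contains both $z_a$ and $z_b$, it contains $m$, so $m$ is a common point of all members of $\F$ except possibly $C_a$ and $C_b$, and $m\in C_a\cap C_b$ would give $m\in K\cap\Z^d$. The obstacle is exactly this: $m$ is the midpoint of $z_a\notin C_a$ and $z_b\in C_a$, and convexity alone does not place $m$ in $C_a$. The route I would pursue is to enlarge $K$ to an inclusion-maximal convex set $M\supseteq K$ having no lattice point in its interior; by a classical theorem $M$ is then a polyhedron with at most $2^d$ facets---itself an instance of the same parity pigeonhole, since congruent lattice points in the relative interiors of two distinct facets would have their midpoint in the interior of $M$---and I would argue that the at most $2^d$ half-spaces describing $M$ can be charged to at most $2^d$ of the sets $C_i$, whose intersection is already disjoint from $\Z^d$, contradicting the hypothesis. (Alternatively, one may fix $c\in K$, choose the witnesses so as to minimize $\sum_i\norm{z_i-c}^2$, and extract a cheaper admissible witness whenever $m\notin C_a$.) I expect the charging step---relating the facets of $M$ back to the original sets $C_i$---to be the main difficulty.
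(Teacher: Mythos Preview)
The paper does not give a proof of Doignon's theorem; it is stated with a citation to Doignon's 1973 paper and then used as a prototype for the $S$-Helly numbers.  So there is no ``paper's own proof'' to compare against.  That said, your attempt is worth analyzing on its own merits.

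Your steps 1--6 are fine.  (In step~6 the parity condition $z_a\equiv z_b\pmod 2$ is never used; the merge works for any pair $a\ne b$ once $n\ge 2^d+2$.)  The genuine gap is exactly where you say it is, and it is more serious than you indicate.  You correctly observe that for $m=\tfrac12(z_a+z_b)$ convexity does not force $m\in C_a$.  But it can happen that $m\notin C_a$ \emph{and} $m\notin C_b$ simultaneously.  A concrete instance: pass to the sets $C_i':=\conv\{z_j:j\ne i\}$ (which still form a minimal counterexample, since $C_i'\subseteq C_i$ and the same $z_i$ witness every $(n-1)$-subfamily), and take the five lattice points $(0,0),(2,0),(2,2),(1,3),(0,2)$ in convex position; then the midpoint $(1,0)$ of $(0,0)$ and $(2,0)$ lies in neither $C_1'$ nor $C_2'$.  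In that situation $m$ is not an admissible replacement witness for \emph{either} index, so your route~(b) (minimize $\sum_i\|z_i-c\|^2$ and swap in $m$) cannot proceed at all---there is nothing cheaper to extract.  Your route~(a) also does not deliver the sharp constant: charging each of the at most $2^d$ facets of a maximal lattice-free $M\supseteq K$ back to the $C_i$ via Helly costs a factor $d$ per facet (this is exactly Lemma~\ref{lemma-hyperplane-containment} of the paper), and one only obtains that some $d\cdot 2^d$ of the $C_i$ have lattice-free intersection, which for $n=2^d+1$ says nothing.

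The missing idea, which the paper records (without proof) as Lemma~\ref{lem:hollow}, is to arrange that the polytope $P=\conv\{z_1,\dots,z_n\}$ is a $\Z^d$-\emph{vertex}-polytope, i.e.\ contains no lattice point other than the $z_i$.  Once that holds, the midpoint $m$ is a lattice point in $P$ distinct from every vertex, an immediate contradiction---and one never needs to place $m$ inside $C_a$ or $C_b$.  The real work, then, is not the parity/midpoint step but showing that witnesses can be chosen so that $P$ has this property; this is the content of Hoffman's and Averkov's results cited in Lemma~\ref{lem:hollow}, and it is where your argument needs to be completed.
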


Doignon's theorem was rediscovered by David E. Bell \cite{Bell:1977tm} and Herbert Scarf \cite{Scarf:1977va}, with applications in integer optimization in mind.
%


We can now define in general the $S$-Helly numbers:

\begin{definition}\label{definition-S-Helly}
	Given a set $S \subset \R^d$, we consider \textit{the $S$-Helly number} 
	\[
	h(S)= h(\{K \cap S: K \subset \R^d \ \mbox{is convex}\}).
	\]  
	In other words, it is the smallest positive integer such that the following statement holds.  
	\textit{Given a finite family $\F$ of convex sets in $\R^d$, if the intersection of every $h(S)$ contains a point of $S$, then there is a point of $S$ in $\cap \F$.} Note that $h(S)$ is the Helly number for the property \textit{``having non-empty intersection over $S$''}.
If no number satisfies the condition above, we say $h(S) = \infty$.  
We use $h(S)$ to denote the Helly number for set $S$, since in this context $\F$ is always the family of convex sets. 
\end{definition}

Helly's original theorem can thus be stated as $h(\R^d) = d+1$, and Doignon's theorem as $h(\Z^d) = 2^d$.
Interestingly, many other sets $S$ give Helly-type theorems as well.  For the case of  dimension $d=1$ the reader can prove easily that for any set $S\subset \R$,  the Helly number $h(S)$ exists and is at most two, but starting with $\R^2$ the situation becomes 
considerably more complicated (see for example the results in \cite{de2015helly}).

Historically, many results about $S$-Helly numbers were originally discussed in the setting of \emph{abstract convexity}, which
focuses on the study of spaces emulating a \textit{convex hull operator} and recreating many results which were known only in 
Euclidean spaces \cite{convexityspaces-vandevel}.  
Many \textit{convexity spaces} can be formed by taking a proper subset $S \subset \R^d$, and defining the 
convex sets as the intersections of the standard convex sets in $\R^d$ with $S$, 
as in Definition \ref{definition-S-Helly} (see \cite{Doignon:1981fv,Hoffman:1979ix,Jamison:1981wz, Kay:1971uf} and the references therein).    

In recent years, the excitement around $S$-Helly numbers has increased due to the rich connections to the theory of optimization, in particular
cutting planes (see the end of Subsection \ref{soptimization}).  Motivated by the work of  Bell, and Scarf, A. J. Hoffman developed a theory to compute general $S$-Helly numbers in \cite{Hoffman:1979ix}.  Hoffmann stated and sketched the following \emph{mixed integer} version of Helly's and Doignon's theorems, which includes them both. Later it was rediscovered by Averkov and Weismantel \cite{Averkov:2010tva}, who gave a detailed proof:

\begin{theorem}[Mixed-Integer Helly theorem \cite{Hoffman:1979ix, Averkov:2010tva}] \label{mixedint}
		The Helly number $h(\R^d \times \Z^k)$ equals $(d+1)2^k$. 
\end{theorem}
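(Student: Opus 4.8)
The plan is to prove both bounds: $h(\R^d \times \Z^k) \le (d+1)2^k$ and $h(\R^d \times \Z^k) \ge (d+1)2^k$.

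For the **lower bound**, I would construct an explicit family of convex sets in $\R^d \times \R^k$ whose intersections with $\R^d \times \Z^k$ exhibit the extremal behavior. The natural candidate combines the two known tight examples: for the $\Z^k$ part, use the standard Doignon configuration (the $2^k$ halfspaces whose only common lattice point is excluded, one for each residue class mod $2$ in each coordinate — i.e. take the family whose members are obtained by deleting, from a generic small simplex-like body, one of its $2^k$ "corner" lattice points), and for the $\R^d$ part use the $d+1$ facets of a simplex whose intersection is empty but every $d$ of which meet. Taking products (more precisely, appropriate "twisted" combinations indexed by pairs consisting of a residue vector in $(\Z/2\Z)^k$ and a facet of the $d$-simplex) yields $(d+1)2^k$ convex sets such that every $(d+1)2^k - 1$ of them have a common point in $\R^d\times\Z^k$ but the whole family does not. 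Verifying this requires checking that omitting any one set frees up either a lattice direction or a simplex facet, which is a finite case analysis.

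For the **upper bound**, the strategy is to reduce to the two results already available: Helly's theorem ($h(\R^d) = d+1$) and Doignon's theorem ($h(\Z^k) = 2^k$). Let $\F$ be a finite family of convex sets in $\R^{d}\times\R^{k}$ such that every $(d+1)2^k$ of them have a common point in $\R^d\times\Z^k$. For each residue class $r \in \{0,1,\dots\}^k$ considered modulo a suitable lattice refinement, or more cleanly: project to the $\Z^k$ factor and, using Doignon-type reasoning on the lattice coordinates, argue that one may restrict attention to a single lattice point $z_0 \in \Z^k$ such that the slices $\{x \in \R^d : (x,z_0) \in K\}$, $K \in \F$, have the property that every $d+1$ of them intersect; then apply Helly in $\R^d$ to this slice family to get a common point $(x_0, z_0)$. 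The multiplicative bookkeeping — showing that a failure of the conclusion forces, via Doignon in the $k$ integer coordinates, a "bad" sub-configuration of size at most $2^k$ in the lattice directions, which in turn forces via Helly a "bad" sub-configuration of size at most $d+1$ in each relevant slice, for a total of $(d+1)2^k$ — is where the argument must be assembled carefully; one natural device is Hoffman's framework relating $S$-Helly numbers of product sets, or an inductive argument on $k$ peeling off one integer coordinate at a time and invoking the one-dimensional integer Helly number $h(\Z) = 2$ together with a product/fiber lemma.

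The **main obstacle** is the upper bound, specifically the fiber argument: slices of a convex set are convex, but the common-intersection hypothesis is on size-$(d+1)2^k$ subfamilies of the \emph{full} sets, not on their slices, so one cannot directly feed a fixed slice family into Helly. The fix is to run Doignon's theorem on the lattice factor first to localize to finitely many candidate lattice points, bounding how many sets are needed to rule each one out, and only then apply Helly on $\R^d$ within the surviving fiber; keeping the two bounds from multiplying beyond $(d+1)2^k$ (rather than, say, $(d+1)\cdot 2^k$ being replaced by something larger due to overcounting) is the delicate point, and it is essentially the content of Hoffman's combinatorial lemma on Helly numbers of Cartesian products $h(S_1 \times S_2) \le h(S_1)\,h(S_2)$ applied with $S_1 = \R^d$, $S_2 = \Z^k$.
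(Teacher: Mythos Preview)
The paper does not give its own proof of this theorem; it states the result with attribution to Hoffman and Averkov--Weismantel and then, in the paragraphs that follow, records exactly the tools one would assemble. For the upper bound, the paper quotes the Averkov--Weismantel proposition that $h(\R^d \times M) \le (d+1)h(M)$ for any closed $M \subset \R^k$, which with $M = \Z^k$ and Doignon's theorem yields $(d+1)2^k$ at once. For the lower bound, Hoffman's characterization (Lemma~\ref{lem:hof}) via sets $R \subset S$ in convex position with $\bigcap_i \conv(R \setminus \{x_i\})$ missing $S$ is the natural certificate, and your product-of-simplex-facets-and-Doignon-cube construction is precisely what one plugs into it. So your plan matches the route the paper implicitly lays out through its cited lemmas.

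One caution: at the end you invoke a general product inequality $h(S_1 \times S_2) \le h(S_1)\,h(S_2)$ and attribute it to Hoffman. The paper does not state any such lemma, and it is not clear it holds in that generality; what is actually available (and sufficient) is the asymmetric version with one factor equal to $\R^d$. The proof of \emph{that} version genuinely uses the linear structure of $\R^d$ --- one takes a generic direction, looks at directional minima in each $\R^d$-slice over a fixed lattice point, and applies Helly in $\R^d$ to the slice family --- rather than a purely combinatorial product argument. Your ``main obstacle'' paragraph correctly locates the crux in the fiber bookkeeping, but you should carry it out via the $\R^d$-specific argument rather than appealing to a general product bound that the paper does not provide.
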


The following methods are useful to bound the $S$-Helly number of a subset $S$ of $\R^d$. 
The techniques were studied first by Hoffman \cite{Hoffman:1979ix}, and later developed by several authors
 \cite{Averkov:2010tva, Ave2013,conforti-unp,de2015helly}.

We define an \emph{$S$-vertex-polytope} as the convex hull of points $x_1,x_2,\dots,x_k\in S$ in convex position such that no other point of $S$ is in $\conv\{x_1,\dots,x_k\}$. Similarly, an \emph{$S$-face-polytope} is defined as the intersection of half-spaces $H_1,H_2,\dots,H_k$ such that $\bigcap_i H_i$ has $k$ facets and contains exactly $k$ points of $S$, one contained in the relative interior of each facet.

%

A set $S \subset \R^d$ is said to be \textit{discrete} if every point of $\R^d$ has an open neighborhood that contains only a finite number of points of $S$.

\begin{lemma}[{\cite[Thm. 2.1]{Ave2013}} {\cite[Prop. 3]{Hoffman:1979ix}}]\label{lem:hollow}
Assume $S\subset\R^d$ is discrete, then $h(S)$ is equal to the following two numbers, which must be equal:
\begin{enumerate}
\item The supremum of the number of faces of an $S$-face-polytope.
\item The supremum of the number of vertices of an $S$-vertex-polytope.
\end{enumerate}
\end{lemma}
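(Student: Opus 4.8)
The plan is to establish the chain $f(S)\le v(S)\le h(S)\le f(S)$, where $f(S)$ and $v(S)$ denote the suprema in items (1) and (2); this forces all four quantities to coincide and proves the lemma. It is convenient to first reformulate the Helly number: $h(S)$ equals the supremum of $\abs{\G}$ over all finite \emph{minimally infeasible} families $\G$ of convex sets, that is, families with $\bigcap\G\cap S=\emptyset$ but $\bigcap(\G\setminus\{C\})\cap S\ne\emptyset$ for every $C\in\G$. One inequality is immediate from the definition of $h(S)$, and the other follows by shrinking any infeasible family to a minimal one.

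Two of the three inequalities are short. For $v(S)\le h(S)$: given an $S$-vertex-polytope $Q=\conv\{x_1,\dots,x_k\}$, set $C_i=\conv\{x_j:j\ne i\}$. If some $z\in S$ lay in $\bigcap_i C_i$, then $z\in\conv\{x_1,\dots,x_k\}=Q$, so $z=x_\ell$ for some $\ell$ (as $Q$ contains no other point of $S$), contradicting $x_\ell\notin\conv\{x_j:j\ne\ell\}$; since moreover $x_i\in\bigcap_{j\ne i}C_j$, the family $\{C_1,\dots,C_k\}$ is minimally infeasible and $h(S)\ge k$. For $f(S)\le v(S)$: given an $S$-face-polytope $P=\bigcap_{i=1}^k H_i$, with $H_i=\{x:\langle a_i,x\rangle\le b_i\}$ and $p_i$ the unique point of $S$ in the relative interior of the facet carried by the hyperplane $\langle a_i,x\rangle=b_i$, the set $\conv\{p_1,\dots,p_k\}$ lies in $P$ and so contains exactly the points $p_1,\dots,p_k$ of $S$; each $p_i$ is a vertex of it because $\langle a_i,p_i\rangle=b_i$ while $\langle a_i,p_j\rangle<b_i$ for $j\ne i$ (a point in the relative interior of one facet of $P$ cannot lie on the hyperplane of another), so $p_i$ is not a convex combination of the others. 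Hence $\conv\{p_1,\dots,p_k\}$ is an $S$-vertex-polytope and $v(S)\ge k$. (The classical ``shrinking'' construction, replacing $H_i$ by $\{x:\langle a_i,x\rangle\le b_i-\eps_i\}$ for small $\eps_i>0$, gives $h(S)\ge k$ directly, consistently with the above.)

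The substance is the remaining inequality $h(S)\le f(S)$, equivalently: every minimally infeasible family $\G$ has $\abs{\G}\le f(S)$. Here one must pass from the combinatorial gadget to a geometric one, and the tool is the theory of maximal $S$-free convex sets. Starting from $\G$, the convex set $\bigcap\G$ contains no point of $S$; enlarge it by Zorn's lemma to an inclusion-maximal convex set containing no point of $S$ in its interior. Because $S$ is discrete, such a maximal $S$-free set is a polyhedron $M$, each of whose facets contains a point of $S$ in its relative interior. One then argues that $M$ has at least $\abs{\G}$ facets, and that after a cleaning step $M$ may be replaced by an $S$-face-polytope with at least that many facets: one perturbs the facet hyperplanes of $M$ slightly so that the new polyhedron has exactly one point of $S$ in the relative interior of each facet and none elsewhere, without lowering the number of facets.

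The step I expect to be the real obstacle is precisely this last paragraph carried out in full: the reduction to half-spaces (explaining why the facets of the maximal $S$-free polyhedron can be charged to the sets of $\G$, yielding at least $\abs{\G}$ of them) and the cleaning of the polyhedron (eliminating points of $S$ on lower-dimensional faces, achieving exactly one per facet, and, when $S$ forces it, keeping the polyhedron bounded). Discreteness of $S$ is exactly what makes the relevant point sets finite and the perturbations possible; making all of this rigorous is the heart of the arguments of Hoffman and of Averkov, building on the structure theory of maximal lattice-free, and more generally $S$-free, convex sets.
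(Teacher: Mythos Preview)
The paper does not supply its own proof of this lemma; it is cited from Averkov and Hoffman, so there is nothing in-paper to compare against directly. Your two short inequalities $v(S)\le h(S)$ and $f(S)\le v(S)$ are correct and are essentially the standard arguments.

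Your route for $h(S)\le f(S)$, however, has a genuine gap, not merely an unfinished step. Enlarging $\bigcap\G$ by Zorn's lemma to a maximal $S$-free polyhedron $M$ does \emph{not} force $M$ to have at least $|\G|$ facets, and no charging of facets of $M$ to members of $\G$ will work in general. Concretely, take $S=\Z^2$ and
\[
C_1=\{x+y\ge\tfrac12\},\quad C_2=\{x-y\ge-\tfrac12\},\quad C_3=\{x-y\le\tfrac12\},\quad C_4=\{x+y\le\tfrac32\}.
\]
This family is minimally infeasible of size $4$, with witnesses $(0,0),(0,1),(1,0),(1,1)$; the intersection $\bigcap\G$ is the rotated square with vertices $(\tfrac12,0),(1,\tfrac12),(\tfrac12,1),(0,\tfrac12)$. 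But the triangle $T=\conv\{(0,0),(2,0),(0,2)\}$ is a maximal lattice-free set containing $\bigcap\G$ and has only three facets. Zorn's lemma may perfectly well return $T$, and then the facet count is too small. So the outline itself is pointed in the wrong direction, not just hard to make rigorous.

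What Hoffman and Averkov actually do is work with the witness points $y_i\in S\cap\bigcap_{j\ne i}C_j$ rather than with $\bigcap\G$. These points are in convex position with $\bigcap_i\conv\{y_j:j\ne i\}$ disjoint from $S$ (exactly the configuration of Lemma~\ref{lem:hof}); from such a configuration one builds the polytope around the $y_i$ themselves---placing one half-space through each $y_i$---and then uses discreteness of $S$ to perturb so that exactly one $S$-point lies in the relative interior of each facet and none elsewhere. The delicate work is keeping the facet count at $k$ while eliminating the extra $S$-points that appear when passing from $\conv\{y_i\}$ to a circumscribed polyhedron, and that is where discreteness is genuinely exploited; it is not done by growing $\bigcap\G$.
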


%
%

The case when $S$ is not discrete is a bit more delicate.  If we denote by $f(S)$ the quantity (1) above, Averkov \cite{Ave2013}  showed that in general $f(S) \leq h(S)$.
Recently, new results involving these two quantities and the $S$-Helly number $h(S)$ have been obtained by Conforti and Di Summa \cite{conforti-unp, Ave2013}. For example, 
they proved that when $S$ is a closed subset and $f(S)$ is finite, then $h(S) \leq (d+1) f(S)$. They also showed an example where $f(S)=1$, yet the $S$-Helly number is infinite (examples appeared in \cite{de2015helly}).  Fortunately, for the calculation of Helly numbers there is a very general result due to A.J. Hoffman characterizing $S$-Helly numbers.

\begin{lemma}[Proposition 2 in \cite{Hoffman:1979ix}]\label{lem:hof}
If $S\subset\R^d$, then $h(S)$ can be computed as the supremum of the numbers $h$ such that the following holds:
there exists a set $R=\{x_1,\dots,x_h\}\subset S$ such that 
$\bigcap_i\conv(R\setminus\{x_i\})$ does not intersect $S$.
\end{lemma}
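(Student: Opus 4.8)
The plan is to prove $h(S)\ge H$ and $h(S)\le H$ separately, where $H$ denotes the supremum in the statement. Throughout I would use the standard reading of Definition~\ref{definition-S-Helly} in which $h(S)$ is the least $n$ such that, for every finite family $\F$ of convex sets in $\R^d$, ``every subfamily of size at most $n$ has a common point of $S$'' forces $\cap\F$ to have a common point of $S$; this property is monotone in $n$, so such a least value is well defined in $\{1,2,\dots\}\cup\{\infty\}$, and for nonempty $S$ a singleton $R=\{x_1\}$ already shows $H\ge 1$.

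For the bound $h(S)\ge H$, I would start from any configuration $R=\{x_1,\dots,x_h\}\subset S$ witnessing a value $h$ (so $\bigcap_{i=1}^h\conv(R\setminus\{x_i\})$ misses $S$) and introduce the convex sets $C_i:=\conv(R\setminus\{x_i\})$, $i=1,\dots,h$. The point is the equivalence $x_i\in C_j\iff i\ne j$: it shows that every subfamily of $\{C_1,\dots,C_h\}$ of size at most $h-1$ has a common point of $S$ (a subfamily avoiding index $i$ contains $x_i$), while $\bigcap_{i=1}^h C_i$ contains no point of $S$ by hypothesis. So the Helly implication fails at $n=h-1$, and by monotonicity $h(S)\ge h$; taking the supremum over admissible $h$ yields $h(S)\ge H$.

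For $h(S)\le H$ — only the case $H<\infty$ requires attention — I would argue by contradiction, assuming a finite family $\F$ of convex sets for which every subfamily of size at most $H$ has a common point of $S$ while $\cap\F$ has none. The crucial move is to replace $\F$ by a cardinality-\emph{minimal} subfamily $\G_0=\{A_1,\dots,A_m\}$ with $\cap\G_0\cap S=\emptyset$; minimality forces every proper subfamily of $\G_0$ to have a common point of $S$, and comparing this with the hypothesis on $\F$ forces $m>H$. For each $i$ I would choose a witness $x_i\in\bigcap_{j\ne i}A_j\cap S$; then $x_i\notin A_i$ (else $x_i\in\cap\G_0\cap S$), so the $x_i$ are pairwise distinct, and, since $R\setminus\{x_i\}\subseteq A_i$ and $A_i$ is convex, $\conv(R\setminus\{x_i\})\subseteq A_i$, hence $\bigcap_{i=1}^m\conv(R\setminus\{x_i\})\subseteq\bigcap_{i=1}^m A_i$, which misses $S$. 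So $R=\{x_1,\dots,x_m\}$ is an admissible configuration of size $m>H$, contradicting the definition of $H$; therefore $h(S)\le H$.

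The genuinely routine parts are the two set-inclusion checks ($x_i\in C_j\iff i\ne j$, and $\conv(R\setminus\{x_i\})\subseteq A_i$), the second of which is the only place convexity of the sets is used; notably, no discreteness assumption on $S$ enters, so the lemma holds for arbitrary $S\subseteq\R^d$. The step I expect to require the most care is the passage to the minimal critical subfamily $\G_0$ in the second half: it is what simultaneously makes all proper subfamilies intersecting (so the witnesses $x_i$ exist) and makes $m$ strictly exceed $H$, and keeping the resulting inequality pointed correctly against $H=\sup(\cdots)$ is the only real pitfall.
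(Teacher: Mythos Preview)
Your argument is correct and is, in fact, the standard proof of this characterization. Note, however, that the paper does not include its own proof of Lemma~\ref{lem:hof}: the result is simply cited from Hoffman \cite{Hoffman:1979ix}, so there is no in-paper proof to compare against. The only remark the paper adds is the observation that the points of $R$ must lie in strictly convex position, which in your write-up is implicit in the fact that each $x_i\notin A_i\supseteq\conv(R\setminus\{x_i\})$.
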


Note that, in particular, the points of $R$ must be in strictly convex position. As a direct application of Lemma \ref{lem:hof} we have the following proposition.

\begin{proposition}
If $S_1,S_2\subseteq\R^d$, then $h(S_1\cup S_2)\leq h(S_1)+h(S_2)$.
\end{proposition}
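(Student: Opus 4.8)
The plan is to use Hoffman's characterization (Lemma~\ref{lem:hof}) directly. Suppose, for contradiction, that $h(S_1 \cup S_2) > h(S_1) + h(S_2)$. Then by Lemma~\ref{lem:hof} there is a finite set $R = \{x_1, \dots, x_h\} \subset S_1 \cup S_2$ with $h \geq h(S_1) + h(S_2) + 1$ such that $\bigcap_{i=1}^h \conv(R \setminus \{x_i\})$ contains no point of $S_1 \cup S_2$; in particular the points of $R$ are in strictly convex position.

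Next I would partition $R$ according to membership: let $R_1 = R \cap S_1$ and $R_2 = R \cap S_2$ (breaking ties arbitrarily for points in both), so $|R_1| + |R_2| = h \geq h(S_1) + h(S_2) + 1$, and hence $|R_j| \geq h(S_j) + 1$ for at least one $j$; actually, since the inequality is strict we get $|R_1| \geq h(S_1)+1$ or $|R_2| \geq h(S_2)+1$. I would then argue that this $R_j$ already witnesses a violation of the $S_j$-Helly number. The key point is monotonicity: for $x_i \in R_j$ we have $R_j \setminus \{x_i\} \subseteq R \setminus \{x_i\}$, so $\conv(R_j \setminus \{x_i\}) \subseteq \conv(R \setminus \{x_i\})$, and intersecting over just those indices $i$ with $x_i \in R_j$ gives
\[
\bigcap_{x_i \in R_j} \conv(R_j \setminus \{x_i\}) \subseteq \bigcap_{x_i \in R_j} \conv(R \setminus \{x_i\}).
\]
The right-hand side need not equal $\bigcap_{i=1}^h \conv(R\setminus\{x_i\})$, so some care is needed here — this is the main obstacle, and I address it below. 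Granting that this intersection misses $S_j$ and that $R_j$ is in strictly convex position (inherited from $R$), Lemma~\ref{lem:hof} would force $h(S_j) \geq |R_j| \geq h(S_j) + 1$, a contradiction.

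The gap to close is that $\bigcap_{x_i \in R_j} \conv(R \setminus \{x_i\})$ could be strictly larger than $\bigcap_{i=1}^h \conv(R \setminus \{x_i\})$, so we cannot immediately conclude it avoids $S_j$. The remedy is to take, for the $S_j$-witness, not the set $R_j$ alone but to keep all of $R$ and observe that Lemma~\ref{lem:hof}'s quantity is a supremum over \emph{all} witnessing sets in $S_j$; we need a witness set lying in $S_j$. So instead I would run the argument the other way: start from $R\subset S_1\cup S_2$, and for each index $i$ with $x_i\in S_2$ note $\conv(R\setminus\{x_i\})\supseteq \conv(R\setminus\{x_i : x_i\in S_2\})$. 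A cleaner route is the standard fractional-Helly-style double counting: since $\bigcap_{i=1}^h \conv(R\setminus\{x_i\})$ misses $S_1\cup S_2$, in particular it misses $S_1$; the sets $\{\conv(R\setminus\{x_i\})\}_{i=1}^h$ are $h$ convex sets, and the subfamily indexed by $R_1$ has the property that if it had a common point in $S_1$, combined with the analogous statement for $R_2$ one derives a common point of the whole family in $S_1\cup S_2$ — contradiction. Thus the $R_1$-subfamily has no common $S_1$-point, giving (after passing to the convex hulls of $R_1$) a set of $|R_1|$ points witnessing $h(S_1) \geq |R_1|$; symmetrically $h(S_2) \geq |R_2|$; adding yields $h(S_1) + h(S_2) \geq h$, the contradiction. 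I expect the bookkeeping around which intersection misses which set — and verifying strict convex position is preserved for the sub-configurations — to be the only delicate part; everything else is monotonicity of convex hulls under inclusion.
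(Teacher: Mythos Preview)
Your approach via Hoffman's lemma is exactly the one the paper intends, and your first paragraph is essentially the complete proof---the ``gap'' you flag is illusory. You correctly derive
\[
\bigcap_{x_i \in R_j} \conv(R_j \setminus \{x_i\}) \subseteq \bigcap_{x_i \in R_j} \conv(R \setminus \{x_i\}),
\]
and then worry that the right side may be strictly larger than the full intersection $\bigcap_{i=1}^h \conv(R\setminus\{x_i\})$. But the \emph{left} side is already contained in the full intersection: for any $x_k \in R\setminus R_j$ we have $R_j \subseteq R\setminus\{x_k\}$, so $\conv(R_j\setminus\{x_i\}) \subseteq \conv(R_j) \subseteq \conv(R\setminus\{x_k\})$. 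Hence
\[
\bigcap_{x_i \in R_j} \conv(R_j \setminus \{x_i\}) \;\subseteq\; \bigcap_{k=1}^{h} \conv(R \setminus \{x_k\}),
\]
which misses $S_1\cup S_2$ and in particular $S_j$. Thus $R_j$ is a valid Hoffman witness for $S_j$, giving $|R_j|\le h(S_j)$ for \emph{both} $j=1,2$, so $h=|R_1|+|R_2|\le h(S_1)+h(S_2)$---contradiction. (Strict convex position of $R_j$ is inherited from $R$, as you note.)

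By contrast, your proposed ``cleaner route'' does not work. You argue by contrapositive that if the $R_1$-indexed subfamily $\{\conv(R\setminus\{x_i\}):x_i\in R_1\}$ had a common $S_1$-point and the $R_2$-indexed subfamily had a common $S_2$-point, then the whole family would have a common $(S_1\cup S_2)$-point. This is false: the two hypothetical points can be distinct, and neither need lie in all $h$ sets. So that line of argument should be dropped; the one-line inclusion above is all that was missing.
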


The Helly number of a product is not always so well-behaved as in the mixed integer case, but Averkov and Weismantel  proved the following

\begin{proposition}[{\cite[Thm. 1.1]{Averkov:2010tva}}]
If $M$ is a closed subset of $\R^k$ then $h(\R^d\times M)\le (d+1)h(M)$.
\end{proposition}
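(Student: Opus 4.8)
The claim is that if $M \subseteq \R^k$ is closed then $h(\R^d \times M) \le (d+1) h(M)$; write $m = h(M)$ and $S = \R^d \times M \subseteq \R^{d+k}$. I want to invoke Hoffman's characterization, Lemma~\ref{lem:hof}: it suffices to show that any finite $R = \{z_1, \dots, z_h\} \subset S$ in strictly convex position with $\bigcap_i \conv(R \setminus \{z_i\})$ disjoint from $S$ must satisfy $h \le (d+1)m$. Write each point as $z_i = (x_i, y_i)$ with $x_i \in \R^d$, $y_i \in M$. The first step is to understand why the set $T = \bigcap_i \conv(R \setminus \{z_i\}) \subseteq \R^{d+k}$ being disjoint from $\R^d \times M$ is restrictive: equivalently, the projection $\pi(T)$ to the last $k$ coordinates must be disjoint from $M$, where $\pi(z) = y$.

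**Key steps.** First, I would project everything to the $M$-coordinates. Let $\bar y_1, \dots, \bar y_r$ be the \emph{distinct} values among $y_1, \dots, y_h$, and for each $j$ let $B_j = \{ i : y_i = \bar y_j\}$; so the index set $\{1,\dots,h\}$ is partitioned into blocks $B_1, \dots, B_r$. The goal is to show $r \le m$ and $|B_j| \le d+1$ for each $j$, which gives $h = \sum_j |B_j| \le (d+1)r \le (d+1)m$. For the bound $r \le m$: I would argue that the points $\bar y_1, \dots, \bar y_r \in M$ witness, via Lemma~\ref{lem:hof} applied to $M$, that $h(M) \ge r$ — that is, I need $\bigcap_j \conv(\{\bar y_\ell : \ell \ne j\})$ to be disjoint from $M$. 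This should follow from the hypothesis on $R$ after projection: since $\pi(\conv(R \setminus \{z_i\})) = \conv(\{y_\ell : \ell \ne i\})$, and removing one index $z_i$ from block $B_j$ still leaves, when $|B_j| \ge 2$, all of $\bar y_1, \dots, \bar y_r$ among the remaining $y$-values; the subtle case is when a block is a singleton. For the bound $|B_j| \le d+1$: within a single block all the $z_i$ share the same $M$-coordinate $\bar y_j$, so they differ only in their $\R^d$-coordinates $x_i$, and if some block had $\ge d+2$ points in strictly convex position I would derive a contradiction with the disjointness condition by a Radon-type / Helly-type argument in the $d$-dimensional slice $\R^d \times \{\bar y_j\}$.

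**The main obstacle.** The delicate point is the interaction between the two coordinate groups: the condition $\bigcap_i \conv(R \setminus \{z_i\}) \cap S = \emptyset$ does not factor as a product condition, so I cannot simply treat the $\R^d$ part and the $M$ part independently. In particular, establishing $|B_j| \le d+1$ requires showing that if a block were too large, one could find a point of $S$ inside $\bigcap_i \conv(R\setminus\{z_i\})$ by choosing a clever convex combination that is "pure" in the $M$-direction (lands exactly on $\bar y_j$) while exploiting $d$-dimensional Helly inside the slice; making the weights in these convex combinations simultaneously consistent across all $h$ of the sets $\conv(R \setminus \{z_i\})$ is where the real work lies. I expect one handles this by first using that within block $B_j$ any $d+1$ of the $x_i$ have a "Helly point," then lifting; the singleton-block edge case in the $r \le m$ argument is a secondary technical nuisance that is likely dispatched by a small perturbation or by noting that a singleton block contributes at most $1 \le d+1$ in any case. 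Closing these two estimates and adding them yields $h \le (d+1)m$, completing the proof via Lemma~\ref{lem:hof}.
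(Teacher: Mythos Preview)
The survey does not actually prove this proposition; it is stated with a citation to Averkov--Weismantel and left at that. So there is no ``paper's own proof'' to compare against, and I will evaluate your argument on its own.

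Your block bound $|B_j|\le d+1$ is correct and the Radon argument you outline goes through cleanly: if some block $B_j$ had $d+2$ points sharing the $M$-coordinate $\bar y_j$, a Radon point of their $\R^d$-coordinates lifts to a point $(p,\bar y_j)\in S$ lying in every $\conv(R\setminus\{z_i\})$.

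The genuine gap is your Claim 2, that the number $r$ of distinct $M$-values satisfies $r\le m=h(M)$. This is not merely hard to prove --- it is false. Take $d=1$, $M=\Z$ (so $m=2$), and $R=\{(0,0),(1,1),(0,2)\}\subset\R\times\Z$. The three sets $\conv(R\setminus\{z_i\})$ are the three edges of a nondegenerate triangle, so $T=\bigcap_i\conv(R\setminus\{z_i\})=\emptyset$, which is vacuously disjoint from $S$. Thus $R$ is a Hoffman configuration with $h=r=3>2=m$. (Note $h=3\le(d+1)m=4$, so the theorem survives; it is only your decomposition $h\le(d+1)\cdot r$, $r\le m$ that breaks.) The underlying reason your projection idea fails is that $T\cap S=\emptyset$ gives $\pi(T)\cap M=\emptyset$, but $\pi(T)$ is only \emph{contained in} $\bigcap_j\conv(\{\bar y_\ell:\ell\ne j\})$, not equal to it, so you cannot infer the Hoffman property for $\{\bar y_1,\dots,\bar y_r\}$ in $M$.

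The argument Averkov and Weismantel actually use bypasses Hoffman's lemma entirely and works directly from the definition. Given convex sets $C_1,\dots,C_n\subset\R^{d+k}$ such that every $(d+1)m$ of them share a point of $S=\R^d\times M$, form for each $(d+1)$-subset $J\subset[n]$ the convex set $D_J=\pi\bigl(\bigcap_{i\in J}C_i\bigr)\subset\R^k$. Any $m$ of these, say $D_{J_1},\dots,D_{J_m}$, come from at most $(d+1)m$ indices, so by hypothesis there is a common point $(p,q)\in S$, and then $q\in M\cap\bigcap_\ell D_{J_\ell}$. Since $h(M)=m$, all the $D_J$ share a point $q\in M$. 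Now slice: the fibers $A_i=\{p\in\R^d:(p,q)\in C_i\}$ are convex, and any $d+1$ of them intersect because $q\in D_J$ for the corresponding $J$. Helly in $\R^d$ gives a common point $p$, and $(p,q)\in S\cap\bigcap_i C_i$. The key structural difference from your attempt is that the $h(M)$-bound is applied not to projected \emph{points} (which need not be in convex position) but to the family of projected $(d+1)$-fold intersections, where the product $(d+1)\cdot m$ emerges naturally from counting indices.
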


Similarly, one can prove a general bound in the case of discrete sets.  The result below shows the optimality of Doignon's theorem, for example.

 \begin{theorem}[Conforti, Di Summa \cite{conforti-unp}]
 	If $S_1, S_2 \subset \rr^d$ are discrete sets, then $h(S_1 \times S_2) \ge h(S_1)h(S_2)$. 
 \end{theorem}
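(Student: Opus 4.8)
The plan is to produce an explicit $(S_1\times S_2)$-vertex-polytope with $h(S_1)h(S_2)$ vertices and then invoke Lemma~\ref{lem:hollow}. First I would clear away degenerate cases: we may assume $S_1$ and $S_2$ are nonempty (otherwise $S_1\times S_2=\emptyset$) and that $m:=h(S_1)$ and $n:=h(S_2)$ are finite, since if, say, $h(S_1)=\infty$, then $S_1$ admits vertex-polytopes with arbitrarily many vertices and taking the product of each with a single point of $S_2$ already yields $(S_1\times S_2)$-vertex-polytopes with arbitrarily many vertices, so $h(S_1\times S_2)=\infty$. I would then note that $S_1\times S_2$ is again discrete---a product of neighborhoods meeting $S_1$ and $S_2$ in finitely many points meets $S_1\times S_2$ in finitely many points---so Lemma~\ref{lem:hollow} applies to all three sets. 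Finally, since for a discrete set the Helly number is the \emph{supremum} of the vertex counts of its vertex-polytopes and these counts are integers bounded above by $m$ (respectively $n$), this supremum is attained; I fix an $S_1$-vertex-polytope $P_1=\conv\{a_1,\dots,a_m\}$ and an $S_2$-vertex-polytope $P_2=\conv\{b_1,\dots,b_n\}$.

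The core of the argument is the claim that $P_1\times P_2=\conv\{(a_i,b_j):1\le i\le m,\ 1\le j\le n\}$ is an $(S_1\times S_2)$-vertex-polytope with exactly $mn$ vertices. Two facts are needed. First, the vertex set of a product of polytopes is the product of the vertex sets, so the $mn$ distinct points $(a_i,b_j)$ are precisely the vertices of $P_1\times P_2$ and in particular lie in convex position. (This is elementary: if $\phi$ is a linear functional uniquely maximized over $P_1$ at $a_i$ and $\psi$ one uniquely maximized over $P_2$ at $b_j$, then $(x,y)\mapsto\phi(x)+\psi(y)$ is uniquely maximized over $P_1\times P_2$ at $(a_i,b_j)$; conversely, writing any non-vertex of $P_1$ as the midpoint of a segment inside $P_1$ exhibits the corresponding point of $P_1\times P_2$ as the midpoint of a segment inside $P_1\times P_2$.) Second, any point of $S_1\times S_2$ lying in $P_1\times P_2$ has the form $(s_1,s_2)$ with $s_1\in S_1\cap P_1$ and $s_2\in S_2\cap P_2$; but $P_i$ being an $S_i$-vertex-polytope forces $S_1\cap P_1=\{a_1,\dots,a_m\}$ and $S_2\cap P_2=\{b_1,\dots,b_n\}$, so $(s_1,s_2)$ is one of the $(a_i,b_j)$. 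Hence $P_1\times P_2$ contains no point of $S_1\times S_2$ beyond its $mn$ vertices, and Lemma~\ref{lem:hollow} applied to $S_1\times S_2$ gives $h(S_1\times S_2)\ge mn=h(S_1)h(S_2)$.

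I do not anticipate a real obstacle in carrying this out; the one thing one has to get right is the \emph{choice of object}. It is tempting to attempt a product construction on the level of the convex sets that certify the two Helly numbers---the sets $\conv(R\setminus\{x_i\})$ that appear in Lemma~\ref{lem:hof}---but such a product degenerates (deleting one of its members does not shrink the intersection), and that route can only reprove the additive bound $h(S_1\cup S_2)\le h(S_1)+h(S_2)$. Using instead the \emph{full combinatorial grid} of vertices $\{a_i\}\times\{b_j\}$ is precisely what makes multiplicativity appear, after which everything reduces to the routine bookkeeping above: discreteness of the product, attainment of the defining supremum (by integrality of vertex counts), and the standard identity $\mathrm{vert}(P\times Q)=\mathrm{vert}(P)\times\mathrm{vert}(Q)$.
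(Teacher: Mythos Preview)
The paper does not include its own proof of this statement; it is simply attributed to Conforti and Di Summa with a citation to an unpublished manuscript. Your argument is correct and is exactly the natural one in this setting: use the vertex-polytope characterization of Lemma~\ref{lem:hollow}, take maximal $S_i$-vertex-polytopes $P_i$, and observe that $P_1\times P_2$ is an $(S_1\times S_2)$-vertex-polytope with $h(S_1)h(S_2)$ vertices. The two nontrivial ingredients---that the supremum in Lemma~\ref{lem:hollow} is attained when finite (integrality of vertex counts) and that $\mathrm{vert}(P_1\times P_2)=\mathrm{vert}(P_1)\times\mathrm{vert}(P_2)$---are handled cleanly, as is the check that $(S_1\times S_2)\cap(P_1\times P_2)$ contains nothing beyond those vertices. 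Your closing remark about why the na\"ive product of the witnessing convex families from Lemma~\ref{lem:hof} fails (and only recovers an additive bound) is a nice diagnostic observation.
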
  

It is also natural to study subgroups of $\R^d$. General subgroups and other 
new examples of sets $S$ with $h(S)< \infty$ are presented in \cite{de2015helly}.

%
%
%

\begin{theorem}[De Loera, La Haye, Oliveros, Rold\'an Pensado \cite{de2015helly}]
The following bounds on Helly numbers hold
\begin{itemize}
	\item If $S$ is a dense subset of $\R^2$, then $h(S)\leq 4$. This result is sharp.
	\item Let $L$ be a proper 
sublattice of $\Z^2$. If $S=\Z^2\setminus L$, then $h(S)\leq 6$. This result is sharp.
	\item Let $L_1,\dots,L_k$ be (possibly translated) sublattices of $\Z^d$. Then the set $S =\Z^d \setminus (L_1\cup\dots\cup L_k)$ has Helly number $h(S)\le C_k2^d$ for some constant $C_k$ depending only on $k$.
\end{itemize}	
\end{theorem}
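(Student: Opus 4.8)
\smallskip
\noindent\textit{Proof plan.}
The plan is to handle the three items separately, translating each statement about $h(S)$ through the combinatorial characterisations of Lemmas~\ref{lem:hof} and~\ref{lem:hollow}. When $S$ is dense I will use Lemma~\ref{lem:hof} in the form: writing $K(R):=\bigcap_i\conv(R\setminus\{x_i\})$ for a finite set $R=\{x_1,\dots,x_h\}$ in strictly convex position, $h(S)$ is the largest $h$ admitting such an $R\subset S$ with $K(R)\cap S=\emptyset$. When $S$ is discrete I will instead bound, via Lemma~\ref{lem:hollow}, the number of vertices of an $S$-vertex-polytope $P$; recall that then every lattice point of $P$ other than a vertex lies outside $S$.

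\smallskip
\noindent\emph{Dense subsets of $\R^2$.} For the upper bound I would show that $K(R)$ has nonempty interior whenever $R$ is in strictly convex position with $h=|R|\ge 5$. Listing the $x_i$ cyclically, $\conv(R\setminus\{x_i\})$ is $\conv(R)$ with the open ``ear'' triangle $\conv\{x_{i-1},x_i,x_{i+1}\}$ removed, so $K(R)$ equals $\conv(R)$ intersected with the half-planes determined by the short diagonals $x_{i-1}x_{i+1}$ not containing $x_i$; for $h\ge 5$ this ``inner polygon'' is full-dimensional for every strictly convex $h$-gon (one can even see nonemptiness from Helly's theorem in the plane, checking triples of the half-planes). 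Hence $K(R)$ meets the dense set $S$, so no $R$ of size $\ge 5$ witnesses a larger Helly number and $h(S)\le 4$. For sharpness take $S=\R^2\setminus\Q^2$, which is dense, and $R=\{(\sqrt2,0),(0,\sqrt2),(-\sqrt2,0),(0,-\sqrt2)\}\subset S$: these four points are in convex position and their two diagonals cross at the origin, so $K(R)=\{(0,0)\}$ avoids $S$ and $h(S)\ge 4$.

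\smallskip
\noindent\emph{The plane minus a sublattice.} Here $S$ is discrete, so I bound the number of vertices of an $S$-vertex-polytope $P$, splitting on the number $I$ of interior lattice points of $P$, all of which lie in $L$. If $I=0$ then $P$ is a hollow convex lattice polygon, which classically has at most $4$ vertices. If $I=1$ then $P$ is one of the finitely many reflexive polygons, which have at most $6$ vertices. If $I\ge 2$, choose two interior lattice points $p,q\in L$; the primitive lattice vector $w$ parallel to $q-p$ must lie in $L$, since every lattice point strictly between $p$ and $q$ is a non-vertex lattice point of $P$ and hence in $L$. After a unimodular change of coordinates we may assume $w=(1,0)$ and $L=\Z\times b\Z$ with $b\ge 2$, so $S=\{(x,y):b\nmid y\}$, and then the $S$-vertex-polytope condition forces the horizontal width of $P$ to be at most $1$ at every height $y_0$ with $b\nmid y_0$. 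Since this width is a piecewise-linear concave function of the height with breakpoints only at heights of vertices of $P$ — and every vertex of $P$ lies at a height not divisible by $b$ — the width of $P$ is at most $1$ everywhere, which forces $P$ to have at most $4$ vertices. So $P$ has at most $6$ vertices in every case. For sharpness take $L=2\Z^2$ and let $P$ be the reflexive hexagon $\conv\{(1,0),(0,1),(-1,1),(-1,0),(0,-1),(1,-1)\}$: its unique non-vertex lattice point is $(0,0)\in 2\Z^2$ and each of its six vertices has an odd coordinate, so $P$ is an $S$-vertex-polytope and $h(\Z^2\setminus 2\Z^2)\ge 6$.

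\smallskip
\noindent\emph{Higher dimensions and several sublattices.} Again $S$ is discrete; let $P$ be an $S$-vertex-polytope and partition its vertices into the at most $2^d$ classes $Q_t$ according to residue modulo $2$, so that it suffices to show $|Q_t|\le C_k$ with $C_k$ depending only on $k$. Fix $Q_t$. For distinct $v,v'\in Q_t$ the midpoint $\frac12(v+v')$ is a lattice point in the relative interior of a segment of $P$, hence a non-vertex lattice point of $P$, hence lies in some $L_\ell$; colour the pair $\{v,v'\}$ by such an index $\ell\in\{1,\dots,k\}$. There is no monochromatic triple $\{v_a,v_b,v_c\}$: if all three midpoints lay in $L_{\ell_0}=u+M_{\ell_0}$ we would get $\frac12(v_b-v_c)=\frac12(v_a+v_b)-\frac12(v_a+v_c)\in M_{\ell_0}$, and hence $v_b=\frac12(v_b+v_c)+\frac12(v_b-v_c)\in L_{\ell_0}$, contradicting $v_b\in S$. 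So this $k$-colouring of the pairs from $Q_t$ has no monochromatic triangle, and Ramsey's theorem bounds $|Q_t|$ by a constant $C_k$ depending only on $k$; summing over the at most $2^d$ classes gives $h(S)\le C_k2^d$. The step I expect to be most delicate is the case $I\ge 2$ above, where one must upgrade ``width at most $1$ at a sparse set of heights'' to a global width bound and then to a bound on the vertices, using that the width profile is piecewise-linear, concave, with breakpoints at vertex-heights, and integer-valued there; the first and third items, by contrast, reduce cleanly to the inner-polygon observation and to the midpoint-plus-Ramsey argument, respectively (the latter already yielding $h(\Z^2\setminus L)\le 2\cdot 2^2=8$, short of the sharp value $6$).
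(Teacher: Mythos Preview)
The paper is a survey and does not prove this theorem; it is quoted from \cite{de2015helly}. So there is nothing in the paper to compare your argument against, and I assess it on its own.

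Parts 1 and 3 go through. In Part~1 your Helly remark should be applied to the \emph{open} half-planes: for any three indices $i,j,k$ with $|R|\ge 5$, a suitable convex combination of the remaining $\ge 2$ vertices lies strictly inside all three (e.g.\ for $|R|=5$ the midpoint of the two remaining vertices works, since each is on the boundary of at most one of $H_i,H_j,H_k$), and then Helly gives a point in all the open half-planes, hence $K(R)$ has nonempty interior. Your sharpness example is fine. Part~3's midpoint-plus-Ramsey argument is clean and correct; the key point that $\tfrac12(v_b+v_c)+\tfrac12(v_b-v_c)\in L_{\ell_0}$ uses exactly that a translated sublattice is a coset.

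In Part~2 the cases $I=0$ and $I=1$ are fine: in dimension two a lattice polygon with a single interior lattice point is automatically (equivalent to) a reflexive polygon, so the $\le 6$ bound holds, and hollow lattice polygons do have $\le 4$ vertices by the standard width-$1$/$2\Delta$ classification. The only genuine gap is in your case $I\ge 2$: the assertion that ``width $\le 1$ everywhere forces $P$ to have at most $4$ vertices'' is neither justified nor obvious for general lattice polygons. However, you do not need it --- your own argument already yields a contradiction in this case. You chose $p\ne q$ interior with $q-p$ a nonzero multiple of $(1,0)$, so $p$ and $q$ lie at the \emph{same} height $y_p$; they are distinct integers in the open horizontal slice $P\cap\{y=y_p\}$, forcing that slice to have length $>1$. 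But you have just shown the width is $\le 1$ at every height (your piecewise-linear argument is correct: width $\le 1$ at every vertex height, hence at every height by linearity between breakpoints). So the case $I\ge 2$ is vacuous, and $h(S)\le 6$ follows from $I\le 1$ alone. The completion is therefore easier than you anticipated, not harder.
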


 \subsection{Colorful variations}
 
Colorful variations of Helly's theorem 
come from asking additional combinatorial restrictions to the intersection structure of the family.  This result is due to Lov\'asz, and his proof, below,  appeared in a paper by B\'ar\'any.

\begin{theorem}[Colorful Helly theorem; Lov\'asz \cite{Barany:1982va}]
Let $\F_1, \F_2, \ldots, \F_{d+1}$ be finite families of convex sets in $\rr^d$ such that for every choice $K_1 \in \F_1, K_2 \in \F_2, \ldots, K_{d+1} \in \F_{d+1}$, the family $\{K_1, \ldots, K_{d+1}\}$ has non-empty intersection.  Then, there is an index $i$ such that the members of family $\F_i$ have a non-empty intersection.
\end{theorem}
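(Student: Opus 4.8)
The plan is to reduce the colorful statement to the monochromatic Helly theorem by a clever choice of a point and a dimension-reduction argument. The standard approach proceeds by contradiction: assume that for every $i \in \{1, \dots, d+1\}$, the family $\F_i$ has empty intersection. By Helly's theorem applied to each $\F_i$ separately, this means that for each $i$ there is a subfamily of at most $d+1$ sets of $\F_i$ with empty intersection. One then seeks a contradiction with the colorful hypothesis.

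The cleaner route, and the one I would actually carry out, uses a minimality/extremal argument on a single well-chosen point. First I would consider all points of the form described below. For each choice $K_1 \in \F_1, \dots, K_{d+1} \in \F_{d+1}$, the hypothesis gives a point in $\bigcap_j K_j$; so the relevant configuration space is nonempty. The idea is to pick, among all ``colorful'' intersection points (or a suitable auxiliary quantity), one that is extremal in some coordinate or with respect to some linear functional, and then argue that the color class attaining the extremum must in fact have all its members containing that point. Concretely, I would fix a generic linear functional $\varphi$ and, for each $i$, let $p$ be chosen so as to minimize $\varphi$ over $\bigcap_{j \ne i} K_j$ for an optimally chosen transversal $(K_j)_{j \ne i}$; a careful comparison of these $d+1$ extremal values, together with the fact that in $\R^d$ one cannot have $d+1$ ``independent obstructions,'' forces one color class to intersect. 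The precise mechanism is the observation that if each $\F_i$ fails to intersect, one can extract from the failure a separating hyperplane for each color, and $d+1$ hyperplanes in $\R^d$ over-determine the system in a way that contradicts the colorful transversal condition.

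In more detail, the key steps in order are: (1) assume for contradiction that every $\F_i$ has empty intersection; (2) by Helly's theorem, for each $i$ select a minimal subfamily $\G_i \subseteq \F_i$ with $\bigcap \G_i = \emptyset$ and $\abs{\G_i} \le d+1$; (3) pick for each $i$ a set $K_i \in \G_i$ and a point $x_i$ witnessing that $\bigcap (\G_i \setminus \{K_i\})$ is nonempty but $x_i \notin K_i$, so that $x_i$ lies on the ``far side'' of a hyperplane $H_i$ supporting $K_i$; (4) show that the union of the open halfspaces bounded by the $H_i$ and not containing the corresponding $K_i$ cannot cover $\R^d$ while being consistent with a common transversal point — this is where the count $d+1$ enters, via a Radon-type or general-position argument on the $d+1$ hyperplanes; (5) produce from this a choice $K_1' \in \F_1, \dots, K_{d+1}' \in \F_{d+1}$ whose intersection is empty, contradicting the hypothesis.

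The main obstacle is step (4): making precise why $d+1$ separating hyperplanes, one per color, must be ``compatible'' in a way that yields an empty colorful intersection. This is the heart of Lov\'asz's argument and the place where the specific value $d+1$ is used; the rest is bookkeeping with Helly's theorem. One must be careful with general position (perturbing the hyperplanes so that no $d+1$ of them share a point and no lower-dimensional degeneracies occur) and with the passage from ``$x_i$ is separated from $K_i$'' back to a genuine empty intersection of one set from each family. I expect the clean write-up to invoke an auxiliary point minimizing a linear functional over a colorful intersection, and to derive the contradiction by examining which color class ``blocks'' further decrease, rather than manipulating the hyperplanes directly — but either formulation hinges on the same $d+1$ counting phenomenon.
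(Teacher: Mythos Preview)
Your intuition that the proof goes through an extremal argument with a generic linear functional is correct, and that is indeed what the paper (and Lov\'asz) does. But the detailed mechanism you commit to in steps (1)--(5) is not the right one, and your own diagnosis that step~(4) is ``the main obstacle'' is accurate: as written, it is a genuine gap. Extracting one separating hyperplane $H_i$ per color from the failure of $\cap\F_i$ does not obviously lead anywhere, because the hyperplanes $H_i$ need have nothing to do with one another, and ``$d+1$ hyperplanes in $\R^d$ over-determine the system'' is not a statement that cashes out into an empty colorful intersection. The colorful hypothesis constrains \emph{intersections of transversals}, not positions of supporting hyperplanes of individual sets, and there is no clean bridge from the $H_i$ back to a bad transversal.

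What actually works, and what the paper does, is closer to your ``cleaner route'' but with two crucial specifics you are missing. First, the extremal object is not a colorful $(d+1)$-tuple but a colorful $d$-tuple: among all rainbow choices of $d$ sets (one color omitted), take the one whose intersection has \emph{maximal} $v$-directional minimum $p$; say the omitted color is $\F_{d+1}$. Second, the endgame is not a hyperplane count but Radon's lemma in one dimension lower. For any $F_{d+1}\in\F_{d+1}$, the full $(d+1)$-tuple intersects at some $q$ with $\langle q,v\rangle\ge\langle p,v\rangle$; dropping each set in turn and using maximality of $p$, one finds for each $i$ a point $p_i$ in the intersection of the remaining $d$ sets lying exactly on the hyperplane $H=\{x:\langle x,v\rangle=\langle p,v\rangle\}$. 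These are $d+1$ points in a $(d-1)$-dimensional affine space, so Radon gives a partition whose convex hulls meet; the Radon point lies in every $F_i$, hence equals $p$, hence $p\in F_{d+1}$. That Radon step is the ``$d+1$ counting phenomenon'' you were looking for --- it lives on the points $p_i$ in $H$, not on separating hyperplanes of the color classes.
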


Note that if $\F_1 = \F_2 = \ldots = \F_{d+1}$ we obtain Helly's original theorem.  The sets $\F_i$ are called \textit{color classes}, which gives the theorem its name.

\begin{proof}
We may assume without loss of generality that the sets are compact.  Let $v$ 
be a direction in $\rr^d$.  We may choose $v$ so that for every colorful choice of $d$ sets, their intersection attains its $v$-directional minimum at a single point (a consequence of \cite{Ewald:1970vw}, for instance).

Take a colorful choice of $d$ sets such that its $v$-directional minimum $p$ is maximal (in the direction $v$).  We may assume without loss of generality that the missing color in that $d$-tuple is $\F_{d+1}$, and denote its sets by $F_1 \in \F_1, \ldots, F_d \in \F_d$.  We will show that $p \in \cap \F_{d+1}$.

Take any set $F_{d+1} \in \F_{d+1}$.  By the condition of the problem, the $(d+1)$-tuple $A=\{F_1, \ldots, F_{d+1}\}$ must be intersecting.  Let $q$ be a point in its intersection.  Notice that $\langle q , v \rangle \ge \langle p , v \rangle$, where $\langle \cdot, \cdot \rangle$ represents the dot product.  Let $A_i = A \setminus \{F_i\}$ for $1 \le i \le d+1$.

By the definition of $p$, for each $i$, the $v$-directional minimum $q_i$ of $\cap A_i$ satisfies $\langle q_i, v \rangle \le \langle p, v\rangle$.  Since $\cap A_i$ also contains $q$, by convexity there must be a point $p_i \in \cap A_i$ with $\langle p_i, v\rangle = \langle p, v \rangle$ (notice that $p_{d+1} = p$).

Thus, we have obtained $d+1$ points in the hyperplane $H = \{x : \langle x, v\rangle = \langle p, v \rangle\}$, of dimension $d-1$.  By Radon's lemma \cite{Radon:1921vh}, there must be a partition of them into two sets $B$, $C$ such that $\conv B \cap \conv C \neq \emptyset$.  Take any point $p' \in \conv B \cap \conv C$.  It is immediate that $p' \in F_i$ for all $1 \le i \le d+1$.  Thus $p' = p$, which means $p \in F_{d+1}$, as desired.
\end{proof}

Not every Helly-type theorem admits a colorful version.  For instance, a colorful version of Theorem \ref{theorem-simple-boxes} does not hold even if we permit $d$ colors.  A simple 
counterexample is to take $\F$ to be the set the of facets of a hypercube.  We color $\F$ so that each opposite pair of facets carry the same color.  Clearly no color class intersects, but every colorful choice intersects in one vertex of the hypercube.  There are two interesting questions regarding colorful variations:

\begin{itemize}
	\item Which Helly-type theorems admit a colorful version?
	\item Is it possible to get a stronger conclusion than a single color class intersecting?
\end{itemize}

Sufficient conditions for the existence of colorful version were determined in \cite{de2015helly}, which are essentially the minimal conditions needed to carry out Lov\'asz's original proof.  In particular, it follows that any discrete set $S$ with a finite Helly number $h(S)$ admits a Helly-type theorem with $h(S)$ color classes.  This extends to some quantitative versions, described in the next subsection \cite{DeLoera:2015wp}.  In particular, it shows that Doignon's theorem is colorable.

\begin{theorem}[Colorful Doignon; De Loera, La Haye, Oliveros, Rold\'an-Pensado \cite{de2015helly}]
	Let $\F_1, \F_2, \ldots, F_{2^d}$ be finite families of convex sets in $\R^d$.  If the intersection of every colorful choice $F_1 \in \F_1, \ldots, F_{2^d} \in \F_{2^d}$ contains an integer point, there is a color class $\F_i$ such that $\cap \F_i$ contains an integer point.
\end{theorem}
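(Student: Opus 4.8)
The plan is to run Lov\'asz's proof of the Colorful Helly theorem, reproduced above, in the lattice setting: Doignon's theorem (Theorem~\ref{doignon}), equivalently its reformulation Lemma~\ref{lem:hof}, takes over the role of Radon's lemma, and a reduction to bounded sets takes over the role of the generic direction~$v$.

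First I would reduce to the case that every member of every $\F_i$ is a lattice polytope contained in a fixed cube $B=[-M,M]^d$. There are only finitely many colorful $2^d$-tuples, each with a common integer point; these finitely many witnesses determine $M$, and replacing each convex set $K$ by $\conv(K\cap\Z^d\cap B)$ preserves the hypothesis (each old witness still lies in the new intersection) while the conclusion for the shrunken families implies it for the originals. Now only finitely many integer points are relevant, so fix a linear functional $\ell$ generic enough to be injective on $B\cap\Z^d$. Every colorful $(2^d-1)$-tuple $T$ has $\bigcap T\cap\Z^d\ne\emptyset$ --- adjoin any set of the missing color and apply the hypothesis --- hence a \emph{unique} $\ell$-minimal integer point $\pi(T)$. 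Among all colorful $(2^d-1)$-tuples pick one, $T$, maximizing $\ell(\pi(T))$; after relabeling, $T=\{F_1,\dots,F_{2^d-1}\}$ with $F_j\in\F_j$, so it omits color $2^d$. Put $p=\pi(T)$ and $c'=\ell(p)$. I claim $p\in\bigcap\F_{2^d}$; this is the theorem.

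To prove the claim, fix $K\in\F_{2^d}$, write $F_{2^d}:=K$, and let $q$ be a common integer point of the colorful $2^d$-tuple $\{F_1,\dots,F_{2^d}\}$; since $q\in\bigcap T$, we have $\ell(q)\ge c'$. For each color $i$ set $q_i:=\pi(\{F_j:j\ne i\})$: maximality gives $\ell(q_i)\le c'$, and $q_{2^d}=p$. Note $q\in F_j$ for all $j$ and $q_i\in F_j$ for all $j\ne i$. Enumerating $q_1,\dots,q_{2^d},q$ as $r_1,\dots,r_{2^d+1}$, consider the $2^d+1$ polytopes $C_m:=\conv(\{r_1,\dots,r_{2^d+1}\}\setminus\{r_m\})$; discarding any single $C_{m_0}$ leaves $2^d$ polytopes that all contain the integer point $r_{m_0}$, so by Theorem~\ref{doignon} the family $\{C_1,\dots,C_{2^d+1}\}$ has a common integer point $p'$. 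Since $p'\in C_{2^d+1}=\conv\{q_1,\dots,q_{2^d}\}$, we get $\ell(p')\le c'$; since $p'\in C_k$ for each $k\le 2^d$ and the generators of $C_k$ all lie in $F_k$, convexity gives $p'\in\bigcap_{k=1}^{2^d}F_k\subseteq\bigcap T$, whence $\ell(p')\ge c'$. Thus $\ell(p')=\ell(p)$ with $p,p'\in B\cap\Z^d$, so $p'=p$ by injectivity of $\ell$; and $p'\in K$, so $p\in K$. Since $K$ was arbitrary, $p\in\bigcap\F_{2^d}$.

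The step I expect to require the most thought is this reduction and the choice of $\ell$, because it is what lets the rest go through without the Euclidean sliding trick. In Lov\'asz's proof one slides along $v$ to land a point of each $d$-wise intersection on the hyperplane $\{\langle x,v\rangle=\langle p,v\rangle\}$; over $\Z^d$ this has no analogue, as a convex set can hold integer points on both sides of a hyperplane but none on it. The fix is to never slice: the $2^d$ extremal witnesses $q_i$ together with the single extra witness $q$ --- exactly $2^d+1$ integer points --- are precisely what Doignon's theorem consumes (in the form of Lemma~\ref{lem:hof}), and it plays there the role Radon's lemma plays in the colorful Helly argument, while boundedness is exactly what makes the extremal integer points $\pi(\cdot)$ exist and be unique. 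The same scheme should yield the colorful Helly theorem for any discrete $S$ with finite $h(S)$: replace $2^d$ by $h(S)$ everywhere and invoke Lemma~\ref{lem:hollow} in place of Doignon's theorem.
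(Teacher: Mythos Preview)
Your proof is correct and follows precisely the approach the paper indicates: it adapts Lov\'asz's colorful Helly argument to the lattice setting, with Doignon's theorem (equivalently Lemma~\ref{lem:hof}) playing the role of Radon's lemma, and your remark that the scheme extends to any discrete $S$ with finite $h(S)$ is exactly what the paper asserts.  The paper does not spell out the details in-text but refers to \cite{de2015helly} for them; your reduction to bounded lattice polytopes and choice of an injective linear functional on $B\cap\Z^d$ are the natural substitutes for the compactness and genericity used in the Euclidean proof, and your use of the $2^d+1$ witnesses $q_1,\dots,q_{2^d},q$ fed directly into Doignon (rather than first slicing to a hyperplane) is the right way to avoid the continuous ``sliding'' step.
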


There are still plenty of Helly-type theorems that do not fit the criteria in \cite{de2015helly}.  For instance, Karasev presented the following theorem.

\begin{theorem}[Karasev \cite{Karasev:2000ui}] \label{theorem-karasev-translates}
 Let $K$ be a convex set in the plane and $\F$ be a family of translates of $K$. If every pair of sets in $\F$ has a non-empty
intersection, then there are three points such that every set in $\F$ contains at least one of them.
\end{theorem}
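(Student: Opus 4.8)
The plan is to move to the space of translation vectors and turn the conclusion into a covering statement. Pick a reference point in $K$ and write each member of $\F$ as $K+c$, with $c$ ranging over a set $C\subseteq\R^2$; we may assume $K$ is a convex body, the cases where $K$ is a point or a segment being immediate. Two translates $K+c$ and $K+c'$ meet precisely when $c-c'\in K-K$, and the difference body $K-K$ is a centrally symmetric convex body, so the pairwise-intersection hypothesis is exactly $C-C\subseteq K-K$. Since $K-K$ is convex, $\overline{\conv}(C)-\overline{\conv}(C)\subseteq K-K$ as well, so enlarging $C$ to its closed convex hull keeps the family pairwise intersecting, and a piercing of the larger family pierces the original; hence we may assume $C$ is a compact convex set. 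Finally $p\in K+c$ if and only if $c\in p-K$, so points $p_1,p_2,p_3$ pierce $\F$ exactly when $C\subseteq(p_1-K)\cup(p_2-K)\cup(p_3-K)$.

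Setting $L:=-K$, a convex body with $L-L=K-K$, the theorem is thus equivalent to the following Covering Lemma: \emph{if $C$ and $L$ are planar convex bodies with $C-C\subseteq L-L$, then $C$ is covered by three translates of $L$.} Comparing the support functions of the two difference bodies, the hypothesis is the same as asking that $\mathrm w_u(C)\le\mathrm w_u(L)$ for every direction $u$, where $\mathrm w_u$ denotes width. When $L$ is a disk of radius $r$ this is precisely the (tight) fact that every planar set of diameter at most $2r$ is covered by three disks of radius $r$, so three cannot be improved; when $L$ is a parallelogram the width hypothesis already forces $C$ into a single translate of $L$, so the real content concerns bodies $L$ that are not parallelograms.

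To prove the Covering Lemma I would first reduce to the extremal case $C-C=L-L$: any $C$ with $C-C\subseteq L-L$ is contained in a convex body $C'$ with $C'-C'=L-L$ (the standard completion procedure), and covering $C'$ covers $C$. The heart of the argument is then a Borsuk-type partition of $C$: pick a center $o\in C$ and three directions $v_1,v_2,v_3$ so that the three rays from $o$ divide $C$ into sectors $C_1,C_2,C_3$, and show that each $C_i$ is contained in a translate of $L$. It is the width hypothesis that makes each fit possible: a sector $C_i$ sits inside a wedge of bounded opening, and its extent in every direction $u$ is at most $\mathrm w_u(C)\le\mathrm w_u(L)$. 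One can verify this scheme by hand in the model cases --- the disk case is classical, and three translates of a triangle $L$ can be arranged to cover the reversed triangle $-L$ --- so what remains is to make the choice of $o$ and of the directions $v_i$ work for an arbitrary $L$.

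That is where I expect the real difficulty. Choosing the center and the three directions uniformly for a general convex body $L$, and then verifying that each of the three sectors embeds in a translate of $L$, is delicate, because --- unlike in the Euclidean plane --- a planar set of small diameter need not have small circumradius in a general norm; one therefore cannot simply halve diameters and cover each piece by a single translate of $L$. The partition must be adapted to the shape of $L$ (for instance to a maximal inscribed triangle of $L$, or to the facet structure of the symmetric body $L-L$), and showing that three sectors always suffice --- rather than the four that appear in the Levi--Hadwiger covering problem --- is exactly the point at which planarity enters in an essential way.
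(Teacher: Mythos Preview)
The survey does not prove this theorem; it is quoted from Karasev's paper and immediately followed by Dol'nikov's open problem, with no argument given. There is therefore no ``paper's own proof'' to compare against.

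As for the proposal itself: your reduction is correct and standard. Writing $\F=\{K+c:c\in C\}$, the equivalence between piercing $\F$ by three points and covering $C$ by three translates of $L=-K$ is exactly right, and the passage from the pairwise-intersection hypothesis to $C-C\subseteq L-L$ (equivalently $w_u(C)\le w_u(L)$ for all $u$) is clean. Replacing $C$ by its closed convex hull is also fine, since $\conv(C)-\conv(C)=\conv(C-C)\subseteq L-L$.

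But the proposal is not a proof, and you say so yourself. The entire content of Karasev's theorem is the Covering Lemma, and you have only checked it for disks, parallelograms, and triangles. Two of the steps you sketch for the general case are genuinely problematic. First, the ``standard completion procedure'' you invoke (enlarging $C$ to $C'$ with $C'-C'=L-L$) is the P\'al completion when $L$ is a disk, but for a general planar body $L$ the existence of such a completion is itself a nontrivial statement that you would have to justify; maximality with respect to $C'-C'\subseteq L-L$ does not obviously force equality. Second, and more seriously, the Borsuk-type partition into three sectors does not go through on the information you record: having $w_u(C_i)\le w_u(L)$ for every $u$ is \emph{not} sufficient for $C_i$ to fit inside a translate of $L$. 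Your own example of $C=-L$ with $L$ a triangle already witnesses this, since $(-L)-(-L)=L-L$ yet $-L$ does not lie in any single translate of $L$. So the argument cannot proceed by bounding widths of the pieces; one needs a shape-adapted choice of the cut (and of the three translates) that uses more than the width function. You correctly identify this as the crux, but identifying the crux is not the same as resolving it; what you have is an informed outline, not a proof.
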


Dol'nikov asked if Theorem \ref{theorem-karasev-translates} could be colored.

\begin{openproblem}[Dol'nikov's problem]
Let $K$ be a convex set in the plane and $\F_1, \F_2, \F_3$ be finite families of translates of $K$.  If any two translates of $K$ from different families intersect, show that there is an index $i$ and three points in $\R^d$ such that every set of $\F_i$ contains at least one of the points.	
\end{openproblem}

The problem above is only solved when $K$ is a triangle, $K$ is centrally symmetric, or when the hypothesis involves four families instead of three \cite{chuymagazinovsoberon}.

A stronger form of the colorful Helly theorem was shown by Kalai and Meshulam.  Their methods work in much more general settings, where the conditions on the color classes are replaced by an arbitrary matroid.  In its most direct application to convex sets in $\R^d$, it yields the theorem below.  Even though the statement is indeed combinatorially stronger than the original colored version by Lov\'asz, the reader may notice that the proof at the beginning of this subsection actually implies both results. See also the interesting commutative algebra reformulation of colorful variants in \cite{gunnar}.

\begin{theorem}[Kalai, Meshulam \cite{Kalai:2005cm}]\label{theorem-stronger?-colorful-kalai}
	Let $\F_1, \F_2, \ldots, \F_{d+1}$ be finite families of convex sets in $\rr^d$ such that for every choice $K_1 \in \F_1, K_2 \in \F_2, \ldots, K_{d+1} \in \F_{d+1}$, the family $\{K_1, \ldots, K_{d+1}\}$ has non-empty intersection.  Then, there is an index $i$ and a choice of sets $F_j \in \F_j$ for all $j \neq i$ such that the family $\F_i \cup \{F_j : j \neq i\}$ has non-empty intersection.
\end{theorem}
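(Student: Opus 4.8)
The plan is not to prove anything new but to observe that Lov\'asz's proof of the colorful Helly theorem, reproduced at the beginning of this subsection, already establishes this stronger statement; so I would simply re-run that argument and read off a little more from it. Recall how it goes: after the harmless reduction to compact sets and the choice of a generic direction $v$, one selects among all colorful $d$-tuples of sets the one whose intersection has the largest $v$-directional minimum, say the tuple $F_1\in\F_1,\dots,F_d\in\F_d$ with missing color $\F_{d+1}$, and names that minimum $p$. The conclusion reached there is precisely that $p\in\cap\F_{d+1}$.

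The one extra remark needed is that $p$ is, by its very definition, the $v$-directional minimum of $\cap\{F_1,\dots,F_d\}$, hence $p\in F_j$ for every $j\in\{1,\dots,d\}$. Combining this with $p\in\cap\F_{d+1}$, the point $p$ lies in the intersection of the family $\F_{d+1}\cup\{F_1,\dots,F_d\}$. Taking $i=d+1$ and letting the distinguished representatives be exactly these $F_j$ for $j\neq i$ is the conclusion sought: one color class, together with a single chosen set from each of the remaining classes, has a common point.

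I do not anticipate a real obstacle here; the entire content of the statement is the realization that Lov\'asz's extremal argument produces a witness point that ``remembers'' the colorful $d$-tuple producing it, and those $d$ sets serve as the required representatives of the other colors. The only items demanding the usual care — the passage to compact sets and the genericity of $v$ — are already dispatched in the proof on the page, so nothing beyond it is needed. (The full matroidal generalization of Kalai and Meshulam requires genuinely different machinery and is not what this particular extremal argument yields; only the convex-set instance stated above follows in this way.)
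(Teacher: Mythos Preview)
Your proposal is correct and is exactly the approach the paper takes: it gives no separate proof of this theorem but remarks explicitly that ``the proof at the beginning of this subsection actually implies both results,'' which is precisely the observation you spell out --- that the extremal point $p$ already lies in the selected $F_1,\dots,F_d$ as well as in all of $\F_{d+1}$.
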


If we change the ambient space, colorful results also appear.  A subset $K$ of the sphere $S^{d-1}$ is called \textit{strongly convex} if it is contained in an open hemisphere and for every pair of points $x$, $y$ in $K$, the shortest arc from $x$ to $y$ is contained in $K$.  In the sphere there are different notions of convexity, which lead to different Helly-type results (see the survey \cite{Wenger:2004uf} and the references therein for details).

\begin{theorem}[Holmsen, Pach, Tverberg \cite{Holmsen:2008id}]\label{theorem-sphercial-colorful-helly}
	Let $C_1, C_2, \ldots, C_{d+1}$ be non-empty, finite collections of closed strongly convex sets on $S^{d-1}$.  If every selection of sets from distinct $C_i$ have a point in common, then for some $1 \le i < j \le d+1$ there is a point common to every member of $C_i \cup C_j$. 
\end{theorem}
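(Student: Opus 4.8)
The plan is to transplant Lov\'asz's directional‑extremum argument---the proof of the colorful Helly theorem reproduced above---to the sphere, after replacing each spherical set by a convex cone in $\R^d$. Since the sets are closed they are compact; strong convexity makes each associated cone $\widehat K:=\{\lambda x:\lambda\ge 0,\ x\in K\}$ a closed convex cone and guarantees that a nonempty intersection of strongly convex sets is again strongly convex, hence connected. A nonempty subfamily of the $C_i$'s has a common point exactly when the corresponding cones share a ray, so the hypothesis reads: every colorful $(d+1)$-tuple of cones shares a ray, and the goal is to produce indices $i<j$ for which the cones coming from $C_i\cup C_j$ share a ray. Following the model proof I would fix a generic unit vector $v$ (as in the use of \cite{Ewald:1970vw} above) so that, for every colorful $d$-tuple whose sets have a common point, the functional $\langle \cdot, v\rangle$ has a \emph{unique} minimizer on their intersection; such tuples exist, since deleting a set from any colorful $(d+1)$-tuple leaves a nonempty one. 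Among these I would select the colorful $d$-tuple $F_1\in C_1,\dots,F_d\in C_d$ (relabeling so that the missing color is $d+1$) whose $v$-minimum value $M$ is as large as possible, with minimizer $p$.

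Now fix an arbitrary $F_{d+1}\in C_{d+1}$ and put $A=\{F_1,\dots,F_{d+1}\}$, $A_i=A\setminus\{F_i\}$. The tuple $A$ meets at some $q$ with $\langle q,v\rangle\ge M$, and for each $i\le d$ the set $\bigcap A_i$ is strongly convex, hence connected; it contains $q$ and also its own $v$-minimizer, whose value is $\le M$ by the maximality of $M$, so the intermediate value theorem produces a point $p_i\in\bigcap A_i$ with $\langle p_i,v\rangle=M$ (take $p_{d+1}:=p$). The $d+1$ points $p_1,\dots,p_{d+1}$ lie on the $(d-2)$-sphere $\{x\in S^{d-1}:\langle x,v\rangle=M\}$, hence in a $(d-1)$-dimensional affine hyperplane, so Radon's lemma \cite{Radon:1921vh} partitions them into two parts whose convex hulls meet at a point $p^\ast$; exactly as in the model proof, $p^\ast$ lies in every cone $\widehat{F_i}$, $1\le i\le d+1$, and (with $M\neq 0$ by genericity) $p^\ast\neq 0$, so $p^\ast/\norm{p^\ast}$ is a common point on $S^{d-1}$ of $F_1,\dots,F_{d+1}$.

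The main obstacle is the final rigidity step. In the affine proof one finishes by observing that $p^\ast$ attains the minimal value $M$ on $\bigcap_{\ell\le d}F_\ell$ and hence \emph{equals} the unique minimizer $p$, forcing $p\in F_{d+1}$. On the sphere $p^\ast$ is a convex combination of unit vectors, so generically $\norm{p^\ast}<1$ and its radial projection lands at level $M/\norm{p^\ast}\neq M$; the argument then only re‑establishes that every colorful $(d+1)$-tuple has a common point, which was already assumed. I expect this slack to be precisely why the conclusion weakens to a pair of color classes: one should run the extremal selection on both sides of $v$ (tracking a companion extremal configuration near $-v$) and show that the Radon point can always be charged to one of two color classes rather than a single one; making this two‑sided bookkeeping precise is the real content of the theorem. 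A cleaner alternative worth attempting is to first establish a suitable spherical Helly theorem for strongly convex sets together with its two‑sided refinement and then feed it into the colorability criterion of \cite{de2015helly} cited earlier in this subsection; handling the case in which all members of the eventual $C_i\cup C_j$ avoid some common point of $S^{d-1}$ by central projection into $\R^{d-1}$---where the affine colorful Helly theorem has one color class to spare---also looks promising. In any case I would expect a sharpness example modeled on the hypercube‑facet construction above to show that neither the number $d+1$ of color classes nor the two‑class conclusion can be improved.
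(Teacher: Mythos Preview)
Your diagnosis is correct: what you have is not a proof. Lov\'asz's argument needs the Radon point $p^\ast$ to lie on the level set $\{\langle\cdot,v\rangle=M\}$ \emph{and} in $\bigcap_{\ell\le d}F_\ell$, so that uniqueness of the $v$-minimizer forces $p^\ast=p$ and drags $p$ into $F_{d+1}$. On the sphere the Radon combination of unit vectors generically has norm $<1$, radial projection moves it off level $M$, and the pinning step evaporates; you recover only the hypothesis. None of the suggested repairs closes this gap as written: the colorability criterion of \cite{de2015helly} is designed to output a \emph{single} intersecting color class, not two, and it is precisely an abstraction of the rigidity step that fails here; the ``two sides of $v$'' idea and the central-projection reduction are heuristics, not arguments.

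The paper takes a completely different route and never attempts to run Lov\'asz's sweep on the sphere. It observes that, via the standard polar duality between convex cones in $\R^d$, Theorem~\ref{theorem-sphercial-colorful-helly} is equivalent to the \emph{Very Colorful Carath\'eodory theorem} stated immediately afterwards: for point sets $C_1,\dots,C_{d+1}\subset\R^d$, if no colorful $(d{+}1)$-tuple captures the origin in its convex hull, then $0\notin\conv(C_i\cup C_j)$ for some pair $i<j$. Under this duality, ``a subfamily of strongly convex sets on $S^{d-1}$ has a common point'' translates to ``the corresponding family of polar points fails to surround the origin'', so the two statements match clause for clause. The substantive work is therefore the proof of the Carath\'eodory statement in \cite{Holmsen:2008id, Arocha:2009ft}; the spherical Helly theorem is then a one-line corollary, and the two-class conclusion is inherited from the Carath\'eodory side rather than from any two-sided bookkeeping on the sphere.
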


This is a consequence of applying duality to the following ``very colorful Carath\'eodory'', which also appears in \cite{Arocha:2009ft}. For more on the interesting topic of colorful Carath\'eodory theorems 
see \cite{Barany:1982va, barany1995caratheodory,Barany:1997fr}

\begin{theorem}[Very colorful Carath\'eodory, \cite{Holmsen:2008id, Arocha:2009ft}]
	Let $C_1, C_2, \ldots, C_{d+1}$ be sets of points in $\R^d$, considered as color classes.  If every colorful choice $c_1 \in C_1, \ldots, c_{d+1} \in C_{d+1}$ does not capture the origin (i.e. $0 \not\in \conv\{c_1, \ldots, c_{d+1}\}$), then there are two color classes $C_i, C_j$ such that $0 \not\in \conv (C_i \cup C_j)$.
\end{theorem}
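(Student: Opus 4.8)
The plan is to prove the contrapositive statement: \emph{if $0\in\conv(C_i\cup C_j)$ for every pair $i\ne j$, then some colourful selection $c_1\in C_1,\dots,c_{d+1}\in C_{d+1}$ captures the origin.} One may assume the families are finite (otherwise reduce to finite subfamilies using Carath\'eodory's theorem). I would run a variant of B\'ar\'any's exchange proof of the colourful Carath\'eodory theorem \cite{Barany:1982va}: argue by contradiction, assuming no colourful selection captures $0$, so that among all colourful simplices $T=\conv\{c_1,\dots,c_{d+1}\}$ there is one, $T^\ast$, minimising $\dist(0,T)$, with minimum value $\delta>0$.

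Let $p$ be the point of $T^\ast$ nearest to $0$, let $H=\{x:\langle x,p\rangle=|p|^2\}$ be the supporting hyperplane of $T^\ast$ at $p$ (orthogonal to $[0,p]$), so that $T^\ast\subseteq\{x:\langle x,p\rangle\ge|p|^2\}$, and let $F=\conv\{c_i:i\in I\}$ be the smallest face of $T^\ast$ containing $p$ in its relative interior; then $F\subseteq H$. The key dichotomy I would use is: for each active index $k\in I$, either $C_k$ has a point $c_k^\ast$ with $\langle c_k^\ast,p\rangle\le 0$, or $C_k$ lies entirely in the open halfspace $\{x:\langle x,p\rangle>0\}$. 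In the first case, replacing $c_k$ by $c_k^\ast$ yields a colourful simplex strictly closer to the origin — this is exactly the exchange step from B\'ar\'any's proof — contradicting minimality of $T^\ast$. So one is left with the case where every $C_k$ with $k\in I$ lies in $\{x:\langle x,p\rangle>0\}$; if $|I|\ge 2$, choosing two active indices $k\ne k'$ gives $C_k\cup C_{k'}\subseteq\{x:\langle x,p\rangle>0\}$ and hence $0\notin\conv(C_k\cup C_{k'})$, contradicting the hypothesis.

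The case $|I|=1$ is where the classical argument breaks down, and I expect it to be the main obstacle: here $p$ equals a single vertex $c_{k_0}$ of $T^\ast$, and B\'ar\'any's exchange cannot be run, since there is no other index on the relevant face and no analogue of ``$0\in\conv(C_{k_0})$'' available. The plan for this case is to invoke the hypothesis on a \emph{different} pair. Fixing $j\ne k_0$, from $0\in\conv(C_{k_0}\cup C_j)$ together with $C_{k_0}\subseteq\{x:\langle x,p\rangle>0\}$, evaluating the linear functional $x\mapsto\langle x,p\rangle$ on a representation of $0$ (a short case check on whether the $C_{k_0}$-coefficients vanish) forces $C_j$ to contain a point $c_j^\ast$ with $\langle c_j^\ast,p\rangle\le 0$. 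Replacing $c_j$ by $c_j^\ast$ then gives a colourful simplex that still has $p=c_{k_0}$ as a vertex and also contains $c_j^\ast$, hence contains the segment $[p,c_j^\ast]$; since the squared distance to $0$ along this segment has derivative $2(\langle c_j^\ast,p\rangle-|p|^2)\le-2|p|^2<0$ at $p$, a short move strictly decreases the distance, contradicting minimality once more.

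All cases thus lead to a contradiction, forcing $\delta=0$, i.e. some colourful selection captures the origin; translating back, the assumption that no colourful choice captures the origin yields a pair $C_i,C_j$ with $0\notin\conv(C_i\cup C_j)$. The only genuinely new ingredient beyond the standard colourful Carath\'eodory argument is the zero-dimensional-face case, handled above via a second colour pair; the remaining computations are routine and I would keep them brief.
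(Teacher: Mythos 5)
Your plan has a genuine gap at its central step. The exchange you invoke in the ``first case'' is not B\'ar\'any's exchange step: in B\'ar\'any's argument one writes $p$ as a convex combination of at most $d$ of the vertices and then replaces a vertex whose colour is \emph{not} used in that representation; this is exactly what guarantees that $p$ still lies in the new colourful simplex, so that adjoining a point $c^\ast$ with $\langle c^\ast,p\rangle\le 0$ forces a strictly smaller distance along the segment $[p,c^\ast]$. You instead replace an \emph{active} vertex $c_k$ with $k\in I$; then $p$ need not belong to the new simplex, and the distance can increase. Concretely, in $\R^2$ take $c_1=(1,1)$, $c_2=(-10,1)$, $c_3=(0,5)$, so that $p=(0,1)$, $\delta=1$ and $I=\{1,2\}$. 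If $C_1$ also contains $c_1^\ast=(-10,-1)$, then $\langle c_1^\ast,p\rangle=-1\le 0$, yet $\conv\{c_1^\ast,c_2,c_3\}$ lies in the half-plane $\{x:3x-5y\le-25\}$, whose distance to the origin is $25/\sqrt{34}>4$; the swap you propose produces a strictly farther simplex and yields no contradiction with the minimality of $T^\ast$. (Your $|I|=1$ case is fine precisely because there you swap an \emph{inactive} colour; the finiteness reduction and the ``all active classes strictly positive, $|I|\ge2$'' branch are also fine.)

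The gap is not cosmetic, because the configuration your invalid step is meant to dispose of is the hard core of the theorem. The dichotomy should be run on an inactive colour $m\notin I$ (one exists after using Carath\'eodory inside $H$ to represent $p$ with at most $d$ colours): if $C_m$ meets $\{x:\langle x,p\rangle\le0\}$, a legitimate exchange finishes; if $C_m\subseteq\{x:\langle x,p\rangle>0\}$, then the hypothesis applied to the pairs $\{m,j\}$ produces points of every other class in $\{x:\langle x,p\rangle\le0\}$, and swapping any further inactive colour finishes as well. What remains is the case where $m$ is the only inactive colour, $C_m$ lies strictly on the positive side, and every active class meets the nonpositive half-space; there no single exchange works (as the example above illustrates), and this is exactly where the proofs in the cited sources \cite{Holmsen:2008id, Arocha:2009ft} require an additional idea beyond the standard colourful Carath\'eodory argument. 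Note also that this survey does not prove the theorem but only cites it, so there is no in-paper argument your sketch could be matching; as written, your proposal does not close this case.
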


  A similar version of Theorem \ref{theorem-sphercial-colorful-helly} holds in $\R^d$ with more generality.  Namely, the following new theorem,

\begin{theorem}
	Let $n > d$ and $\F_1, \ldots, \F_n$ be non-empty, finite collections of closed convex sets in $\R^d$.  If every selection of sets, one from each $\F_i$, have a point in common, then there is a set $I$ of $n-d$ indices in $\{1, \ldots, n\}$ such that there is a point in common to every member of $\cup_{i \in I} \F_i$.
\end{theorem}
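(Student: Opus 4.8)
The plan is to adapt Lov\'asz's extremal-point proof of the colorful Helly theorem (reproduced above), but to run the optimization over a larger supply of configurations, so that the ``$d$ missing colors'' of the extremal configuration are precisely the $d$ indices that get discarded, leaving the desired $n-d$ indices.

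First I would make the routine reduction to compact sets: there are only finitely many colorful selections, so I pick one point in the intersection of each, take a closed ball $B$ containing all of these points, and replace every $K\in\F_i$ by $K\cap B$. This keeps all the sets non-empty, preserves the colorful hypothesis (a truncated colorful selection still contains the point chosen for it), and a point lying in every member of $n-d$ of the truncated families lies in every member of the corresponding original families. Next I would fix a direction $v\in\R^d$ that is \emph{generic}, meaning that for each of the finitely many \emph{$d$-selections} --- choices of one set $F_i\in\F_i$ from each of some $d$-element index set $S\subseteq\{1,\dots,n\}$ --- the set $\bigcap_{i\in S}F_i$ (which is non-empty and compact, since every $d$-selection extends to a full colorful selection) attains its minimum in direction $v$ at a single point. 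Such $v$ exists because the exceptional directions for one compact convex body form a null set, exactly as in the appeal to \cite{Ewald:1970vw} above, and only finitely many bodies are involved. Among all $d$-selections I then choose one, say on index set $S^\ast$ with sets $(F_i)_{i\in S^\ast}$, for which $\sprod{p}{v}$ is as large as possible, where $p$ is the $v$-minimizer of $\bigcap_{i\in S^\ast}F_i$; and I set $I=\{1,\dots,n\}\setminus S^\ast$, so that $\abs I=n-d$.

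The heart of the argument is to show that $p$ lies in every member of $\F_i$ for each $i\in I$. Given $i_0\in I$ and $K_{i_0}\in\F_{i_0}$, consider $A=\{F_i:i\in S^\ast\}\cup\{K_{i_0}\}$, a choice of $d+1$ sets from $d+1$ distinct families; it has a common point $q$, and $q\in\bigcap_{i\in S^\ast}F_i$ forces $\sprod{q}{v}\ge\sprod{p}{v}$. Deleting any one set of $A$ leaves a $d$-selection, whose $v$-minimizer lies at height at most $\sprod{p}{v}$ by the maximality of $p$, while $q$ lies at height at least $\sprod{p}{v}$ in that same intersection; convexity then produces a point of that intersection on the hyperplane $\setcond{x}{\sprod{x}{v}=\sprod{p}{v}}$ (when $K_{i_0}$ is the set deleted, take $p$ itself). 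This yields $d+1$ points on a $(d-1)$-dimensional hyperplane, so Radon's lemma splits them into two groups with a common point $p'$ in both convex hulls. The same bookkeeping as in Lov\'asz's proof --- each set of $A$ survives the deletion of every other set of $A$, hence contains all of the constructed points except possibly the one tied to its own deletion --- shows that $p'\in\bigcap A$. In particular $p'\in\bigcap_{i\in S^\ast}F_i$ and $\sprod{p'}{v}=\sprod{p}{v}$, so uniqueness of the $v$-minimizer forces $p'=p$; since also $p'\in K_{i_0}$, we get $p\in K_{i_0}$, which is what we wanted.

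I expect the main obstacle to be picking the right object to optimize. A tempting shortcut --- merge two color classes into $\{K\cap K':K\in\F_i,\ K'\in\F_j\}$ and induct on $n$ --- does preserve the colorful hypothesis, but the index set returned by the induction need not involve the merged class, and then one recovers only $n-d-1$ classes; repairing this appears to require exactly the extremal argument above. Once the configuration to be optimized is correctly identified as ``a $d$-selection'', the remaining ingredients (the compactness and genericity reductions, and the Radon bookkeeping) are routine and mirror the colorful Helly proof above; specializing to $n=d+1$ recovers that theorem.
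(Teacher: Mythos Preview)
Your proof is correct and is precisely the argument the paper has in mind: the paper's only ``proof'' of this theorem is the remark that Lov\'asz's extremal-point argument, reproduced earlier in the section, already establishes it once the optimization is taken over all colorful $d$-selections drawn from any $d$ of the $n$ families. You have simply written out those details (including the compactness reduction and the Radon bookkeeping), so your approach and the paper's are the same.
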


The reader may notice that the original proof by Lov\'asz, described at the beginning of this subsection, proves this result.


\subsection{Quantitative variations}

More variations of Helly's theorem appear when one wants to quantify the size of the intersection of the sets in the family. The driving principle behind these results is that \textit{given a family $\F$ of convex sets in $\R^d$, if every subfamily of a fixed size has a ``large intersection'', then the whole family has a ``large intersection'' too.}

If we consider $\mathcal{C} (\R^d)$ the space of all convex sets of $\R^d$ with the Hausdorff topology, we can measure the size of a convex set by any non-negative monotone function $f: \mathcal{C} (\R^d) \to \R^+ \cup \{\infty \}$.  We say $f$ is monotone if $A \subset B$ implies $f(A) \le f(B)$.
The crudest way to define that a convex set $K$ is large is to say that $K$ is large if it is non-empty, corresponding to Helly's theorem.  
The usual non-quantitative Helly theorems arise when we ask whether $f(K) >0$ or not.  
For example, if we are given a subset $S \subset \R^d$, we could define $f(K) = 0$ if $K \cap S = \emptyset$ and $f(K) = 1$ otherwise.  
This reproduces the $S$-Helly numbers we mentioned earlier. 

Given a discrete set $S \subset \R^d$, one can consider  the function $f(K) = |K \cap S|$, which leads to several new Helly-type results.  Other notions of size are given by the volume of $K$ or its diameter.  We stress that Helly-type theorems based on functions such as $f(K)=\vol (K)$ or $f(K)=\diam (K)$, which vary continuously over the convex sets, yield wildly different results than Helly-type theorems based on functions such as $f(K)=|K \cap S|$, which vary discretely. 

Our first example is a quantitative version of Doignon's theorem.

\begin{theorem}[Aliev, Bassett, De Loera, Louveaux \cite{Aliev:2014va}]  \label{k-doignon}
Let  $d$, $k$  be non-negative integers, there exists a constant $c(k,d)$, depending only on $k$ and $d$, such that if a rational polytope $P_A(b)=\{ x : Ax \leq b\}$ in $\R^d$ has  exactly $k$ integral solutions, then there is a subset of the inequalities of $Ax\leq b$, of cardinality  no more than $c(k,d)$, such that the induced subproblem has exactly the same $k$ integer solutions as $P_A(b)$. Moreover,

\[
c(k,d) \leq \lceil 2(k+1)/3\rceil (2^d-2)+2.
\]
\end{theorem}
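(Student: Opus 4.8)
\emph{The plan} is to reduce the statement to the $S$-Helly framework of Subsection~\ref{Shelly} for a carefully chosen discrete set $S$, and then estimate the resulting Helly number by a lattice-geometric count. If $k=0$ the assertion is exactly Doignon's theorem (Theorem~\ref{doignon}), giving $c(0,d)=2^d$, which matches the claimed bound, so assume $k\ge 1$. Write $F:=P_A(b)\cap\Z^d$, a set of exactly $k$ lattice points, and set $S:=\Z^d\setminus F$; since $F$ is finite, $S$ is still discrete. Regard the rows of $Ax\le b$ as a finite family $\F=\{H_1,\dots,H_n\}$ of halfspaces, so $\bigcap\F=P_A(b)$. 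Because $P_A(b)\cap S=\emptyset$ by construction, reading the defining property of the $S$-Helly number (Definition~\ref{definition-S-Helly}) contrapositively yields an index set $J$ with $|J|\le h(S)$ and $\bigcap_{j\in J}H_j\cap S=\emptyset$, i.e.\ $\bigcap_{j\in J}H_j\cap\Z^d\subseteq F$; and since $\bigcap_{j\in J}H_j\supseteq P_A(b)\supseteq F$ this inclusion is an equality, so the subsystem indexed by $J$ has exactly the same $k$ integral solutions as $P_A(b)$. Thus the theorem follows once one establishes the uniform estimate
\[
\sup\{\, h(\Z^d\setminus F) : F\subseteq\Z^d,\ |F|=k \,\}\ \le\ \lceil 2(k+1)/3\rceil(2^d-2)+2 .
\]

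\emph{Step: bounding $h(\Z^d\setminus F)$.} Since $\Z^d\setminus F$ is discrete, Lemma~\ref{lem:hollow} computes this Helly number as the largest number $m$ of vertices of an $(\Z^d\setminus F)$-vertex-polytope: a lattice polytope $Q=\conv\{x_1,\dots,x_m\}$ whose vertices are in strictly convex position and avoid $F$, and all of whose remaining lattice points lie in $F$ --- so $Q$ has at most $k$ lattice points besides its $m$ vertices. To bound $m$ I would use a parity argument. Group the $m$ vertices by residue class in $(\Z/2\Z)^d$, fix a generic linear functional $\ell$, and within each class list its vertices in increasing $\ell$-order; the midpoint of two consecutive vertices of a class is a lattice point of $Q$, and it is not a vertex (a vertex is an extreme point of $Q$, hence not the midpoint of two distinct points of $Q$), so it lies in $F$. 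Within one class these consecutive midpoints have strictly increasing $\ell$-values, so a given point of $F$ is hit by at most one consecutive pair per class, hence at most $2^d$ times over all classes; adding one further vertex per nonempty class then gives the crude uniform estimate $m\le 2^d(k+1)$, which already shows the supremum is finite. (One could instead run the same analysis with $(\Z^d\setminus F)$-face-polytopes using the facet formula in Lemma~\ref{lem:hollow}, which is closer in spirit to the classical proof of Doignon's theorem, or apply Lemma~\ref{lem:hof} directly.)

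\emph{The main obstacle.} Upgrading $m\le 2^d(k+1)$ to the sharp $\lceil 2(k+1)/3\rceil(2^d-2)+2$ is the delicate part, and I expect it to be the crux. One must refine the charging argument in two ways: show that a single point of $F$ cannot serve as a consecutive midpoint for too many residue classes at once, once all $m$ vertices are forced into convex position inside one polytope $Q$ (this should recover the factor roughly $2/3$); and isolate the special behaviour of the two residue classes that are extreme with respect to $\ell$ (this should account for the additive $+2$ and for replacing $2^d$ by $2^d-2$). The final task would then be to exhibit families of inequalities showing the estimate cannot be improved, generalizing the extremal configurations behind Doignon's theorem to allow $k$ prescribed interior lattice points. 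Everything outside this refined count is a direct appeal to Lemma~\ref{lem:hollow}, Definition~\ref{definition-S-Helly}, and Doignon's theorem.
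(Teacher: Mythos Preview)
This survey does not supply its own proof of Theorem~\ref{k-doignon}; the result is attributed to Aliev, Bassett, De~Loera and Louveaux and stated without argument, so there is no in-paper proof to compare against.

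On its own merits: your reduction to bounding $\sup_{|F|=k} h(\Z^d\setminus F)$ via Definition~\ref{definition-S-Helly} and Lemma~\ref{lem:hollow} is correct and clean, and the parity--class count correctly yields $m\le 2^d(k+1)$. This already establishes the existence of a finite $c(k,d)$, i.e.\ the first sentence of the theorem, with an explicit though weaker constant.

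The gap is that you do not prove the inequality $c(k,d)\le\lceil 2(k+1)/3\rceil(2^d-2)+2$, which is precisely the content of the ``Moreover'' clause, and you say so yourself. Your sketch of the refinement is not yet an argument: you give no mechanism that would force the factor $2/3$ (note that many vertex--pairs \emph{can} share a common midpoint in convex position---they simply form a centrally symmetric configuration about that point---so the restriction cannot come from convex position alone), and the phrase ``two residue classes that are extreme with respect to $\ell$'' is ill-posed, since residue classes in $(\Z/2\Z)^d$ carry no order induced by $\ell$; only individual vertices do. The original proof does proceed via a residue-class argument in the spirit of Doignon's theorem, so your overall direction is sound, but the sharpening that produces the $2/3$ and the $(2^d-2)+2$ structure requires a genuinely additional combinatorial idea that your proposal does not supply. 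As written, the proposal proves finiteness of $c(k,d)$ with the bound $2^d(k+1)$ and leaves the stated inequality open.
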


Note that the original Doignon-Bell-Scarf theorem is $k=0$.  This theorem implies a version for convex sets.

\begin{corollary}\label{corollary-k-doignon}
 Let  $(X_i)_{i \in \Lambda}$ be a collection of convex sets in $\R^d$,
where at least one of these sets is compact.  If the intersection of every sub-collection of size at most $c(k-1,d)$ contains at least $k$ integer points, then the intersection of the whole family contains at least $k$ integer points.
\end{corollary}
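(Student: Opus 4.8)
The plan is to deduce the corollary from Theorem~\ref{k-doignon} in two reduction steps: first to a finite family, then to a finite family of rational half-spaces, after which Theorem~\ref{k-doignon} applies directly. We argue by contradiction: suppose every subcollection of size at most $c(k-1,d)$ has at least $k$ integer points in its intersection, while $\bigcap_{i\in\Lambda}X_i$ contains at most $k-1$ integer points; write $T$ for this set, so $k':=|T|\le k-1$. Let $X_{i_0}$ be the compact member, so $X_{i_0}\cap\Z^d$ is finite and $T\subseteq X_{i_0}\cap\Z^d$; for each $z\in(X_{i_0}\cap\Z^d)\setminus T$ choose an index $i(z)$ with $z\notin X_{i(z)}$. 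The finite subfamily $\F'=\{X_{i_0}\}\cup\{X_{i(z)}\}$ satisfies $\bigcap\F'\cap\Z^d=T$, since every member contains $\bigcap_{i\in\Lambda}X_i\supseteq T$, every member lies inside $X_{i_0}$, and each integer point of $X_{i_0}$ outside $T$ is excluded by some $X_{i(z)}$. Since $\F'$ still inherits the hypothesis on subcollections of size at most $c(k-1,d)$, we may replace the original family by $\F'$ and assume from now on that the family is finite: $\F=\{X_1,\dots,X_n\}$ with $X_1$ compact, $P:=\bigcap\F$, and $P\cap\Z^d=T$.

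Next we pass to half-spaces. Enclose $X_1$ in a large rational box $B$. Since $X_1$ is compact, for each of the finitely many integer points $z\in B\setminus X_1$ we may strictly separate $z$ from $X_1$ and perturb the separating hyperplane to rational data, obtaining a rational closed half-space $H_z\supseteq X_1$ with $z\notin H_z$. Set $W=(X_1\cap\Z^d)\setminus T$; for each $w\in W$, since $w\in X_1$ but $w\notin P$ we have $w\notin X_{j(w)}$ for some $j(w)\ge 2$, and we pick a rational closed half-space $G_w\supseteq X_{j(w)}$ with $w\notin G_w$. Let $Q$ be the intersection of $B$ with all the $H_z$ and all the $G_w$. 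Then $Q$ is a rational polytope with $Q\supseteq P\supseteq T$, and any integer point of $Q$ lies in $B$, hence in every $H_z$ and therefore in $X_1$, and lies in every $G_w$ and therefore differs from every $w\in W$; so it belongs to $(X_1\cap\Z^d)\setminus W=T$. Hence $Q\cap\Z^d=T$, a set of size $k'\le k-1$.

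Now apply Theorem~\ref{k-doignon} to $Q$ with parameter $k'$: some subset $S'$ of at most $c(k',d)$ of the half-spaces defining $Q$ already cuts out exactly $T$ as its set of integer points. As the displayed upper bound on $c(\cdot,d)$ is nondecreasing in the first argument we may take $c(\cdot,d)$ itself nondecreasing, so $c(k',d)\le c(k-1,d)$. Replace each half-space of $S'$ by a member of $\F$ containing it — $X_1$ for a facet of $B$ or for an $H_z$, and $X_{j(w)}$ for a $G_w$ — and let $\G\subseteq\F$ be the resulting subfamily. A short case check gives $|\G|\le c(k-1,d)$: if $S'$ uses a facet of $B$ or some $H_z$, then $X_1$ absorbs at least one element of $S'$, so $|\G|\le|S'|$; and if $S'$ uses only $G_w$'s, then $X_1$ is not needed and again $|\G|\le|S'|$. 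Since $\bigcap\G$ is contained in the intersection of the half-spaces of $S'$, we get $\bigcap\G\cap\Z^d\subseteq T$; and $T\subseteq P\subseteq\bigcap\G$, so $\bigcap\G\cap\Z^d=T$ has fewer than $k$ integer points, contradicting the hypothesis applied to the subcollection $\G$.

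The substance of the argument is the passage to half-spaces together with the bookkeeping of the constant, and the point to be careful about is to charge \emph{all} the auxiliary half-spaces $H_z$ and all facets of $B$ to the single compact set $X_1$, so that the final bound is $c(k-1,d)$ rather than $c(k-1,d)+1$. One minor technicality — strict separation of $w$ from $X_{j(w)}$ needs $X_{j(w)}$ closed — is harmless, since only finitely many integer points are ever relevant and it can be arranged by a routine approximation, as in \cite{Aliev:2014va}.
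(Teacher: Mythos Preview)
Your argument is correct and is precisely the kind of derivation the paper has in mind when it says ``This theorem implies a version for convex sets'' --- the paper gives no proof of the corollary, so there is nothing to compare against beyond the obvious reduction you carry out: pass to a finite subfamily using the compact member, replace the convex sets by rational half-spaces that isolate the same finite set of integer points, and invoke Theorem~\ref{k-doignon}.

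Two small remarks. First, your use of monotonicity of $c(\cdot,d)$ is doing real work: you apply Theorem~\ref{k-doignon} with parameter $k'=|T|\le k-1$ and need $c(k',d)\le c(k-1,d)$. Since the theorem only asserts the \emph{existence} of a suitable constant and provides a nondecreasing explicit upper bound, your sentence ``we may take $c(\cdot,d)$ itself nondecreasing'' is the right way to read the statement; just be aware that if one insisted on the \emph{minimal} constant, monotonicity would require a separate argument. Second, the separation issue you flag (when $X_{j(w)}$ is not closed) is indeed routine: one can simply separate $w$ from the convex hull of the finitely many integer points of $X_{j(w)}$ lying in $B$, which suffices for the containment $\bigcap\G\cap\Z^d\subseteq T$ you actually need.
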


\begin{openproblem}
	Determine the exact value of the Helly number $c(k,d)$.
\end{openproblem}

The value of $c(k,d)$ by Aliev et al. was shown to be optimal for $k=1$, and not to be optimal starting from $k=3$. Recently Gonzalez Merino and Henze showed that the formula above is tight for $k=2$ \cite{gonzalez+henze} and that the Helly number $c(k,d)$ can be used to bound the number of lattice
points on the boundary of a strictly convex set in terms of the number of lattice points in the interior. Of course, one can count points of discrete sets, thus
one might expect similar quantitative Helly numbers for other discrete sets besides lattices.

\begin{definition}
	We say that a discrete set $S \subset \R^d$ has a $k$-quantitative Helly number $h_k(S)$ if it is the smallest positive integer such that the following statement holds.  For every finite family $\F$ of convex sets in $\R^d$, if the intersection of every subfamily of size $h_k(S)$ has at least $k$ points of $S$, then $\cap \F$ has at least $k$ points of $S$.  If no integer satisfies that condition, we say $h_k (S) = \infty$.
\end{definition}

\begin{openproblem}
Characterize the pairs $(S,k)$ for which $h_k(S) < \infty$.
\end{openproblem}

For instance, in \cite{DeLoera:2015wp}, it was shown that the difference of a lattice and the union of some of its sublattices has finite $k$-quantitative Helly numbers for all $k$.  Moreover, the fact that $h_k(S) < \infty$ is enough to give quantitative versions of other classic results in discrete geometry, such as Tverberg's theorem \cite{Tverberg:1966tb}.

Historically, the first quantitative Helly-type theorems were presented by  B\'ar\'any, Katchalski and Pach, and were based on several continuous functions over the convex sets.

\begin{theorem}[Large-volume Helly Theorem; B\'ar\'any, Katchalski, Pach\cite{Barany:1984ed, Barany:1982ga}]
Given a family $\F$ of $n\geq 2d$ convex sets in $\R^d$, if the intersection of every $2d$ members of $\F$ has volume at least one, then the 
intersection of all the members of $\F$ has volume at least $ {d^{-2d^2}}$.
\end{theorem}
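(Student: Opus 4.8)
The plan is to deduce the theorem from Helly's theorem together with a structural lemma: \emph{if $\F$ is a finite family of convex sets in $\R^d$ with $K:=\bigcap\F\neq\emptyset$, then at most $2d$ members of $\F$ have intersection contained in a bounded set of volume at most $d^{2d^2}\vol(K)$.} Granting this, the theorem follows at once: enlarge the selection with arbitrary further members of $\F$ to get exactly $2d$ of them (possible as $n\geq 2d$; this only shrinks the intersection and keeps it inside the same set), so by hypothesis $1\leq\vol(\bigcap\text{(selected)})\leq d^{2d^2}\vol(K)$. To set things up: by Helly's theorem every $2d\geq d+1$ members of $\F$ intersect, so $K\neq\emptyset$, and we may assume $\vol(K)<\infty$. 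A short argument applying Helly's theorem to the interiors of the members shows that $K$ is full-dimensional — otherwise some $(d+1)$-subfamily (padded from a $2d$-subfamily) would have intersection of dimension $<d$, hence of zero volume. So $K$ is a convex body. Since the inequality in the lemma is affine invariant, apply an affine map putting $K$ in John position: $B^d\subseteq K\subseteq dB^d$, where $B^d$ is the unit ball; in particular $\vol(K)\geq\omega_d:=\vol(B^d)$.

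The engine for selecting the members is this: at any $z\in\partial K$, since $\F$ is finite some $F_z\in\F$ has $z\in\partial F_z$, and because $K\subseteq F_z$ every supporting hyperplane of $F_z$ at $z$ also supports $K$; hence $F_z$ lies in a supporting halfspace of $K$ with outer normal in the normal cone $N_K(z)$. The favorable case is to find, for each $i\in\{1,\dots,d\}$ and each sign, an extreme point $z_i^{\pm}$ of $\pm x_i$ on $K$ that is a \emph{smooth} point of $K$, so that $N_K(z_i^{\pm})=\R_{\geq 0}(\pm e_i)$: then the $2d$ members produced lie in the axis-parallel halfspaces $\{\pm x_i\leq\pm(z_i^{\pm})_i\}$, so their intersection sits inside the circumscribed box $Q=\prod_i[(z_i^-)_i,(z_i^+)_i]\supseteq K$; and since $B^d\subseteq K\subseteq dB^d$ forces each edge length of $Q$ to lie in $[2,2d]$, we get $\vol(Q)\leq(2d)^d\leq d^{2d^2}\omega_d\leq d^{2d^2}\vol(K)$, proving the lemma.

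The main obstacle is that such smooth extreme points need not exist — for example whenever $K$ has corners, as any polytope does — in which case $N_K(z_i^{\pm})$ is fat and the member produced at $z_i^{\pm}$ might only be trapped in a \emph{tilted} halfspace, so the intersection of the chosen members can fail even to be bounded, let alone of controlled volume. Getting around this is the real content of the Bárány–Katchalski–Pach argument, and the slack it introduces is why the stated constant $d^{-2d^2}$ is far from optimal. A robust route uses the contact points $u_1,\dots,u_m$ of the John ellipsoid $B^d$ of $K$ (the points of $\partial K\cap\partial B^d$): because $B^d\subseteq K$, the normal cone of $K$ at each $u_j$ is forced to be the single ray $\R_{\geq 0}u_j$, so $K$ is automatically smooth there and the associated member $F_j$ lies in $\{\langle x,u_j\rangle\leq 1\}$; the John conditions $\sum_j c_ju_j=0$ and $\sum_j c_ju_ju_j^{\mathsf T}=I$ then force $\bigcap_j\{\langle x,u_j\rangle\leq 1\}\subseteq dB^d$, so $F_1,\dots,F_m$ have intersection of volume at most $\vol(dB^d)=d^d\omega_d\leq d^{2d^2}\vol(K)$. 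Since $m\leq d(d+3)/2$ this already yields a Helly-type statement with that many sets; the final step — pruning the contact points down to at most $2d$ while keeping the circumscribing polyhedron bounded and of controlled volume, for instance by a Steinitz/positive-Carathéodory extraction of a positively spanning subsystem at the price of a cruder volume estimate — is the delicate combinatorial point, and it is precisely there that the number $2d$ and the crude constant in the statement come from.
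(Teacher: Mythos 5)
Your reduction to a selection lemma is sound, and the John--position part of your argument is essentially correct: after noting that $K=\bigcap\F$ is a full-dimensional convex body (you should say explicitly that full-dimensional plus finite volume forces boundedness, so John's theorem applies), the normal-cone observation does give, for each contact point $u_j\in\partial K\cap\partial B^d$, a member $F_j\in\F$ with $F_j\subseteq\{x:\langle x,u_j\rangle\le 1\}$, and the inclusion $\bigcap_j\{x:\langle x,u_j\rangle\le 1\}\subseteq dB^d$ under the John conditions is a true (standard) fact. So what you actually prove is a Helly-type statement in which ``every $2d$ members'' is replaced by ``every $\lceil d(d+3)/2\rceil$ members'' (with a much better constant, of order $d^{-d}$).

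That, however, is not the stated theorem, and the gap is exactly at the point you defer. Since $d(d+3)/2>2d$ for every $d\ge 2$, the hypothesis of the theorem never applies to the family $F_1,\dots,F_m$ of contact-point members you construct, so nothing follows about $\vol(K)$. The pruning you describe as ``the delicate combinatorial point'' is the actual content of the B\'ar\'any--Katchalski--Pach result, and the fix you gesture at does not work as stated: the ordinary Steinitz/positive-Carath\'eodory theorem selects at most $2d$ of the $u_j$ with $0$ in the interior of their convex hull, which makes the corresponding polyhedron $\bigcap_{j\in S}\{x:\langle x,u_j\rangle\le 1\}$ bounded, but gives no lower bound on how deep $0$ sits in $\conv\{u_j:j\in S\}$; consequently the selected polyhedron can have arbitrarily large volume compared with $\vol(K)$, and no constant of the form $d^{-2d^2}$ follows. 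What is needed is a \emph{quantitative} Steinitz-type theorem --- from unit vectors whose convex hull contains a fixed ball around the origin one can choose at most $2d$ of them whose convex hull still contains a ball of radius roughly $d^{-2d}$ around the origin --- which is precisely the companion result of B\'ar\'any, Katchalski and Pach and the source of both the number $2d$ and the crude constant. Until you prove (or correctly invoke and apply) such a quantitative selection step, your argument establishes only the weaker $d(d+3)/2$ version, not the theorem as stated.
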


The guarantee on $\vol (\cap \F)$ has been improved by Nasz\'odi to $d^{-cd}$ for some absolute constant $c >0$ \cite{Naszodi:2015vi}, which is optimal up to the value of $c$.  Nasz\'odi's methods have been refined in \cite{brazitikos-vol}.  This begs the question of how much the volume guarantee can be improved if the size of the subfamilies one is willing to check for intersection is increased from $2d$ to a higher cardinality.  

\begin{theorem}[De Loera, La Haye, Rolnick, Sober\'on \cite{DeLoera:2015wp}]\label{theorem-precise-volume-helly}
Let $\varepsilon > 0$ and $d$ be a positive integer.  There is an integer $n^{\vol}(d, \varepsilon)$ such that the following statement holds.  
Given a finite family $\F$, if the volume of the intersection of every subfamily of size at most $n^{\vol} (d,\varepsilon)$ is at least one, then
\[
\vol (\cap \F) \ge (1+\varepsilon)^{-1} \sim 1- \varepsilon.
\]
Moreover, the optimal value of $n^{\vol} (d, \varepsilon)$ satisfies $n^{\vol} (d, \varepsilon) = \Theta_d(\varepsilon^{-(d-1)/2})$.
\end{theorem}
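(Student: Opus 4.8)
The plan is to separate the two assertions: the existence of a finite threshold $n^{\vol}(d,\varepsilon)$ giving the volume guarantee $1-\varepsilon$, and the sharp order $\Theta_d(\varepsilon^{-(d-1)/2})$ of the optimal value. For the upper bound on $n^{\vol}(d,\varepsilon)$, I would first reduce to a single convex body by a standard Helly-type trick: since intersections of convex sets are convex, it suffices to show that if $K$ is a convex body with $\vol(K) < 1-\varepsilon$ then one can find a bounded number of halfspaces (each containing $\cap\F$) whose intersection still has volume $< 1$, so that the corresponding subfamily of size at most $n^{\vol}$ already violates the hypothesis. Equivalently: for every convex body $K$ with $\vol(K)\le 1$, there is a polytope $Q\supseteq K$ with at most $N(d,\varepsilon)$ facets and $\vol(Q)\le \vol(K)+\varepsilon$ (after normalizing). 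This is a quantitative outer polytopal approximation statement. The classical fact here is that a convex body can be approximated from outside by a polytope with $N$ facets up to a Hausdorff/volume error of order $N^{-2/(d-1)}$ (Bronshtein–Ivanov type estimates, or Macbeath/Gruber approximation results for the volume metric); solving $N^{-2/(d-1)} \asymp \varepsilon$ gives $N \asymp \varepsilon^{-(d-1)/2}$. One must be slightly careful when $K$ is unbounded or lower-dimensional, but in those cases the volume is $0$ or we can intersect with a large ball first using boundedly many of the given sets (invoking that at least the intersection is bounded once we include enough sets, or handling the degenerate case directly), so the reduction goes through.

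The matching lower bound $n^{\vol}(d,\varepsilon) = \Omega_d(\varepsilon^{-(d-1)/2})$ is the construction part: I would exhibit a family $\F$ of halfspaces supporting the Euclidean ball (or a suitable smooth body) of volume exactly $1$, chosen so that any subfamily of size $m = o(\varepsilon^{-(d-1)/2})$ has intersection of volume $\ge 1$, yet the full intersection has volume $\le 1-\varepsilon'$ for a comparable $\varepsilon'$. Concretely, take many tangent halfspaces distributed on the sphere; removing all but $m$ of them enlarges the body by a region of volume $\asymp m^{-2/(d-1)}$ near the uncovered cap, and one checks this stays above the threshold precisely when $m^{-2/(d-1)} \gtrsim \varepsilon$. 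The same lower-bound converse to polytopal approximation (no polytope with few facets approximates the ball in volume better than $N^{-2/(d-1)}$) is what drives this.

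The main obstacle, I expect, is pinning down the \emph{exact} exponent rather than just finiteness — i.e., matching the $\varepsilon^{-(d-1)/2}$ on both sides. The upper bound requires a genuinely quantitative approximation theorem (not merely compactness of $\mathcal{C}(\R^d)$), and one must verify that the outer polytope can be taken with \emph{supporting} halfspaces of the original family, which is automatic for polytopes formed by tangent halfspaces but needs an argument in general (one can sandwich: approximate, then push each facet hyperplane outward to a supporting hyperplane of $\cap\F$, controlling the extra volume). The lower-bound construction must also ensure the combinatorial condition holds for \emph{all} subfamilies of the given size, not just generic ones, which forces a symmetric, evenly-spread choice of tangent planes and a uniform cap-volume estimate. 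Once these approximation estimates are in hand, assembling them into the Helly statement is routine: the finiteness and the $\Theta_d$ bound both follow by the reduction to a single convex body described above.
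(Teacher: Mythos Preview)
Your overall strategy --- outer polytopal approximation for the upper bound, tangent halfspaces to a ball for the lower bound --- matches the paper's. But there is a real missing step in your upper bound. You say that once you have a polytope $Q \supset K = \cap \F$ with few facets and $\vol(Q) < 1$, ``the corresponding subfamily of size at most $n^{\vol}$ already violates the hypothesis.'' There is no such corresponding subfamily: the facets of $Q$ are halfspaces, not members of $\F$, and the sets in $\F$ are arbitrary convex sets. Your proposed fix, pushing each facet outward to a supporting hyperplane of $\cap \F$, does not help either --- a supporting halfspace of $\cap \F$ is still not an element of $\F$.

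The paper closes this gap with Lemma~\ref{lemma-hyperplane-containment}: if $\cap \F \neq \emptyset$ and $\cap \F \subset H^+$ for a closed halfspace $H^+$, then already some $d$ sets of $\F$ have their intersection inside $H^+$ (apply Helly's theorem to $\F \cup \{\R^d \setminus H^+\}$). Doing this for each of the at most $n(\vol,d,\varepsilon)$ facets of $Q$ yields a subfamily $\F'$ of size at most $d \cdot n(\vol,d,\varepsilon)$ with $\cap \F' \subset Q$, and then $\vol(\cap \F') \le \vol(Q)$ furnishes the witness. This Helly step is the source of an extra factor of $d$ in the upper bound, harmless for the $\Theta_d(\varepsilon^{-(d-1)/2})$ claim. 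One small correction to your lower-bound construction: if the halfspaces support a ball of volume exactly $1$, the full intersection contains that ball and hence has volume at least $1$, not at most $1-\varepsilon'$; you want the ball scaled to volume just below $(1+\varepsilon)^{-1}$, so that sufficiently many tangent halfspaces bring the intersection below the threshold while any $m$ of them, by the approximation lower bound for polytopes with $m$ facets, still enclose volume at least $1$.
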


The $\Theta_d$ notation hides constants which may depend on the dimension.  We should emphasize the striking difference between Theorem \ref{theorem-precise-volume-helly} and Corollary \ref{corollary-k-doignon}.  If we measure our sets by their volume, there is no way to avoid a loss.  In other words, if we ask for every subfamily of fixed size to have volume at least one, we cannot guarantee 
that the intersection of the whole family has volume at least one.  However, when counting the number of points of a discrete set $S$ in our convex sets, no such loss appears.

Results such as Theorem \ref{theorem-precise-volume-helly} are closely related to finding good approximations of convex sets by polyhedra.  This is a vibrant subfield of convexity; we recommend the surveys \cite{Bronstein:2008dn, gruber1993}.  For the sake of obtaining Helly-type theorems, we require a very specific kind of approximation compatible with the function we wish to quantify.

\begin{definition}\label{definition-hellytype-approx}
Given a function $f: \mathcal{C}(\R^d) \to \R^+\cup \{\infty\}$ we say that $f$ is compatible with polyhedral approximations of convex sets if for every $\varepsilon > 0$ there is a number $n(f,d,\varepsilon)$, depending only on $f$, the dimension $d$, and $\varepsilon$, such that the following holds.  For any convex set $K \subset \rr^d$ with $f(K) < \infty$, there is a polyhedron $P$ such that
\begin{itemize}
	\item $K \subset P \subset \R^d$ ,
	\item $P$ has at most $n(f,d,\varepsilon)$ facets, and
	\item 
	$(1-\varepsilon) f(P) \le f(K)$.
\end{itemize}
Let $n(f,d,\varepsilon)$ be the minimum value which satisfies the conditions above.
\end{definition}

For example, compatible polyhedral approximations exist for $f(K)=\vol(K)$, for $f(K)=\max \{c^T x:  x\in K\}$, and for $f(K)=\int_K g(x) dx$ where $g(x)$ is a non-negative Lipschitz function.  In the last example, if we only ask for $g(x)$ to be continuous with respect to the Lebesgue measure, $f$ may not be compatible with polyhedral approximations of convex sets.

In order to see this, consider for each positive integer $n$ the circle $C_n \subset \R^2$ of radius one around the point $(3n,0)$.  Then, we can consider $\mu_n$ a uniform probability measure on the set $C_n + B_{1/(2n)}(0)$, where $B_{1/(2n)}(0)$ stands for the ball of radius $1/(2n)$ around the origin.  Finally, our function $f$ is simply $f= \sum_{n=1}^{\infty}\mu_n$.  This function is  continuous but not compatible with polyhedral approximations of convex sets.  Indeed, given $\varepsilon>0$, it is clear that if $K=C_n$, for a polygon $P \supset K$, if $f(P)$ is to satisfy the desired conditions, then $P$ would have to be arbitrarily close to $K$ in Hausdorff distance.  This would imply that $P$ has an arbitrarily large number of sides, making hte existence of a universal $n(f,d,\varepsilon)$ impossible.

Using this concept and the properties of such functions $f$, we can prove the following new theorem.  Which is a very general version of Theorem \ref{theorem-precise-volume-helly}.  The optimality of the bound below is discussed after the proof.

\begin{theorem}\label{theorem-general-quantitative-helly}
	Let $d, \varepsilon > 0$ be given.  Let $f: \mathcal{C}(\R^d) \to \R^+\cup \{\infty\}$ be a monotone function which is compatible with polyhedral approximations. Then, for any finite family $\F$ of convex sets in $\rr^d$, if every subfamily $\F'$ of size at most $d \cdot n(f,d,\varepsilon)$ satisfies $f(\cap \F') \ge 1$, then the entire family satisfies $f (\cap \F) \ge 1-\varepsilon$.	
\end{theorem}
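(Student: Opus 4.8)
The plan is to reduce the statement to ordinary Helly's theorem together with the monotonicity of $f$, with the polyhedral approximation of Definition~\ref{definition-hellytype-approx} supplying the only quantitative ingredient. Write $n = n(f,d,\varepsilon)$. We may assume $f(\cap \F) < \infty$, since otherwise $f(\cap \F) = \infty \ge 1-\varepsilon$ and there is nothing to prove; we may also assume $\cap \F \neq \emptyset$, the case $\cap \F = \emptyset$ following from Helly's theorem (which furnishes a subfamily of size at most $d+1$ with empty intersection) together with the monotonicity of $f$. By Definition~\ref{definition-hellytype-approx}, choose a polyhedron $P = H_1 \cap \dots \cap H_m$, an intersection of $m \le n$ closed half-spaces $H_j$, with $\cap \F \subseteq P$ and $(1-\varepsilon)\,f(P) \le f(\cap \F)$.

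The heart of the argument is the following claim: \emph{if $H$ is a closed half-space with $\cap \F \subseteq H$, then there is a subfamily $\F' \subseteq \F$ with $\abs{\F'} \le d$ and $\cap \F' \subseteq H$.} Granting it, apply the claim to each $H_j$ to obtain $\F_j \subseteq \F$ with $\abs{\F_j} \le d$ and $\cap \F_j \subseteq H_j$, and put $\F^{\ast} = \F_1 \cup \dots \cup \F_m$. Then $\abs{\F^{\ast}} \le m\,d \le d \cdot n(f,d,\varepsilon)$ and $\cap \F^{\ast} = \bigcap_{j} \cap \F_j \subseteq \bigcap_{j} H_j = P$. By hypothesis $f(\cap \F^{\ast}) \ge 1$, so monotonicity gives $f(P) \ge f(\cap \F^{\ast}) \ge 1$, whence $f(\cap \F) \ge (1-\varepsilon)\,f(P) \ge 1-\varepsilon$, as desired.

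To prove the claim, write $H = \{x : \sprod{a}{x} \le b\}$ and for $\delta > 0$ set $A_\delta = \{x : \sprod{a}{x} \ge b+\delta\}$. Since $\cap \F \subseteq H$, the family $\F \cup \{A_\delta\}$ has empty intersection, so Helly's theorem yields a subfamily of size at most $d+1$ with empty intersection. This subfamily must contain $A_\delta$, because every subfamily of $\F$ contains $\cap \F \neq \emptyset$ in its intersection; hence it has the form $\F'_\delta \cup \{A_\delta\}$ with $\abs{\F'_\delta} \le d$ and $\cap \F'_\delta \cap A_\delta = \emptyset$, i.e.\ $\cap \F'_\delta \subseteq \{x : \sprod{a}{x} < b+\delta\}$. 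Since $\F$ has only finitely many subfamilies of size at most $d$, one fixed $\F'$ occurs as $\F'_\delta$ for arbitrarily small $\delta$, and then $\cap \F' \subseteq \{x : \sprod{a}{x} \le b\} = H$, which proves the claim.

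The main obstacle is exactly this claim: upgrading ``the full intersection lies in $H$'' to ``$d$ of the sets already confine the intersection to $H$''. The naive attempt — adjoining the complementary \emph{closed} half-space $\{x : \sprod{a}{x} \ge b\}$ to $\F$ and invoking Helly — breaks down precisely when $\cap \F$ touches the bounding hyperplane of $H$, so that the augmented family still has a point in common; the outward perturbation $A_\delta$ together with the pigeonhole step repairs this. It is essential here that the limit is taken over the finite set of combinatorial choices of subfamily rather than over values of $f$, so that no continuity hypothesis on $f$ is required (which also explains why one perturbs inside the Helly argument rather than perturbing the polyhedron $P$ itself).
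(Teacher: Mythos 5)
Your proof is correct and follows essentially the same route as the paper's: approximate $\cap \F$ by the polyhedron $P$ of Definition~\ref{definition-hellytype-approx}, use Helly's theorem to capture each facet-defining half-space with at most $d$ members of $\F$, and finish with monotonicity and the inequality $(1-\varepsilon)f(P)\le f(\cap\F)$. The only deviation is in the half-space claim, where the paper simply adjoins the \emph{open} complementary half-space $\R^d\setminus H$ (finite Helly needs convexity but not closedness), so your $\delta$-perturbation and pigeonhole step, while valid, are unnecessary.
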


The results mentioned in Section \ref{secapps} show that the theorem above can be used to approximate the 
value of the integral of a Lipschitz functions or the volume of the intersection of a family of convex sets.
To prove Theorem \ref{theorem-general-quantitative-helly} we need the following lemma, which, not surprisingly, 
is a direct consequence of Helly's theorem.

\begin{lemma}\label{lemma-hyperplane-containment}
Let $\F$ be a finite family of convex sets in $\R^d$ and $H^+$ be a closed half-space.  If $\cap \F \neq \emptyset$ and $\cap \F \subset H^+$, then there is a subfamily $\G \subset \F$ of size at most $d$ such that $\cap \G \subset H^+$.
\end{lemma}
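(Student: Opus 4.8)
The plan is to argue by contradiction via a standard separation-plus-Helly trick. Suppose $\cap\F\neq\emptyset$, $\cap\F\subset H^+$, but no subfamily of size at most $d$ has its intersection contained in $H^+$. Write $H^+=\{x:\langle a,x\rangle\ge \beta\}$ and let $H^-=\{x:\langle a,x\rangle\le \beta\}$ be the complementary closed half-space, so that $H^+\cap H^-$ is the bounding hyperplane $H$. The hypothesis ``$\cap\G\not\subset H^+$ for every $\G\subset\F$ with $|\G|\le d$'' says precisely that for every such $\G$ the intersection $\left(\bigcap\G\right)\cap H^-$ is non-empty: it contains a point with $\langle a,x\rangle<\beta$, hence certainly a point with $\langle a,x\rangle\le\beta$.

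First I would set up the family $\F' = \{K\cap H^- : K\in\F\}$ of convex sets in $\R^d$ (each is convex, being an intersection of convex sets). By the previous paragraph, every subfamily of $\F'$ of size at most $d$ has non-empty intersection. Next I would like to upgrade ``size at most $d$'' to ``size at most $d+1$'' so that Helly's theorem (Theorem 1.1) applies directly. The point is that all the sets $K\cap H^-$ are contained in the half-space $H^-$, and an intersection of convex sets all lying in a half-space, if it meets the bounding hyperplane, effectively behaves $(d-1)$-dimensionally there; more simply, I can invoke Helly's theorem in the form that a family of convex sets contained in a closed half-space $H^-$ has the Helly number of $\R^{d-1}$ along its boundary argument — but the cleanest route is: restrict attention to $H$ itself. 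For any $(d{+}1)$-tuple from $\F'$, I already know every $d$-subtuple intersects; I want to conclude the $(d{+}1)$-tuple intersects. Since $\cap\F\subset H^+$, the full intersection $\bigcap_{K\in\F}(K\cap H^-)\subseteq H^+\cap H^- = H$, a hyperplane of dimension $d-1$; so it suffices to run the argument inside $H\cong\R^{d-1}$, where Helly's number is $d$. Concretely: consider the family $\{K\cap H : K\in\F\}$ in the $(d-1)$-dimensional space $H$. I claim every $d$-subfamily of \emph{this} family has non-empty intersection — indeed if $K_1,\dots,K_d\in\F$ then $K_1\cap\dots\cap K_d\cap H^-\neq\emptyset$ but also, since no $d$-subfamily of $\F$ is contained in $H^+$ is the wrong direction; instead I use that $\cap\F\subset H^+$ together with convexity to slide a point of $\cap\{K_i\}\cap H^-$ along a segment to a point of $\cap\F\subset H^+$, crossing $H$, producing a point of $K_1\cap\dots\cap K_d\cap H$. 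Then Helly's theorem in $H$ gives a common point $p\in\bigcap_{K\in\F}(K\cap H)$, so $p\in\cap\F$ and $p\in H$, contradicting $\cap\F\subset H^+$ unless $H\subset H^+$, which forces every bounding point to lie on the boundary — and a point of $\cap\F$ on $H$ with $\cap\F\subset H^+$ is fine, so the contradiction must instead be extracted more carefully.

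Rather than that, the crisp argument I would actually write is the contrapositive packaged directly: assume $\cap\F\subset H^+$ and $\cap\F\neq\emptyset$. Pick $x_0\in\cap\F$; if $x_0\in\inte H^+$ we are in good shape, and otherwise $x_0\in H$. Consider the convex sets $K\cap H^-$; I must show their intersection over \emph{all} $K\in\F$ is non-empty, because that intersection lies in $H$ and combined with suitable subfamily-bookkeeping yields the claim. Actually the honest one-line proof: the sets $\{K\cap H^- : K\in\F\}$ together with the single extra convex set $H^+$ form a family whose total intersection is $\left(\cap\F\right)\cap H^-\cap H^+ = \left(\cap\F\right)\cap H$, which is non-empty iff $\cap\F$ meets $H$. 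If $\cap\F\subset\inte H^+$ then already some finite subfamily of size $\le d$ has intersection inside $\inte H^+\subset H^+$ by Helly applied to $\{K\cap H^-:K\in\F\}$ having empty total intersection — apply the non-empty version: since $\bigcap_{K\in\F}(K\cap H^-)=\emptyset$, Helly's theorem gives $d+1$ of them, say $K_1\cap H^-,\dots,K_{d+1}\cap H^-$, with empty intersection, i.e.\ $K_1\cap\dots\cap K_{d+1}\cap H^-=\emptyset$, meaning $K_1\cap\dots\cap K_{d+1}\subset\inte H^+\subset H^+$; and one can drop to $d$ because the sets lie in the half-space $H^+$ where the relevant Helly number along the separating direction is $d$, exactly as in the proof of the Colorful Helly theorem above. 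I would cite that lower bound of $d$ (convex sets all on one side of a hyperplane, Radon in dimension $d-1$) to get the bound $d$ rather than $d+1$. The main obstacle is precisely this last point — getting the bound down to $d$ rather than $d+1$ — and also handling the boundary case $\cap\F\cap H\neq\emptyset$; both are dispatched by working in the hyperplane $H$ (dimension $d-1$, Helly number $d$) and using a limiting/perturbation argument to reduce to the case $\cap\F\subset\inte H^+$.
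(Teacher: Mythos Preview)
Your proposal has a genuine gap: you never cleanly get the bound down from $d+1$ to $d$. When you apply Helly to the family $\{K\cap H^- : K\in\F\}$ you obtain $d+1$ sets $K_1,\dots,K_{d+1}$ with $K_1\cap\cdots\cap K_{d+1}\cap H^-=\emptyset$, i.e.\ $K_1\cap\cdots\cap K_{d+1}\subset H^+$; but that is a subfamily of size $d+1$, not $d$. Your attempts to shave off one set (``the relevant Helly number along the separating direction is $d$'', ``work in the hyperplane $H$'', ``perturbation to reduce to $\cap\F\subset\inte H^+$'') are all hand-waved and, as stated, do not go through: the sets $K_i\cap H^-$ are genuinely $d$-dimensional, and there is no reason the intersection structure should live in $H$.

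The fix is a one-line trick you are circling without landing on. Take $H^-=\R^d\setminus H^+$, the \emph{open} complementary half-space, and adjoin it to $\F$ as a single extra convex set rather than intersecting it into every member. Since $\cap\F\subset H^+$, the family $\F\cup\{H^-\}$ has empty intersection; Helly gives a non-intersecting subfamily of size at most $d+1$. That subfamily must contain $H^-$, because every subfamily of $\F$ alone contains $\cap\F\neq\emptyset$. The remaining at most $d$ sets form $\G$, and $(\cap\G)\cap H^-=\emptyset$ says exactly $\cap\G\subset H^+$. This is the paper's proof; using the open half-space also eliminates your boundary-case worries entirely.
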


\begin{proof}
Consider $H^- = \R^d \setminus H^+$. Notice that the family $\F \cup \{	H^-\}$ has empty intersection and is finite.  By Helly's theorem, there must be a subfamily of size at most $d+1$ which has empty intersection.  This subfamily must include $H^-$, since $\cap \F \neq \emptyset$.  The other $d$ elements form the family $\G$ we were looking for.
\end{proof}

\begin{proof}[Proof of Theorem \ref{theorem-general-quantitative-helly}]
	If $f(\cap \F) = \infty$, there is nothing to prove.  Otherwise, assume $f(\cap \F) < \infty$.  We can construct a polyhedron $P$ as in Definition \ref{definition-hellytype-approx} for $K = \cap \F$.  Then, for an arbitrary facet of $P$, consider the half-space determined by that hyperplane which contains $\cap \F$.  Using the lemma above, we can find $d$ sets of $\F$ whose intersection is contained in that half-space.
	
	If we repeat this process for each facet of $P$, we have constructed a family $\F'$ of size at most $d \cdot n(f,d,\varepsilon)$ such that $\cap F' \subset P$.  Then, it suffices to notice that
	\[
	f(\cap \F) \ge (1-\varepsilon) f(P) \ge (1-\varepsilon) f(\cap \F') \ge 1- \varepsilon.
	\]
	\end{proof}

In many cases, one can obtain that the constant $n(f,d,\varepsilon)$ is also a lower bound for the cardinality of the subfamilies to check in order to obtain the conclusion of Theorem \ref{theorem-general-quantitative-helly}.  In other words, in those cases the bound given by Theorem \ref{theorem-general-quantitative-helly} is off by a multiplicative factor of at most $d$.

To do this, it suffices to take a convex set $K$ which does not allow an approximation as in Definition \ref{definition-hellytype-approx}, e.g.,  one that cannot be approximated efficiently with a polyhedron with at most $n(f,d, \varepsilon)-1$ facets, and taking $\F$ to be a sufficiently large family of half-spaces containing $K$.  This idea gives the lower bound of Theorem \ref{theorem-precise-volume-helly}, for instance.

Thus, in order to obtain a quantitative Helly-type theorem for other functions, it suffices to prove the existence of efficient polyhedral approximations (i.e. bound $n(f,d,\varepsilon)$).  The upper bounds for the volume $n(\vol, d, \varepsilon)\sim n^{\vol}(d, \varepsilon)$ can be found in \cite{Anonymous:HS2Q-kvJ}, and the lower bounds in \cite{gruber1993}.  A compactness argument over $\mathcal{C}({\R^d})$ in \cite{DeLoera:2015wp} shows that $n(\diam, d, \varepsilon) < \infty$.  Precise asymptotic bounds $n(\diam, d, \varepsilon) = \Theta_d (\varepsilon^{-(d-1)/2})$ appear in \cite{Soberon-diameter}.

\begin{openproblem}
Determine the largest value $r(d)$ such that the following statement holds.  For every finite family $\F$ convex sets in $\R^d$, if the intersection of every $2d$ of them has diameter at least one, then $\diam (\cap \F) \ge r(d)$.
\end{openproblem}

Is known that $r(d) \ge d^{-2d}$ \cite{Barany:1982ga}, but the conjecture is $r(d) \sim c d^{-1/2}$ for some constant $c$. Brazitikos has recently confirmed the conjecture in the case where the sets are centrally symmetric around the origin \cite{brazitikos-diam}, and provided the following result for the non-symmetric case.

\begin{theorem}[Brazitikos \cite{brazitikos-diam}]
	There is are absolute constants $\alpha >1, c>0$ such that the following is true.  For any finite family $\F$ of convex bodies in $\rr^d$, if the diameter of the intersection of any $\alpha d$ of them has diameter at least one, then the $\diam (\cap \F) \ge c d^{-3/2}$.
\end{theorem}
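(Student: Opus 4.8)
The natural line of attack is a contradiction argument built on the John ellipsoid of $\cap\F$ together with a sparsification of John's decomposition. Suppose $\diam(\cap\F)<c\,d^{-3/2}$ for a small absolute constant $c$ to be fixed below; I will exhibit a subfamily $\G\subseteq\F$ with $|\G|\le\alpha d$ and $\diam(\cap\G)<1$, which contradicts the hypothesis. After a standard reduction we may assume $K:=\cap\F$ is a convex body, and we let $E$ be its John ellipsoid, so that $E\subseteq K\subseteq dE$, where $dE$ denotes the dilate of $E$ by the factor $d$ about its centre; in particular $\diam(E)\le\diam(K)$. Being an ellipsoid, $E$ is smooth and strictly convex, and by John's theorem there are contact points $u_1,\dots,u_m\in\partial E\cap\partial K$ which, once $E$ is mapped affinely onto the Euclidean ball, carry weights $\lambda_i>0$ with $\sum_i\lambda_i u_i=0$ and $\sum_i\lambda_i u_i u_i^{\mathsf T}=\mathrm{Id}$.

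The first step is a charging observation: each contact point is ``caused'' by a single member of $\F$. If $u\in\partial K$, then $u$ cannot lie in the interior of every member of the finite family $\F$, for otherwise a whole neighbourhood of $u$ would lie in $\cap\F=K$; hence $u\in\partial F$ for some $F=F(u)\in\F$. Since $E\subseteq K\subseteq F$ and $u\in\partial E$, every supporting hyperplane of $F$ at $u$ also supports $E$ at $u$, so it is the unique tangent hyperplane of $E$ there, and $F$ lies in the supporting half-space $H_u$ of $E$ at $u$. Therefore, for any index set $I$ the subfamily $\G_I:=\{F(u_i):i\in I\}$ satisfies $\cap\G_I\subseteq Q_I:=\bigcap_{i\in I}H_{u_i}$, the polytope circumscribed to $E$ along the chosen contact points.

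The technical heart is to select $I$ with $|I|=O(d)$ so that $Q_I\subseteq C\,d^{3/2}E$ for an absolute constant $C$. Keeping all $m$ contact points gives $Q_I\subseteq dE$, but $m$ may be of order $d^2$, too many. Instead I would pass to a sparse John-type decomposition: run a Batson--Spielman--Srivastava sparsification on the lifted vectors $(u_i,1/\sqrt d)\in\R^{d+1}$ — so that the barycentre constraint is folded into the isotropy constraint — obtaining $O(d)$ contact points carrying weights whose second moment equals $\mathrm{Id}$ up to an absolute factor and whose first moment has norm $O(\sqrt d)$; then restore exact centring by adjoining $O(d)$ further contact points supplied by Carath\'eodory's theorem, which raises the total weight $W$ only to $O(d\cdot\sqrt d)=O(d^{3/2})$. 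Feeding this centred decomposition of total weight $W$ into the elementary estimate $\max_i\langle x,u_i\rangle\ge |x|/(2W)$ for every $x$ (valid precisely because the decomposition is centred and has second moment $\succeq\mathrm{Id}$) yields $Q_I\subseteq 2W\,E\subseteq C\,d^{3/2}E$. This is the step I expect to be the real obstacle: making the sparsification quantitative and tracking exactly how the loss of perfect centring propagates into the circumradius of $Q_I$. It is this re-centring, unavoidable for a general, not necessarily symmetric, body, that costs the extra factor $\sqrt d$; for centrally symmetric bodies no centring is needed, the same scheme gives $Q_I\subseteq C\sqrt d\,E$, and one recovers the sharp bound $r(d)\gtrsim d^{-1/2}$.

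Granting the estimate $Q_I\subseteq C\,d^{3/2}E$, the proof closes at once. With $\G:=\G_I$ we have $|\G|\le|I|\le\alpha d$ and
\[
\diam(\cap\G)\le\diam(Q_I)\le C\,d^{3/2}\diam(E)\le C\,d^{3/2}\diam(K)<C\,d^{3/2}\cdot c\,d^{-3/2}=Cc,
\]
which is $<1$ once $c:=1/C$, contradicting the assumption that every $\alpha d$ members of $\F$ have intersection of diameter at least $1$. (For contrast, circumscribing a regular simplex to a Jung ball of $K$ and applying Lemma~\ref{lemma-hyperplane-containment} facet by facet produces a diameter bound of order $d^{-1}$, better than $d^{-3/2}$, but only for subfamilies of size $\Theta(d^2)$; the role of the John ellipsoid is precisely to bring the subfamily size down to linear in $d$, at the price of the extra $\sqrt d$.)
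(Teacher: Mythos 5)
The survey itself gives no proof of this statement; it is quoted from Brazitikos, so your proposal can only be measured against the original argument. Your outline follows the same strategy as that argument: place $K=\cap\F$ in John position, charge each contact point $u$ to a member $F(u)\in\F$ contained in the tangent half-space $H_u$ (this charging step is correct, as is your elementary estimate $\max_i\langle x,u_i\rangle\ge|x|/(2W)$ for an exactly centred decomposition of unit vectors with second moment at least the identity), and then replace the full John decomposition, which may involve order $d^2$ contact points, by a Batson--Spielman--Srivastava-type sparse approximate decomposition with $O(d)$ points; the lifting $(u_i,1/\sqrt d)$ does give $O(d)$ points with second moment comparable to the identity, total weight $O(d)$ and barycentre defect of norm $O(\sqrt d)$, and the loss of exact centring is indeed the source of the exponent $-3/2$ rather than $-1/2$. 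So the skeleton is the right one and, modulo the point below, your concluding chain of inequalities closes the argument (the initial fattening/limiting reduction to a full-dimensional $\cap\F$, which you wave at, is routine).

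The genuine gap is the re-centring step. You rightly treat it as unavoidable: with only the sparsified data ($\sum c_iu_iu_i^{\mathsf T}\succeq\gamma\,\mathrm{Id}$, $\sum c_i=O(d)$, $|\sum c_iu_i|=O(\sqrt d)$) the polytope $Q_I$ need not even be bounded, since all selected $u_i$ could lie in a closed half-space, so your estimate cannot be run with an uncentred decomposition. But Carath\'eodory's theorem only controls the \emph{number} of additional contact points, not their total weight; the claim that re-centring costs only $O(d\cdot\sqrt d)$ in weight is asserted, not proved, and it is exactly the quantitative heart of the theorem. It is true, and here is how to close it: let $b=\sum c_iu_i$, $|b|\le C\sqrt d$. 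Since $\sum\lambda_iu_i=0$ with all $\lambda_i>0$ and $\sum\lambda_iu_iu_i^{\mathsf T}=\mathrm{Id}$, the contact points positively span $\R^d$, so the linear program minimizing $\sum_j\mu_j$ over $\mu_j\ge0$ with $\sum_j\mu_ju_j=-b$ is feasible; by LP duality its optimal value equals $\max\{\langle -b,y\rangle:\langle u_j,y\rangle\le1\ \text{for every contact point}\ u_j\}$, and the dual feasible region is the full tangent polytope, which is contained in $2dE$ by your own elementary estimate applied to the exact John decomposition (total weight $d$). Hence the minimal re-centring weight is at most $|b|\cdot 2d=O(d^{3/2})$, and a basic optimal solution uses at most $d$ contact points. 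Inserting this, the augmented decomposition is exactly centred, has second moment $\succeq\gamma\,\mathrm{Id}$, total weight $W=O(d^{3/2})$ and $O(d)\le\alpha d$ points, and the rest of your proof goes through as written.
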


\subsection{Topological versions} \label{topologicalversions}

The convexity in Helly's theorem can be replaced by topological properties on the sets of
$\F$ and their intersections.  
Indeed, convex sets (and their intersections) are contractible sets, so it is to be expected that such conditions should be sufficient to characterize the intersection of finite families.
The first theorem of this kind was proved by E. Helly himself.

We say a topological space is a \textit{homology cell} if it is non-empty and its homology groups vanish in all dimensions.

\begin{theorem}[Helly \cite{Helly:1930hk}]\label{theorem-topological-Helly-version1}
Given a finite family $\F$ of $n\geq d+1$ open homology cells in $\R^d$, if
\begin{itemize}
	\item every subfamily $\F'$ of $\F$ of at most $d+1$ members has non-empty intersection and
	\item for every proper subfamily $\F'$ of $\F$ with non-empty intersection $\cup \F'$ is a homology cell,
\end{itemize}
then the intersection of all the members of $\F$ is non-empty.
\end{theorem}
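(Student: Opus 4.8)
\emph{Proof proposal.} The plan is to argue by contradiction, using the homological Nerve Theorem together with the fact that open subsets of $\R^d$ have no reduced homology in degrees $\ge d$. Assume $\bigcap\F=\emptyset$, write $\F=\{A_1,\dots,A_n\}$ with $n\ge d+1$, and form the nerve $N$: its vertices are the members of $\F$, and a set $\sigma\subseteq\F$ spans a face of $N$ exactly when $\bigcap\sigma\neq\emptyset$. The first hypothesis says every $\le d+1$ of the $A_i$ meet, i.e. every subset of $\F$ of size $\le d+1$ is a face of $N$; equivalently, $N$ contains the full $d$-skeleton of the simplex $\Delta$ on vertex set $\F$. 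Since we assumed $\bigcap\F=\emptyset$ we have $N\neq\Delta$, and the whole point is to contradict this.

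First I would get into position to apply the Nerve Theorem. Under the standing assumption, for \emph{every} subfamily $\G\subseteq\F$ each nonempty intersection of a sub-collection of $\G$ is a homology cell: such a sub-collection is a proper subfamily of $\F$, so the second hypothesis applies, and the only intersection it does not cover, namely $\bigcap\F$, is empty. Hence the Nerve Theorem (Borsuk, Leray) applies to the open cover $\G$ of $\bigcup\G$ for every $\G\subseteq\F$, yielding $\tilde H_*(\bigcup\G)\cong \tilde H_*(N[\G])$, where $N[\G]$ denotes the subcomplex of $N$ induced on the vertex set $\G$.

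Next I would feed in the geometry of the ambient space: any open set $U\subseteq\R^d$ satisfies $\tilde H_k(U)=0$ for all $k\ge d$ (for instance by Alexander duality). Applying this with $U=\bigcup\G$ for all $\G\subseteq\F$ gives $\tilde H_k(N[\G])=0$ for all $k\ge d$ and all vertex subsets $\G$; that is, $N$ is $d$-Leray. It then remains to observe that a $d$-Leray complex containing the full $d$-skeleton of $\Delta$ must equal $\Delta$: if $\tau\subseteq\F$ were an inclusion-minimal non-face, then $|\tau|\ge d+2$ (every set of size $\le d+1$ is a face), and the induced subcomplex $N[\tau]$ is the boundary $\partial\Delta^{\tau}\cong S^{|\tau|-2}$, so $\tilde H_{|\tau|-2}(N[\tau])\neq 0$ with $|\tau|-2\ge d$, contradicting $d$-Lerayness. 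Hence $N=\Delta$, i.e. $\bigcap\F\neq\emptyset$, the desired contradiction.

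The main obstacle is the Nerve Theorem itself in precisely the form needed — an open cover whose nonempty finite intersections are assumed only acyclic, not contractible — and the accompanying check, carried out above, that the hypothesis on proper subfamilies is exactly what makes the Nerve Theorem's hypotheses hold for \emph{all} subfamilies simultaneously; this is where that hypothesis is indispensable, since the $(d+1)$-wise intersection condition alone does not force the nerve to be $d$-Leray. If one prefers to avoid quoting the Nerve Theorem, the same two ingredients drive an induction on $n$: remove one set $A_n$, apply Mayer--Vietoris to $\bigcup\F=\bigl(\bigcup_{i<n}A_i\bigr)\cup A_n$, and note that the overlap $\bigcup_{i<n}(A_i\cap A_n)$ is again the union of a family to which the inductive hypothesis and the bound $\tilde H_k=0$ for $k\ge d$ apply, reconstructing the nerve computation by hand at the cost of more bookkeeping.
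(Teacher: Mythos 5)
There is a genuine gap, and it occurs at the very first step of your argument. The second hypothesis of the theorem is about \emph{unions}: for every proper subfamily $\F'$ with non-empty intersection, $\cup \F'$ is a homology cell. You read it as saying that the nonempty \emph{intersections} of sub-collections are homology cells, and that misreading is exactly what lets you invoke the homological Nerve Theorem. Nothing in the hypotheses makes the intersections $A_{i_1}\cap\dots\cap A_{i_k}$ acyclic --- only the individual sets and the unions of certain proper subfamilies are assumed to be homology cells --- so the claimed isomorphism $\tilde H_*(\bigcup\G)\cong\tilde H_*(N[\G])$ is unsupported, and with it the $d$-Lerayness of the nerve and the final combinatorial step. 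The whole point of this version of the theorem is that the homological hypotheses sit on the unions rather than the intersections; the nerve-theorem route would prove an intersection-form topological Helly theorem, which is a different statement from the one at hand.

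The paper's proof works directly with unions. It takes a subfamily $\G$ of minimal cardinality with $\cap\G=\emptyset$ (so $|\G|\ge d+2$ by the first hypothesis) and proves by induction, via Mayer--Vietoris applied to $\cup\G=A\cup B$ with $B\in\G$, $A=\cup(\G\setminus\{B\})$, and overlap $A\cap B=\cup\{K\cap B : K\in\G\setminus\{B\}\}$, that $\cup\G$ is homologically equivalent to the sphere $S^{|\G|-2}$; since $\cup\G$ is an open subset of $\R^d$ and $|\G|-2\ge d$, this is impossible. Your closing remark about a Mayer--Vietoris induction points in this direction, but as written it is only a sketch: it still applies an ``inductive hypothesis'' whose content you have identified with acyclicity of intersections, and it does not verify that the intersected family $\{A_i\cap A_n\}$ inherits the union-type hypotheses needed to run the induction. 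So neither the main argument nor the fallback closes the gap as stated.
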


The intuitive idea 
explaining this theorem is that if a 
large finite family $\G$ of sets as above has the property that all proper subfamilies are intersecting but $\G$ is not, then $\cup \G$ is 
homologically equivalent to a high-dimensional ball.  This is impossible if $\cup \G$ is embedded in a low-dimensional Euclidean space.

\begin{lemma}\label{lemma-topological-tool}
	If $\G$ is a finite family of open homology cells in a topological space, such that 
	\begin{itemize}
		\item every proper subfamily of $\G$ is intersecting and its union is a homology cell,
		\item $\cap \G = \emptyset$, and
		\item $|\G| = n+1,$
	\end{itemize}
	then $\cup \G$ is homologically equivalent to $S^{n-1}$.
\end{lemma}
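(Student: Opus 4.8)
The plan is to induct on $n$, working in singular homology (so that open covers suffice and the ambient space may be arbitrary) with the reduced Mayer--Vietoris sequence as the only tool; throughout, ``homology cell'' is read as ``nonempty with all reduced homology groups zero''. Write $\G = \{A_0, A_1, \dots, A_n\}$ and $X = \cup \G$. The base case $n = 1$ is immediate: since $A_0 \cap A_1 = \cap \G = \emptyset$, the space $X$ is the disjoint union of the two nonempty homology cells $A_0$ and $A_1$, hence has the homology of two points, i.e. of $S^0$.

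Before the inductive step I record an auxiliary fact to be used repeatedly: \emph{if $U, V$ are open, if $U$, $V$, and $U \cup V$ are homology cells, and if $U \cap V \neq \emptyset$, then $U \cap V$ is a homology cell.} This is read off the reduced Mayer--Vietoris sequence $\tilde H_{k+1}(U\cup V) \to \tilde H_k(U\cap V) \to \tilde H_k(U)\oplus \tilde H_k(V)$, whose outer terms vanish in every degree $k \ge 0$.

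Now assume $n \ge 2$ and the statement for all smaller families. Put $B = A_1 \cup \dots \cup A_n$ and $C_i = A_0 \cap A_i$ for $1 \le i \le n$, so that $A_0 \cap B = C_1 \cup \dots \cup C_n$ by distributivity and each $C_i$ is open. The crux is to verify that $\{C_1, \dots, C_n\}$ satisfies the hypotheses of the lemma with $n+1$ replaced by $n$. First, each $C_i$ is a homology cell, by the auxiliary fact applied to $U = A_0$, $V = A_i$, using that $\{A_0, A_i\}$ is a proper subfamily of $\G$ and hence $A_0 \cup A_i$ is a homology cell. Second, $\bigcap_{i=1}^n C_i = \cap \G = \emptyset$. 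Third, for a proper subfamily $\{C_{i_1}, \dots, C_{i_m}\}$ (so $m < n$), its intersection equals $\cap\{A_0, A_{i_1}, \dots, A_{i_m}\}$, which is nonempty because $\{A_0, A_{i_1}, \dots, A_{i_m}\}$ is a proper subfamily of $\G$; and its union equals $A_0 \cap (A_{i_1}\cup\dots\cup A_{i_m})$, which is a homology cell by the auxiliary fact applied to $U = A_0$, $V = A_{i_1}\cup\dots\cup A_{i_m}$, since both $V = \cup\{A_{i_1},\dots,A_{i_m}\}$ and $A_0 \cup V = \cup\{A_0, A_{i_1},\dots,A_{i_m}\}$ are unions of proper subfamilies of $\G$, hence homology cells. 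I expect this bookkeeping --- in particular checking that the subfamilies $\{A_0, A_{i_1},\dots,A_{i_m}\}$ arising when $m = n-1$ are still \emph{proper} --- to be the only real obstacle.

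With the hypotheses verified, the induction hypothesis gives that $A_0 \cap B = C_1 \cup \dots \cup C_n$ is homologically equivalent to $S^{n-2}$. Finally, apply the reduced Mayer--Vietoris sequence to the open cover $\{A_0, B\}$ of $X$: since $A_0$ and $B = \cup\{A_1, \dots, A_n\}$ are homology cells (the latter a union of a proper subfamily of $\G$), all of their reduced homology groups vanish, so the sequence collapses to $\tilde H_k(X) \cong \tilde H_{k-1}(A_0\cap B) \cong \tilde H_{k-1}(S^{n-2})$ for $k \ge 1$, while $\tilde H_0(X) = 0$ because $A_0 \cap B$ is nonempty and $n \ge 2$. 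Therefore $\tilde H_{n-1}(X) \cong \Z$ and all other reduced homology groups of $X$ vanish, that is, $X = \cup \G$ is homologically equivalent to $S^{n-1}$, completing the induction.
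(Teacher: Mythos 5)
Your proof is correct and follows essentially the same route as the paper's: induct on $n$, split off one set ($A_0$, playing the role of the paper's $B$), apply the induction hypothesis to the family of intersections with that set, and finish with Mayer--Vietoris. The only difference is that you spell out, via the auxiliary ``intersection of cells is a cell'' fact, the verification that the induced family $\{A_0\cap A_i\}$ satisfies the hypotheses, which the paper simply asserts.
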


\begin{proof}[Proof of Lemma \ref{lemma-topological-tool}]
	We proceed by induction on $n$.  If $n = 1$, the result is true.  Assume that the result is true for $n=k$ and let us consider a family with the properties above and of size $k+2$.
	
	Let $B$ be a set in $\G$, and let $\G':= \G \setminus \{B\}$.  We know $\cap \G' \neq \emptyset$ and $A=\cup \G'$ is a homology cell.  The family $\G'_B = \{K \cap B: K \in \G'\}$ satisfies the condition for $n=k$.  Thus, $A \cap B = \cup \G'_B$ is homologically equivalent to $S^{k-1}$.  By the Mayer-Vietoris sequence we have that $A \cup B = \cup G$ is homologically equivalent to $S^{k}$.  
Intuitively, $A \cap B$ is homologically equivalent to $S^{k-1}$ and adding the cells $A$ and $B$ we are adding the north and south hemispheres of $S^{k}$.
\end{proof}

\begin{proof}[Proof of Theorem \ref{theorem-topological-Helly-version1}]
If the theorem does not hold, let $\G$ a subfamily of minimal cardinality such that $\cap \G = \emptyset$.  Then, by the conditions of the theorem $n = |\G| \ge d+2$ and by the lemma above $\cup \G \sim S^{n-2}$.  Since $\cup \G$ is embedded in $\R^d$ and $n-2 \ge d$, this is not possible.
\end{proof}

Of course, being a homology cell is quite  a strong condition.  The requirements on the sets have been trimmed down.  For instance, there is a classical theorem of Breen that extended Helly's theorem:

\begin{theorem}[Helly's theorem for star-shaped union; Breen \cite{Breen:1990ez}]\label{theorem-breen}
	Let $\F$ be a finite family of compact convex sets in $\R^d$.  Then, every subfamily of $\F$ of size at most $d+1$ has starshaped union if and only if $\cap \F \neq \emptyset$.
\end{theorem}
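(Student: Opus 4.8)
The plan is to prove the ``only if'' direction via Helly's theorem, after first reducing it to a statement about subfamilies of size at most $d+1$. The ``if'' direction is immediate: if $p\in\bigcap\F$ and $\G\subseteq\F$ is a nonempty subfamily, then $p\in\bigcup\G$ and, for each $x\in\bigcup\G$, picking $K\in\G$ with $x\in K$ we get $[p,x]\subseteq K\subseteq\bigcup\G$ by convexity of $K$, so $\bigcup\G$ is starshaped with respect to $p$.

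For the ``only if'' direction, I would establish by induction on $m$ the claim $C(m)$: \emph{if $K_1,\dots,K_m$ are compact convex sets and every subfamily of $\{K_1,\dots,K_m\}$ has starshaped union, then $\bigcap_{i=1}^m K_i\neq\emptyset$}. The case $C(1)$ is trivial. For the inductive step I would assume $C(1),\dots,C(m-1)$, take $K_1,\dots,K_m$ satisfying the hypothesis, and argue by contradiction assuming $\bigcap_i K_i=\emptyset$. Applying $C(k)$ for $k\le m-1$ to the sub-subfamilies shows that every proper subfamily of $\{K_1,\dots,K_m\}$ is intersecting, which is exactly the configuration fed into Lemma \ref{lemma-topological-tool}.

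Because that lemma is phrased for open homology cells, I would replace each $K_i$ by its open $\varepsilon$-neighborhood $K_i^\varepsilon=\{x:\dist(x,K_i)<\varepsilon\}$, which is open and convex, and verify the hypotheses of Lemma \ref{lemma-topological-tool} for $\{K_1^\varepsilon,\dots,K_m^\varepsilon\}$: one has $\bigcup_{i\in\sigma}K_i^\varepsilon=(\bigcup_{i\in\sigma}K_i)^\varepsilon$, and the $\varepsilon$-neighborhood of a starshaped set is starshaped (straight-line homotopy), so the union of every proper subfamily is a homology cell; every proper subfamily is intersecting, a fortiori; and, since the $K_i$ are compact with empty intersection, $\bigcap_i K_i^\varepsilon=\emptyset$ for all small enough $\varepsilon$. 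Lemma \ref{lemma-topological-tool} with $n=m-1$ then gives that $\bigcup_i K_i^\varepsilon=(\bigcup_i K_i)^\varepsilon$ is homologically equivalent to $S^{m-2}$; but this set is starshaped, hence contractible, hence acyclic, contradicting $\widetilde H_{m-2}(S^{m-2})\neq 0$ (for $m=2$, the contradiction is simply that $S^0$ is disconnected while a starshaped set is connected). This proves $C(m)$.

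Finally I would combine things: assuming every subfamily of $\F$ of size at most $d+1$ has starshaped union, every subfamily $\G_0$ of $\F$ with $|\G_0|=m\le d+1$ has all of its subfamilies again among the subfamilies of $\F$ of size $\le d+1$, so $C(m)$ applies and $\bigcap\G_0\neq\emptyset$; Helly's theorem then yields $\bigcap\F\neq\emptyset$. The main obstacle I anticipate is the inductive step — in particular, the bookkeeping in passing from the compact $K_i$ to their open neighborhoods while preserving all the hypotheses of Lemma \ref{lemma-topological-tool} (keeping the partial unions starshaped and keeping the total intersection empty), and ensuring that the homological conclusion $\bigcup_i K_i^\varepsilon\simeq S^{m-2}$ genuinely contradicts starshapedness in every degree, including the degenerate case $m=2$.
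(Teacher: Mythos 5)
Your argument is correct, but it is worth noting that the paper does not actually prove Theorem \ref{theorem-breen}: the result is cited from Breen, and the text only gestures at a proof by pointing to the union-homology machinery of Lemma \ref{lemma-topological-tool} (as used for Theorem \ref{theorem-topological-Helly-version1} and for Montejano's topological Breen theorem). Your proof fleshes out exactly that route, with two adjustments that are genuinely needed and that a naive imitation of the proof of Theorem \ref{theorem-topological-Helly-version1} would miss. First, since the starshapedness hypothesis is only available for subfamilies of size at most $d+1$, you cannot take a minimal non-intersecting subfamily of $\F$ and contradict embeddability of a sphere in $\R^d$ as in that proof; instead you prove $(\le d+1)$-wise intersection by contradicting acyclicity of the starshaped union of the whole subfamily (a contradiction that never uses the dimension), and then you finish with the classical Helly theorem, which is where the dimension enters. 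Second, you correctly bridge the compact/open mismatch with $\varepsilon$-neighborhoods, and all the supporting verifications are sound: the neighborhood of a union is the union of the neighborhoods, the $\varepsilon$-neighborhood of a set starshaped with respect to $p$ is again starshaped with respect to $p$, an open nonempty starshaped set is a homology cell, and compactness is precisely what guarantees $\bigcap_i K_i^\varepsilon=\emptyset$ for small $\varepsilon$ when $\bigcap_i K_i=\emptyset$ (so the compactness hypothesis of Breen's theorem is used exactly where it must be). The handling of the degenerate case $m=2$ via connectedness is also fine. In short: correct, and essentially the proof the survey implicitly suggests, organized so that the homological lemma is combined with classical Helly rather than with the embedding-dimension argument; Breen's original proof is different in character, but the paper does not reproduce it, so there is nothing further to compare against.
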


To produce a topological generalization of Breen's theorem with a proof similar to that of Theorem~\ref{theorem-topological-Helly-version1}, 
we only need to ask for conditions on certain homology groups $H_j(\cdot)$ of the subfamilies. Thus:

\begin{theorem}[Topological Breen theorem; Montejano \cite{Montejano:2014ii}]
	Let $\F$ be a finite family of open subsets of a topological space $X$.  Let $d$ be an integer such that $H_i (U) = 0$ for all $i \ge d$ and every open subset $U$ of $X$.
	
Suppose that $H_{j-2} (\cup \F') = 0$ for any subset $\F' \subset \F$ of size $j$, $1 \le j \le d+1$.  Then $\cap \F \neq \emptyset$.
\end{theorem}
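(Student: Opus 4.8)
The plan is to mimic the contradiction argument used for Theorem~\ref{theorem-topological-Helly-version1}, replacing Lemma~\ref{lemma-topological-tool} by a weaker statement that uses only the vanishing of a \emph{single} homology group of each union. Throughout I would work with \emph{reduced} homology, so that $H_{-1}(\emptyset)\cong\Z$ while $H_{-1}$ vanishes on non-empty spaces; this keeps the reduced Mayer--Vietoris sequence exact in all degrees, makes the degree bookkeeping uniform, and makes the $|\F'|=1$ case of the hypothesis read ``$\cup\F'\neq\emptyset$''. Assume $\cap\F=\emptyset$ and pick $\G\subseteq\F$ of least cardinality $m$ with $\cap\G=\emptyset$, so that every proper subfamily of $\G$ intersects; call a family with this last property \emph{critical}. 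The first point is that $H_{|\mathcal H|-2}(\cup\mathcal H)=0$ for \emph{every} non-empty $\mathcal H\subseteq\F$: for $|\mathcal H|\le d+1$ this is the hypothesis, and for $|\mathcal H|\ge d+2$ we have $|\mathcal H|-2\ge d$ and $\cup\mathcal H$ open in $X$, so it follows from the dimension assumption. In particular $H_{m-2}(\cup\G)=0$, and $m\ge 2$. Hence it suffices to establish the following key lemma, whose conclusion contradicts the previous sentence: \emph{every critical family $\G$ of open sets with $|\G|=m\ge 2$ and $H_{|\mathcal H|-2}(\cup\mathcal H)=0$ for all non-empty proper $\mathcal H\subsetneq\G$ satisfies $H_{m-2}(\cup\G)\neq 0$}.

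I would prove the lemma by induction on $m$. When $m=2$ the set $\cup\G$ is a disjoint union of two non-empty open sets, so $H_0(\cup\G)\neq 0$. For $m\ge 3$, indexing $\G=\{G_1,\dots,G_m\}$, the key move is the \emph{overlapping} decomposition $A=\bigcup_{i\neq 1}G_i$, $B=\bigcup_{i\neq 2}G_i$: then $A\cup B=\cup\G$, while $A$ and $B$ are unions of $(m-1)$-element proper subfamilies, so $H_{m-3}(A)=H_{m-3}(B)=0$. A routine distributive computation gives $A\cap B=(G_1\cap G_2)\cup G_3\cup\dots\cup G_m=\cup\G'$, where $\G'=\{G_1\cap G_2,G_3,\dots,G_m\}$; this $\G'$ is again critical of size $m-1$, since $\cap\G'=\cap\G=\emptyset$ and every proper subfamily of $\G'$ has intersection equal to $\bigcap_{i\in I}G_i$ for some proper $I\subsetneq\{1,\dots,m\}$, hence non-empty. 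Assuming for the moment that $\G'$ inherits the homology hypothesis, the inductive hypothesis gives $H_{m-3}(A\cap B)=H_{(m-1)-2}(\cup\G')\neq 0$; feeding this into the reduced Mayer--Vietoris sequence of $\{A,B\}$ and using $H_{m-3}(A)=H_{m-3}(B)=0$ shows that $H_{m-2}(\cup\G)$ surjects onto $H_{m-3}(A\cap B)\neq 0$, so $H_{m-2}(\cup\G)\neq 0$.

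The main obstacle is precisely the claim that $\G'$ inherits the homology hypothesis, i.e.\ that $H_{|\mathcal H|-2}(\cup\mathcal H)=0$ for every non-empty proper $\mathcal H\subsetneq\G'$. If $G_1\cap G_2\notin\mathcal H$ then $\mathcal H$ is a proper subfamily of $\G$ and there is nothing to prove. If $G_1\cap G_2\in\mathcal H$, write $\mathcal H=\{G_1\cap G_2\}\cup\mathcal H'$ with $V=\cup\mathcal H'$; then $\cup\mathcal H=(G_1\cap G_2)\cup V$ is \emph{not} the union of any subfamily of $\G$, so the hypothesis on $\G$ does not apply directly, and this is the one place where a new idea is needed. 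The repair I have in mind is a second Mayer--Vietoris argument applied to $P=G_1\cup V$ and $Q=G_2\cup V$: here $P\cup Q=G_1\cup G_2\cup V$ and $P\cap Q=(G_1\cap G_2)\cup V=\cup\mathcal H$, and now $P$, $Q$, and $P\cup Q$ \emph{are} unions of proper subfamilies of $\G$, of sizes $|\mathcal H'|+1$, $|\mathcal H'|+1$, and $|\mathcal H'|+2$; so $H_{|\mathcal H'|-1}(P)=H_{|\mathcal H'|-1}(Q)=0$ and $H_{|\mathcal H'|}(P\cup Q)=0$, and exactness of the reduced Mayer--Vietoris sequence squeezes $H_{|\mathcal H'|-1}(\cup\mathcal H)=H_{|\mathcal H|-2}(\cup\mathcal H)$ to $0$. (The subcase $\mathcal H'=\emptyset$ needs only $G_1\cap G_2\neq\emptyset$, which holds because $\{G_1,G_2\}$ is a proper subfamily of the critical family $\G$.) This closes the induction; combined with $H_{m-2}(\cup\G)=0$ from the first paragraph, the key lemma yields the desired contradiction, so $\cap\F\neq\emptyset$.
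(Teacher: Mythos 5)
Your argument is correct, and it supplies genuinely more than the paper does: the paper only sketches this theorem by saying the proof is ``similar'' to that of Theorem~\ref{theorem-topological-Helly-version1}, whose engine is Lemma~\ref{lemma-topological-tool}, and that lemma is not directly available here because its hypotheses (every set and every union of a proper subfamily is a homology cell) are much stronger than Montejano's assumption that a \emph{single} reduced homology group vanishes per union. Your replacement lemma is the right weakening, and your induction is structurally different from the paper's: Lemma~\ref{lemma-topological-tool} removes one set $B$ and inducts on the family $\{K\cap B : K\in\G'\}$, which is only usable because homology cells are closed under the operations involved, whereas you keep the family intact except for merging two members, using the overlapping cover $A=\bigcup_{i\neq 1}G_i$, $B=\bigcup_{i\neq 2}G_i$ with $A\cap B=\cup\{G_1\cap G_2,G_3,\dots,G_m\}$, and then the auxiliary Mayer--Vietoris with $P=G_1\cup V$, $Q=G_2\cup V$ is exactly the new step needed to show the merged family inherits the one-degree vanishing hypothesis; I checked the degree bookkeeping and the exactness arguments, and they are sound (the reduced sequence with $\tilde H_{-1}(\emptyset)\cong\Z$ handles the small cases uniformly, as you intend). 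What each approach buys: the paper's route is shorter but proves only the homology-cell statement and leaves the Breen-type strengthening to the citation, while your single-degree lemma proves the stated theorem from its actual hypotheses and subsumes the homology-cell case. Two cosmetic points you may wish to make explicit: ``critical'' should be defined as $\cap\G=\emptyset$ \emph{together with} all proper subfamilies intersecting (your later usage shows this is what you mean), and one should either treat families as indexed so that $\G'$ honestly has $m-1$ members or note that $G_1\cap G_2$ cannot coincide with any $G_j$, $j\ge 3$, since that would make $\cap\G$ equal to the non-empty intersection of the proper subfamily $\{G_3,\dots,G_m\}$.
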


A topological version of Helly's theorem by making requirement on the intersections rather than the unions also holds.

\begin{theorem}[Topological Helly theorem; Montejano \cite{Montejano:2014ii}]\label{theorem-topological-montejano}
	Let $\F$ be a finite family of open subsets of a topological space $X$.  Let $d$ be an integer such that $H_i (U) = 0$ for all $i \ge d$ and every open subset $U$ of $X$.
	
	Suppose that $H_{d-j} (\cap \F') = 0$ for any subset $\F' \subset \F$ of size $j$, $1 \le j \le d+1$.  Then $\cap \F \neq \emptyset$.
\end{theorem}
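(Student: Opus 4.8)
Throughout, $\tilde H$ denotes reduced singular homology, so that the $i=-1$ entry of $\tilde H_\ast(Y)$ records whether $Y$ is non-empty; with this reading the instance $j=d+1$ of the hypothesis says exactly that all $(d+1)$-wise intersections are non-empty, recovering the usual Helly assumption. The plan is to follow the strategy behind Theorem~\ref{theorem-topological-Helly-version1}: assume $\cap\F=\emptyset$, reduce to a minimal non-intersecting subfamily, observe that its nerve is a homology sphere whose dimension is at least $d$, and contradict the hypothesis that no open subset of $X$ carries homology in degrees $\ge d$.

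First I would reduce to the case that $\F$ itself is minimal non-intersecting. Arguing by induction on $|\F|$ --- with base case $|\F|=d+1$, handled by the $j=d+1$ instance of the hypothesis applied to $\F$ --- one observes that every proper subfamily of $\F$ inherits all the hypotheses (the vanishing conditions only involve subfamilies of size $\le d+1$, and ``$H_i(U)=0$ for $i\ge d$'' is a property of $X$), so if $\cap\F=\emptyset$ then every proper subfamily of $\F$ is intersecting. Write $\F=\{F_1,\dots,F_m\}$. Since $\cap\F=\emptyset$ while all $(d+1)$-wise intersections are non-empty, $m\ge d+2$, and the nerve $N$ of $\F$ is the boundary of the $(m-1)$-simplex; thus $N\cong S^{m-2}$ and $\tilde H_{m-2}(N)\cong\Z$.

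The heart of the matter is to show that this top class is detected by $Z:=\bigcup_i F_i$, i.e.\ that $\tilde H_{m-2}(Z)\neq 0$: since $Z$ is open and $m-2\ge d$, this contradicts the hypothesis on $X$ and finishes the proof. I would deduce it from a homological nerve theorem for the open cover $\{F_1,\dots,F_m\}$ of $Z$, via its Mayer--Vietoris spectral sequence
\[
E^1_{p,q}=\bigoplus_{|\sigma|=p+1}\tilde H_q\Bigl(\textstyle\bigcap_{i\in\sigma}F_i\Bigr)\ \Longrightarrow\ \tilde H_{p+q}(Z).
\]
The hypothesis on $X$ kills every entry with $q\ge d$ (each $\bigcap_{i\in\sigma}F_i$ is open), and the staircase hypotheses $\tilde H_{d-j}\bigl(\bigcap_{i\in\sigma}F_i\bigr)=0$ for $|\sigma|=j\le d+1$ kill precisely the entries on the antidiagonal $p+q=d-1$. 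These vanishings are calibrated so that the fundamental class of $N\cong S^{m-2}$ --- which, once one knows the relevant small intersections are connected, appears in the row $q=0$ of the $E^2$-page --- is neither hit by a differential nor the source of a nonzero one, hence survives to $E^\infty$. A convenient way to organise the bookkeeping is to peel off one set at a time: the Mayer--Vietoris sequence of $Z=(F_1\cup\dots\cup F_{m-1})\cup F_m$ relates $\tilde H_{m-2}(Z)$ to the top homology of $\bigcup_{i<m}(F_i\cap F_m)$, which is a minimal non-intersecting family of one fewer set obeying the same staircase conditions with $d$ replaced by $d-1$.

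The main obstacle is exactly this spectral-sequence analysis. One has to check that the single-degree vanishings furnished by the staircase hypotheses, combined with the degree-$\ge d$ acyclicity of all open subsets of $X$, really do annihilate every differential into and out of the slot carrying the fundamental class of $N$, and one must control the connectivity --- and more generally the low-degree homology --- of the intersections $\bigcap_{i\in\sigma}F_i$ for small $\sigma$, which is where the bound ``$j\le d+1$'' interacts delicately with the size $m$ of the minimal family. It is precisely here that the exact shape of the hypothesis, ``$H_{d-j}$ of a $j$-fold intersection vanishes'', is indispensable, and making the argument uniform in $m\ge d+2$ is the technical core of the proof.
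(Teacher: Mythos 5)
Your reduction steps are fine and mirror the survey's treatment of Theorem~\ref{theorem-topological-Helly-version1}: hypotheses pass to subfamilies, a minimal non-intersecting subfamily has size $m\ge d+2$ (by the $j=d+1$ case read in reduced homology), and its nerve is $\partial\Delta^{m-1}\cong S^{m-2}$. Note, however, that the survey does not actually prove Theorem~\ref{theorem-topological-montejano}; it cites Montejano, and the proof it does give (of Theorem~\ref{theorem-topological-Helly-version1}) leans on Lemma~\ref{lemma-topological-tool}, which requires all proper subfamilies to have homology-cell unions --- far stronger than the staircase hypotheses here. The entire content of Montejano's theorem is therefore the step you leave open: showing $\tilde H_{m-2}\bigl(\bigcup\F\bigr)\neq 0$ under only the conditions $\tilde H_{d-j}=0$ on $j$-fold intersections, $j\le d+1$, plus degree-$\ge d$ acyclicity of open sets. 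As written, that step is a genuine gap, and your own calibration claim is not correct in general: the staircase kills exactly the $E^1$-entries on the antidiagonal $p+q=d-1$ (with $p\le d$), while the differentials $d^r\colon E^r_{m-2,0}\to E^r_{m-2-r,\,r-1}$ land on the antidiagonal $p+q=m-3$; these coincide only when $m=d+2$, and the ambient hypothesis only kills rows $q\ge d$, so for $m>d+2$ and $2\le r\le d$ the targets of these differentials are not forced to vanish. In addition, placing the fundamental class of $S^{m-2}$ in $E^2_{m-2,0}$ presupposes that the row $q=0$ computes the homology of the nerve with constant coefficients, which needs connectivity of the small intersections; the hypothesis gives $\tilde H_0=0$ only for intersections of exactly $d$ sets.

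The fallback induction you propose does not repair this. The peeled family $\{F_i\cap F_m\}_{i<m}$ is indeed minimal non-intersecting of size $m-1$ and satisfies the staircase with $d$ replaced by $d-1$, but the ambient condition ``$H_i(U)=0$ for $i\ge d$ for every open $U\subset X$'' does not improve to ``$i\ge d-1$'', so you cannot invoke the theorem (or the claim $\tilde H_{m-3}\neq 0$ leading to a contradiction) at parameter $d-1$. If instead you induct on the auxiliary statement ``$\tilde H_{m-2}(\cup\G)\neq 0$ for minimal non-intersecting $\G$'', the Mayer--Vietoris step relating $\tilde H_{m-2}(Z)$ to $\tilde H_{m-3}\bigl(\bigcup_{i<m}(F_i\cap F_m)\bigr)$ requires control of $\tilde H_{m-2}$ and $\tilde H_{m-3}$ of $F_m$ and of $F_1\cup\dots\cup F_{m-1}$, which the hypotheses do not supply (this is exactly where Lemma~\ref{lemma-topological-tool} used acyclicity of $A$ and $B$, unavailable here). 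So the proposal is a reasonable strategy outline in the spirit of Montejano's spectral-sequence proof, but the decisive homological argument --- making the survival of the sphere class work uniformly in $m\ge d+2$ under only the staircase vanishings --- is missing, and the specific mechanism you sketch would fail for $m>d+2$.
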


This uses fewer conditions than an earlier topological Helly-style theorem by Debrunner \cite{Debrunner:1970th}.  Another way to obtain topological versions of Helly's theorem is to work with the \textit{nerve complex} of the family $\F$ instead of $\F$.  Given a family of sets $\F$, the nerve complex of $\F$, denoted by $N(\F)$, is the simplicial complex formed by adding one vertex for each non-empty element in $\F$ and a face for each intersecting subfamily.
It turns out that the nerve complexes of families of convex sets have deep topological properties.  A lucid explanation is contained in a recent survey by Tancer \cite{Tancer:2013iz}.  It should be noted that some topological versions, such as the colorful topological Helly type results of Kalai and Meshulam \cite{Kalai:2005cm}, have application to interesting purely geometric problems.  Two examples are applications to geometric versions of Hall's theorem for graph \cite{HMM} and extensions of the colorful Carath\'eodory theorem \cite{KH15}.

One of the key ideas around Helly theorems, as is noted among some proofs in this survey, is to continuously translate a half-space containing the family until it loses a non-trivial intersection.  The same idea was used by Wegner to show that nerve complexes of families of convex sets in $\R^d$ are \textit{$d$-collapsible} \cite{Wegner:1975eo}.  In other words, the nerve complex of a finite family of convex sets may be brought down to the empty simplex by iteratively removing faces of dimension at most $d-1$ which are contained in exactly one containment-maximal face.  Indeed, the fact that every nerve complex of a finite family of convex sets in $\R^d$ is $d$-collapsible implies Helly's theorem.

Many Helly-type results hold for families whose nerve complexes are $d$-collapsible.  A stronger condition than being $d$-collapsible is being \textit{$d$-Leray}, which means that all homology groups of $N(\F)$ of dimension greater than or equal to $d$ are zero (see \cite{matousektancer} where collapsable and Leray complexes are discussed in detailed). By the nerve lemma (see \cite{Bjorner:1995vi}), or as a consequence of being $d$-collapsible, nerve complexes of families of convex sets in $\R^d$ are $d$-Leray.  In fact, the nerve lemma can be used to provide a very short inductive proof of Helly's theorem.  It comes from the observation that a non-intersecting family of convex sets where every $d+1$ of them do intersect has as a nerve the boundary of a $(d+1)$-dimensional simplex, which is impossible if the family is contained in $\R^d$.

We say that a family $\G$ of subsets of $\R^d$ is a \emph{good cover} if the intersection of every subfamily is either empty or contractible.  Notice that the family of all convex sets in $\R^d$ is  a good cover, and the same application of the nerve lemma shows that the nerve complex of a finite subset of a good cover of $\R^d$ is $d$-Leray.  Kalai and Meshulam showed that good covers are enough to obtain the following result.

\begin{theorem}[Kalai, Meshulam \cite{Kalai:2008kc}]\label{theorem-kalai-covers-disjoint}
	Let $\G$ be a good cover of $\R^d$ and $\F$ be a finite family of sets such that the intersection of any subfamily of $\F$ is the union of at most $k$ elements of $\G$.  Then, if every $k(d+1)$ elements of $\F$ intersect, so does $\F$.
\end{theorem}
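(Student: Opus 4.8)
The plan is to recast the statement as a bound on the nerve complex $N(\F)$: we show that $N(\F)$ is $\bigl(k(d+1)-1\bigr)$-Leray, i.e.\ that $\tilde H_i\bigl(N(\H)\bigr)=0$ for every subfamily $\H\subseteq\F$ and every $i\ge k(d+1)-1$ (observe that each such $\H$ again satisfies the hypothesis of the theorem). Granting this, we finish by the topological Helly argument for Leray complexes, exactly as in the proof of Theorem~\ref{theorem-topological-Helly-version1} but with $d$ replaced by $k(d+1)-1$: if $\cap\F=\emptyset$ while every $k(d+1)$ members of $\F$ intersect, choose $\G_0\subseteq\F$ minimal with $\cap\G_0=\emptyset$; then $\abs{\G_0}\ge k(d+1)+1$ and every proper subfamily of $\G_0$ is intersecting, so $N(\G_0)$ is the boundary of the simplex on $\G_0$, hence homeomorphic to $S^{\abs{\G_0}-2}$, which is impossible since $\abs{\G_0}-2\ge k(d+1)-1$.

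To establish the Leray bound I would first recall, from the nerve lemma, that the nerve of any finite family drawn from a good cover of $\R^d$ is $d$-Leray: such a nerve is homotopy equivalent to the union of the family, an open subset of $\R^d$, whose reduced homology vanishes in dimensions $\ge d$. Now expand $\F$: for each $F\in\F$ fix a decomposition $F=\bigcup_{j\le k_F}G_{F,j}$ with $k_F\le k$ and $G_{F,j}\in\G$, and let $\widehat\F$ be the family of these sets, recorded with labels $(F,j)$. The members of $\widehat\F$ come from $\G$, so $N(\widehat\F)$ is $d$-Leray. Distributing intersections over unions, for a subfamily $\sigma\subseteq\F$ the set $\bigcap_{F\in\sigma}F$ is the union, over all choices $(j_F)_{F\in\sigma}$, of the sets $\bigcap_{F\in\sigma}G_{F,j_F}$; hence $\sigma$ is a face of $N(\F)$ if and only if some face of $N(\widehat\F)$ maps onto $\sigma$ under the label projection $\pi\colon(F,j)\mapsto F$. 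Thus $N(\F)=\pi_*(N(\widehat\F))$, and every fibre of $\pi$ has size at most $k$. Writing $L(\dotvar)$ for the Leray number, the Kalai--Meshulam theorem on Leray numbers of projections then gives $L(N(\F))\le k\bigl(L(N(\widehat\F))+1\bigr)-1\le k(d+1)-1$, as required.

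The heart of the matter, and the step I expect to be the main obstacle, is the Leray-number-of-projections inequality $L(\pi_*X)\le k\bigl(L(X)+1\bigr)-1$ for $\pi$ with fibres of size at most $k$. One proves it by induction, collapsing one fibre at a time and running a Mayer--Vietoris argument that exploits the fibre structure to keep homology under control; each unit of fibre size is paid for by a shift of $L(X)+1=d+1$ in homological degree, which is exactly where the factor $k$ enters. If one prefers to stay closer to the elementary tools of this survey, one can instead induct on $k$ directly on $\F$, covering each $\cap\F'$ by its at most $k$ pieces and feeding the Mayer--Vietoris spectral sequence of that cover into the homological nerve spectral sequence; the case $k=1$ is immediate because then $\F$ is itself a good cover, but organizing the inductive step is where the work lies.
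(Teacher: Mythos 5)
Your opening and closing steps are sound: nerves of finite subfamilies of a good cover are $d$-Leray, and a $(k(d+1)-1)$-Leray bound on $N(\F)$ yields the Helly number via the minimal non-intersecting subfamily, exactly as in the proof of Theorem~\ref{theorem-topological-Helly-version1}. (For what it is worth, the survey itself does not prove this theorem; it only proves Amenta's disjoint special case, Theorem~\ref{theorem-amenta-disjoint}, by the unrelated LP-type argument of Theorem~\ref{morris}.) The genuine gap is in the middle: your reduction uses only a decomposition of each \emph{individual} $F\in\F$ into at most $k$ members of $\G$, and never touches the hypothesis that every \emph{intersection of a subfamily} is a union of at most $k$ members of $\G$. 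Under that weakened hypothesis the theorem is false, so no correct argument can factor through your reduction. Concretely, in $\R^1$ let $G_1,\dots,G_n$ be open intervals forming a minimal cover of $[0,1]$ and set $F_i=[0,1]\setminus G_i$. Each $F_i$ is a union of at most two closed intervals, i.e.\ of at most two members of the good cover of convex sets; every $n-1$ of the $F_i$ intersect by minimality of the cover, hence every $4=k(d+1)$ of them do (take $k=2$, $d=1$, $n\ge 5$), yet $\cap\F=\emptyset$. The actual hypothesis of the theorem fails here because already pairwise intersections such as $F_2\cap F_4$ can have three components, and that is precisely the information your construction throws away.

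The same family refutes the projection inequality you rely on, in the form you state it: the nerve $X$ of the at most $2n$ interval pieces is $1$-Leray, the vertex fibres of the label projection have size at most $2$, and yet the image complex is $N(\F)$, the boundary of an $(n-1)$-simplex, whose Leray number is $n-1$, far exceeding $2\bigl(L(X)+1\bigr)-1=3$. So a bound on vertex fibres alone cannot control $L\bigl(\pi_*(X)\bigr)$; whatever the precise hypotheses of the Kalai--Meshulam projection theorem are, they must encode more than that, and in the Helly application the extra structure comes exactly from the condition on all intersections: each face $\sigma$ of $N(\F)$ must be covered by at most $k$ ``pieces'' (this is the viewpoint made explicit by the multinerve of Colin de Verdi\`ere, Ginot and Goaoc), whereas in your $N(\widehat\F)$ a face $\sigma$ can have on the order of $k^{|\sigma|}$ minimal lifts. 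To repair the argument you would have to build the auxiliary complex from chosen decompositions of the intersections $\cap\F'$ for all subfamilies $\F'$ (not of the individual sets) and invoke a projection-type theorem whose fibre condition is stated at the level of faces rather than vertices; as written, the ``heart of the matter'' you defer is not merely unproved but false.
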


This is a generalization of the following result by Amenta.

\begin{theorem}[Amenta, \cite{Amenta:1996eu}]\label{theorem-amenta-disjoint}
Let $\F$ be a family of sets in $\R^d$ such that the intersection of any members of $\F$ can be expressed as the union of at most $k$ pairwise disjoint compact convex sets.  If every $k(d+1)$ or fewer members of $\F$ have a point in common, then there is a point in common to all memebers of $\F$.	
\end{theorem}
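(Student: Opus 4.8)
The plan is to obtain this statement as an immediate corollary of Theorem~\ref{theorem-kalai-covers-disjoint} of Kalai and Meshulam — which was, after all, stated in the form above precisely so that Amenta's theorem becomes a special case. First I would dispose of the possibility that $\F$ is infinite by a standard compactness reduction: every member of $\F$ is a finite union of compact convex sets, hence compact, so if every finite subfamily of $\F$ has a common point then $\bigcap \F \neq \emptyset$ by the finite intersection property for compact sets. It therefore suffices to treat finite families $\F$; for those, every subfamily of size at most $k(d+1)$ is intersecting by hypothesis.

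For the finite case I would take $\G$ to be the family of all compact convex subsets of $\R^d$ and check that it is a good cover of $\R^d$: it covers $\R^d$ (singletons are compact convex), and the intersection of any subfamily of $\G$ is again compact and convex, hence empty or contractible. By the hypothesis on $\F$, the intersection of any (nonempty) subfamily of $\F$ is the union of at most $k$ pairwise disjoint compact convex sets, hence in particular the union of at most $k$ members of $\G$; this is exactly the hypothesis of Theorem~\ref{theorem-kalai-covers-disjoint} with the same $k$ and $d$, the pairwise disjointness in Amenta's statement being an extra, unused restriction. Applying that theorem, since every $k(d+1)$ members of $\F$ have a common point, so does $\F$. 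Together with the first paragraph this proves the theorem.

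The point worth stressing is that there is essentially no obstacle once Theorem~\ref{theorem-kalai-covers-disjoint} is in hand; the only things requiring care are the compactness reduction for infinite $\F$ and the routine verification that $\G$ is a good cover. If one instead wanted a self-contained argument in the spirit of the proofs of Lemma~\ref{lemma-topological-tool} and Theorem~\ref{theorem-topological-Helly-version1}, one would show by induction, via the nerve lemma and the Mayer--Vietoris sequence, that the nerve $N(\F)$ is $(k(d+1)-1)$-Leray; the new feature is that an intersection of members of $\F$ has the homology of at most $k$ points rather than that of a single homology cell, and the genuine difficulty is the bookkeeping needed to track how these $k$ pieces recombine under unions — which is exactly what the general Kalai--Meshulam argument automates. (Note that historically Amenta's theorem predates the Kalai--Meshulam generalization, so chronologically the implication ran the other way.)
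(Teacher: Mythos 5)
Your derivation is correct, but it takes a genuinely different route from the paper. The survey's own proof of Theorem \ref{theorem-amenta-disjoint} is the LP-type argument of Section 3: one realizes the closed convex sets, ordered by a generic linear or lexicographic order, as a concrete LP-type problem of combinatorial dimension $d$, then the basis-counting argument of Theorem \ref{morris} (move the at most $d$ constraints determining the current minimum into a holding set, delete a constraint, and observe that the new minimum must jump to a \emph{different connected component}) shows that passing to sets whose intersections are disjoint unions of at most $k$ convex pieces raises the combinatorial dimension only to $k(d+1)-1$, and Lemma \ref{lem:LP-typeObs} converts this into the Helly number $k(d+1)$. That route is self-contained modulo Helly's theorem, uses disjointness essentially, and buys the algorithmic consequences of Section 3 (linear-time solvability in fixed dimension). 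Your route instead quotes the Kalai--Meshulam good-cover theorem \ref{theorem-kalai-covers-disjoint}, which the survey states but does not prove; it is shorter and mathematically legitimate (the paper itself presents that theorem as a generalization of Amenta's), and your compactness reduction for infinite families and the check that the compact convex sets form a good cover in the survey's sense are both fine, but it yields no algorithmic information and, as you note, runs the historical implication backwards.

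One point deserves a concrete warning: you dismiss the pairwise disjointness as ``an extra, unused restriction.'' That reading rests on the survey's loose paraphrase of Theorem \ref{theorem-kalai-covers-disjoint}. Immediately after Theorem \ref{theorem-amenta-disjoint} the paper states that if disjointness is dropped, strictly larger Helly numbers are needed \cite{Alon:1995fs, Matousek:1997di}, and the comparison with Theorem \ref{theorem-eckhoff-nischke}, whose hypothesis is a \emph{disjoint} union of at most $k$ members of $\mathcal{C}$, indicates that the intended hypothesis of the Kalai--Meshulam statement is likewise a disjoint union of at most $k$ elements of the good cover. Taken literally with $\G$ equal to all compact convex sets, the displayed statement would prove Amenta's theorem with the disjointness deleted, contradicting those references. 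Your proof survives because Amenta's hypothesis supplies the disjoint decomposition anyway; you should carry it along (your at most $k$ pieces are pairwise disjoint members of $\G$, so the stronger, correct hypothesis is met) rather than discard it. A minor second point: good covers are usually taken to consist of open sets so that the nerve lemma applies; under the survey's definition your choice of $\G$ is admissible, and what the Kalai--Meshulam machinery really needs is only that nerves of finite families of convex sets are $d$-Leray, which does hold for compact convex sets.
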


The result above gives a clean solution of a conjecture by Gr\"unbaum and Motzkin \cite{Grunbaum:1961fd}.  The conjecture had a more elaborate solution due to Morris \cite{Morristhesis}, whose correctness has been put to question. We give a proof of Theorem \ref{theorem-amenta-disjoint} in Section 3.  If the disjointness assumption is dropped, larger Helly numbers are needed \cite{Alon:1995fs, Matousek:1997di}.

There have been two recent of generalizations of this result.  The first is a topological generalization by Goaoc, Pat\'ak, Pat\'akov\'a, Tancer, and Wagner \cite{Goaoc:2015aaa}, which further generalizes a previous topological extension \cite{ColindeVerdiere:2014gw}.  In this case, asking of requiring the family $\F$ to be a good cover of $\R^d$, only bounds on the first $\lceil d/2\rceil$ Betti numbers over $\Z_2$ of $\cap \mathcal{G}$ are required for each subfamily $\mathcal{G} \subset \F$, which is a significantly weaker condition.  This result also generalizes \cite{Matousek:1997di}. The second generalization is a combinatorial version of Theorem \ref{theorem-amenta-disjoint}  by Eckhoff and Nischke.  The arguments of their proof rely on fixing and extending some of the arguments by 
Morris \cite{Morristhesis}.  To describe this last result we need the following definitions.

Given a family $\F$ of sets, we say $\F$ is
\begin{itemize}
	\item \textit{intersectional} if for any finite subfamily $\G$, $\cap \G$ is either in $\F$ or empty, and
	\item \textit{non-additive} if for any finite subfamily $\G$ of pairwise disjoint subsets of $\F$, if $|\G| \ge 2$ then $\cup \G \not\in \F$.
\end{itemize} 

Notice that the family of convex sets in $\R^d$ is intersectional and non-additive.

\begin{theorem}[Eckhoff, Nischke \cite{Eckhoff:2009kv}]\label{theorem-eckhoff-nischke}
	Let $\mathcal{C}$ be an intersectional non-additive family.  If $\F$ is a finite family of sets such that the intersection of any $k$ or fewer members of $\F$ is a disjoint union of at most $k$ members of $\mathcal{C}$, then $h(\F) \le k h(\mathcal{C})$.
\end{theorem}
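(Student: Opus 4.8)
The plan is to reduce the statement to a purely combinatorial selection problem and then invoke Morris's pigeonhole principle. Recall that $h(\F)$ is just the largest size of a \emph{minimal} non-intersecting subfamily of $\F$, so it suffices to prove that every $\G\subseteq\F$ with $\bigcap\G=\emptyset$ but $\bigcap\G'\ne\emptyset$ for every proper $\G'\subsetneq\G$ has $|\G|\le km$. We may assume $m:=h(\mathcal{C})<\infty$. Suppose $\G$ is such a minimal counterexample, $n:=|\G|\ge km+1$. For each $G\in\G$ pick a point $x_G\in\bigcap(\G\setminus\{G\})$; minimality gives $x_G\notin G$, and the $x_G$ are pairwise distinct, so we have $n$ points and $n$ sets arranged in the ``complement of the diagonal'' incidence pattern: $x_{G'}\in G$ for every $G'\ne G$, while $x_G\notin G$.

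I would then isolate the one place the hypotheses on $\mathcal{C}$ are used: the pieces refine laminarly. For $\mathcal{H}\subseteq\G$ with $|\mathcal{H}|\le k$ write $\bigcap\mathcal{H}=C^{\mathcal{H}}_1\sqcup\dots\sqcup C^{\mathcal{H}}_r$ with $r\le k$ and $C^{\mathcal{H}}_j\in\mathcal{C}$. If $\mathcal{H}\subseteq\mathcal{H}'$ (both of size $\le k$) then every nonempty $C^{\mathcal{H}'}_j$ lies inside a unique $C^{\mathcal{H}}_\ell$: indeed $C^{\mathcal{H}'}_j=\bigsqcup_\ell\bigl(C^{\mathcal{H}'}_j\cap C^{\mathcal{H}}_\ell\bigr)$ is a disjoint union of members of $\mathcal{C}$ because $\mathcal{C}$ is intersectional, so by non-additivity exactly one summand is nonempty. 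Consequently each support point $x_G$ gets assigned, consistently with these refinements, to one piece of $\bigcap\mathcal{H}$ whenever $x_G\in\bigcap\mathcal{H}$.

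Now partition $[n]$ into $t:=\lceil n/k\rceil\ge m+1$ blocks $I_1,\dots,I_t$ of size at most $k$, and write $\bigcap_{j\in I_s}G_j=\bigsqcup_\ell C^{I_s}_\ell$. Since $\bigcup_s I_s=[n]$ we have $\bigcap_s\bigcap_{j\in I_s}G_j=\bigcap\G=\emptyset$, and distributing the intersection over the disjoint unions shows that \emph{any} choice of one piece $D_s=C^{I_s}_{\ell(s)}$ per block already gives $\bigcap_s D_s=\emptyset$. The real task is to make the choice so that $\{D_1,\dots,D_t\}$ is a \emph{minimal} non-intersecting family of members of $\mathcal{C}$; then $h(\mathcal{C})=m$ forces $t\le m$, contradicting $t\ge m+1$. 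The diagonal pattern makes this plausible: for $s_0$ fixed, each $i\in I_{s_0}$ satisfies $x_i\in\bigcap_{j\in I_s}G_j$ for all $s\ne s_0$ (the blocks are disjoint), so $x_i$ designates one piece of $\bigcap_{j\in I_s}G_j$ for every $s\ne s_0$, i.e.\ an admissible ``restriction pattern'' for $(D_s)_{s\ne s_0}$, and block $I_{s_0}$ supplies at most $k$ such patterns. If a single global selection $\ell(\cdot)$ restricts over $[t]\setminus\{s_0\}$ to one of the patterns coming from $I_{s_0}$ \emph{for every} $s_0$, then $\bigcap_{s\ne s_0}D_s$ contains the matching $x_i$, hence is nonempty, and $\{D_1,\dots,D_t\}$ is minimal. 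Producing such a globally compatible selection out of the local, refinement-consistent data is exactly what Morris's pigeonhole principle (in the form repaired and proved by Eckhoff and Nischke) provides. Taking $\mathcal{C}$ to be the convex sets of $\R^d$ recovers Theorem~\ref{theorem-amenta-disjoint}.

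The main obstacle is this last step. Morris's original proof of the pigeonhole principle had a gap, and the substance of the Eckhoff--Nischke argument is a careful reworking that keeps track of the laminar structure precisely enough to guarantee the simultaneously compatible, minimality-witnessing choice of pieces. Everything else is routine: the reformulation in terms of minimal non-intersecting subfamilies, the assumption $m<\infty$, the distinctness of the $x_G$, harmless padding of blocks when $k\nmid n$, and the fact that the points $x_G$ live in whatever ambient set $\F$ and $\mathcal{C}$ inhabit even though $\mathcal{C}$ carries no geometric structure.
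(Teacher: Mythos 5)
Your setup is sound as far as it goes: the reformulation of $h(\F)$ via critical (inclusion-minimal non-intersecting) subfamilies, the support points $x_G\in\bigcap(\G\setminus\{G\})$ with the off-diagonal incidence pattern, and the observation that intersectionality plus non-additivity force each piece of $\bigcap\mathcal{H}'$ to sit inside exactly one piece of $\bigcap\mathcal{H}$ for nested subfamilies are all correct and are indeed the standard opening moves. But the proof has a genuine gap, and it is exactly where you place it: the existence of a single selection $\ell$ of one piece per block whose restriction off each block $I_{s_0}$ is realized by the pattern of some $x_i$, $i\in I_{s_0}$. You do not state this ``pigeonhole principle'' precisely, you do not prove it, and you explicitly outsource it to Eckhoff and Nischke \cite{Eckhoff:2009kv} --- but that principle (Morris's, with the gap in \cite{Morristhesis} repaired) \emph{is} the substance of the theorem you were asked to prove, so the argument is circular as a blind proof. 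Moreover, your blockwise reduction is itself unverified: partitioning $\G$ into disjoint blocks of size at most $k$ destroys the nesting on which your refinement lemma operates, the patterns $\sigma_i$ supplied by different blocks need not be mutually consistent, and demanding that $\ell$ agree with some $\sigma_i$ on \emph{all} of $[t]\setminus\{s_0\}$ simultaneously for every $s_0$ is a strong requirement that a naive ``at most $k$ patterns per block'' count does not deliver; whether your specific formulation is even equivalent to the principle Eckhoff and Nischke prove is left unchecked.

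For comparison, note that this survey does not prove Theorem~\ref{theorem-eckhoff-nischke} at all; it cites \cite{Eckhoff:2009kv} and remarks that the result is obtained ``by completely different methods'' from the topological route. The only proof in this direction given in the paper is of the convex special case, Theorem~\ref{theorem-amenta-disjoint}, via the LP-type machinery of Theorem~\ref{morris} in Section~3: there one fixes a total order on the ground set, tracks the basis of the minimum point of each connected component, and bounds the combinatorial dimension by $r(\delta+1)-1$, with Lemma~\ref{lem:LP-typeObs} converting combinatorial dimension into a Helly number. That argument avoids any pigeonhole principle but leans on concrete LP-type structure (a linear order with unique minima), so it does not directly give the abstract statement for an arbitrary intersectional non-additive $\mathcal{C}$ either. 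Either route would require you to actually carry out its core step; as written, your proposal stops precisely at that core.
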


Given a good cover $\G$ of $\R^d$, the family of finite intersections of $\G$ is an intersectional non-additive family.  Thus, Theorem \ref{theorem-kalai-covers-disjoint} is also implied by Theorem \ref{theorem-eckhoff-nischke}, which is proved by completely different methods.  The results of Eckhoff and Nischke and those of Goaoc et al. do not seem to imply each other.

Topological Helly-numbers are a form of moving to non-convex spaces. Similarly, there are some Helly-type results for curves and
for algebraic varieties. Consider for example the following theorem about convex curves.

\begin{theorem}[Helly for boundary of convex planar curves, Swanepoel \cite{Swanepoel:2003bg}] \label{swanepoel}
	Given a convex body $K \subset \R^2$, and $\F$ a finite family of homothets (including translates) of the boundary $\partial K$, if every four elements in $\F$ have a point in common, all of them do.
\end{theorem}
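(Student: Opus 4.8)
The plan is to reduce the statement to Helly's theorem in $\R^3$ by a lifting construction. After a translation we may assume $0\in\inte K$; let $g$ be the Minkowski gauge of $K$, so every homothet of $\partial K$ is $C_i=\{x:g(x-v_i)=\lambda_i\}$, the boundary of the convex body $B_i=\{x:g(x-v_i)\le\lambda_i\}$. The cases $n=\abs{\F}\le 4$ are trivial, so assume $n\ge 5$. Since a common point of any four of the curves lies in the corresponding four bodies, any three of the $B_i$ intersect, so planar Helly gives $P:=\bigcap_i B_i\neq\emptyset$. Because $P\subseteq B_i$ for every $i$, a point lies in $\bigcap_i C_i$ precisely when it lies in $P$ but in none of the open bodies $\inte B_i$; hence the goal is to produce a point of $P$ lying on the boundary of every $B_i$.

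For the model case in which $K$ is an ellipse — equivalently, by affine invariance, a disk — I would use the classical paraboloid lift $x\mapsto\hat x=(x,\norm{x}^2)\in\R^3$. A point $x$ lies on the circle $C_i$ iff $\hat x$ lies on a fixed plane $\Pi_i$, so $\bigcap_i C_i\neq\emptyset$ iff the affine flat $A:=\bigcap_i\Pi_i$ meets the paraboloid $\mathcal{P}=\partial\mathcal{Q}$, where $\mathcal{Q}=\{(y,z):z\ge\norm{y}^2\}$ is the solid, strictly convex paraboloid. Now $A\cap\mathcal{Q}=\mathcal{Q}\cap\bigcap_i\Pi_i^+\cap\bigcap_i\Pi_i^-$ is an intersection of $2n+1$ convex sets in $\R^3$, so by Helly's theorem (Helly number $4$) it is nonempty once every four of these sets meet; but any four of them involve at most four of the curves $C_i$, whose common point (supplied by the hypothesis) lifts into all four chosen sets — and the same check with only the $2n$ half-spaces shows $A\neq\emptyset$. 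It remains to upgrade $A\cap\mathcal{Q}\neq\emptyset$ to $A\cap\mathcal{P}\neq\emptyset$: the function $z-\norm{y}^2$ is concave and nonnegative on the nonempty convex set $A\cap\mathcal{Q}$, and if it were strictly positive there then $A\cap\mathcal{Q}$ would be both relatively open and closed in the connected flat $A$, forcing $A\subseteq\inte\mathcal{Q}$; this is impossible when $\dim A\ge 1$ (the solid paraboloid contains no line), and when $A$ is a single point it is contradicted by picking three planes $\Pi_{i_1},\Pi_{i_2},\Pi_{i_3}$ whose intersection is already that point and invoking the three-wise case of the hypothesis, $C_{i_1}\cap C_{i_2}\cap C_{i_3}\neq\emptyset$.

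For a general convex body $K$ one lifts in the same spirit, replacing the solid paraboloid by the solid cone $\mathcal{C}=\operatorname{epi}(g)=\{(y,s):g(y)\le s\}$ and sending $x\mapsto(x,0)$; then $x\in C_i$ iff $(x,0)$ lies on the boundary of the translated cone $\mathcal{C}_i=(v_i,-\lambda_i)+\mathcal{C}$. The essential new difficulty is that, unlike a plane, the boundary of a translated cone is not an intersection of two half-spaces but only a difference of convex sets: writing $g(u)=\max_{w\in K^\circ}\sprod{w}{u}$ one gets $\partial\mathcal{C}_i=\mathcal{C}_i\setminus\inte\mathcal{C}_i=\mathcal{C}_i\cap\bigcup_{w\in K^\circ}H_{i,w}$, where each $H_{i,w}$ is a half-space bounded by a supporting hyperplane of $\mathcal{C}_i$ at its apex. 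Thus $\bigcap_i C_i\neq\emptyset$ iff there is a choice of directions $w_i\in K^\circ$ for which the $2n+1$ convex sets $\{s=0\}$, $\mathcal{C}_i$, $H_{i,w_i}$ have a common point, which by Helly in $\R^3$ again reduces to a four-at-a-time condition. The step I expect to be the main obstacle is making this choice of the $w_i$ consistently: the four-wise hypothesis supplies, for each four-element index set $S$, a common point of $\{C_i:i\in S\}$ together with admissible supporting directions, but a priori these directions depend on $S$. One resolves this either by first treating the polytopal case, where $K^\circ$ has finitely many extreme directions and the unions above are finite, and then passing to a limit of circumscribed polytopes, or by a minimal-counterexample analysis in which the configuration pins the directions down; carrying this out is the heart of the proof.
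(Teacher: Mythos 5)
Your paraboloid-lift argument does settle the case where $K$ is a disk (hence, by affine invariance, an ellipse): the translation of ``$x$ lies on every circle $C_i$'' into ``the flat $A=\bigcap_i\Pi_i$ meets the paraboloid'', the Helly application in $\R^3$ to the $2n+1$ convex sets, and the open--closed connectedness argument (with the zero-dimensional case of $A$ handled by three planes meeting in a point and the triple-intersection hypothesis) are all correct. But the theorem is about an arbitrary planar convex body, and for general $K$ your write-up stops at a plan: you yourself identify the consistent choice of supporting directions $w_i\in K^\circ$ as ``the heart of the proof'' and do not carry it out. This is a genuine gap, not a deferred verification. After the cone lift, the condition ``$(x,0)\in\partial\mathcal{C}_i$'' is of the form (convex set) $\cap$ (union of half-spaces), and there is no Helly theorem available for such unions without strong additional hypotheses; the closest result in this survey, Theorem \ref{theorem-amenta-disjoint}, requires the convex pieces to be pairwise disjoint, which fails here because neighbouring boundary pieces $\mathcal{C}_i\cap H_{i,w}$ and $\mathcal{C}_i\cap H_{i,w'}$ overlap. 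The four-wise hypothesis hands you, for each quadruple, a common point together with quadruple-dependent admissible directions, and Helly in $\R^3$ would need every quadruple of the sets $\{s=0\},\mathcal{C}_i,H_{i,w_i}$ to intersect for one fixed global assignment of the $w_i$; extracting such an assignment is essentially equivalent to the original non-convex intersection problem, so the reduction restates the difficulty rather than resolving it. Even in the polytopal case a pigeonhole over the finitely many facet directions does not do this, since different quadruples may force incompatible choices.

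Your two proposed repairs also face concrete obstacles. Approximating $K$ from outside by polytopes $K_m$ does not preserve the hypothesis: a common point of four homothets of $\partial K$ need not lie on the corresponding homothets of $\partial K_m$, so there is no four-wise intersection statement to feed into a (still unproven) polytopal case, and a limiting argument has nothing to take a limit of; the ``minimal counterexample'' route is not described at all. Note finally that the survey states this theorem only with a citation to Swanepoel and contains no proof of it, so the argument must stand on its own; as written, it proves the result only for ellipses.
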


One can successfully look at non-linear algebraic versions of Helly's theorem, where the sets in the family $\F$ are algebraic 
hypersurfaces defined as the zeros of a polynomial.  There is indeed a  Helly-type theorem for hypersurfaces defined by 
homogeneous polynomials. This was proved by Motzkin \cite{motzkin} and later reproved by  Deza and Frankl \cite{deza+frankl,frankl}. 


\begin{theorem}[Motzkin and Deza, Frankl \cite{motzkin, deza+frankl,frankl}]\label{thm:DF-Helly} 
	Let $f_1,\dots,f_m$ be a system of homogeneous polynomials in $n$ variables and coefficients over a field. Define $d=\max\{\operatorname{deg}(f_i)\}$. 
	Suppose that every subset of $p={{n+d}\choose{d}}$ polynomials $\{f_{i_1},\dots,f_{i_p}\}\subset \{f_1,\dots,f_m\}$ has a solution. Then the entire system of polynomial equations $\{f_1,\dots,f_m\}$ does as well. 	
\end{theorem}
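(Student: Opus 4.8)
The plan is to reduce the statement to elementary linear algebra by noticing that $p={n+d\choose d}$ is exactly the dimension of the vector space $W$ of all polynomials in $x_1,\dots,x_n$ of degree at most $d$ over the ground field: the monomials $x_1^{e_1}\cdots x_n^{e_n}$ with $e_1+\cdots+e_n\le d$ are in bijection (via a slack exponent) with the monomials of degree exactly $d$ in $n+1$ variables, of which there are ${n+d\choose d}$. This is where the apparently mysterious ``$+1$'' in $p$ comes from. Since each $f_i$ is homogeneous of degree $d_i\le d$, it is in particular an element of $W$.

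Granting this, here is the argument I would carry out. We may assume $m\ge p$ (otherwise the hypothesis concerns no $p$-subsets, and under the usual reading — replace $p$ by $\min(m,p)$ — there is nothing to prove). Pick a maximal linearly independent subfamily $f_{i_1},\dots,f_{i_r}$ of $f_1,\dots,f_m$; since these lie in the $p$-dimensional space $W$ we have $r\le p$, and by maximality every $f_i$ can be written as a linear combination $f_i=\sum_{j=1}^{r}c_{ij}f_{i_j}$ with $c_{ij}$ in the field. Enlarge $\{i_1,\dots,i_r\}$ to an index set $J\subseteq\{1,\dots,m\}$ of size exactly $p$. By hypothesis the subsystem $\{f_j:j\in J\}$ has a common solution $a$ (necessarily a nonzero, i.e.\ projective, one — since $0$ solves every homogeneous system, ``solution'' can only mean ``nonzero solution''). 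In particular $f_{i_1}(a)=\cdots=f_{i_r}(a)=0$, hence $f_i(a)=\sum_j c_{ij}f_{i_j}(a)=0$ for every $i$, so $a$ solves the whole system.

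I do not expect a genuine obstacle: the entire content is the dimension count, after which the conclusion follows from the single observation that vanishing at a fixed point is a linear condition on polynomials. Two points deserve care without affecting the substance. First, homogeneity is used only to make the statement non-vacuous; the argument itself never uses it and works over an arbitrary field, whether one seeks zeros in $K$ or in a fixed extension — indeed it shows that any family of polynomials of degree at most $d$ in $n$ variables has Helly number at most ${n+d\choose d}$ for the property ``having a common zero at a prescribed point''. Second, the degenerate range $m<{n+d\choose d}$ must be handled by the convention just mentioned, since otherwise a family with no common nonzero zero but fewer than $p$ members — for instance $x_1^d,\dots,x_n^d$ — would formally contradict the statement.
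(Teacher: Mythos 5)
Your argument is correct, and it is exactly the route the paper indicates (it does not spell out a proof, but states that the proof ``uses the linear algebraic structure of the vector spaces of polynomials of degree $d$ in $n$ variables''): the dimension count $\dim W = \binom{n+d}{d}$, a maximal linearly independent subfamily extended to a $p$-subset, and the linearity of evaluation at a point give the conclusion. Your two caveats (reading ``solution'' as a nonzero solution, and the convention needed when $m < p$) are sensible and do not affect the substance; note also that your observation that homogeneity is inessential is consistent with the paper's own example of affine plane curves with Helly number $\binom{d+2}{2}$.
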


As an example that resembles Theorem \ref{swanepoel},  take $\F=\{f_1(x,y),\dots,f_s(x,y)\}$ a family of affine real plane curves of degree at 
most $d$. If every $\delta={d+2 \choose 2}$ of the curves have a common real intersection point, then all curves in $\F$ have a real intersection point. 
The proof of Theorem \ref{thm:DF-Helly} uses the linear algebraic structure of the vector spaces of polynomials of degree $d$ in $n$ variables. This result was later
extended to \emph{quasialgebraic sets} in \cite{quasialgebraic}.

Other ``algebraic versions'' of Helly's theorem have been considered in recent years. E.g. Tropical geometry is a very active area where the usual arithmetic operations are replaced by the max-plus counterparts, i.e., maximum and addition are the two binary operations in $\R$ . In \cite{Gaubert:2010ke} the authors presented several exciting tropical versions of the classical theorems of convexity (see \cite{Briec:2004eb} for the first tropical version of Helly's theorem and the survey \cite{trop2}).
Finally, \cite{basugabi} shows a Helly-type theorem for finite families of subsets of $\Bbb{R}^n$ which are graphs of monotone maps definable in some $o$-minimal structure over $\Bbb{R}$.
This is a rather powerful framework, since one of the most famous examples of  $o$-minimal structures is that of  semi-algebraic sets. On the other hand, graphs of monotone maps are, generally, non-convex, and their intersections, unlike intersections of convex sets, can be topologically complicated. 

\subsection{Fractional versions and versions with tolerance}

The hypothesis statement of many Helly-type theorems requires that all subfamilies of cardinality $h$ satisfy a certain property. In this 
Subsection we discuss two versions which allow some relaxation of this assumption. 

The fractional version of Helly's theorem is obtained by changing the condition that \textit{all} $(d+1)$-tuples are intersecting to the condition that \textit{most} $(d+1)$-tuples are intersecting.

\begin{theorem}[Fractional Helly theorem; Katchalski, Liu \cite{Katchalski:1979bq}] 
Given a family $\F$ of $n$ convex sets in $\R^d$ and a parameter 
$\alpha >0$,   if at least $\alpha {n \choose d+1}$ of the $(d+1)$-tuples of $\F$ 
 have a common intersection, then there exists a point contained in at least $\beta n$ objects
of $\F$. Here $\beta$ is a positive constant depending only on $\alpha$ and $d$.
\end{theorem}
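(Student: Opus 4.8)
The plan is to pass from $\F$ to its nerve complex $N(\F)$ and then apply a face-counting inequality for collapsible complexes. The translation is immediate from the definition of the nerve: an intersecting $(d+1)$-tuple of $\F$ is the same thing as a $d$-dimensional face of $N(\F)$, so the hypothesis says $N(\F)$ has at least $\alpha{n\choose d+1}$ faces of dimension $d$; and a face of $N(\F)$ on $m$ vertices is the same as $m$ members of $\F$ with a common point, so the conclusion is equivalent to $N(\F)$ having a face on at least $\beta n$ vertices. Thus it suffices to show that the nerve of a finite family of convex sets in $\R^d$ with $n$ vertices and at least $\alpha{n\choose d+1}$ faces of dimension $d$ has a face on at least $\beta n$ vertices, for some $\beta=\beta(\alpha,d)>0$.

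By Wegner's theorem, recalled above, $N(\F)$ is $d$-collapsible, hence $d$-Leray. The crux is then a face-counting inequality for $d$-Leray complexes (Kalai's theorem, which settled a conjecture of Eckhoff and is proved via algebraic shifting together with the Kruskal--Katona theorem): if a $d$-Leray complex on $n$ vertices has no face on more than $m$ vertices, then it has at most ${n\choose d+1}-{n-m\choose d+1}$ faces of dimension $d$. Applying this to $N(\F)$, with $m$ the size of its largest face, gives
\[
\alpha{n\choose d+1}\ \le\ {n\choose d+1}-{n-m\choose d+1},
\qquad\text{i.e.}\qquad
{n-m\choose d+1}\ \le\ (1-\alpha){n\choose d+1}.
\]
Expanding the binomials yields $\prod_{i=0}^{d}\tfrac{n-m-i}{n-i}\le 1-\alpha$, whence $n-m\le(1-\alpha)^{1/(d+1)}n+O_d(1)$; so $N(\F)$ has a face on at least $\bigl(1-(1-\alpha)^{1/(d+1)}\bigr)n-O_d(1)$ vertices, and any constant $\beta$ below $1-(1-\alpha)^{1/(d+1)}$ works (this value is in fact optimal).

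The face-counting inequality is the only genuinely hard step; the rest is bookkeeping. For intuition, the regime $\alpha>(d+1)/(d+2)$ can be handled elementarily: count the pairs $(I,J)$ with $I$ an intersecting $(d+1)$-tuple and $J\supseteq I$ a $(d+2)$-set; by Helly's theorem a $(d+2)$-set with empty intersection contains at most $d+1$ intersecting $(d+1)$-subsets, while one with nonempty intersection contains all $d+2$, and comparing the two counts shows that a positive fraction of the $(d+2)$-sets are intersecting. Iterating this up the skeleton reaches only faces of size $d+O(1)$, however. Obtaining faces of size linear in $n$, and the sharp value of $\beta$, is precisely what forces the use of Kruskal--Katona-type control over how the $d$-faces of a $d$-Leray complex must concentrate onto a few large faces.
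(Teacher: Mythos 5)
Your proposal is correct, but it takes a genuinely different route from the paper. The paper gives the elementary Katchalski--Liu-style argument: choose a generic direction $v$, assign to each intersecting $(d+1)$-tuple the $d$-subtuple whose minimal $v$-halfspace is containment-maximal, use Helly's theorem to conclude that all $d+1$ sets contain that subtuple's $v$-minimum point, and then pigeonhole over the $d$-tuples; this is short, self-contained, and (as the survey notes) versatile enough to be recycled for several other fractional variants, but it only yields $\beta \ge \alpha/(d+1)$. You instead pass to the nerve, invoke Wegner's $d$-collapsibility, and outsource the whole difficulty to Kalai's upper bound theorem for $d$-Leray complexes (the Eckhoff conjecture, proved via algebraic shifting and Kruskal--Katona); this buys the sharp constant $\beta = 1-(1-\alpha)^{1/(d+1)}$, which the paper only cites as later work of Kalai and Eckhoff, at the cost of a black box far heavier than the statement itself. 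Two small remarks: your statement of the face-count inequality is off by a constant in the inner binomial --- the sharp form is $f_d \le {n \choose d+1} - {n-m+d \choose d+1}$ when no face has more than $m$ vertices (the extremal family being $m-d$ copies of $\R^d$ together with sets in general position), so what you wrote is a weaker but still valid consequence, and your $O_d(1)$ slack absorbs the discrepancy; and for small $n$ one should note that a single intersecting $(d+1)$-tuple already gives a point in $d+1 \ge \beta n$ sets, so $\beta$ may need to be shrunk (still depending only on $\alpha$ and $d$), which is the kind of bookkeeping you waved at but did not spell out.
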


\begin{proof}
	We may assume without loss of generality that the sets in $\F$ are compact.  Let $v$ be a direction.  We consider a $v$-halfspace to be a set of the form $\{x \in \R^d: \langle x , v \rangle \ge \alpha \}$ for some $\alpha$.  Given an intersecting $d$-tuple $A$ of $\F$, we can consider $H_A$ to be the containment-minimal $v$-halfspace such that $\left( \cap A \right) \bigcap H_A$ is not empty, which exists by the compactness of the sets.  Moreover, $v$ can be chosen so that for each $A$ as above $\left( \cap A \right) \bigcap H_A$ consists of a single point $p_A$.
	
	Given an intersecting $(d+1)$-tuple $B\subset \F$, we can assign to it its $d$-tuple $A$ for which $H_A$ (its associated containment minimal $v$-halfspace) is containment-maximal. Helly's theorem immediately gives that $B \cup \{H_A\}$ is intersecting, so all the sets in $B$ contain $p_A$.  A simple counting argument shows that if a positive fraction $\alpha$ of the $(d+1)$-tuples is intersecting, then this process assigns the same $d$-tuple $A$ at least $\beta n$ times for some $\beta$ depending only on $d$ and $\alpha$.  Thus, at least $\beta n$ sets contain $p_A$.
\end{proof}

The proof above only gives $\beta \ge \alpha /(d+1)$, but it is quite versatile. Several of the results below are a consequence of this method with little modifications needed.  It was later shown by Kalai \cite{Kalai:1984bg} and Eckhoff \cite{Eckhoff:1985hr} that $\beta = 1 - (1-\alpha)^{1/(d+1)}$, which in particular gives $\beta \to 1$ as $\alpha \to 1$ (Katchalski and Liu showed this limit as well, but not the optimal value for $\beta$).  Kalai's result has been extended to require weaker topological conditions on the sets \cite{hell-thesis, colin2012multinerves} giving topological versions of the fractional Helly theorem and the $(p,q)$ theorem.  At first glance the fractional Helly theorem shows the robustness of Helly's theorem.  In reality it goes further than that.
For instance, an integer version of the result above is also possible, as shown below.

\begin{theorem}[Alon, Kalai, Matou\v{s}ek, Meshulam \cite{Alon:2002wz}]
	Given a family $\F$ of $n$ convex sets in $\R^d$ and a parameter 
$\alpha >0$  such that $\alpha {n \choose {2^d}}$ of the  $2^d$-tuples of  $\F$ 
 have an integer point in their intersection, there exists an integer point contained in at least $\beta n$ sets of $\F$. Here $\beta$ is a positive constant depending only on $\alpha$ and $d$
\end{theorem}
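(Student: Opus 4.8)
The plan is to mimic, almost verbatim, the proof of the Fractional Helly theorem given above, replacing Helly's theorem by Doignon's theorem (Theorem~\ref{doignon}); accordingly, $d$-tuples and $(d+1)$-tuples are replaced by $(2^d-1)$-tuples and $2^d$-tuples throughout. Call a $j$-tuple \emph{good} if the common intersection of its members contains a point of $\Z^d$; by hypothesis at least $\alpha\binom{n}{2^d}$ of the $2^d$-tuples of $\F$ are good.

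First I would reduce to the case of bounded sets. Since $\F$ is finite there are only finitely many good $2^d$-tuples, so, picking one integer point in the intersection of each, there is a cube $Q$ containing all of these points; then the family $\{K\cap Q:K\in\F\}$ still has at least $\alpha\binom{n}{2^d}$ good $2^d$-tuples, while every member is now bounded. Hence it suffices to prove the statement under the additional assumption that all members of $\F$ lie in a fixed cube $Q$; in particular the common intersection of any subfamily meets $\Z^d$ in a finite (possibly empty) set. Next, fix a direction $v\in\R^d$ generic enough that no two distinct points of $Q\cap\Z^d$ have the same inner product with $v$, and for each good $(2^d-1)$-tuple $A$ let $p_A$ be the (now unique) point of $(\cap A)\cap\Z^d$ maximizing $\langle\,\cdot\,,v\rangle$.

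The heart of the argument is to assign to each good $2^d$-tuple $B$ a sub-$(2^d-1)$-tuple $A(B)$ such that $p_{A(B)}$ lies in every member of $B$. To do this, among the $2^d$ sub-$(2^d-1)$-tuples of $B$ choose one, $A=A(B)$, for which $\ell:=\langle p_A,v\rangle$ is smallest, and set $H_A=\{x:\langle x,v\rangle\ge\ell\}$. Now $B\cup\{H_A\}$ is a family of $2^d+1$ convex sets, and every $2^d$ of them have a common integer point: removing $H_A$ gives $B$ itself (good by assumption), while removing a set of $B$ leaves $A'\cup\{H_A\}$ for some sub-$(2^d-1)$-tuple $A'$ of $B$, and $(\cap A')\cap\Z^d$ contains $p_{A'}$, which lies in $H_A$ because $\langle p_{A'},v\rangle\ge\ell$ by minimality of $\ell$. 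By Doignon's theorem $\cap(B\cup\{H_A\})$ then contains an integer point $q$; since $q\in\cap A$ we have $\langle q,v\rangle\le\ell$, and since $q\in H_A$ we have $\langle q,v\rangle\ge\ell$, so $q=p_A$ by genericity of $v$. Thus $p_{A(B)}$ lies in every member of $B$, and in particular in the unique set of $B$ not belonging to $A(B)$.

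Finally I would run the counting exactly as in the fractional Helly proof: the assignment $B\mapsto A(B)$ maps at least $\alpha\binom{n}{2^d}$ good $2^d$-tuples into the $\binom{n}{2^d-1}$ sub-$(2^d-1)$-tuples, so some $(2^d-1)$-tuple $A$ receives at least $\alpha\binom{n}{2^d}/\binom{n}{2^d-1}=\alpha(n-2^d+1)/2^d$ of them; each such $B$ contributes a distinct set of $\F$ (the one outside $A$) containing $p_A$, and together with the members of $A$ itself we obtain an integer point $p_A$ lying in at least $\beta n$ sets of $\F$ for a suitable $\beta=\beta(\alpha,d)>0$ (the finitely many small values of $n$ being handled trivially). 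I do not expect a genuinely new difficulty here beyond checking that the fractional-Helly machinery survives the substitution; the two points requiring care are that Doignon's theorem must be applied to the $(2^d+1)$-set family $B\cup\{H_A\}$ — applying it to $B$ alone would be vacuous — and that one must first truncate to a box $Q$ so that $\langle\,\cdot\,,v\rangle$ actually attains its maximum on the finite sets $(\cap A)\cap\Z^d$ and genericity of $v$ is meaningful.
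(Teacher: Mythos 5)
Your proof is correct and follows exactly the route the paper intends: the survey states this result right after remarking that the Katchalski--Liu argument ``is quite versatile'' and yields the results below with little modification, and your write-up is precisely that adaptation, replacing Helly by Doignon (so $d$-tuples become $(2^d-1)$-tuples), with the truncation to a cube and the genericity of $v$ correctly supplying the compactness/uniqueness details that make the assignment $B\mapsto A(B)$ and the identification $q=p_A$ work. The pigeonhole count then gives $\beta$ on the order of $\alpha/2^d$, which suffices for the statement as given.
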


Again, $\beta \to 1$ as $\alpha \to 1$, showing the robustness of Doignon's theorem.

The technique can be pushed to show the robustness of quantitative Helly theorems for discrete sets $S$.

\begin{theorem}[Rolnick, Sober\'on \cite{Soberon:2015ts}]\label{theorem-fractional-quantitative}
Let $S \subset \R^d$ be a discrete set and $k >0$ an integer such that $h_k(S) < \infty$.  Given a parameter $\alpha >0$, let $\F$ be a family of $n$ convex sets in $\R^d$ such that the intersection of at least $\alpha {n \choose {h_k(S)}}$ of the  $h_k(S)$-tuples of  $\F$ 
 have at least $k$ points of $S$ in their intersection.  Then, there exists a subset of $k$ points of $S$ contained in at least $\beta n$ sets
of $F$. Here $\beta$ is a positive constant depending only on $\alpha, S$ and $d$. 
\end{theorem}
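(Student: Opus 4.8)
The plan is to imitate the sweeping-halfspace proof of the fractional Helly theorem given above, with the finiteness of $h_k(S)$ playing the role of Helly's theorem; in the special cases $S=\R^d,\ k=1$ (so $m=d+1$) and $S=\Z^d,\ k=1$ (so $m=2^d$) this reproduces the two fractional theorems stated just before. Write $m:=h_k(S)$. Two preliminary moves are needed: reduce to a compact family, so that ``the lowest $k$ points of $S$ inside an intersection'' is well defined even when $S$ is infinite, and fix a generic sweep direction.

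For the reduction, note that $\F$ has at most ${n \choose m}$ subfamilies of size $m$ whose intersection meets $S$ in at least $k$ points; choose one such $k$-point subset of $S$ for each of them, enclose this finite point set in a box $Q$, and replace every $C\in\F$ by $C\cap Q$. This only shrinks the sets and keeps every ``good'' $m$-tuple good, so it suffices to treat the new family; henceforth all sets lie in $Q$ and all intersections we consider meet $S$ in finitely many points. Now pick $v\in\R^d$ generic for the finitely many point sets $(\cap A)\cap S$, where $A$ runs over the $(m-1)$-subfamilies of good $m$-tuples, so that no two points of any such set share a hyperplane orthogonal to $v$. For each such $A$ we have $\abs{(\cap A)\cap S}\ge k$ by monotonicity; sweeping the halfspace $H_t=\{x:\langle x,v\rangle\le t\}$ upward, the count $\abs{(\cap A)\cap H_t\cap S}$ rises one unit at a time, so there is a first value $t_A$ at which it equals $k$. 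Set $H_A:=H_{t_A}$ and $P_A:=(\cap A)\cap H_A\cap S$, a distinguished $k$-point subset of $S$ contained in every member of $A$.

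The core of the argument assigns to each good $m$-tuple $B$ the $(m-1)$-subfamily $A=A(B)\subset B$ for which $t_A$ is maximal, and shows $P_A\subseteq\cap B$. Apply the defining property of $h_k(S)$ to the $(m+1)$-member family $B\cup\{H_A\}$: each of its $m$-element subfamilies has at least $k$ points of $S$ in its intersection --- the subfamily $B$ by hypothesis; the subfamily $A\cup\{H_A\}$ because $(\cap A)\cap H_A\cap S=P_A$; and any subfamily $A'\cup\{H_A\}$ with $A'\subset B$ because $t_{A'}\le t_A$ forces $H_{A'}\subseteq H_A$, so $P_{A'}\subseteq(\cap A')\cap H_A\cap S$. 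Hence $\cap(B\cup\{H_A\})$ contains at least $k$ points of $S$; since $\cap B\subseteq\cap A$, the set $\cap(B\cup\{H_A\})\cap S$ lies inside $(\cap A)\cap H_A\cap S=P_A$, forcing equality and in particular $P_A\subseteq\cap B$. A counting step then finishes: at least $\alpha{n \choose m}$ good $m$-tuples are distributed among only ${n \choose m-1}$ possible assigned subfamilies, so some $A^{\ast}$ is assigned by at least $\alpha(n-m+1)/m$ of them; writing each such $B$ as $A^{\ast}\cup\{C_B\}$ with the $C_B$ pairwise distinct, every $C_B$ and every member of $A^{\ast}$ contains the $k$-point set $P_{A^{\ast}}$, so those $k$ points of $S$ lie in at least $\beta n$ members of $\F$ for a suitable $\beta=\beta(\alpha,m)=\beta(\alpha,S,d)$ (the claim being vacuous for bounded $n$).

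The step proving $P_A\subseteq\cap B$ is where I expect the main difficulty: it rests on arranging the sweep so that the halfspaces $H_{A'}$ are nested along the $(m-1)$-subfamilies of $B$, which is precisely what allows every $m$-subfamily of $B\cup\{H_A\}$ to inherit $k$ common points of $S$ and thereby makes the quantitative Helly property applicable. The compactness reduction is a genuine necessity and not a cosmetic step, because for infinite $S$ the set $(\cap A)\cap S$ need have no ``lowest'' $k$ points in the direction $v$.
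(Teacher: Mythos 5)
Your argument is correct and is exactly the route the survey intends: it reuses the sweeping-halfspace proof given for the Katchalski--Liu fractional Helly theorem (which the paper says yields the quantitative version of Rolnick--Sober\'on "with little modifications"), replacing Helly's theorem by the finiteness of $h_k(S)$, the single minimum point $p_A$ by the lowest $k$ points $P_A$ of $S$ in $\cap A$, and adding the (needed) compactness/genericity preprocessing. Your key step --- applying the definition of $h_k(S)$ to $B\cup\{H_A\}$ with $H_A$ containment-maximal and using nestedness of the $H_{A'}$ --- is the same mechanism as in the paper's proof of the classical case, and the pigeonhole count giving $\beta\approx\alpha/h_k(S)$ matches the paper's $\beta\ge\alpha/(d+1)$ remark.
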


\begin{theorem}[Rolnick, Sober\'on \cite{Soberon:2015ts}]\label{theorem-fractional-volumetric}
Let $\varepsilon >0$ and $d>0$.  There is a constant $n_2(d, \varepsilon)$ such that the following holds.  Given a parameter $\alpha >0$  and a family $\F$ of $n$ convex sets in $\R^d$ such that the intersection of at least $\alpha {n \choose {n_2(d,\varepsilon)}}$ of the  $n_2(d, \varepsilon)$-tuples of  $\F$ 
 have volume at least one, then there exists a set $C\subset \R^d$ of volume at least $1-\varepsilon$ contained in at least $\beta n$ sets
of $F$. Here $\beta$ is a positive constant depending only on $\alpha, \varepsilon$ and $d$.

Moreover, $n_2(d, \varepsilon) = O_d\left(\varepsilon^{-(d^2-1)/4}\right)$. 
\end{theorem}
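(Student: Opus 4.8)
The plan is to marry the two proofs already displayed in this section: the polyhedral-approximation proof of the quantitative, non-fractional volume Helly theorem (the argument for Theorem~\ref{theorem-general-quantitative-helly}, which relies on Lemma~\ref{lemma-hyperplane-containment}), and the sweeping-and-counting proof of the classical fractional Helly theorem of Katchalski and Liu reproduced above. The shape of the exponent is explained by the identity $(d^2-1)/4=\frac{d-1}{2}\cdot\frac{d+1}{2}$: one invokes the polyhedral approximation of Definition~\ref{definition-hellytype-approx} not with error $\varepsilon$ but with a much smaller error $\delta$, since $\delta$ is amplified (roughly to order $\delta^{2/(d+1)}$) as it is carried through the fractional machinery. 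Choosing $\delta$ of order $\varepsilon^{(d+1)/2}$, so that the final loss is $\varepsilon$, one gets $p:=n(\vol,d,\delta)=\Theta_d\bigl(\delta^{-(d-1)/2}\bigr)=\Theta_d\bigl(\varepsilon^{-(d^2-1)/4}\bigr)$, and one sets $n_2(d,\varepsilon):=C_d\cdot d\cdot p$ with a dimensional constant $C_d$ giving slack for the final count.

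To set up, note that infinite-volume intersections are harmless (the cap construction below still applies, and if $\vol(\cap\F)\ge1-\varepsilon$ the conclusion is immediate with $\beta=1$), so assume the hypothesis provides at least $\alpha {n \choose n_2}$ tuples of size $n_2$ whose intersection has volume $\ge1$. Fix a generic direction $v$. For a tuple $A$ with $\vol(\cap A)\ge1$, let $H_A=\{x:\langle x,v\rangle\ge t_A\}$ be the $v$-halfspace with $\vol\bigl((\cap A)\cap H_A\bigr)=1-\delta$; this $t_A$ exists by continuity and, after the generic choice of $v$, is unique, so the cap $C_A:=(\cap A)\cap H_A$ is a convex set of volume $1-\delta$ depending only on $A$ and $v$.

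The combinatorial engine now follows the Katchalski--Liu proof. To each good $n_2$-tuple $B$ I would attach a \emph{core}, using the mechanism of the proof of Theorem~\ref{theorem-general-quantitative-helly}: outer-approximate $C_B$ by a polytope $P$ with at most $p$ facets and $\vol(P)\le1$, then apply Lemma~\ref{lemma-hyperplane-containment} to the family $B\cup\{H_B\}$ facet by facet to extract a subfamily $B_0\subseteq B$ of size at most $d\cdot p$ with $(\cap B_0)\cap H_B\subseteq P$; together with $v$, this $B_0$ determines a canonical cap of volume at least $1-O_d(\delta)$ inside $\cap B_0$. One then charges $B$ to $B_0$, choosing among the admissible cores the extremal one whose cap sits lowest in direction $v$. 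Since there are at least $\alpha {n \choose n_2}$ good tuples but only $\sum_{j\le d\cdot p}{n\choose j}$ possible cores, pigeonholing yields a single core $B_0$ such that, over all $n_2$-tuples charged to it, the ``extra'' elements outside $B_0$ number at least $\beta_1 n$ distinct sets $K$ --- exactly as a single $d$-tuple is hit by $\beta n$ sets in the classical proof. Each such $K$ contains the canonical cap of $B_0$ up to a volume loss controlled by the quantitative Helly estimate, so the $\beta_1 n$ sets $K$ together with the $\le d\cdot p$ sets of $B_0$ all contain a common set $C$ with $\vol(C)\ge1-\varepsilon$; setting $\beta:=\beta_1$, which as in Katchalski--Liu depends only on $\alpha$, $\varepsilon$ and $d$, finishes the argument.

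The step I expect to be the genuine obstacle is the volume bookkeeping in that last sentence. In the classical argument the $v$-extreme \emph{point} of a small intersection, once shown to lie in a larger intersection, is automatically the $v$-extreme point there as well, with no loss; a cap, however, survives passage to a larger intersection only up to a loss of volume, so one must (i) choose the extremal core precisely so that enlarging the intersection does not chop it off in direction $v$, and (ii) control the cumulative loss --- from the $d$ half-space ``charges'' per facet, from the Radon-type collision inside the sweeping hyperplane, and from the outer-polytope error --- by repeated use of Theorem~\ref{theorem-general-quantitative-helly} with parameter $\delta$. It is exactly this error budget that forces $\delta$ to be of order $\varepsilon^{(d+1)/2}$, hence $n_2(d,\varepsilon)=O_d(\varepsilon^{-(d^2-1)/4})$. (A matching lower bound follows, as for Theorem~\ref{theorem-precise-volume-helly}, by letting $\F$ be a large family of tangent half-spaces to a Euclidean ball of volume slightly larger than $1$, which cannot be outer-approximated to within volume factor $1-\varepsilon$ by a polytope with few facets.)
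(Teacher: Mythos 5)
The survey does not actually prove this theorem: it is quoted from Rolnick--Sober\'on \cite{Soberon:2015ts}, and the only guidance the text gives is the remark, after the Katchalski--Liu proof, that the sweeping argument ``is quite versatile'' and adapts with modifications. So your proposal has to stand on its own, and it has a genuine gap at exactly the point you flag. In the classical argument the canonical object attached to the pigeonholed core is a single \emph{point}: once Helly forces $\cap B \cap H_A \neq \emptyset$, that point, being the unique extreme point of $\cap A \cap H_A$, automatically lies in $\cap B$, so every set of every charged tuple contains \emph{one and the same} point. In your version the canonical object is a cap of positive volume, and what the machinery of Theorem~\ref{theorem-general-quantitative-helly} and Lemma~\ref{lemma-hyperplane-containment} delivers is only that each charged tuple's intersection (hence each set $K$) contains \emph{some} subset of the canonical cap of volume $\ge 1-O_d(\delta)$. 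These subsets differ from one charged tuple to the next: a set $K$ coming from a different tuple may bite into the middle of the cap rather than its bottom, and no choice of ``extremal core whose cap sits lowest in direction $v$'' prevents this, because the maximality trick controls only the sweep threshold, not where inside the cap the loss occurs. Intersecting the $\beta n$ individual large-volume subsets can therefore lose up to $\beta n \cdot O_d(\delta)$ of volume, which is unbounded as $n$ grows; so the sentence ``the $\beta_1 n$ sets $K$ together with the $\le d\cdot p$ sets of $B_0$ all contain a common set $C$ with $\vol(C)\ge 1-\varepsilon$'' is a non sequitur, and your proposed remedies (i) and (ii) budget per-tuple errors rather than the cumulative error over the $\beta n$ output sets. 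What the proof actually requires is a canonical set of bounded description that is contained \emph{as a set} in $\cap B$ for every charged tuple $B$ (the way the single Katchalski--Liu point is), and constructing such a set of volume $1-\varepsilon$ is the real content of \cite{Soberon:2015ts}; your sketch does not supply it.

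Two further cautions. Your closing parenthetical, that a matching lower bound $n_2(d,\varepsilon)=\Omega_d(\varepsilon^{-(d^2-1)/4})$ follows from half-spaces tangent to a ball, is unjustified: that construction gives the lower bound for Theorem~\ref{theorem-precise-volume-helly}, where \emph{every} tuple must have large intersection, whereas under the fractional hypothesis only an $\alpha$-fraction must; indeed the survey states that the dependence of $n_2(d,\varepsilon)$ on $\varepsilon$ ``is not clear'' and poses as an open problem whether $\varepsilon$ can be dropped entirely, which your claim would contradict. Finally, the heuristic $\delta\sim\varepsilon^{(d+1)/2}$ with amplification of order $\delta^{2/(d+1)}$ (cap height versus cap volume) is a plausible explanation of the shape of the exponent $(d^2-1)/4$, but in your write-up it is an a posteriori fit to the stated bound, not a derivation; as long as the containment step above is missing, there is no argument forcing that choice of $\delta$ or that exponent.
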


The dependence of $n_2(d,\varepsilon)$ on $\varepsilon$ in Theorem \ref{theorem-fractional-volumetric} is not clear.  It is possible for this fractional Helly to have a much nicer behavior than its Helly counterpart.  For certain functions, such as the diameter, this behavior can be observed \cite{Soberon-diameter}.  Thus, it would be interesting to know the answer to the following problem.

\begin{openproblem}
In Theorem \ref{theorem-fractional-volumetric}, is it possible to conclude that the resulting set $C$ has volume at least $1$ (i.e. can $\varepsilon$ be dropped entirely from the statement)?	
\end{openproblem}

The first indication that the fractional Helly theorem carries essentially different information than Helly's theorem is the following surprising result of Matou\v{s}ek and B\'ar\'any.  It says that for the integer fractional Helly theorem it is not necessary to check the $2^d$-tuples in order to get a positive-fraction intersection result, but the $(d+1)$-tuples are sufficient.  Namely

\begin{theorem}[B\'ar\'any, Matou\v{s}ek \cite{Barany:2003wg}]
	Given a family $\F$ of $n$ convex sets in $\R^d$ and a parameter 
$\alpha >0$  such that $\alpha {n \choose {d+1}}$ of the  $(d+1)$-tuples of  $\F$ 
 have an integer point in their intersection, then there exists an integer point contained in at least $\beta n$ sets
of $\F$. Here $\beta$ is a positive constant depending only on $\alpha$ and $d$.
\end{theorem}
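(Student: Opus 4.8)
The plan is to recast the statement as: the family of all \emph{convex lattice sets} in $\R^d$ (sets of the form $K\cap\Z^d$ with $K$ convex) has \emph{fractional Helly number} $d+1$, which strengthens the $2^d$-tuple version of Alon, Kalai, Matou\v{s}ek and Meshulam stated above. The whole point of the theorem is the gap between this $d+1$ and the $2^d$ of Doignon's theorem (Theorem~\ref{doignon}): knowing that a positive fraction of the $(d+1)$-tuples have a common integer point does \emph{not} force all the sets to share an integer point --- Doignon's bound is tight --- yet it does force a positive fraction of them to share one. The tools I would reach for are (i) the classical fractional Helly theorem of Katchalski and Liu \cite{Katchalski:1979bq} together with the moving-hyperplane proof given above, (ii) Doignon's theorem, and (iii) the fact that an arrangement of $n$ convex sets in $\R^d$ has only $O(n^d)$ cells, so that integer points fall into only polynomially many combinatorial types.

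First I would apply the classical fractional Helly theorem to the convex hulls $K_i=\conv(P_i)$ of the sets $P_i=K_i\cap\Z^d$: a tuple with a common integer point is in particular an intersecting tuple of convex sets, so a positive fraction of the $(d+1)$-tuples of the $K_i$ intersect and there is a real point $p$ lying in at least $\beta_1 n$ of the $K_i$. I would use this localization, together with a truncation of all sets to a large bounded region containing the relevant part of the configuration, to reduce to a situation where only finitely many integer points are in play (this also makes the next step well-posed). Then, mimicking the moving-hyperplane argument, I would fix a generic direction $v$ and attach to each ``good'' $(d+1)$-tuple $T$ a canonical witness, namely the $v$-extreme integer point of $\bigcap_{i\in T}K_i$; since this point is pinned down by the arrangement cell containing it, the witnesses that actually occur form a small set. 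A counting argument over these witnesses, using that each occurring witness lies in every set of the tuples it witnesses, should then yield an integer point common to $\beta n$ of the $P_i$.

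The hard part will be the quantitative bookkeeping in this last step, and this is where I expect the real work of \cite{Barany:2003wg} to lie. A naive pigeonhole is not enough: distributing $\alpha\binom{n}{d+1}=\Omega(n^{d+1})$ good tuples among $O(n^d)$ types yields only a type --- hence eventually an integer point --- witnessing $\Omega(n)$ tuples, and a point witnessing $t$ good $(d+1)$-tuples need only lie in $\Omega(t^{1/(d+1)})=\Omega(n^{1/(d+1)})$ of the sets, far short of the required linear bound. To obtain a \emph{linear} fraction one has to shrink the pool of relevant witnesses to a size independent of $n$, or extract a linear-sized sunflower of good tuples directly; it is at this point that Doignon's finite Helly number must be fed back in to keep the reduction under control. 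A secondary nuisance is the possible unboundedness of the intersections $\bigcap_{i\in T}K_i$, which leaves the ``$v$-extreme integer point'' undefined until the truncation above is carried out carefully.
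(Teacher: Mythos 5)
There is a genuine gap, and you essentially flag it yourself: the proposal never reaches the linear-fraction conclusion. Your witness-counting scheme provably falls short by your own computation --- assigning each of the $\alpha\binom{n}{d+1}$ good tuples a canonical lattice witness and pigeonholing yields a point witnessing $\Omega(n)$ tuples, hence lying in only $\Omega(n^{1/(d+1)})$ sets, not $\beta n$ --- and the suggested rescues (shrinking the witness pool to constant size, extracting a sunflower, ``feeding Doignon back in'') are not carried out and it is not clear any of them can be. The preliminary steps are also shaky: the classical fractional Helly theorem of Katchalski--Liu \cite{Katchalski:1979bq} applied to the hulls $\conv(K_i\cap\Z^d)$ produces a \emph{real} point in many hulls, which gives no control on common \emph{integer} points (the tightness examples for Theorem~\ref{doignon} show that sets sharing a real point need share no lattice point), and the premise that an arrangement of $n$ convex sets in $\R^d$ has only $O(n^d)$ cells is false for general convex sets (already two convex bodies in the plane can create arbitrarily many cells), so the bound on the number of witness ``types'' is unjustified as stated.

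The actual argument of B\'ar\'any and Matou\v{s}ek \cite{Barany:2003wg} is of a different nature: its engine is a structural lemma (their Corollary~2.2) reducing intersection properties of convex \emph{lattice} sets to intersection properties of ordinary convex sets in $\R^d$, which makes Helly-type statements with Helly number $d+1$ --- in particular the colorful Helly theorem of Lov\'asz --- applicable to the lattice setting, and the fractional conclusion with exponent $d+1$ is then extracted by the standard hypergraph/supersaturation machinery. That reduction lemma is precisely the missing ingredient in your outline: without something playing its role, neither the moving-hyperplane argument nor any counting over canonical witnesses bridges the gap between the trivial $\Omega(n^{1/(d+1)})$ bound and the required $\beta n$.
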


In this case, we do not get $\beta \to 1$ as $\alpha \to 1$, since $h(\Z_d) = 2^d$.  The proof relies on an argument reducing properties of $\Z^d$-convexity to $\R^d$-convexity \cite[Cor. 2.2]{Barany:2003wg}, which allows the use of Helly-type theorems for which the Helly number is $d+1$ (in particular, the colorful Helly theorem).  The question of which sets $S \subset \R^d$ allow for a fractional Helly theorem that only needs to check $(d+1)$-tuples instead of $h(S)$-tuples was answered by Averkov and Weismantel.

\begin{theorem}[Averkov, Weismantel \cite{Averkov:2010tva}]
	Let $S \subset \R^d$ be a closed set with $h(S) < \infty$.  Given a family $\F$ of $n$ convex sets in $\R^d$ and a parameter 
$\alpha >0$  such that $\alpha {n \choose {d+1}}$ of the  $(d+1)$-tuples of  $\F$ 
 have a point of $S$ in their intersection, there exists a point of $S$ contained in at least $\beta n$ sets
of $\F$. Here $\beta$ is a positive constant depending only on $\alpha$, $S$ and $d$.
\end{theorem}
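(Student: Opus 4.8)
The plan is to imitate B\'ar\'any and Matou\v{s}ek's proof of the integer fractional Helly theorem: combine the \emph{ordinary} fractional Helly theorem in $\R^d$ (whose Helly parameter is $d+1$) with a ``linearization'' step that replaces the $S$-membership conditions by genuine intersection conditions on an auxiliary family of ordinary convex sets. As always one first reduces to the case where every member of $\F$ is compact; since only finitely many subfamilies matter, each $K \in \F$ may be replaced by a compact convex subset still containing the points of $S$ that witness the common $S$-intersections under consideration.

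The heart of the matter is the linearization, and the natural vehicle is the $v$-direction argument used in the proof of the ordinary fractional Helly theorem in this survey, run ``over $S$''. Fix a generic direction $v$. For each $d$-subset $A \subset \F$ with $(\cap A) \cap S \neq \emptyset$, let $p_A$ be the point of $(\cap A) \cap S$ minimizing $\langle \cdot, v \rangle$; to each $(d+1)$-tuple $B$ with $(\cap B)\cap S \neq \emptyset$ (each $d$-subset $A$ of such a $B$ inherits $(\cap A)\cap S\neq\emptyset$, so $p_A$ is defined), assign the $d$-subset $A \subset B$ for which $\langle p_A, v\rangle$ is largest. If one could conclude $p_A \in \cap B$, the argument would close immediately: $p_A$ lies in $S$ by construction, there are only ${n \choose d}$ possible $d$-subsets, so some $A$ is assigned to a $\beta$-fraction of the ${n \choose d+1}$ witnessed tuples, each extra set in such a tuple contains $p_A$, and we obtain a point of $S$ in $\beta n$ sets. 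The reduction of $S$-convexity to $\R^d$-convexity of \cite[Cor.~2.2]{Barany:2003wg} is exactly what supplies ``$p_A \in \cap B$'': the Radon-partition argument that normally forces $p_A$ into $\cap B$ only produces a point of $\cap B \cap \R^d$ on the witnessing hyperplane, not a point of $S$, so one first shows that the ``combinatorial types'' in which a closed $S$ with $h(S) < \infty$ can certify a common point of $d+1$ convex sets form a \emph{bounded} family. I would read this off from Hoffman's characterization of $h(S)$ (Lemma~\ref{lem:hof}) --- and, when $S$ is discrete, from the finiteness of the face count of $S$-face-polytopes (Lemma~\ref{lem:hollow}) --- then pigeonhole on the type first (losing only an $(S,d)$-dependent factor) and on $A$ second; within a fixed type, ``$K_{i_1},\dots,K_{i_{d+1}}$ share an $S$-point of that type'' becomes an honest intersection statement about ordinary convex sets $\widehat K_{i_1},\dots,\widehat K_{i_{d+1}}$, to which the colorful Helly theorem (equivalently, the $v$-direction argument above, whose only Helly input is parameter $d+1$) applies.

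With the linearized family $\widehat \F$ in hand, the ordinary fractional Helly theorem (Katchalski--Liu, or with the optimal $\beta$ of Kalai and Eckhoff) applied with parameter $d+1$ yields a point $q$ lying in at least $\beta' n$ of the $\widehat K_i$, where $\beta'$ depends only on $\alpha$, $d$, and $S$; undoing the type identification, $q$ corresponds to a point of $S$ contained in at least $\beta' n$ of the original sets, and we take $\beta = \beta'$. For $S = \R^d \times \Z^k$ this recovers the B\'ar\'any--Matou\v{s}ek result, and for $S = \Z^d$ it also explains why $\beta$ need not tend to $1$ as $\alpha \to 1$: linearization discards, up front, a fixed $S$-dependent fraction of the witnessing tuples.

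The main obstacle is the linearization itself --- proving that the ways a closed set $S$ with $h(S) < \infty$ can witness a common point of a $(d+1)$-tuple of convex sets fall into boundedly many affinely controlled types, \emph{uniformly} over all such tuples. For discrete $S$ this is reasonably direct from the $S$-face-polytope picture; the delicate regime is genuinely non-discrete closed $S$, where $f(S) \le h(S)$ can be strict and where the ``$v$-minimal $S$-point'' $p_A$ need not even be unique. There one must lean on Hoffman's description of $h(S)$ via finite configurations $R \subset S$ in strictly convex position, together with a compactness argument over $\mathcal{C}(\R^d)$, to keep the number of types finite. Everything downstream of that --- the two pigeonholes and the appeal to fractional or colorful Helly --- is routine.
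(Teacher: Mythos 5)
The survey states this result without proof (it is quoted from \cite{Averkov:2010tva}), so there is no in-paper argument to compare against; judged on its own terms, your proposal has a genuine gap, and it sits exactly where you say it does: the ``linearization''. You assert that the ways a closed set $S$ with $h(S)<\infty$ can witness a common point of a $(d+1)$-tuple of convex sets fall into boundedly many ``affinely controlled types'', uniformly over all tuples, and that within a fixed type the statement ``$K_{i_1},\dots,K_{i_{d+1}}$ share an $S$-point of that type'' is equivalent to an honest intersection statement about auxiliary ordinary convex sets $\widehat K_{i_1},\dots,\widehat K_{i_{d+1}}$. Neither claim is made precise, and neither follows from the tools you cite. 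Lemma \ref{lem:hof} characterizes the \emph{number} $h(S)$ through critical configurations $R\subset S$ whose shaved-off hulls miss $S$; it classifies nothing about how an arbitrary $(d+1)$-tuple can be $S$-witnessed, and Lemma \ref{lem:hollow} only counts faces and vertices of extremal $S$-polytopes when $S$ is discrete. Already for $S=\Z^d$ the witness points range over infinitely many lattice points, and no finite list of types with the claimed equivalence is available --- if such a tuple-by-tuple translation into ordinary convexity existed, the reduction of $\Z^d$-convexity to $\R^d$-convexity would be a pigeonhole triviality, whereas it is the substantive content of Corollary 2.2 of \cite{Barany:2003wg} and of its generalization to closed $S$ in \cite{Averkov:2010tva}. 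For non-discrete closed $S$ (for instance $\R^m\times\Z^{d-m}$) matters are worse: the candidate types a priori form a continuum, the $v$-minimal $S$-point of a $d$-subset need not be unique, and the unspecified ``compactness argument over $\mathcal{C}(\R^d)$'' is being asked to do all the work without being stated.

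Everything surrounding that step is assembled correctly --- the reduction to compact sets, the $v$-direction bookkeeping, the double pigeonhole, and the final appeal to the ordinary (or colorful) fractional Helly theorem with parameter $d+1$ are routine once a reduction to $\R^d$-convexity is in hand --- but that reduction \emph{is} the theorem. In particular, your sketch never actually cashes out the hypothesis $h(S)<\infty$, and it does not engage with the central subtlety that the fractional Helly number drops to $d+1$ even though the plain Helly number $h(S)$ does not (so any tuple-by-tuple equivalence strong enough to run your argument must somehow fail to prove the false non-fractional statement). As written, the proposal defers the entire difficulty to an unproved structural claim; it is a plan for a proof rather than a proof.
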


It is not known if the same improvements hold for the volumetric and quantitative versions of the fractional Helly theorem.  Moreover, since the result of 
B\'ar\'any and Matou\v{s}ek shows that for fractional versions of Helly's theorem one may drastically weaken the conditions, it is natural to ask the following question.

\begin{openproblem}
Determine if the size of the subfamilies one needs to check in Theorem \ref{theorem-fractional-quantitative} can be lowered from $h_k (S)$.  
In particular, is it possible to obtain a bound that depends on $k$ and $d$ but not on $S$?
\end{openproblem}

 The results of Weismantel and Averkov show that this is the case when $k=1$.
It should be mentioned that additional conditions on the sets allow for improved bounds on $\beta$, or relaxed requirements on the size of the subfamilies needed to check. 
For example the authors of \cite{Barany:2014vt}  investigated fractional Helly theorems for finite families of boxes. Their main result is the fractional version of Theorem \ref{theorem-boxes-helly-simple} :

\begin{theorem}[B\'ar\'any, Fodor, Mart\'inez-P\'erez, Montejano, Oliveros, P\'or \cite{Barany:2014vt}]
	Let $\F$ be a family of $n$ boxes in $\R^d$, and let $\alpha \in \left(1 - \frac{1}{d}, 1\right]$ be a real number. There exists a real number $\beta (\alpha) > 0$ such that if there are $\alpha{n \choose {2}}$ intersecting pairs in $\F$, then $\F$ contains an intersecting subfamily of size $\beta n$.
\end{theorem}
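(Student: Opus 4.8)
The plan is to recast the statement as a question about cliques in a graph. Writing $B_j = I^j_1 \times \cdots \times I^j_d$ for the members of $\F$, let $G$ be the intersection graph of $\F$ and, for each coordinate $k$, let $G_k$ be the interval graph of $\{I^j_k : 1\le j\le n\}$; then $G = G_1 \cap \cdots \cap G_d$. By Theorem~\ref{theorem-boxes-helly-simple} a subfamily of $\F$ has a common point if and only if it is pairwise intersecting, i.e.\ if and only if it induces a clique in $G$. So it suffices to prove: if $G$ is an intersection of $d$ interval graphs on $n$ vertices and has at least $\alpha\binom n2$ edges with $\alpha > 1 - \frac1d$, then $\omega(G)\ge \beta n$ for some $\beta=\beta(\alpha)>0$. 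The threshold is sharp: split $[n]$ into $d$ equal groups and let the boxes of group $a$ be pairwise disjoint ``slabs'' bounded only in the $a$-th coordinate while agreeing with one huge common box in every other coordinate; then all $\binom d2 (n/d)^2 \approx (1-\frac1d)\binom n2$ cross pairs intersect, but two boxes of the same group are disjoint, so no $d+1$ of them have a common point.

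Next I would extract the consequence of the edge bound and set up a grid of witness points. Every non-edge of $G$ is a non-edge of some $G_k$, and there are fewer than $(1-\alpha)\binom n2 < \frac1d\binom n2$ of them, so each $G_k$ fails to contain fewer than $\frac1d\binom n2$ edges. Since interval graphs are perfect, $G_k$ has a proper colouring into $t_k := \alpha(G_k)$ classes; translating back to intervals, this yields points $x^1_k,\dots,x^{t_k}_k\in\R$ such that each $I^j_k$ contains one of them. This tags every box $B_j$ with a \emph{signature} in $[t_1]\times\cdots\times[t_d]$, and, crucially, all boxes sharing a signature $s$ contain the common grid point $(x^{s_1}_1,\dots,x^{s_d}_d)$ and hence form an intersecting subfamily. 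If $\prod_k t_k$ were bounded, the largest signature class would already contain $\ge n/\prod_k t_k$ boxes and we would be done; the content of the theorem is to show that the edge hypothesis forces either $\prod_k t_k$ small or a single signature class of linear size.

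The heart of the argument --- and the main obstacle --- is that this cannot be done one coordinate at a time: for the extremal family above, in each individual coordinate only a point lying in about $(1-\frac{1}{\sqrt d})n$ of the intervals is guaranteed, and intersecting $d$ such ``good'' sets over all coordinates is vacuous once $d\ge 2$. A global argument is needed, so I would induct on $d$. Fix one coordinate, say the $d$-th, partition $\F$ into the clique-cover classes $C_1,\dots,C_m$ of $G_d$ (the boxes sharing one of the points $x^i_d$), and observe that inside each $C_i$ the boxes automatically agree in coordinate $d$, so $C_i$ is a family of $(d-1)$-dimensional boxes to which the inductive hypothesis may apply; the base case $d=1$ is one-dimensional fractional Helly, valid for every $\alpha>0$. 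One then has to show that some $C_i$ is simultaneously of size $\Omega(n)$ and has at least a $(1-\frac1{d-1})$-fraction of intersecting pairs.

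To do this I would feed in the structural constraints: $m=\alpha(G_d)$ is at most $\alpha(G)$, and since any independent set of $G$ accounts for $\binom{\cdot}{2}$ of the non-edge budget, $m = O(\sqrt{1-\alpha}\,n)$; the non-edges internal to the classes satisfy $\sum_i\bigl(\binom{|C_i|}{2}-e(C_i)\bigr) < (1-\alpha)\binom n2$; and $\sum_i\binom{|C_i|}{2}$ is bounded below by convexity. Comparing these quantities, the slack between $d(1-\alpha)$ and $1$ is exactly what prevents the simultaneous occurrence of ``every large class is too sparse to recurse on'' and ``every class dense enough to recurse on is too small''; ruling this scenario out is the delicate step, and it is also where one reads off an explicit (though far from optimal) value of $\beta(\alpha)$, degrading by a controlled amount at each of the $d-1$ inductive steps.
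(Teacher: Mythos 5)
First, a calibration point: the survey states this theorem only as a cited result from \cite{Barany:2014vt} and contains no proof of it, so your proposal has to stand on its own. Your reduction is sound as far as it goes: by Theorem~\ref{theorem-boxes-helly-simple}, intersecting subfamilies of boxes are exactly the cliques of $G=G_1\cap\dots\cap G_d$, your extremal construction correctly shows the threshold $1-\frac{1}{d}$ cannot be lowered, and in each coordinate the object you want is a partition of $G_k$ into $\alpha(G_k)$ cliques (a clique cover, i.e.\ a proper colouring of the \emph{complement} of $G_k$ --- not a proper colouring of $G_k$ as written; for interval graphs the two relevant numbers coincide by perfection, so this is only a slip of terminology).

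The genuine gap is the inductive step, and it is not merely ``delicate'': as formulated it fails. You need some clique-cover class $C_i$ of $G_d$ that is simultaneously of size $\Omega(n)$ and internally dense enough to recurse on, and you propose to extract this from three facts: $m=O(\sqrt{1-\alpha}\,n)$, the internal non-edge budget, and the convexity lower bound on $\sum_i\binom{|C_i|}{2}$. These constraints do not imply any such dichotomy. Take $d=2$, with $n/2$ slabs $[2i,2i+1]\times[0,1]$ (pairwise disjoint) and $n/2$ boxes all equal to $[0,2n]\times[0,1]$. About $\frac{3}{4}\binom{n}{2}$ pairs intersect, comfortably above the threshold $\frac{1}{2}$, yet a minimum clique cover of $G_1$ may match each slab with one big box, so every class has size exactly $2$; this cover satisfies all three of your constraints ($m=n/2$, zero internal non-edges, $\sum_i\binom{|C_i|}{2}=n/2$), and no class has linear size. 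The conclusion of the theorem still holds, of course (the big boxes form an intersecting half of the family, recovered either by decomposing along the other coordinate or by the cover of $G_1$ that puts all big boxes into one class), which shows the missing content is exactly how the coordinate and the clique cover are to be chosen, and a proof that a good choice exists for every $\alpha>1-\frac{1}{d}$; nothing in the proposal addresses this. Relatedly, the grid-point/signature observation, made quantitative in the obvious way (a coordinate with $m_k$ disjoint pairs admits a point missing only $O(\sqrt{m_k})$ intervals), gives a nonempty conclusion only for $\alpha\ge 1-c/d^{2}$, so it cannot reach the stated range on its own --- you acknowledge this, but it means the entire weight of the theorem rests on the step that is absent. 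As it stands this is a reasonable plan with a correct reduction and a correct extremal example, not a proof.
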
 

  An example shows that the bounds for $\alpha$ are best possible.

\begin{openproblem}
Is there a fractional version of Theorem \ref{theorem-karasev-translates}?	
\end{openproblem}
  
 Another variation of the fractional Helly theorem is a fractional Breen theorem \cite{Barany:2006jh}. There, when many $(d+1)$-tuples of a family of convex sets have a star-shaped union, then the conclusion is that many of the sets must have a point in common.   
   
\begin{openproblem}
Can one prove similar specialized fractional Helly theorems for restricted families of convex bodies besides boxes? 
\end{openproblem}
   
Fractional Helly theorems work in ambient spaces much different than $\rr^d$.  Matou\v{s}ek showed a purely combinatorial version of the fractional Helly theorem, which works on any set system $\G$ of bounded VC-dimension (we define this below, but recommend \cite{Matousek:2009ur} for details).  It should be stressed that Matou\v{s}ek's result gives a large family of fractional Helly theorems for families of sets that do not admit a Helly theorem.  Earlier examples of fractional Helly theorems that did not have a Helly theorem were already known (see Theorem \ref{theorem-fractional-hyperplanes}, for instance). 

Given a set $X$ and a set system $\G$ on $X$, we say that the VC-dimension of $\G$ is the largest size of a set $A$ that is \textit{shattered} by $\G$.  A set $A$ is shattered if every subset of $A$ can be written as $A \cap G$ for some $G \in \G$.  For example, the family of hyperplanes in $\R^d$ has VC-dimension $d$.  Plenty of examples of geometric set systems with bounded VC-dimension can be found in \cite{Matousek:2009ur}.

The fractional Helly number given by Matou\v{s}ek is in terms of the \textit{dual shatter function} of the set system.  The dual shatter function of a set system $\G$ is a function $\pi^*_{\G}: \N \to \N$ where $\pi^*_{\G} (m)$ is the maximum number of non-empty fields of the Venn diagram of $m$ subsets of $\G$.  In other words, given a set $A$ of $m$ subsets of $\G$, it is the largest possible number of subsets $B \subset A$ such that there are elements contained in every set of $B$ but in no set of $A \setminus B$.  It is known that $\pi^*_{\G} (m) = o(m^k)$ for some $k$ if and only if the VC-dimension of $\G$ is bounded.

\begin{theorem}[Matou\v{s}ek \cite{Matousek:2004cs}]\label{theorem-fractional-vc-dimension}
	Let $\F$ be a set system whose dual shatter function satisfies $\pi^*_{\F}$ satisfies $\pi^*_{\F}(m) = o(m^k)$.  Then, for any finite subfamily $\F' \subset \F$ where $\alpha {{|\F'|}\choose{k}}$ of the the $k$-tuples of $\F'$ are intersecting, there is an intersecting subfamily of $\F'$ of size at least $\beta |\F'|$, where $\beta>0$ only depends on $\alpha$ and $k$. 
\end{theorem}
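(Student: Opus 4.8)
The plan is to abandon the ambient space entirely and argue combinatorially on the Venn diagram of the family. Fix a finite subfamily $\F' = \{S_1,\dots,S_n\}\subseteq\F$ and let $X$ be the set of non-empty atoms (cells) of the Venn diagram of $S_1,\dots,S_n$, so that $|X|\le \pi^*_{\F}(n)$ and each $S_i$ is recovered as $\{x\in X: x\in S_i\}$. Under this dictionary an intersecting $k$-tuple is a $k$-subset $I\subseteq\F'$ admitting a common atom $x_I\in X$, and an intersecting subfamily of size $\beta n$ is an atom $x\in X$ of degree $d(x):=|\{i: x\in S_i\}|\ge\beta n$. So it suffices to prove: if at least $\alpha{n\choose k}$ of the $k$-subsets of $\F'$ have a common atom, then some atom has degree $\ge\beta n$, with $\beta=\beta(\alpha,k)$.

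A first estimate shows the shape of the argument but falls short: choosing one common atom $x_I$ per intersecting $k$-tuple and double counting gives $\alpha{n\choose k}\le\sum_{x\in X}{d(x)\choose k}$, and since there are only $o(n^k)$ atoms this already forces $\max_x d(x)\to\infty$ — but not the needed linear bound, because it uses $\pi^*_{\F}(m)=o(m^k)$ at the scale $m=n$. The idea is to use it at a \emph{bounded} scale instead. Since ${m\choose k}=\Theta(m^k)$ while $\pi^*_{\F}(m)=o(m^k)$, there is a constant $p=p(\F,k)$ with $\pi^*_{\F}(p)<{p\choose k}$. This forbids a configuration: there are no $p$ sets $T_1,\dots,T_p\in\F$ together with points $y_I$, one for each $I\in{[p]\choose k}$, such that $y_I$ lies in exactly the sets $\{T_j: j\in I\}$, for otherwise the $y_I$ would be ${p\choose k}$ distinct atoms of $T_1,\dots,T_p$.

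The heart of the proof is to promote the global density hypothesis to a single atom of linear degree using this forbidden configuration. I would argue by contradiction: suppose every atom has degree $<\beta n$ for a $\beta=\beta(\alpha,k)$ to be fixed, and build a forbidden $(p,k)$-configuration. Processing the sets one at a time, one keeps a partial configuration $T_1,\dots,T_j$ carrying private witnesses $y_I$ for all $I\in{[j]\choose k}$; to append $T_{j+1}$ one needs, for each $(k-1)$-subset $J\subseteq[j]$, some set containing a not-yet-used atom of $\bigcap_{i\in J}T_i$. Such sets exist in quantity because the $\alpha{n\choose k}$ intersecting $k$-tuples force many $(k-1)$-subfamilies to extend to intersecting $k$-tuples through fresh atoms, while the degree cap $d(x)<\beta n$ ensures a positive fraction of the still-available sets avoid every atom designated so far. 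Choosing $\beta$ small enough that this process survives $p$ steps yields the forbidden configuration, a contradiction; alternatively the same bookkeeping can be packaged as a supersaturation/Ramsey statement producing a subfamily of size $\gamma(\alpha,k)\,n$ all of whose intersecting $k$-tuples are witnessed by one common atom.

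The main obstacle is exactly this last step: the crude atom-degree double count cannot beat a sublinear bound, so one must feed in the shatter-function hypothesis through the constant-size forbidden configuration and then control the interaction between ``many intersecting $k$-tuples'', ``few atoms at every bounded scale'', and ``no atom of large degree'' — keeping the loss independent of $n$. That balancing act is the whole content of Matou\v{s}ek's theorem. It is also the step with no counterpart in the directional-minimum proof of the classical fractional Helly theorem given earlier in this subsection: there is no ambient $\rr^d$ here to sweep a half-space across, so the geometry must be replaced wholesale by VC-type counting.
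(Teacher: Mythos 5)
Your setup is sound: passing to the atoms of the Venn diagram, observing that the crude double count $\alpha\binom{n}{k}\le\sum_x\binom{d(x)}{k}$ over $o(n^k)$ atoms only forces $\max_x d(x)\to\infty$ rather than $\max_x d(x)\ge\beta n$, and isolating a constant $p$ with $\pi^*_{\F}(p)<\binom{p}{k}$ are all correct first moves. But the heart of the argument --- and you say so yourself --- is missing, and the contradiction scheme you sketch does not close. The obstruction supplied by $\pi^*_{\F}(p)<\binom{p}{k}$ is only that $\binom{p}{k}$ witness points cannot occupy pairwise distinct cells of the arrangement of $T_1,\dots,T_p$; if two witnesses $y_I,y_{I'}$ land in the same cell, nothing is contradicted --- one merely obtains an intersecting $(k{+}1)$-tuple among the $T_j$, which is perfectly consistent both with the hypothesis and with every atom having degree far below $\beta n$ (even degree at most $p+O(1)$). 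The degree cap $d(x)<\beta n$ only limits how many sets of the \emph{whole} family contain a designated atom; it gives no control whatsoever over the membership pattern of a witness atom \emph{within the bounded configuration}, which is exactly what the ``$y_I$ lies in exactly $\{T_j:j\in I\}$'' (or at least ``all witness cells distinct'') requirement demands. So the greedy process has no mechanism to guarantee freshness or exactness of witnesses relative to sets chosen earlier or later, and choosing $\beta$ small cannot repair this: the failure mode is bounded-size coincidences among the $T_j$, not large atom degrees. Your fallback sentence (a supersaturation/Ramsey statement producing $\gamma n$ sets all of whose intersecting $k$-tuples share one atom) is essentially a restatement of a strengthened form of the conclusion, not an argument for it.

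For calibration: the survey states this theorem without proof and defers entirely to Matou\v{s}ek \cite{Matousek:2004cs}, so there is no in-paper argument to compare against; the passage from ``positive fraction of intersecting $k$-tuples'' plus ``few cells at every bounded scale'' to a \emph{linear-size} intersecting subfamily is genuinely the content of that paper and requires machinery (supersaturation-type extraction of bounded $k$-partite configurations combined with the cell count, and a nontrivial endgame) that your outline does not yet contain. As it stands, the proposal is an honest and well-oriented plan with the decisive step left open, not a proof.
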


An important corollary of this result is a fractional Helly theorem for semi-algebraic sets.  Recall that a semi-algebraic set in $\R^d$ is defined by a Boolean combination of a bounded number of polynomial inequalities of bounded degree.  Here the maximum of the number of polynomials used and their degrees is called the \textit{complexity} of the system.

\begin{corollary}\label{corsemialgfrac}
	For every fixed $B$, and $\F$ a finite family of \textit{semi-algebraic} sets in $\R^d$ of complexity at most $B$, if at least $\alpha {{|\F|}\choose{d+1}}$ of the $(d+1)$-tuples of $\F$ intersect, there is a subfamily of $\F$ of size at least $\beta |\F|$ which is intersecting.  The constant $\beta>0$ only depends on $B, d$, and $\alpha$.
\end{corollary}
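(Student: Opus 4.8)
The plan is to obtain this as an immediate consequence of Matou\v{s}ek's fractional Helly theorem (Theorem~\ref{theorem-fractional-vc-dimension}), applied with $k = d+1$. The only thing to verify is that the set system $\G$ of \emph{all} semi-algebraic subsets of $\R^d$ of complexity at most $B$ has a dual shatter function satisfying $\pi^*_{\G}(m) = o(m^{d+1})$; in fact I would establish the stronger bound $\pi^*_{\G}(m) = O_{B,d}(m^d)$, and since $m^d = o(m^{d+1})$ this is more than enough. This is also essentially the best one can hope for, since a family of halfspaces already has $\pi^*(m) = \Theta(m^d)$, so $d+1$ is the optimal tuple size for this class in Matou\v{s}ek's theorem.

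The first step is to reduce the count of non-empty Venn fields to a count of realizable sign conditions of polynomials. Take $m$ sets $A_1,\dots,A_m \in \G$. Each $A_i$ is a Boolean combination of at most $B$ polynomial inequalities of degree at most $B$, so altogether the $A_i$ are described by a collection $\mathcal{Q}$ of at most $Bm$ polynomials of degree at most $B$ in $d$ variables. Membership of a point $x$ in each $A_i$ is determined by the sign vector $\bigl(\operatorname{sign} q(x)\bigr)_{q \in \mathcal{Q}}$; consequently every field $\bigcap_{i \in I} A_i \cap \bigcap_{i \notin I}(\R^d \setminus A_i)$ of the Venn diagram of $\{A_1,\dots,A_m\}$ is a union of the sets on which this sign vector is constant. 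Hence such a field is non-empty only if it contains at least one realizable sign condition of $\mathcal{Q}$, so the number of non-empty fields is at most the number of realizable sign conditions of $\mathcal{Q}$.

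The second step is to invoke the classical Milnor--Thom--Oleinik--Petrovsky bound from real algebraic geometry: a family of $N$ polynomials of degree at most $D$ in $d$ real variables realizes at most $O\bigl((ND)^d\bigr)$ sign conditions, with the implied constant depending only on $d$. Applying this with $N = Bm$ and $D = B$ gives $O_{B,d}(m^d)$ realizable sign conditions, hence $\pi^*_{\G}(m) = O_{B,d}(m^d) = o(m^{d+1})$. Feeding this bound into Theorem~\ref{theorem-fractional-vc-dimension} with $k = d+1$ yields exactly the stated conclusion, with $\beta > 0$ depending only on $\alpha$ and $d+1$, and hence in particular only on $\alpha$, $d$, and $B$. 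The only step with real content is the sign-condition bound, which is a standard black box; everything else is the bookkeeping of passing from ``bounded-complexity semi-algebraic sets'' to ``boundedly many polynomials of bounded degree'' and from sign cells to Venn fields. The one subtlety is to make sure the exponent in the dual shatter bound is exactly $d$ rather than $d+1$, so that Matou\v{s}ek's theorem can be applied at the value $k = d+1$ asserted by the corollary; this is precisely what the $O\bigl((ND)^d\bigr)$ bound provides.
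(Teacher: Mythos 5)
Your proof is correct and follows exactly the route the paper intends: the paper states this as an immediate corollary of Theorem~\ref{theorem-fractional-vc-dimension}, and your verification that the dual shatter function of bounded-complexity semi-algebraic sets is $O_{B,d}(m^d)=o(m^{d+1})$ via sign conditions and the Oleinik--Petrovsky--Milnor--Thom bound is precisely the standard argument being invoked. No gaps; the only caveat is that $\beta$ inherits a dependence on the constants in the dual shatter bound (hence on $B$ and $d$), which you correctly note and which matches the statement of the corollary.
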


\begin{openproblem}
Is there an integral version of Corollary \ref{corsemialgfrac}?	
\end{openproblem}

The fractional Helly theorem for semi-algebraic sets is not a surprise. The intersection patterns of these families of sets are extremely rich in combinatorial properties (see, for instance, \cite{Conlon:2014fx, Fox:2015uo, Alon:2005hj} and the references therein).

A stronger notion of robustness of Helly's theorem than the one presented by the fractional versions is found in the versions with tolerance.  In this case, instead of allowing a positive-fraction subfamily not to be included in the intersecting subfamily, we only allow avoidance of a subfamily of fixed size.

These results are related to the Erd\H{o}s-Gallai numbers $\eta(d+1,t+1)$, which are constants that come up when studying vertex covers in hypergraphs (see  \cite{AG61} for a proper definition).  The following bound was obtained by Tuza \cite{T89}.
\[
\eta(d+1,t+1) < \binom{d+t+1}{d} + \binom{d+t}{d}.
\]
In particular, the inequality above implies that the case $t=0$ the next theorem directly implies Helly's theorem.
\begin{theorem}[Helly with tolerance; Montejano, Oliveros \cite{Montejano:2011vs}] Let $\eta(d+1,t+1)$ be the Erd\H{o}s-Gallai number for $d+1$ and $t+1$.
	Given a finite family $\F$ of convex sets in $\R^d$, if out of every $\eta(d+1,t+1)$ of the members of $\F$ there is a point in the intersection of all, with the exception of at most $t$ sets, then there is a point in the intersection of all members of $\F$, with the exception of at most $t$ sets. 
\end{theorem}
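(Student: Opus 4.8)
The plan is to translate the statement into a question about transversals (vertex covers) in a hypergraph, and then feed it to the Erd\H{o}s--Gallai theory of covers that underlies the very definition of $\eta(d+1,t+1)$. Throughout we may assume every member of $\F$ is non-empty and that $|\F|\ge\eta(d+1,t+1)$, since otherwise the hypothesis already speaks about the whole family and there is nothing to prove.

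The crucial step is a dictionary, valid for every finite subfamily $\G\subseteq\F$. Call $\G$ \emph{$t$-pierceable} if some point of $\R^d$ belongs to all but at most $t$ members of $\G$, i.e.\ there is $T\subseteq\G$ with $|T|\le t$ and $\bigcap(\G\setminus T)\ne\emptyset$. Applying Helly's theorem (the finite form stated at the start of the paper) to $\G\setminus T$, this intersection is non-empty exactly when $\G\setminus T$ contains no inclusion-minimal subfamily with empty intersection. Hence, if $H(\G)$ denotes the hypergraph with vertex set $\G$ whose edges are the inclusion-minimal subfamilies of $\G$ with empty intersection, then $\G$ is $t$-pierceable if and only if $H(\G)$ has a transversal of size at most $t$, that is, $\tau(H(\G))\le t$. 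Two facts come with this dictionary: by Radon's lemma every edge of $H(\G)$ has size between $2$ and $d+1$; and for $\G\subseteq\F$ the hypergraph $H(\G)$ is exactly the sub-hypergraph of $H(\F)$ induced on the vertex set $\G$, since minimality of an empty subfamily does not depend on the ambient family.

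Now I would argue by contraposition. Assume $\F$ is not $t$-pierceable, so $\tau(H(\F))\ge t+1$. Pick $W\subseteq\F$ that is inclusion-minimal among subfamilies with $\tau(H(W))\ge t+1$. By minimality, deleting any vertex of $H(W)$ lowers its transversal number to at most $t$, so $H(W)$ is a vertex-$\tau$-critical hypergraph with $\tau=t+1$ and all edges of size at most $d+1$. The content of the Erd\H{o}s--Gallai theory \cite{AG61} is precisely a bound on the number of vertices of such a hypergraph, and this number is $\eta(d+1,t+1)$; Tuza's inequality \cite{T89} makes it explicit. Thus $|W|\le\eta(d+1,t+1)$. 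Finally extend $W$ to a subfamily $W'\subseteq\F$ with $|W'|=\eta(d+1,t+1)$, which is possible since $|\F|\ge\eta(d+1,t+1)$. The property of not being $t$-pierceable passes to superfamilies (a point meeting all but $\le t$ members of $W'$ meets all but $\le t$ members of $W$), so $W'$ is not $t$-pierceable, contradicting the assumption that every subfamily of size $\eta(d+1,t+1)$ is.

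I expect the combinatorial ingredient --- the finiteness, and Tuza's explicit bound, for the number of vertices of a vertex-$\tau$-critical hypergraph with edges of bounded size --- to be the main obstacle; the Helly-based dictionary is routine. One subtlety worth flagging is that inclusion-minimal empty subfamilies may have size strictly less than $d+1$ (two disjoint sets already form one), which is why one must phrase $H(\G)$ via minimal empty subfamilies rather than via $(d+1)$-tuples with empty intersection, and why the harmless lower bound $|\F|\ge\eta(d+1,t+1)$ is used for the closing enlargement step.
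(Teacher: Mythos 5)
The paper never actually proves this theorem: it only records that ``the proof is a consequence of combinatorial properties of intersection hypergraphs'' and defers to Montejano and Oliveros \cite{Montejano:2011vs}, so your proposal is a reconstruction of precisely the route the paper alludes to, and it is essentially correct: the dictionary ``$\G$ is $t$-pierceable iff the hypergraph of inclusion-minimal empty subfamilies of $\G$ has transversal number at most $t$'', the rank bound $\le d+1$ via Helly/Radon, the heredity of the hypergraph under passing to subfamilies, and the closing monotonicity-plus-enlargement step are all sound. The one step you should tighten is the appeal to the Erd\H{o}s--Gallai/Tuza bound: your minimal witness $W$ gives a \emph{vertex}-critical hypergraph (deleting any vertex drops $\tau$ below $t+1$), whereas $\eta(d+1,t+1)$ is defined through $\tau$-critical hypergraphs in the \emph{edge} sense (removing any edge decreases $\tau$), which is the object the cited theory actually bounds. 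The patch is two lines: inside $H(W)$ choose an inclusion-minimal collection $E'$ of edges with $\tau(E')=t+1$ (it exists, and $\tau(E')$ cannot exceed $t+1$ since deleting an edge lowers $\tau$ by at most one); any vertex of $W$ outside $\bigcup E'$ could be deleted without destroying $E'$, contradicting your vertex-minimality, so $W=\bigcup E'$ and $(W,E')$ is edge-critical with no isolated vertices, whence $|W|\le\eta(d+1,t+1)$ --- alternatively, start from the edge-minimal witness and let $W$ be the union of its edges, which avoids vertex-criticality altogether. A last cosmetic point: discarding empty members of $\F$ is not needed, since an empty set just contributes a size-one edge and the rank-$\le(d+1)$ hypothesis still holds.
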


The proof of this theorem is a consequence of combinatorial properties of intersection hypergraphs. Their result is stated in a very general form \cite[Thm. 1.1]{Montejano:2011vs}.  If we apply this to $S$-Helly numbers, we immediately obtain the following theorem.

\begin{theorem}[Helly with tolerance over $S \subset \R^d$] Let $k>0$ be a integer and $S \subset \R^d$ be a subset with a finite quantitative Helly number $h_k(S)$.  Given a finite family $\F$ of convex sets in $\R^d$, if out of every $\eta(h_k(S),t+1)$ sets in $\F$ there are $k$ points of $S$ contained in all, with the exception of at most $t$ sets, then there is a set of $k$ points of $S$ contained in all members of $\F$, with the exception of at most $t$.
\end{theorem}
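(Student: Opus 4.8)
The plan is to obtain this statement as an immediate instance of the general form of the tolerance theorem of Montejano and Oliveros, \cite[Thm.~1.1]{Montejano:2011vs}, exactly as the paragraph preceding the statement announces: one recasts the condition ``$k$ points of $S$ lie in the common intersection'' as an abstract intersection property with a finite Helly number, and then quotes their theorem.

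First I would package the hypothesis combinatorially so that it matches the framework of \cite{Montejano:2011vs}. Fix $\F = \{K_1, \dots, K_n\}$, let the ``ground set'' of potential witnesses be $\binom{S}{k}$, the collection of all $k$-element subsets of $S$, and to each $K_i$ assign $\widehat{K}_i := \{\, T \in \binom{S}{k} : T \subseteq K_i \,\}$. A subfamily of $\F$ then has $k$ points of $S$ in its common intersection precisely when the corresponding sets $\widehat{K}_i$ have a common element. The assumption $h_k(S) < \infty$ says exactly that, for every family $\F$ of convex sets in $\R^d$, the derived set system $\{\widehat{K}_1, \dots, \widehat{K}_n\}$ has a common element as soon as every $h_k(S)$ of its members do; that is, ``having $k$ points of $S$ in common'' is a Helly-type property with Helly number $h_k(S)$.

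Next I would apply \cite[Thm.~1.1]{Montejano:2011vs}. Its proof proceeds through combinatorial properties of intersection hypergraphs and applies to any finite family of sets carrying a finite Helly number $h$ for the ``common element'' property: if out of every $\eta(h, t+1)$ of the members all but at most $t$ share a common element, then all but at most $t$ members of the whole family share a common element, where $\eta(h, t+1)$ is the corresponding Erd\H{o}s--Gallai number. Applying this with $h = h_k(S)$ to $\{\widehat{K}_i\}$ produces a common witness $T \in \binom{S}{k}$, i.e. a set of $k$ points of $S$, contained in all but at most $t$ of the $\widehat{K}_i$ --- equivalently, contained in all members of $\F$ with at most $t$ exceptions --- which is the claimed conclusion.

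The only genuine point to check --- and what I expect to be the main (though minor) obstacle --- is that the general statement \cite[Thm.~1.1]{Montejano:2011vs} really does apply to an arbitrary intersectional set system with no hidden convexity or topological assumptions beyond the finiteness of the Helly number, so that the reduction to the derived system $\{\widehat{K}_i\}$ is legitimate. Granting this, the Tuza bound on $\eta(d+1, t+1)$ carries over with $d+1$ replaced by $h_k(S)$, and the specialization $t = 0$ recovers the quantitative $S$-Helly statement, as it should.
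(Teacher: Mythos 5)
Your proposal is correct and follows essentially the same route as the paper, which likewise obtains the statement as an immediate application of the general form of the Montejano--Oliveros tolerance theorem \cite[Thm.~1.1]{Montejano:2011vs} to the abstract intersection property defined by $h_k(S)$; your explicit reduction via the derived system $\widehat{K}_i$ just spells out the paper's one-line deduction.
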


\subsection{Piercing numbers and $(p,q)$ theorems}

When Helly's condition does not hold there may not be a single point in the intersection of all members of a family of convex sets.  However, it is still possible that using few points we can intersect all the sets in the family, effectively bounding its \textit{piercing number}. More precisely, given a family $\F$ of non-empty convex sets in $\R^d$, we say that the piercing number of $\F$, denoted by $\pi(\F)$ is the minimum number of points needed to intersect every member of $\F$.  If no finite set can accomplish this, we say $\pi (\F) = \infty$. 

A popular weakening of Helly's condition is the so called $(p,q)$ property.  We say that a family $\F$ of convex sets in $\R^d$ has the $(p,q)$ property if \textit{out of every $p$ sets in $\F$, there are $q$ which are intersecting}.  A classic conjecture of Hadwiger and Debrunner \cite{Hadwiger:1957we} asked if this property was enough to bound the piercing number of the family. This was answered positively by Alon and Kleitman.

\begin{theorem}[$(p,q)$ theorem; Alon, Kleitman \cite{Alon:1992gb}]
	Given $p \ge q \ge d+1$, there is a constant $c = c(p,q,d)$ such that any finite family $\F$ of convex sets in $\R^d$ with the $(p,q)$ property satisfies $\pi (\F) \le c$.
\end{theorem}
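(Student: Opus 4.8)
The plan is to reproduce the Alon--Kleitman argument, whose two essential ingredients are the fractional Helly theorem (available to us above) and an $\varepsilon$-net theorem for convex sets. First I would dispose of trivialities: we may assume $\F$ is nonempty with $n := |\F| \ge p$ (otherwise $\pi(\F)\le |\F| < p$ already), and that its members are compact since the family is finite; note that every subfamily of $\F$ again has the $(p,q)$ property.

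\emph{Step 1: from $(p,q)$ to a positive-fraction hypothesis.} I would double count the pairs $(\mathcal{P},T)$ with $\mathcal{P}\subseteq\F$ of size $p$ and $T\subseteq\mathcal{P}$ an intersecting subfamily of size $d+1$. By the $(p,q)$ property each $\mathcal{P}$ contains $q$ sets with a common point, hence at least $\binom{q}{d+1}$ such $T$; and each intersecting $(d+1)$-tuple $T$ lies in exactly $\binom{n-d-1}{p-d-1}$ sets $\mathcal{P}$. Comparing and simplifying shows that at least $\alpha_0\binom{n}{d+1}$ of the $(d+1)$-tuples of $\F$ are intersecting, where $\alpha_0 := \binom{q}{d+1}/\binom{p}{d+1}>0$ depends only on $p,q,d$.

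\emph{Step 2: bounding the fractional transversal number.} Let $\tau^*(\F)$ be the optimum of the fractional transversal linear program (place nonnegative weights on points of $\R^d$ with total weight $\ge 1$ inside each member of $\F$, minimize the total weight); by LP duality $\tau^*(\F)$ equals the fractional matching number $\nu^*(\F)$. I would take an optimal fractional matching, observe that each set receives weight $\le 1$ (any of its points saturates a constraint), so the support has at least $\nu^*(\F)$ sets, clear denominators, and pass to a large multiset $\mathcal{G}$ of members of $\F$ covering every point at most $N$ times with $|\mathcal{G}| = N\nu^*(\F)$. Counting only the $(d+1)$-tuples of $\mathcal{G}$ with distinct underlying sets (the rest are a vanishing fraction as $N\to\infty$) and repeating Step 1's double count over $p$-tuples of $\mathcal{G}$ with distinct underlying sets — to which the $(p,q)$ property of $\F$ applies directly — shows that a fraction $\ge \alpha_0 - o(1)$ of the $(d+1)$-tuples of $\mathcal{G}$ is intersecting. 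The fractional Helly theorem then produces a point lying in at least $\beta(\alpha_0,d)\,|\mathcal{G}|$ members of $\mathcal{G}$; since no point is covered more than $N = |\mathcal{G}|/\nu^*(\F)$ times, letting $N\to\infty$ forces $\beta(\alpha_0,d)\,\nu^*(\F)\le 1$. (If the support had fewer than $d+2$ sets this is moot, since then $\nu^*(\F)\le d+1$.) Hence $\tau^*(\F)=\nu^*(\F)\le C_1 := \max\{d+1,\ \beta(\alpha_0,d)^{-1}\}$, a constant depending only on $p,q,d$.

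\emph{Step 3, and the crux.} Finally I would upgrade the bounded fractional transversal to a bounded transversal. Let $\phi$ be an optimal fractional transversal, supported on finitely many points, and set $\mu := \phi/\tau^*(\F)$: a probability measure on $\R^d$ with $\mu(K)\ge 1/\tau^*(\F)\ge 1/C_1$ for every $K\in\F$. Applying the weak $\varepsilon$-net theorem for convex sets in $\R^d$ (Alon, B\'ar\'any, F\"uredi, Kleitman) with $\varepsilon := 1/C_1$ yields a set $T$ of at most $c(d,C_1)$ points of $\R^d$ meeting every convex set of $\mu$-measure $\ge\varepsilon$; in particular $T$ meets every member of $\F$, so $\pi(\F)\le c(d,C_1) =: c(p,q,d)$. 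Equivalently, one may note that the range space of convex sets in $\R^d$ has bounded dual VC dimension and invoke the standard rounding of fractional set covers in bounded-VC-dimension systems. This last step is the main obstacle: the ordinary (strong) $\varepsilon$-net theorem does not apply, since convex sets have unbounded primal VC dimension and the net points cannot be taken from the support of $\mu$, so one genuinely needs the geometric fact that weak $\varepsilon$-nets of size bounded only in terms of $d$ and $\varepsilon$ exist for convex sets. By contrast, Step 1 is elementary counting and the multiset passage in Step 2 is routine once made precise.
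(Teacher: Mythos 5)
Your proposal is correct and follows exactly the route the paper indicates for the Alon--Kleitman theorem: an elementary double count converting the $(p,q)$ property into the hypothesis of the fractional Helly theorem, an LP-duality/multiset argument bounding the fractional transversal number, and the weak $\varepsilon$-net theorem for convex sets to round the fractional transversal to a bounded piercing set. The only nitpick is in Step 2: the fraction of $(d+1)$-tuples of $\mathcal{G}$ with a repeated underlying set is $O\bigl(1/\nu^*(\F)\bigr)$ rather than $o(1)$ as $N\to\infty$, which is harmless because a small $\nu^*(\F)$ makes the bound trivial --- precisely the dichotomy your parenthetical gestures at, just with the threshold phrased in terms of $\nu^*(\F)$ rather than the support size.
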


We denote by $c(p,q,d)$ the optimal constant that fits the theorem above.  A simplified proof can be found in \cite{Alon:1996uf, Matousek:2002td}.  A survey of Eckhoff collected many results around the $(p,q)$ theorem \cite{Eckhoff:2003ed}.  Even though there are plenty of existence results for $(p,q)$ type problems, finding efficient bounds for $c(p,q,d)$ is extremely difficult.  As noted in \cite{Eckhoff:2003ed}, by following the original proof of the $(p,q)$ theorem one obtains $c(4,3,2) \le 276$, which is the first non-trivial case.  The conjecture is $c(4,3,2) = 3$.  For this problem, the best bounds are $3 \le c(4,3,2) \le 13$ \cite{Kleitman:2001vo}.

\begin{openproblem}
	Improve the bounds on $c(4,3,2)$.
\end{openproblem}

A key step in the proof the $(p,q)$ theorem is the use of the fractional Helly theorem.  The other main ingredient one needs to check is the existence of weak $\varepsilon$-nets for the families of sets in question (for convex sets, see \cite{Alon:2008ek}).  Once the existence for the appropriate type of $\varepsilon$-nets has been cleared we obtain variations of the $(p,q)$ theorem.  Thus, many variations of the fractional Helly theorem imply $(p,q)$ type results.

 This yields hyperplane transversal (as  Theorem \ref{theorem-fractional-hyperplanes}), quantitative (as theorems \ref{theorem-fractional-quantitative} and \ref{theorem-fractional-volumetric}) and bounded VC-dimension (as Theorem \ref{theorem-fractional-vc-dimension}) $(p,q)$ theorems as a consequence, each contained in the corresponding reference.  In all these variations, the key modification is either the set system in question or the conditions on the intersection.

 However, variations of the $(p,q)$ theorem where the local intersection condition is modified are also possible.  For instance, a colorful version of the fractional Helly theorem almost immediately gives the following result.

\begin{theorem}[colorful $(p,q)$ theorem; B\'ar\'any, Fodor, Montejano, Oliveros, P\'or \cite{Barany:2014bp}]
Let $p,q,d$ be positive integers with $p \ge q \ge d+1$.  Then there is an integer $M(p,q,d)$ such that the following holds.  Given finite families $\F_1, \F_2, \ldots, \F_p$ of convex sets such that for every choice $F_1 \in \F_1, \ldots, F_p \in \F_p$ there are $q$ sets of $\{F_1, \ldots, F_p\}$ intersecting, there are $q-d$ indices $i \in \{1,\ldots, p\}$ for which $\pi (\F_i) \le M(p,q,d)$.	
\end{theorem}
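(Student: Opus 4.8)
The plan is to reduce the colorful $(p,q)$ theorem to the colorful fractional Helly theorem combined with a weak $\varepsilon$-net argument, exactly mirroring the Alon--Kleitman strategy but performed colorfully. First I would recall that the ordinary $(p,q)$ theorem follows from two ingredients: (i) a fractional Helly theorem guaranteeing that a positive fraction of intersecting $(d+1)$-tuples forces a point in a positive fraction of the sets, and (ii) the existence of weak $\varepsilon$-nets of bounded size for finite families of convex sets in $\R^d$ (Alon--Kleitman, see also \cite{Alon:2008ek}). The colorful analogue of ingredient (i) is available: as remarked in the excerpt, a colorful version of the fractional Helly theorem holds (this is the content of \cite{Barany:2014bp}), stating that if a positive fraction of the colorful $(d+1)$-tuples $F_1 \in \F_1, \ldots, F_{d+1} \in \F_{d+1}$ (say, $\alpha$-fraction) is intersecting, then for some color class $\F_i$ there is a point meeting a $\beta(\alpha,d)$-fraction of the sets in $\F_i$.

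The key steps, in order, are as follows. First, use a fractional-Helly--to--$(p,q)$ bootstrapping: given the $(p,q)$-type hypothesis on the $p$ families $\F_1,\ldots,\F_p$, a standard double-counting argument shows that the number of intersecting colorful $(d+1)$-tuples drawn from any $d+1$ of the color classes is a positive fraction of all such tuples (the constant depends only on $p,q,d$ via a Turán-type estimate on hypergraphs, since out of every colorful $p$-tuple some $q \geq d+1$ of its members intersect). Second, apply the colorful fractional Helly theorem to deduce that some color class $\F_i$ contains a point meeting a positive fraction $\beta n_i$ of its sets. Third, remove those sets pierced by that point and iterate: each iteration adds one point to a piercing set of one of the color classes, and the number of such classes that can receive points is controlled. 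The subtle accounting is that we want to conclude $q-d$ of the color classes have bounded piercing number; this matches the $q-d$ appearing in colorful Helly-type statements (e.g. the ``$n-d$ indices'' phenomenon in the unnamed theorem earlier in the excerpt and in the Holmsen--Pach--Tverberg spherical result). Finally, feed this into the weak $\varepsilon$-net machinery: on each of those $q-d$ surviving color classes we now have a fractional-Helly property, hence a $(p,q)$ property in the ordinary sense holds on that class (with parameters depending on $p,q,d$), and the Alon--Kleitman weak $\varepsilon$-net argument bounds $\pi(\F_i)$ by some $M(p,q,d)$.

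I expect the main obstacle to be the correct bookkeeping that produces exactly $q-d$ indices rather than a single index. The naive approach — applying the colorful fractional Helly theorem once — only yields one good color class. To get $q-d$ of them one must argue that the ``bad'' behavior can concentrate in at most $d$ of the classes: if $d+1$ or more classes failed to have a piercing bound, one could select one troublesome set from each of those failing classes, plus arbitrary sets from the remaining classes, to build a colorful $p$-tuple containing no intersecting $q$-subfamily, contradicting the hypothesis. Making this selection argument precise — and ensuring it interacts correctly with the iterative removal so that the piercing numbers stay uniformly bounded — is the delicate part; everything else is a colorful transcription of the classical proof.
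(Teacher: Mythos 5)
Your high-level plan (colorful fractional Helly plus the Alon--Kleitman machinery of weak $\varepsilon$-nets) is exactly the route the survey's remark before the theorem indicates and the one followed in \cite{Barany:2014bp}, but two of your steps fail as stated. The first is the bookkeeping that produces $q-d$ indices. You claim that at most $d$ color classes can fail to have bounded piercing number, because $d+1$ failing classes would let you pick one ``troublesome'' set from each and assemble a colorful $p$-tuple with no intersecting $q$-subfamily. This is false: for $d=1$, $p=3$, $q=2$, take $\F_1=\{[0,1]\}$ and $\F_2=\F_3$ equal to a family of $N$ pairwise disjoint subintervals of $[0,1]$. Every colorful triple contains an intersecting pair, yet $d+1=2$ classes have piercing number $N$; only $q-d=1$ class is good, as the theorem promises. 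A class with huge piercing number supplies no single set witnessing badness, so no violating tuple can be built this way. The correct reduction goes in the other direction: if at least $p-q+d+1$ classes were bad, then any colorful selection from the bad classes extends (using arbitrary sets from the good classes) to a colorful $p$-tuple whose guaranteed intersecting $q$-subfamily contains at least $d+1$ bad-class members; hence the bad classes satisfy the colorful $(b,d+1)$ hypothesis, and the $q=d+1$ case of the theorem applied to them makes one of them good, a contradiction. In particular the entire weight of the proof rests on the case $q=d+1$, which your outline does not actually settle.

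The second gap is how you get a bounded piercing number at all. Iteratively removing the sets pierced by the fractional-Helly point only deletes a $\beta$-fraction of the current sets of one class per step, so it yields a piercing set of size growing like $\log|\F_i|$, not a bound $M(p,q,d)$ independent of the family sizes. And your final assertion that a surviving class ``has a $(p,q)$ property in the ordinary sense'' is not a consequence of anything established: the colorful hypothesis says nothing about intersections inside a single class (in the example above $\F_2$ is pairwise disjoint), and a fractional conclusion about a class --- a point in $\beta n_i$ of its sets, or a positive fraction of intersecting $(d+1)$-tuples within it --- does not bound its piercing number (half the family sharing a point and the other half pairwise disjoint). What makes the Alon--Kleitman scheme work is that the hypothesis is universally quantified over colorful tuples and therefore stable under duplicating, i.e.\ reweighting, sets within classes: assume every class has fractional cover number exceeding $1/\beta$, place the optimal dual (fractional matching) distributions $\mu_i$ on the classes, run your counting step in this weighted setting to find $d+1$ classes among which an $\alpha=\binom{p}{d+1}^{-1}$ fraction (in product measure) of colorful $(d+1)$-tuples intersect, and apply the weighted colorful fractional Helly theorem to obtain a point of $\mu_i$-measure at least $\beta$ in some class, contradicting the dual constraints. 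This bounds the fractional cover of one class, and only then does the weak $\varepsilon$-net theorem convert that bound into $\pi(\F_i)\le M(p,q,d)$. Your proposal skips precisely this LP-duality step, which is where the colorful hypothesis itself, rather than a fractional consequence of it, is indispensable.
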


If one wishes for improved bounds on the piercing number of the family, strengthening the $(p,q)$ condition helps.  For example, we can say that a family $\F$ of convex sets has the $(p,q)_r$ property if \textit{out of every $p$ convex sets of $\F$, there are $r$ of the $q$-tuples which are intersecting}.

\begin{theorem}[Montejano, Sober\'on \cite{Montejano:2011cz}]
	Let $\F$ be a finite family of convex sets in $\R^d$ and $p \ge q \ge d+1$ be positive integers.  If $\F$ has the $(p,q)_r$ property, $|\F| \ge p$ and 
	\[
r > {\binom{p}{q}}-{\binom{p+1-d}{q+1-d}},
\]
then $\pi (\F) \le p-q+1$.
\end{theorem}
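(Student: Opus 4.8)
The plan is to pass to the nerve complex and reduce the assertion to a purely combinatorial statement about $d$-collapsible complexes. Write $K = N(\F)$; by Wegner's theorem $K$ is $d$-collapsible, hence $d$-Leray, and consequently every minimal non-face of $K$ has at most $d+1$ vertices, so a $q$-set of vertices fails to span a face exactly when it contains one of these small non-faces. Two dictionary entries are needed. First, $\pi(\F)$ equals the least number of faces of $K$ required to cover its whole vertex set: a face is an intersecting subfamily, which has a common piercing point, and conversely each of $\pi(\F)$ piercing points determines a face, so write $\tau(K)$ for this covering number and note $\pi(\F)=\tau(K)$. Second, since any $p$ of the sets span $\binom pq$ $q$-subsets, the condition $r > \binom pq - \binom{p+1-d}{q+1-d}$ says precisely that the $(p,q)_r$ property forces every $p$-vertex induced subcomplex $K[P]$ to have \emph{strictly fewer} than $\binom{p+1-d}{q+1-d}$ missing faces of size $q$.

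With this dictionary it suffices to prove the contrapositive via the following lemma: \emph{if $L$ is a $d$-collapsible complex with $\tau(L) \ge s+1$ and at least $s+q-1$ vertices, then some set $P$ of $s+q-1$ vertices spans at least $\binom{s+q-d}{q+1-d}$ missing $q$-faces in $L[P]$.} Applying this with $L=K$ and $s=p-q+1$ (so $s+q-1=p$ and $s+q-d=p+1-d$) to a hypothetical family with $\pi(\F)=\tau(K)\ge p-q+2$ produces a $p$-subset with at least $\binom{p+1-d}{q+1-d}$ missing $q$-faces, contradicting the $(p,q)_r$ hypothesis; hence $\pi(\F)\le p-q+1$.

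I would prove the lemma by induction on $s$. The base case $s=1$ is immediate: $\tau(L)\ge 2$ means $L$ is not the full simplex on its vertices, so being $d$-Leray it has a minimal non-face of size at most $d+1\le q$; enlarging it inside the vertex set (which has at least $q$ elements) gives a missing face of size exactly $q$, and $\binom{s+q-d}{q+1-d}=1$. For the inductive step one has, for \emph{every} vertex $v$, the bound $\tau(L\setminus v)\ge \tau(L)-1\ge s$ (cover $L\setminus v$ and adjoin the singleton $\{v\}$); the delicate part is to select $v$ so that, after applying the inductive hypothesis to $L\setminus v$ to obtain a $(p-1)$-set $P'$ carrying at least $\binom{s+q-1-d}{q+1-d}$ missing $q$-faces, the enlarged set $P=P'\cup\{v\}$ picks up at least $\binom{s+q-1-d}{q-d}$ \emph{further} missing $q$-faces through $v$. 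Pascal's identity then assembles these into the required total $\binom{s+q-d}{q+1-d}$. To guarantee the extra count one must use the $d$-collapse structure: a well-chosen $v$ lies in several minimal non-faces whose remaining ($\le d$) vertices can be absorbed into $P'$, so that every $q$-subset of $P$ containing $v$ together with enough of those vertices is already a missing face.

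The main obstacle is exactly this vertex-selection step. Deleting an arbitrary vertex need not manufacture enough new small non-faces, so one must exploit $d$-collapsibility — the existence of free faces, or equivalently the Leray property of every induced subcomplex — to pin down a vertex $v$ and coordinate the choice of $P'$ so that the number of new missing $q$-faces through $v$ matches what Pascal's recursion demands; this coordination is also the structural reason that the sharp binomial $\binom{p+1-d}{q+1-d}$, and not a cruder count, governs the statement. A secondary point is optimality of the bound, which I expect to follow by exhibiting, for $\pi(\F)=p-q+2$, a family whose nerve is an extremal $d$-collapsible complex realizing equality in the lemma — for instance one in which the $q$-tuples that fail to intersect are precisely those containing $d-1$ fixed sets together with two of $p+1-d$ further sets placed in sufficiently general position.
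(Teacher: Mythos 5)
Your reduction to the nerve complex is set up correctly: the dictionary $\pi(\F)=\tau(N(\F))$ (minimum number of faces covering the vertex set), the observation via Helly that minimal non-faces have at most $d+1$ vertices, and the translation of the hypothesis into ``every $p$-vertex induced subcomplex has fewer than $\binom{p+1-d}{q+1-d}$ missing $q$-faces'' are all sound, as is the contrapositive framing through your proposed lemma with $s=p-q+1$. But the proof has a genuine gap, and you have located it yourself: the inductive step of the lemma is not proved. The inductive hypothesis hands you \emph{some} $(s+q-2)$-set $P'$ inside $L\setminus v$, with no control relating it to the deleted vertex $v$, and you then need at least $\binom{s+q-1-d}{q-d}$ missing $q$-faces of $P'\cup\{v\}$ through $v$. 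No selection rule for $v$ is given, and none is obviously available: $v$ could be joined to every face of $L[P']$, and $d$-collapsibility by itself does not supply the kind of local ``many small non-faces through a chosen vertex'' structure your Pascal recursion requires. Since this counting step is the entire content of the lemma, the argument as it stands does not establish the theorem. It is moreover not clear that the lemma is even true at the level of abstract $d$-collapsible (or $d$-Leray) complexes; the sharp binomial bound may genuinely depend on convexity in $\R^d$, so you would need either a proof of the combinatorial lemma or a counterexample before investing further in this route.

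For comparison, the paper's proof is geometric rather than nerve-theoretic: one first notes the hypothesis forces every $(d-1)$-tuple to intersect, chooses a direction $v$ so that each intersecting $d$-tuple has a unique $v$-directional minimum, and takes the $d$-tuple $A$ whose minimum point is highest; every set compatible with $A$ contains that point. The remaining sets are cut by the open half-space below it, and a projection argument (using the result of Bracho and Montejano together with the homological lemma that the union of such a non-intersecting $d$-tuple is homologically $S^{d-2}$) produces a line $\ell$ meeting every set that intersects all $(d-1)$-subsets of $A$. A combinatorial count then shows the restricted family on $\ell$ has the $(p-d,q-d+1)$ property, and the one-dimensional fact $c(p',q',1)=p'-q'+1$ finishes the piercing with $p-q+1$ points in total. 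If you want to salvage a combinatorial approach, you would essentially need to find an abstract substitute for this reduction-to-a-line step.
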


Let us sketch a proof of the result above, as it does not rely on a fractional Helly theorem, but rather uses some of the topological tools described earlier to reduce the problem to dimension one.

\begin{proof}
The reader may notice that the condition implies that every $(d-1)$-tuple of $\F$ is intersecting.  Let $v$ be a direction such that the intersection of every $d$-tuple is either empty or has a $v$-directional minimum at a single point.  Let $A$ be the intersecting $d$-tuple whose $v$-directional minimum $p$ is maximal.  As before, every set $K \in \F$ such that $\{K\} \cup A$ is intersecting contains $p$.

Let $\F_1$ be the subfamily of $\F$ of sets which contain $p$, and $\F_2 = \F \setminus \F_1$.  Consider the half-space $H = \{x: \langle x, v\rangle < \langle p, v\rangle\}$, and define $\F' = \{K \cap H : K \in \F_2\}$.  Similarly, let $A' = \{K \cap H : K \in A\}$.  It suffices to show that $\pi (\F') \le p-q$.  If $|\F'\cup A'| \ge p$, it still satisfies the $(p,q)_r$ condition.  Otherwise, showing that $\pi (\F) \le p-q+1$ can be done by extending $\F_2 \cup A$ to a $p$-tuple and using a brute-force argument.

The family $A'$ is not intersecting and of size $d$, so by \cite{Bracho:2002wj} there is a hyperplane $T$ with orthogonal projection $p: \R^d \to T$ such that $p(A)$ is still not intersecting.  By Lemma \ref{lemma-topological-tool}, $\cup (p(A))$ is homologically equivalent to $S^{d-2}$, which implies that it splits $T$ into two connected components, one of which is bounded.  Let $x$ be a point in the bounded component and consider the line $\ell = p^{-1} (x)$.  It suffices to show that $\F_{\ell}= \{K \cap \ell: K \in \F'\}$ has the $(p-d,q-d+1)$-property, as then we would be able to pierce it with $p-q$ points, since $c(p',q',1) = p'-q'+1$ for $p' \ge q' \ge 2$.

By the construction of $\ell$, every convex set $K$ that intersect every $(d-1)$-tuple of $A$ has a point of $\ell$. However, a purely combinatorial argument shows that for any $(p-d)$-tuple $B$ in $\F'$, since $B \cup A'$ has the $(p,q)_r$ property with $r$ as in the theorem, there must be a $(q-d+1)$-tuple $C$ of $B$ such that the union of $C$ and any $(d-1)$-tuple of $A'$ is intersecting, as desired.
\end{proof}

\subsection{Transversal theorems}

Many Helly-type theorems arise when one changes the conditions that the sets intersect to that of having a transversal by an affine subspace. 
These tantalizing variations often required additional conditions on the family.  Notice that we can think of transversality theorems as intersection theorems.
One way is to observe that if a set of convex sets in $\R^d$ is pierced through by a linear subspace $L$ of dimension $k$, then it means that, under the orthogonal projection to
$L$, all the projected convex sets in $\R^{d-k}$ must intersect.  Another way is to replace each convex set by the space of potential transversals to it, as a subset of a Grassmanian manifold.  Thus, a Helly-type transversality theorem for a family of convex set is equivalent to a Helly-type theorem for the intersection of this new family. In the following theorems about hyperplane transversals, the assumption of convexity on $\F$ can be replaced by connectedness without a problem.

 It is clear that a piercing or transversal line crosses the sets in the family in some order when read from left to right.
When the sets are disjoint one can assume the sets are ordered by such a line. It is intuitively clear such linear ordering of sets is a necessary, although not sufficient, condition for the existence of a transversal. Now we are ready to state the first ever transversal theorem, given by Hadwiger in 1957.

\begin{theorem}[Hadwiger \cite{Hadwiger:1957tx}]
	Let $\F$ be a finite family of pairwise disjoint connected sets in the plane, and $\lambda$ be a linear ordering of the sets.  
	If every three elements of $\F$ have a line transversal that intersects them consistently with $\lambda$, then there is a line transversal to the whole family.
\end{theorem}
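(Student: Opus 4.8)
The plan is to parametrize the problem by the direction of the transversal and then run the ``rotate a line until the intersection is lost'' argument used repeatedly above. We treat the case of disjoint convex sets; as remarked before the theorem, connectedness may be used in place of convexity with no essential change. Represent a directed line by a pair $(\theta,\rho)$, where $\theta\in S^1$ is its direction and $\rho\in\R$ is the signed distance from a fixed origin; the space of directed lines is then a cylinder, and no direction is excluded. For a fixed $\theta$ and a set $C\in\F$, the set of $\rho$ for which the directed line $(\theta,\rho)$ meets $C$ is the orthogonal projection of $C$ onto the normal direction of $\theta$, a closed interval $J_C(\theta)=[a_C(\theta),b_C(\theta)]$ whose endpoints depend continuously on $\theta$. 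Since the sets are pairwise disjoint, for each pair $C,C'$ the order in which a directed line of direction $\theta$ meets them (when it meets both) is determined by any line separating $C$ from $C'$, hence depends only on $\theta$; call $\theta$ \emph{$\lambda$-admissible} when, for every pair, this order agrees with $\lambda$. For a $\lambda$-admissible $\theta$, a directed line of direction $\theta$ is a $\lambda$-consistent transversal of $\F$ exactly when $\rho\in\bigcap_{C\in\F}J_C(\theta)$; so it suffices to produce one $\lambda$-admissible $\theta$ with $\bigcap_{C\in\F}J_C(\theta)\neq\emptyset$.

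Next I would pick a generic direction and, among $\lambda$-admissible directions, rotate $\theta$ while tracking the interval $I(\theta)=\bigcap_{C\in\F}J_C(\theta)$. Suppose the conclusion fails, so $I(\theta)=\emptyset$ for every $\lambda$-admissible $\theta$. For each such $\theta$, the one-dimensional Helly theorem (Helly number $2$) yields a pair $C,C'$ with $J_C(\theta)\cap J_{C'}(\theta)=\emptyset$. Rotating $\theta$ to an extremal position — either where such a bad pair is on the verge of appearing, or where the range of $\lambda$-admissible directions ends — and adjoining the single additional set responsible for that boundary should produce three sets of $\F$ with no $\lambda$-consistent transversal whatsoever, contradicting the hypothesis. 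The ``$2$'' coming from Helly on the line together with the ``$1$'' coming from the extremal direction is exactly what gives the Helly number $3$ in the statement.

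The point I expect to be genuinely hard is making this rotation argument finite and forcing it to localize to exactly three sets. The set of $\lambda$-admissible directions need not be an arc, the pairwise order of two sets can flip as $\theta$ varies, and one has to preclude the possibility that the obstruction to a common transversal ``wraps around'' the circle of directions without ever concentrating on three sets. This is precisely where the global linear order $\lambda$ is indispensable: it rules out the cyclic behaviour responsible for the classical examples of three convex sets that pairwise admit transversals but admit none in common. The remaining points — the reduction to compact sets, the passage from connected to convex sets, and the handling of extremal directions where the one-parameter intersections are not attained at single points — are routine and are dealt with by approximation, compactness, and a generic choice of the rotation.
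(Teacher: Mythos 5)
The paper itself gives no proof of this statement --- it is quoted from Hadwiger, with the Pollack--Wenger theorem stated immediately afterwards as its generalization --- so your argument has to stand on its own, and as written it does not. Your framework (projections $J_C(\theta)$ onto the normal of $\theta$, the observation that for a disjoint pair the crossing order is determined by $\theta$ alone, and the reduction to finding a $\lambda$-admissible $\theta$ with $\bigcap_C J_C(\theta)\neq\emptyset$) is a sound and standard starting point. But the step you yourself flag as ``genuinely hard'' is precisely the content of the theorem, not a technicality. From ``for every admissible $\theta$ the one-dimensional Helly theorem yields a bad pair'' you must manufacture a single \emph{triple} of sets admitting no consistent transversal in \emph{any} direction, and the proposed bookkeeping --- rotate to an extremal admissible direction and adjoin ``the single additional set responsible for that boundary'' --- does not deliver this. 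The boundary of the set of $\lambda$-admissible directions is itself determined by a \emph{pair} (the closed arc $D_{ij}$ of directions of segments running from $C_i$ to $C_j$ terminating), not by a single set, so the naive count at an extremal direction combines two pairs and yields four sets, i.e.\ at best a Helly number of four; moreover the bad pair produced by the 1D Helly step changes with $\theta$, and nothing in the sketch prevents the obstruction from migrating among different pairs as $\theta$ sweeps the admissible arc --- which is exactly the ``wrap-around'' danger you acknowledge but do not rule out. Note also that the hypothesis only controls triples of the special form $\{C_i,C_j,C_k\}$, hence only the arcs $D_{ij},D_{jk},D_{ik}$ together with the corresponding three projections; you cannot invoke a Helly-type statement for circular arcs on arbitrary triples such as $D_{ij},D_{kl},D_{mn}$. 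Bridging this gap is where Hadwiger's original continuous-motion/pivoting argument (or the order-type machinery of Goodman, Pollack and Wenger) does real work, and none of it appears in the proposal.

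Two secondary points. First, the reduction from connected to convex sets is not as automatic as you claim: a line meets a connected planar set if and only if it meets its convex hull, which is fine for transversal existence, but the hulls of disjoint connected sets need not be disjoint, and for nonconvex sets the ``order in which a line meets them'' is not even well defined, so the consistency hypothesis does not transfer verbatim; the remark in the paper about replacing convexity by connectedness refers to the statements of the transversal theorems (and is itself nontrivial --- it is what Pollack--Wenger prove, where even disjointness is dropped), not a licence for this reduction. Second, the compactness/approximation step and the continuity of $a_C(\theta)$, $b_C(\theta)$ are indeed routine. So: correct skeleton and correct reformulation of the target, but the heart of the proof --- localizing the global obstruction to three sets --- is missing.
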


Precise conditions for the existence of hyperplane transversals were determined by Pollack and Wenger.  Given a family $\F$ of connected sets in $\R^d$, we say there is a \textit{consistent $k$-ordering of $\F$} if there is a function $f: \F \to \R^k$ such that for any two subfamilies $A, B \subset \F$ with $|A|+|B|=k+2$ and $A \cap B = \emptyset$, if 
$\conv\{f(a) : a \in A\} \cap \conv\{f(b) : b \in B\} \neq \emptyset$, then $\conv (\cup A) \cap \conv (\cup B) \neq \emptyset$.  In other words, the family has all the Radon partitions of a family of points in $\R^k$.

\begin{theorem}[Pollack, Wenger \cite{Pollack:1990cna}]
	A finite family of connected sets in $\R^d$ has a hyperplane transversal if and only if there is a consistent $k$-ordering of $\F$ for some $0 \le k \le d-1$.
\end{theorem}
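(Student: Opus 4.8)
The ``only if'' direction is immediate: given a hyperplane transversal $H$, fix an affine identification $H\cong\R^{d-1}$, pick $p_K\in K\cap H$ for each $K\in\F$, and set $k=d-1$ and $f(K)=p_K$. For disjoint $A,B\subseteq\F$ with $|A|+|B|=k+2=d+1$ one has $f(a)=p_a\in a$, so $\conv\{f(a):a\in A\}\subseteq\conv(\cup A)$ and likewise for $B$; hence if the small hulls meet so do the large ones, and $f$ is a consistent $(d-1)$-ordering.

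For the ``if'' direction I would first make two harmless reductions. Since a hyperplane $\ell$ misses a connected set $K$ iff it misses $\conv K$ (a connected set disjoint from $\ell$ lies in one open halfspace, hence so does its hull), we may replace each $K$ by $\conv K$ and assume all sets convex and compact. And we may assume $k=d-1$: a consistent $k$-ordering with $k<d-1$ becomes one with $k=d-1$ after padding coordinates with zeros, which I would justify via the Radon-minimality lemma (if two convex hulls of finite point sets in $\R^m$ meet, some subsets of total size $\le m+2$ already have meeting hulls --- apply Carath\'eodory to $0\in\conv(\{(p,1):p\in P\}\cup\{(-q,-1):q\in Q\})$) together with the triviality that enlarging a subfamily enlarges its hull. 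Next I would reduce to a one-parameter problem: for $u\in S^{d-1}$ let $I_K(u)\subseteq\R$ be the image of $K$ under $x\mapsto\langle u,x\rangle$ (an interval, as $K$ is connected). A hyperplane with normal $u$ is a transversal of $\F$ exactly when $\bigcap_{K\in\F}I_K(u)\ne\emptyset$, which by Helly on the line ($h(\R)=2$) holds exactly when the $I_K(u)$ meet pairwise. So it suffices to exhibit one \emph{good} direction $u$, one along which no pair $\{K,L\}$ is strictly separated. Writing $D_{K,L}=\{u\in\R^d:\langle u,w\rangle>0\ \text{for all}\ w\in K-L\}$ for the open convex cone of directions putting $K$ strictly above $L$, the pair $\{K,L\}$ is separated along $u$ iff $u\in D_{K,L}\cup D_{L,K}$, so a transversal exists iff the cones $\{D_{K,L}\}$ do \emph{not} cover $\R^d\setminus\{0\}$.

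The crux --- and the step I expect to be the main obstacle --- is to rule out such a covering using the consistent $(d-1)$-ordering $f$. The plan is: suppose the open convex cones $D_{K,L}$ cover $\R^d\setminus\{0\}$; using openness and convexity, pass to a finite irredundant subcover and, by a Carath\'eodory/extremality argument on $S^{d-1}$, isolate a direction $u_0$ lying on the boundaries of a controlled collection of these cones; the weak separations witnessed by $u_0$ package into disjoint subfamilies $A,B$ with $|A|+|B|=d+1$ and $\conv(\cup A)\cap\conv(\cup B)=\emptyset$. On the other hand, the $d+1=(d-1)+2$ points of $f(A)\cup f(B)$ lie in $\R^{d-1}$, so by Radon's theorem they possess a Radon partition; consistency of $f$ --- in contrapositive form, a hyperplane separation of $\cup A$ from $\cup B$ forces a hyperplane separation of $f(A)$ from $f(B)$ --- must be arranged so that the split $(A,B)$ is exactly the complement of that unavoidable Radon partition, giving $\conv f(A)\cap\conv f(B)$ both empty and nonempty, a contradiction. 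The genuinely technical part is making this packaging exact: a genericity/perturbation argument guaranteeing that precisely the right cones meet at $u_0$, and an orientation bookkeeping ensuring the forced separations \emph{coincide with}, rather than merely being compatible with, the Radon partition of the $f$-images. (Equivalently, one can replace this extremal argument by a $(d-1)$-parameter continuity/degree argument on the space of hyperplanes that directly generalizes Hadwiger's planar proof; the difficulty is the same.) Once a good $u_0$ is found, the one-parameter reduction delivers the transversal.

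As a structural remark, the reduction to $k=d-1$ can instead be organized as an induction on $d$: if a consistent $k$-ordering exists with $k\le d-2$, then $f$ stays a consistent $k$-ordering under any orthogonal projection $\pi\colon\R^d\to\R^{d-1}$ (projection only makes hulls intersect more), so by induction $\{\pi(K)\}$ has a hyperplane transversal $H'\subset\R^{d-1}$ and $\pi^{-1}(H')$ is one for $\F$; the base case is again $k=d-1$. Either way all the non-routine work sits in the case $k=d-1$.
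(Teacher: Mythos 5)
The paper only states this result with a citation to Pollack and Wenger; it gives no proof, so your proposal has to stand on its own. Your ``only if'' direction is correct, and your reductions are sound: convexification of connected sets, the padding/Carath\'eodory argument (or the projection induction) reducing to $k=d-1$, and the observation via Helly on the line that it suffices to find one direction $u$ along which no pair of the convexified sets is separated. But these reductions only restate the problem, and the step you yourself flag as ``the main obstacle'' is precisely the content of the theorem, and it is not proved. Concretely: from the assumption that the separation cones $D_{K,L}$ cover $\R^d\setminus\{0\}$ you must extract \emph{disjoint} subfamilies $A,B$ with $|A|+|B|=d+1$, $\conv(\cup A)\cap\conv(\cup B)=\emptyset$, and $\conv f(A)\cap\conv f(B)\neq\emptyset$. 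At a boundary direction $u_0$ the separations you obtain are weak rather than strict, typically involve many pairs at once, and do not by themselves single out a bipartition of a $(d+1)$-element subfamily. Worse, even granting that some $(d+1)$-element subfamily is split by a hyperplane with normal $u_0$ into separated blocks $(A,B)$, Radon's theorem applied to the $d+1$ points $f(A)\cup f(B)$ in $\R^{d-1}$ gives a Radon partition $(A_0,B_0)$ of that subfamily, but there is no reason this partition coincides with $(A,B)$; consistency of $f$ only yields a contradiction when the separated bipartition and the Radon partition of the images are the \emph{same} bipartition. Arranging that alignment (Pollack and Wenger do it through a careful analysis of the space of directions together with a topological/connectedness argument) is the genuinely hard part, and your ``genericity/perturbation plus orientation bookkeeping'' placeholder does not supply it.

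A secondary issue: you assume the sets may be taken compact, but the theorem is stated for arbitrary connected sets, whose convex hulls need be neither closed nor bounded; then $K-L$ need not be compact, the cones $D_{K,L}$ need not be open, and your ``finite irredundant subcover'' step on $S^{d-1}$ breaks down. This can be repaired (e.g.\ by a limiting argument over compact exhaustions, using finiteness of the family), but it needs to be said. As it stands, the proposal is an accurate map of where the difficulty lies rather than a proof: the easy direction and the preprocessing are fine, the central implication is missing.
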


The case $k=1, d=2$ implies Hadwiger's theorem, and the pairwise disjointness is not needed.  Since a consistent $k$-ordering only needs to be checked for the $(k+2)$-tuples, this constitutes a Helly-type theorem.  It should be emphasized that although the condition of $k$-ordering is necessary for a Helly-type theorem, it is not for the fractional version.

\begin{theorem}[Alon, Kalai \cite{Alon:1995fs}]\label{theorem-fractional-hyperplanes}
	Given a family $\F$ of $n$ connected sets in $\R^d$ and a parameter 
$\alpha >0$  such that $\alpha {n \choose {d+1}}$ of the  $(d+1)$-tuples of  $\F$ 
 have a hyperplane transversal, then there is a hyperplane transversal to at least $\beta n$ sets
of $\F$. Here $\beta$ is a positive constant depending only on $\alpha$ and $d$.
\end{theorem}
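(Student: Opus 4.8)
The plan begins, as in the proofs given above for the fractional and colorful Helly theorems, with the usual harmless reductions: a limiting argument lets us assume the members of $\F$ are compact convex bodies, since a hyperplane fails to meet a connected set precisely when it strictly separates it, a condition that depends only on the closed convex hull. The substantive first step is to pass to the parameter space of hyperplanes. After a generic choice of affine chart, the hyperplanes of $\R^d$ avoiding a fixed base point are identified with $\R^d$, and for a convex body $C$ we write $T(C) \subseteq \R^d$ for the set of hyperplanes that meet $C$. A subfamily $\G \subseteq \F$ has a hyperplane transversal exactly when $\bigcap_{C \in \G} T(C) \neq \emptyset$, so the theorem is literally the fractional Helly statement for the set system $\{T(C) : C \in \F\}$, with the integer $d+1$ playing the role of the tuple size. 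The structural feature to exploit is that the complement of each $T(C)$ in $\R^d$ splits into at most two convex pieces, namely the hyperplanes that leave $C$ strictly on one prescribed side.

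With this in hand, the plan is to invoke Matou\v{s}ek's fractional Helly theorem for set systems of controlled combinatorial complexity, Theorem \ref{theorem-fractional-vc-dimension}: it suffices to bound the dual shatter function of the transversal set system, that is, the number of distinct transversal patterns $\{i : H \cap C_i \neq \emptyset\}$ that can occur as $H$ ranges over all hyperplanes, for an arbitrary configuration of $m$ convex bodies $C_1, \dots, C_m$ in $\R^d$. This is where the remark about complements pays off: a transversal pattern is refined by the membership pattern of a point with respect to the $2m$ convex sets coming from the two complementary pieces of the $T(C_i)$, and the number of membership patterns of a point with respect to $N$ convex sets in $d$-dimensional space is polynomially bounded, since such convex sets form a dual set system of finite (though larger than $d$) VC dimension. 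Plugging a polynomial dual shatter bound into Theorem \ref{theorem-fractional-vc-dimension} already yields a point in the parameter space, i.e.\ a single hyperplane, lying in at least $\beta n$ of the sets $T(C_i)$ --- a transversal of $\beta n$ members of $\F$ --- although a priori only after testing tuples of some size larger than $d+1$.

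The main obstacle is to push this down to tuples of size exactly $d+1$, as the theorem asserts, which matches the fact that the space of hyperplanes of $\R^d$ is $d$-dimensional: one has to show that the number of transversal patterns of $m$ convex bodies is $o(m^{d+1})$, even though the bodies, and the $2m$ convex pieces above, may be arbitrarily complicated. The gain here is that a transversal pattern records only on \emph{which side} of each body a hyperplane passes, not how far, and turning this extra rigidity into the required exponent is the technical heart of the argument. An alternative, closer to Alon and Kalai's original approach and in the spirit of the other proofs in this section, is to avoid Theorem \ref{theorem-fractional-vc-dimension}: working directly in the parameter space of hyperplanes with a generic direction and canonical extremal supporting hyperplanes, one applies Helly's theorem there together with a pigeonhole count over the $\alpha \binom{n}{d+1}$ transversal-admitting $(d+1)$-tuples to force one canonical hyperplane to pierce $\beta n$ members of $\F$; the delicate point is that the sets $T(C_i)$ are not convex, so this Helly-type step must be executed by exploiting their special structure as complements of two convex sets.
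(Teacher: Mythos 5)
The paper does not actually prove this theorem; it only cites Alon--Kalai, so your proposal can only be judged on its own merits, and there it has a fatal gap at its central step. Your route through Theorem \ref{theorem-fractional-vc-dimension} requires the dual shatter function of the transversal system $\{T(C)\}$ to be $o(m^{d+1})$, and you justify the needed polynomial bound by asserting that the number of membership patterns of a point with respect to $N$ convex sets in $\R^d$ is polynomially bounded because convex sets have finite VC dimension. That assertion is false for $d\ge 2$: convex sets have infinite VC dimension, and $N$ convex sets can realize all $2^N$ point-membership patterns (place $2^N$ points in convex position and let the $i$-th set be the convex hull of the points labeled by subsets containing $i$). Worse, this obstruction transfers by polarity to the transversal system itself, so the hypothesis of Theorem \ref{theorem-fractional-vc-dimension} fails for every exponent $k$, not just $k=d+1$: take open bounded convex sets $U_1,\dots,U_m$ containing the origin realizing all $2^m$ patterns on points $p_S$ (add the origin to the hulls in the construction above and thicken slightly), and let $C_i=U_i^{\circ}$ be the polar bodies. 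Since $0\in C_i$, the line (hyperplane) $\{x:\langle p_S,x\rangle=1\}$ meets $C_i$ exactly when $h_{C_i}(p_S)\ge 1$, i.e.\ when $p_S\notin U_i$, so these $m$ convex bodies in the plane already admit $2^m$ distinct transversal patterns. Hence the dual shatter function of $\{T(C)\}$ is exponential and Matou\v{s}ek's theorem simply cannot be invoked; this is consistent with the survey's own framing, which presents Theorem \ref{theorem-fractional-hyperplanes} precisely as an earlier fractional Helly theorem for a family that has no Helly theorem and is not covered by the bounded-VC machinery.

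Your fallback paragraph does not repair this, because it defers exactly the crucial point: the sets $T(C_i)$ are not convex (their complements are unions of two convex sets), and there is no Helly theorem at all for hyperplane transversals -- for every $k$ there are families in which all $k$-tuples have a transversal but the whole family does not -- so the step ``apply Helly's theorem there together with a pigeonhole count'' cannot be carried out by mimicking the extremal-direction arguments used earlier in this section for genuinely convex sets. Establishing the fractional Helly property with tuple size $d+1$ for this non-convex system is the entire content of Alon and Kalai's result, and their argument rests on the special structure of the dual sets in a way that goes well beyond what you sketch (it is not a consequence of bounded VC dimension or of a routine reduction to the fractional Helly theorem for convex sets). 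What is correct and worth keeping is your preliminary reduction (connected sets may be replaced by their convex hulls, since a hyperplane misses a connected set iff it misses its convex hull) and the dualization identifying transversals with points of $\bigcap T(C_i)$ whose complements split into two convex pieces; but the heart of the proof is still missing.
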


Pollack and Wenger's result can be colored.  If we have a colored family $\F$ of connected sets in $\R^d$, we say there is a \textit{consistent colored $k$-ordering of $\F$} if there is a function $f: \F \to \R^k$ such that for any two subfamilies $A, B \subset \F$ with $|A|+|B|=k+2$, $A \cap B = \emptyset$ and $A \cup B$ rainbow (i.e. not repeating any color), if $\conv\{f(a) : a \in A\} \cap \conv\{f(b) : b \in B\} \neq \emptyset$, then $\conv (\cup A) \cap \conv (\cup B) \neq \emptyset$.  In other words, the family has all the colored Radon partitions of a colored family of points in $\R^k$.

\begin{theorem}[Holmsen, Rold\'an-Pensado \cite{Holmsen:2015eh}]
	For $d \ge 1$ and $d-2 \le k \le d-1$, there is number $r(d,k)$ such that the following statement holds.  
	Given a family $\F$ of connected sets in $\R^d$, colored with $r$ colors, if there is a consistent colored 
	$k$-ordering of $\F$, then there is a color that admits a hyperplane transversal to all its sets.
\end{theorem}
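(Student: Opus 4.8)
\emph{Strategy.}
The plan is to first strip the statement down to the cleanest setting, then dualize the transversal condition into a Helly‑type condition on the sphere, and finally feed that into a colorful Helly‑type theorem to extract the monochromatic conclusion.

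\emph{Reductions.}
First, by a standard compactness argument one may assume that every member of $\F$ is compact. Next one reduces to \emph{convex} sets: for a connected set $C$ and a unit vector $v$, the orthogonal projection of $C$ onto the line $\R v$ is a compact interval, and since $\pi_v(\conv C)=\conv(\pi_v(C))$ while $\pi_v(C)$, being a connected subset of $\R$, is already an interval, we get $\pi_v(C)=\pi_v(\conv C)$. Hence a hyperplane with normal $v$ meets $C$ if and only if it meets $\conv C$, so $\F$ and $\{\conv C: C\in\F\}$ have exactly the same hyperplane transversals; moreover the defining condition of a consistent colored $k$-ordering refers only to convex hulls of subfamilies, so it is unaffected by this replacement. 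Finally one would like to assume $k=d-1$: given a consistent colored $k$-ordering realized by a point configuration $p$ in $\R^{k}$, a generic lift of $p$ to $\R^{d-1}$ should realize a consistent colored $(d-1)$-ordering of $\F$, since the (essentially unique) Radon partitions of the lift project to Radon partitions of $p$ and hence, after a Carath\'eodory-type trimming that preserves the rainbow property, are already witnessed by $\F$. Making this precise, or else treating $k=d-2$ directly by a parallel argument, is the first place where care is needed, and is where the hypothesis $k\ge d-2$ genuinely enters.

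\emph{Dualization and colorful extraction.}
Assume now the sets are compact convex and $k=d-1$. For a pair $C,C'\in\F$ let $A(C,C')\subseteq S^{d-1}$ be the set of unit vectors $v$ for which the projections $\pi_v(C)$ and $\pi_v(C')$ overlap; this set is closed and antipodally symmetric, so it descends to a closed subset of $\R P^{d-1}$ whose complement is the image of an open convex cone. By one‑dimensional Helly, a subfamily $\G\subseteq\F$ has a hyperplane transversal with normal $v$ precisely when all the intervals $\pi_v(C)$, $C\in\G$, are pairwise intersecting, so $\G$ has a hyperplane transversal if and only if $\bigcap_{C,C'\in\G}A(C,C')\neq\emptyset$. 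The role of the consistent colored $(d-1)$-ordering is to pin down the combinatorics of the family $\{A(C,C')\}$: via the Goodman--Pollack correspondence between point configurations and order types (allowable sequences / oriented matroids), the hypothesis forces these projective dual sets, on any single color class and subject to the compatibility constraints coming from the rainbow Radon partitions, to have the same intersection pattern as the dual sets of an honest point configuration in $\R^{d-1}$ of the prescribed order type --- a configuration for which the non‑colorful Pollack--Wenger theorem already supplies a transversal. One then argues by contradiction: if $r=r(d,k)$ is large and \emph{no} color class of $\F$ has a transversal, then $\bigcap_{C,C'\in\F_c}A(C,C')=\emptyset$ for every $c$, and invoking a colorful Helly‑type statement on $\R P^{d-1}$ (whose proof, like that of Pollack--Wenger, will rest on a topological ingredient such as a Borsuk--Ulam‑type argument) one obtains either a rainbow subfamily of these dual sets with empty intersection that is shorter than the order type permits --- contradicting the consistent colored $(d-1)$-ordering --- or a color class whose dual sets have a common point, i.e.\ a transversal for that color. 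The threshold $r(d,k)$ is produced either directly from the colorful Helly/nerve statement with many color classes, or via a Ramsey‑type reduction to the non‑colorful case.

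\emph{Main obstacle.}
The crux is the dualization step: proving rigorously that a consistent colored $(d-1)$-ordering forces the projective dual sets $A(C,C')$ to realize, color class by color class and with the rainbow constraints, the intersection combinatorics of a genuine point configuration of the given order type. This requires the oriented‑matroid / allowable‑sequence machinery underlying the Pollack--Wenger theorem, adapted to keep track of colors, together with the topological argument needed to work in the non‑simply‑connected space $\R P^{d-1}$. Obtaining --- or even establishing finiteness of --- the correct bound $r(d,k)$, and understanding why $k$ cannot be lowered past $d-2$, is inseparable from carrying out this step correctly.
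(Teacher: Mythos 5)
This statement is quoted in the survey without proof (it is cited from Holmsen and Rold\'an-Pensado, who obtained $r(d,d-1)\le 2d^2+3$ and a new proof of $r(2,1)=3$), so there is no in-paper argument to compare against; judged on its own terms, your proposal is a plausible plan but not a proof, and the gaps are exactly at the load-bearing points. First, the reduction to $k=d-1$ by ``generic lift'' does not go through as sketched: after lifting and then trimming an intersecting pair $\conv f(A)\cap\conv f(B)\neq\emptyset$ down to subfamilies $A',B'$ with $|A'|+|B'|\le k+2$, you can only invoke the consistent colored $k$-ordering when the total size is exactly $k+2$; padding $A',B'$ with foreign elements to reach size $k+2$ yields a conclusion about the padded families, i.e.\ $\conv(\cup A'')\cap\conv(\cup B'')\neq\emptyset$, which does not imply $\conv(\cup A)\cap\conv(\cup B)\neq\emptyset$. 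The fact that the theorem is only claimed for $d-2\le k\le d-1$ is a warning sign that no such cheap lifting exists, and the case $k=d-2$ genuinely requires its own argument.

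Second, and more seriously, the entire colorful extraction rests on an unspecified ``colorful Helly-type statement on $\R P^{d-1}$.'' The dual sets $A(C,C')$ (your description of them as complements of antipodal open convex cones, and the equivalence via one-dimensional Helly, are fine) do not form a good cover and their intersections can be topologically wild; this is precisely why Hadwiger-type theorems need the ordering hypothesis at all, so no off-the-shelf colorful Helly or nerve theorem applies. You also never say how the color classes of $\F$ induce a coloring of the pair-indexed family $\{A(C,C')\}$ --- a pair drawn from two different classes has no natural color --- nor how the consistent colored ordering ``pins down'' the intersection pattern of these dual sets; you flag this yourself as the main obstacle. Since even the planar case $r(2,1)=3$ of Arocha, Bracho and Montejano requires a nontrivial topological argument, and the general case requires the colored Goodman--Pollack--Wenger machinery you defer, the proposal as it stands assumes the hard part rather than proving it, and it gives no mechanism for producing (or even bounding) $r(d,k)$.
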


This extends a result of Arocha, Bracho and Montejano \cite{Arocha:2008ti}, that showed $r(2,1)=3$ (i.e. a colorful version of Hadwiger's theorem).
Holmsen and Rold\'an-Pensado obtained the general bound $r(d,d-1) \le 2d^2+3$ and a new proof of $r(2,1)=3$.  However, the optimal number of colors needed is still unknown.

\begin{openproblem}[\cite{Holmsen:2015eh}]
Determine if $r(d,d-1) = d+1$ for all $d$.	
\end{openproblem}

The problem above has a positive answer if the family has some additional separation properties on the sets \cite{Oliveros:2008uk}.  Other colorful versions of transversal results are contained in \cite{Mon13transv}.
If one wishes to work without the $k$-ordering condition, it is possible to obtain results by relaxing the conclusion of the theorem.

Given a convex body $K$ in the plane, $\lambda >0$ and family $\F =\{x_1 + K, \ldots, x_n + K\}$ of translates of $K$, we consider $\lambda \F =\{ x_1 + \lambda K, \ldots, x_n + \lambda K\}$.  A classic question of Gr\"unbaum \cite{Grunbaum:1958uu} is to find the minimal $\lambda = \lambda (K,s)$ such that if every $s$ elements of $\F$ have a line transversal, $\lambda \F$ has a line transversal.

For instance if $B$ is a disk, it is known that $\lambda(B,3) \ge \frac{1+\sqrt{5}}{2}$ (see Figure \ref{fig:t3}) but the exact value is still open.

\begin{openproblem}\label{openproblem-t3}
Prove that $\lambda(B,3)=\frac{1+\sqrt{5}}{2}$.	
\end{openproblem}

\begin{figure}
\includegraphics[scale=0.4]{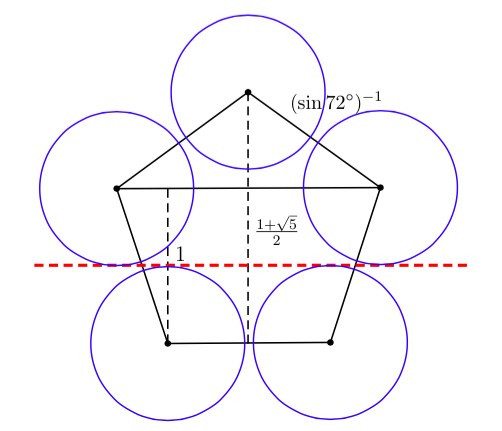}
\caption{Construction showing that the constant in Problem \ref{openproblem-t3} cannot be improved.}
\label{fig:t3}
\end{figure}

It is known that $\lambda(B,4) = \frac{1+\sqrt{5}}{2}$ \cite{JeronimoCastro:2007fk} and $\lambda(B,3) < 1.79005$ \cite{JeronimoCastro:2011cq}.  Additionally, if we ask the balls to be pairwise disjoint, then there is a Helly-type theorem for line transversals, with the Helly number being five \cite{tverberg1989proof}.  This result extends to higher dimensions.

\begin{theorem}[Cheong, Goaoc, Holmsen, Petitjean \cite{Cheong:2009hc}] \label{cheongetal}
Let $\F$ be a family of at least $4d-1$ pairwise disjoint unit balls in $\R^d$.  If every $4d-1$ have a line transversal, then $\F$ has a line transversal.
\end{theorem}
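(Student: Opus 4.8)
The plan is to regard a line transversal as a point in the manifold $\mathcal{L}$ of all lines in $\R^d$, which has dimension $2d-2$, and for each ball $B_i = B(c_i,1)$ to let $T_i \subset \mathcal{L}$ be the set of lines meeting $B_i$; the task is then to show that if every $4d-1$ of the $T_i$ have a common point, then all of them do. The obstruction to a naive Helly-type argument is that the $T_i$ are not convex (in fact $T_i$ is homotopy equivalent to $\mathbb{RP}^{d-1}$, via the retraction onto the zero section of the bundle of lines over the space of directions), so I would proceed locally, analysing the configurations for which a transversal is \emph{isolated}.

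\textbf{Reduction to an isolated transversal.} Suppose, for contradiction, that $\F$ has no line transversal. Replace each ball by $B(c_i,\lambda)$ and let $\F_\lambda$ be the resulting family; for $\lambda$ large $\F_\lambda$ has a transversal, for $\lambda=1$ it does not, so $\lambda_0 = \inf\{\lambda \ge 1 : \F_\lambda \text{ has a line transversal}\}$ satisfies $\lambda_0 > 1$. Writing $\phi(\ell) = \max_i \dist(\ell,c_i)$, we have $\lambda_0 = \min_{\ell \in \mathcal{L}} \phi(\ell)$; taking $\ell_0$ to be an extreme point of the minimizing set and perturbing the centers slightly if needed, I would arrange that $\ell_0$ is a \emph{strict} local minimum of $\phi$, i.e. an isolated line transversal to $\F_{\lambda_0}$. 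Discarding from $\F_{\lambda_0}$ the balls not tangent to $\ell_0$ preserves this, so we may also assume every ball of $\F_{\lambda_0}$ is tangent to $\ell_0$.

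\textbf{The Pinning Lemma.} The crux is the statement: \emph{if a line $\ell$ is an isolated line transversal to a family of pairwise disjoint unit balls in $\R^d$, then it is already an isolated transversal to a subfamily of at most $2d-1$ of them.} To prove it, I would fix local coordinates identifying a neighbourhood of $\ell$ in $\mathcal{L}$ with an open subset of $\R^{2d-2}$. A ball through whose interior $\ell$ passes imposes no local constraint; a ball $B$ tangent to $\ell$ contributes the hypersurface $\partial T_B$ of lines tangent to $B$, which to first order is a hyperplane $H_B$ with a preferred side. If the half-spaces bounded by the $H_B$ already isolated $\ell$, this would be a linear-programming fact producing $2d-1$ of them; the substance of the lemma is the degenerate case, where the first-order half-spaces fail to isolate $\ell$ but curvature does. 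Here one must control the second fundamental forms of the $\partial T_B$ at $\ell$, and it is precisely this step where pairwise disjointness and the common radius are used to keep the relevant quadratic forms in check; an LP-type selection then extracts the subfamily of size at most $2d-1$. \emph{This second-order analysis is the main obstacle in the whole argument.}

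\textbf{Conclusion.} Applying the Pinning Lemma to $\F_{\lambda_0}$ gives $\G_0 \subseteq \F_{\lambda_0}$ with $|\G_0| \le 2d-1$ and $\ell_0$ an isolated transversal to $\G_0$. If $\min_\ell \max_{i \in \G_0}\dist(\ell,c_i) > 1$, the corresponding un-inflated subfamily of $\F$ has no line transversal, a contradiction since $2d-1 < 4d-1$. In general this minimum may be attained away from $\ell_0$ and be $\le 1$, which reflects the presence of several ``obstruction types'' — geometric permutations realized near $\ell_0$. Using that a family of pairwise disjoint unit balls admits only boundedly many geometric permutations, one repeats the extremal/pinning analysis for each relevant type and takes the union of the obstructing subfamilies; a careful accounting of the two rotational senses in which a transversal to the extremal configuration can be deformed shows that at most $4d-1$ balls of $\F$ suffice to obstruct, giving the contradiction. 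In the plane this recovers a (non-optimal) Helly number for line transversals to disjoint disks — Tverberg's theorem gives the sharp value $5$, whereas $4d-1=7$ — and in general $4d-1$ is not known to be best possible.
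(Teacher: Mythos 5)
The survey does not actually prove this theorem; it is quoted from Cheong--Goaoc--Holmsen--Petitjean, so your proposal can only be measured against that source. Your architecture is essentially the right one in spirit -- work in the $(2d-2)$-dimensional space of lines, reduce by an extremal argument to an isolated (``pinned'') transversal, bound the size of a pinning configuration by $2d-1$, and then pay a factor of roughly two for the several geometric permutations/orientations to reach $4d-1$. But three steps that you flag or gloss over are precisely where the content of the theorem lives, so as written this is a roadmap rather than a proof. First, the Pinning Lemma is asserted, not proved, and it is not a routine ``LP plus curvature bookkeeping'' statement: in the literature it rests on the strict convexity of the cone of directions of line transversals to disjoint (more generally, pairwise-inflatable) balls, due to Borcea, Goaoc and Petitjean, and on a careful analysis of the tangency hypersurfaces in line space; the degenerate case you isolate is exactly the hard part, and nothing in your sketch supplies it.

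Second, your reduction inflates the unit balls to radius $\lambda_0>1$, which in general destroys pairwise disjointness and the equal-radius hypothesis -- the very assumptions your Pinning Lemma and the geometric-permutation bounds require. The published argument avoids this by working throughout with the class of \emph{pairwise inflatable} balls, which contains disjoint unit balls and is stable under the deformations used in the proof; some such replacement is needed, and ``perturbing the centers slightly'' does not address it (nor does it obviously make the minimizing set of $\phi$ a single point). Third, the passage from a local pinning configuration of size at most $2d-1$ to a global obstructing subfamily of size at most $4d-1$ is the actual theorem: a line pinned by the inflated subfamily $\G_0$ says nothing about transversals elsewhere in line space, so ruling those out requires global input. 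In the cited paper this is done through a Hadwiger-type theorem with constant $2d$ (transversals compatible with a fixed ordering) combined with the classification of geometric permutations of disjoint unit balls; your ``careful accounting of the two rotational senses'' is a placeholder for exactly that argument. Until the Pinning Lemma, the disjointness-preserving reduction, and this global counting are supplied, the proposal has genuine gaps at each of the load-bearing points.
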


\section{Computational Applications of Helly-type theorems} \label{secapps}

Helly's theorem has many applications, sometimes in unexpected situations, from voting theory \cite{apps-voting} to problems
for clustering incomplete data points \cite{apps-clusteringincomplete}, covering problems \cite{apps-coveringball}, and recently in 
a new algorithmic approach to the interpolation of data by differentiable functions \cite{apps-interpolation}. Once more, for sake of space
and coherence of narrative, we mostly discuss optimization algorithms where Helly numbers and Helly-type theorems play a central role.

Helly's theorem plays a basic role in the theory of convex optimization algorithms.  
Given $n$ convex constraints (their feasible solutions being convex sets) it tells us that 
checking all subfamilies of size $d+1$ for empty intersection (a common solution) would either produce 
a subset of $d+1$ constraints that certifies that the entire set has empty intersection (no common solution), or
that some common solution exists.  Checking for the intersection (or lack thereof) among every subset of $d+1$ constraints is not a
very efficient algorithm to solve convex programs; surely we can do much better. 
But  the basic idea of checking only small subfamilies of constraints to determine a property 
of the entire family set system shows the way towards developing randomized
optimization algorithms that leverage other Helly-type  theorems. 

\subsection{Frameworks and Algorithms}

Linear optimization is perhaps the central problem in the theory of optimization, and it is often
used as a subroutine to solve much more complicated problems which include non-linear or integrality constraints.
For an introduction to linear programming see \cite{schrijverlpbook}. A {\it Linear Program} consists of 
a pair $(\HH,\omega)$, where $\HH$  is a family of  $n$ linear halfspace
constraints in $\real^d$, and $\omega$ is a linear function on $\real^d$. 
The output of a linear programming algorithm should be either a point in the intersection $\cap \HH$ minimizing $\omega$, 
if the intersection is non-empty, or a set of $d+1$ constraints certifying that the intersection 
is empty. 

Helly's theorem describes the combinatorics of linear halfspace intersections, 
and it plays a natural role in 
combinatorial random sampling algorithms that were developed mainly to solve the linear programming 
problem.  
Recall, a combinatorial algorithm accesses the input only through primitive operations
which produce discrete results such as Boolean value or a subset of an input set, rather than floating point numbers. 
Its running time, when given as a function of the number of primitive operations,  
does not depend on the size of the coefficients.  
Two significant combinatorial algorithms are Clarkson's random sampling algorithm, which uses the solutions of small subproblems 
of size $O(d^2)$, 
and the classic simplex algorithm from linear optimization. 

Clarkson observed that his algorithm could be applied to other problems such as integer programming and 
smallest enclosing ball. Sharir and  Welzl \cite{sw-cblpr-92} introduced an abstract framework, the 
LP-type problems, to describe the requirements for these algorithms to work. 
Other work in this direction applying 
randomization and abstractions of linear programming include \cite{gartner1995subexponential, bsv-dsapg-01,  Halman:2007gu}. 
Last, but not least, in 2008, G\"artner, Matou{\v{s}}ek, R\"ust and \v{S}kovro\v{n}  \cite{ViolatorSpaces2008} established a hierarchy of 
algorithmic frameworks. Starting with the most concrete definition, and progressing to the more abstract case, they propose the 
following three levels of abstraction:
\begin{definition}
\label{concrete}
A \textit{concrete LP-type problem} is a triple $(X,\HH, \preceq)$, where $X$ is a set linearly ordered
by $\preceq$, $\HH$ is a finite multiset whose elements are subsets of $X$, and for any $\G \subseteq \HH$,
 if $\cap \G \neq \emptyset$, then $\cap \G$ has a unique minimum element with respect to $\preceq$. 
We call this unique minimum element $\omega(G)$ (this is in relation to the more abstract notion below).  
\end{definition}
\begin{definition}
\label{def:LP-type}
Consider a pair $(\HH, \omega)$, where $\HH$ is a finite 
set of constraints, 
and $\omega$ is a function 
$\omega:\F \in 2^{\HH} \rightarrow \Lambda$, where $\Lambda$ is a linearly ordered set including
an element $\infty$.
We say $h$ \textit{violates} $F$ when $\omega(F+h) > \omega(F)$. 
The pair $(\HH,\omega)$ is an \emph{LP-type problem} if it satisfies:
\\\begin{tabular}{ll}
{\it Monotonicity}: & $\omega(G + h) \geq \omega(G)$ holds for all $G\subseteq \HH$ and $h \in \HH$, and\\
{\it Locality}: & For all $F \subseteq G \subseteq \HH$, such that $\omega(F) = \omega(G)$, \\
 & $h$ violates $F$ if and only if $h$ violates $G$. 
\end{tabular}
\end{definition}
\begin{definition}
\label{defn:violator}
A {\it violator space} is a pair $(\HH,\vi)$, where $\HH$  is a finite set 
of constraints and $\vi$  a mapping $2^\HH\to2^\HH$ from subsets of 
$\HH$ to their sets of violators, such that the following two axioms hold: 
\\\begin{tabular}{ll}
{\it Consistency}: & $G\cap \vi(G)=\emptyset$ holds for all $G\subseteq \HH$, and\\
{\it Locality}: & $\vi(G)=\vi(F)$ holds for all $F\subseteq G\subseteq \HH$ such that
$G\cap \vi(F)=\emptyset$.\\
\end{tabular}
\end{definition}

The definition of LP-type problems abstracts away the ground set $X$ used in concrete LP-type problem 
and the definition of violator spaces abstracts further, taking away the objective function $\omega$. 
These abstract frameworks are useful because we can often show that a computational challenge has
an algorithmic solution, by proving the axioms are satisfied for an LP-type problem, perhaps without 
showing that its constraints define convex, quasi-convex, or even connected sets. This is often done by 
using a Helly-type theorem, as we see in the next section. 
We summarize here some of the results of G\"artner, Matou{\v{s}}ek, R\"ust and \v{S}kovro\v{n} 
on the relationship between the levels of abstraction:

\begin{theorem}[G\"{a}rtner, Matou\v{s}ek, R\"{u}st,  \v{S}kovro\v{n} \cite{ViolatorSpaces2008}]
i) For every LP-type problem, there is a concrete LP-type problem whose bases achieve the same
values of $\omega$ (so in this sense all LP-type problems are concrete). \\ 
ii) All LP-type problems are violator spaces, 
but not all violator spaces are LP-type problems. 
\end{theorem}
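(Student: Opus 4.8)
The plan is to prove the two parts separately, in each case extracting the right combinatorial data from the more abstract object and verifying the defining axioms of the less abstract one.

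\textbf{Part (i): every LP-type problem admits a concrete realization.} Starting from an LP-type problem $(\HH,\omega)$ with objective taking values in a linearly ordered set $\Lambda$, I would build a concrete LP-type triple $(X,\HH',\preceq)$ as follows. Take the ground set $X$ to be (a linearly ordered copy of) $\Lambda$ itself, or more conveniently the set of pairs recording which value $\omega$ attains; order $X$ by $\preceq$ so that smaller $\omega$-values come first. For each constraint $h\in\HH$ let the associated subset of $X$ be $\{x\in X : x \succeq \omega(\{h\})\}$ — morally, ``the values still feasible once $h$ is imposed.'' The key verification is that for any $\G\subseteq\HH$ the intersection $\bigcap_{h\in\G}$ of these up-sets, when non-empty, has a unique $\preceq$-minimum, and that this minimum equals $\omega(\G)$; this is exactly where the \emph{Monotonicity} axiom $\omega(G+h)\ge\omega(G)$ is used, together with the fact that $\omega(\G)$ is determined by $\G$. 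One then checks that \emph{Locality} of the original problem is automatically inherited, since violation in the concrete problem ($\omega(F+h)>\omega(F)$) translates verbatim into the up-set language. The main technical nuisance, rather than a genuine obstacle, is handling the value $\infty$ (infeasible subsystems) and ensuring the ``bases'' — the inclusion-minimal subsets achieving a given $\omega$-value — are preserved; one pads $X$ with a top element and checks the correspondence on bases directly.

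\textbf{Part (ii): LP-type problems are violator spaces, but not conversely.} Given an LP-type problem $(\HH,\omega)$, define $\vi(G) = \{\, h\in\HH : \omega(G+h) > \omega(G)\,\}$, i.e. $\vi(G)$ is the set of constraints that violate $G$. \emph{Consistency} $G\cap\vi(G)=\emptyset$ is immediate: if $h\in G$ then $G+h=G$, so $\omega(G+h)=\omega(G)$ and $h\notin\vi(G)$. For \emph{Locality} of the violator space, suppose $F\subseteq G\subseteq\HH$ with $G\cap\vi(F)=\emptyset$; I must show $\vi(G)=\vi(F)$. First, $G\cap\vi(F)=\emptyset$ means no element of $G$ violates $F$, and a short induction adding the elements of $G\setminus F$ one at a time — using Monotonicity to see $\omega$ cannot decrease and the violation-definition to see it cannot strictly increase — gives $\omega(G)=\omega(F)$. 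Then the Locality axiom of the LP-type problem ($\omega(F)=\omega(G)$ implies $h$ violates $F$ iff $h$ violates $G$) yields precisely $\vi(G)=\vi(F)$. For the strictness of the containment, I would exhibit (or cite from \cite{ViolatorSpaces2008}) a small violator space on a few constraints for which no objective function $\omega$ reproduces the given violator map — the obstruction being that violator spaces need not satisfy the kind of ``acyclicity'' forced by a linearly ordered $\omega$; a $3$- or $4$-element example with a cyclic violation pattern suffices.

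The step I expect to require the most care is the induction in Part (ii) showing $G\cap\vi(F)=\emptyset \Rightarrow \omega(G)=\omega(F)$: one must add elements of $G\setminus F$ in some order and repeatedly invoke the Locality axiom of the LP-type problem to guarantee that none of them becomes a violator of the enlarged set along the way, so that Monotonicity can be upgraded from ``$\ge$'' to ``$=$''. Everything else is bookkeeping.
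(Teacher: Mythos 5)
The survey itself never proves this theorem -- it is only quoted from \cite{ViolatorSpaces2008} -- so your proposal has to stand on its own. Part (ii) does: defining $\vi(G)=\{h\in\HH:\omega(G+h)>\omega(G)\}$, consistency is immediate, and your induction (adding the elements of $G\setminus F$ one at a time, using Monotonicity together with the LP-type Locality axiom to keep $\omega$ constant along the chain) is exactly the right way to get $\omega(G)=\omega(F)$ and hence $\vi(G)=\vi(F)$. For the strictness you still owe an actual example; the relevant fact, stated later in the survey, is that LP-type problems are precisely the \emph{acyclic} violator spaces, so a small violator space with a cyclic pattern of basis improvements works, but as written this part is a citation rather than a proof.

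Part (i), however, has a genuine gap. You attach to each constraint $h$ the up-set $\{x\in X:\ x\succeq\omega(\{h\})\}$, which depends only on the singleton value $\omega(\{h\})$. For $\G\subseteq\HH$ the intersection of these up-sets is again an up-set, and its minimum is $\max_{h\in\G}\omega(\{h\})$, which in general is strictly smaller than $\omega(\G)$: already in planar linear programming two halfplanes can each have a very low individual optimum while their common intersection forces a much higher one. So the concrete problem you build does not reproduce the values of $\omega$ on non-singleton subsets, and reproducing those values is the entire content of (i). The repair is to make the ground set finer than $\Lambda$ and to encode the violation relation, not the singleton values: take as points the pairs $(\omega(B),B)$ over all bases $B$, linearly ordered by value with ties broken arbitrarily, and let the set attached to $h$ consist of those points whose basis $B$ is \emph{not violated} by $h$, i.e.\ $\omega(B+h)=\omega(B)$. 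Every basis of $\G$ lies in $\bigcap_{h\in\G}$ of these sets and has value $\omega(\G)$, while conversely, if no $h\in\G$ violates $B$, the same induction you use in part (ii) gives $\omega(B\cup\G)=\omega(B)$, hence $\omega(B)\geq\omega(\G)$ by Monotonicity; so the unique minimum of the intersection has value exactly $\omega(\G)$. Note that this is where Locality must enter in part (i) as well -- your proposed construction never uses it, which is a warning sign that it cannot be sufficient.
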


The simplex method pivots from one basic feasible solution to an adjacent one.  
That is, it visits vertices of a polyhedron by moving along its
one-dimensional faces.  
The following abstractions replace these notions.
A \emph{basis} of $G \in 2^{\HH}$ is a minimal cardinality subset $B \subseteq G$ such that the solution of $B$ 
is the solution of $G$. 
For  LP-type problems, we define: 
\begin{definition}
A \emph{basis} 
$B\subseteq \HH$ is a subset such that $\omega(B - h) < \omega(B)$ holds for all $h \in B$. 
For $G\subseteq \HH$, a {basis of $G$}  is a minimal subset $B$ of $G$ with $\omega(B)=\omega(G)$. 
\end{definition}
\begin{definition}
The size of a largest basis of an LP-type problem is called its \emph{combinatorial dimension} 
and denoted by $\delta$.
\end{definition}
For example, the combinatorial dimension of linear programs in $d$ variables is $d+1$.
Similar definitions can be made for violator spaces but we omit them here.  
As with the simplex algorithm,
the solution found by a
combinatorial algorithm for an LP-type problems is a basis for the input set $\HH$ of constraints. 

The following observation links the combinatorial dimension of an LP-type problem with a Helly-type
theorem about its constraint set $\HH$:
\begin{lemma}[Amenta \cite{Amenta:1994gs}]
\label{lem:LP-typeObs}
A concrete LP-type problem of combinatorial dimension $\delta$
is always associated with a Helly-type theorem, in which the Helly number is $\delta+1$: 
a finite set $\F \in 2^{\HH}$ of constraints has non-empty intersection if and only if every subset 
$\G \subseteq \F$ of size  $\delta+1$ has non-empty intersection. 
\end{lemma}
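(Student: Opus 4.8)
The plan is to prove the backward implication, the forward one being immediate: if $\cap\F\neq\emptyset$ then $\cap\G\supseteq\cap\F\neq\emptyset$ for every $\G\subseteq\F$. So suppose every $(\delta+1)$-element subfamily of $\F$ has non-empty intersection and, for contradiction, that $\cap\F=\emptyset$; we may assume $|\F|\geq\delta+1$ (otherwise the hypothesis should be read as quantifying over subfamilies of size at most $\delta+1$, matching the ``or fewer'' phrasing of Helly's theorem, and the claim is trivial). The idea is the ``sweep'' trick used repeatedly in this survey: among all subfamilies $\G\subseteq\F$ with $\cap\G\neq\emptyset$ -- a finite, non-empty collection, since every $(\delta+1)$-tuple is feasible -- I would pick one, call it $\G^*$, for which the pair $(\omega(\G^*),|\G^*|)$ is lexicographically largest with respect to $\preceq$ (legitimate because $X$ is linearly ordered and each feasible subfamily has a well-defined $\omega$-value in $X$). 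Write $p=\omega(\G^*)$; by construction $p$ is the $\preceq$-largest value $\omega$ attains on a feasible subfamily of $\F$.

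Next I would show that adjoining any one more constraint to $\G^*$ destroys feasibility. Since $\cap\F=\emptyset$ we have $\G^*\neq\F$, so fix $h\in\F\setminus\G^*$. If $\cap(\G^*\cup\{h\})$ were non-empty, then monotonicity of concrete LP-type problems ($\cap(G\cup\{h\})\subseteq\cap G$ forces $\omega(G)\preceq\omega(G\cup\{h\})$) would give $\omega(\G^*\cup\{h\})\succeq p$, while maximality of $p$ forces $\omega(\G^*\cup\{h\})\preceq p$; hence $\omega(\G^*\cup\{h\})=p$, contradicting the lexicographic choice of $(p,|\G^*|)$. So $\cap(\G^*\cup\{h\})=\emptyset$, i.e.\ $h$ violates $\G^*$.

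Now I would replace $\G^*$ by a small certificate. Let $B$ be a basis of $\G^*$, so $B\subseteq\G^*$, $\omega(B)=\omega(\G^*)=p$, and $|B|\leq\delta$ by the definition of combinatorial dimension. The locality axiom, applied to $B\subseteq\G^*$ with $\omega(B)=\omega(\G^*)$, says $h$ violates $B$ iff $h$ violates $\G^*$; hence $h$ violates $B$, i.e.\ $\omega(B\cup\{h\})\succ p$. If $\cap(B\cup\{h\})$ were non-empty, it would be a feasible subfamily of $\F$ with $\omega$-value strictly above $p$, contradicting the maximality of $p$; therefore $\cap(B\cup\{h\})=\emptyset$. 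Finally $B\cup\{h\}$ is a subfamily of $\F$ of size at most $\delta+1$ with empty intersection, and extending it to a subfamily of size exactly $\delta+1$ (possible since $|\F|\geq\delta+1$) keeps the intersection empty, contradicting the hypothesis.

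There is no deep obstacle here; the content lies in two bookkeeping points. First, one must confirm that a concrete LP-type problem genuinely satisfies the LP-type axioms: monotonicity is immediate from $\cap(G\cup\{h\})\subseteq\cap G$, while locality requires a short case analysis on whether the minimum of the enlarged intersection changes, under the convention that $\omega(\cdot)=\infty$ (the top of $\Lambda$) on infeasible subfamilies -- this is the fiddliest step. Second, the extremal choice of $\G^*$ must be set up so that its basis $B$ inherits the property ``adjoining $h$ kills feasibility''; this is exactly why $p$ is chosen globally maximal over feasible subfamilies of $\F$, rather than $\G^*$ merely being a maximal feasible subfamily.
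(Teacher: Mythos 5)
Your proof is correct, and there is nothing in the paper to measure it against: the survey states Lemma \ref{lem:LP-typeObs} without proof, citing \cite{Amenta:1994gs}, and its own nearby arguments (the proofs of Theorem \ref{LPmixedInt} and Theorem \ref{morris}) go in the opposite direction, bounding a combinatorial dimension by augmenting a basis with one extra constraint rather than extracting a Helly number from a dimension bound. Your argument is essentially the standard one from the cited source, and you handle the two genuinely delicate points correctly: choosing $\G^*$ to maximize $\omega$ over \emph{all} feasible subfamilies of $\F$ (with cardinality as tie-breaker) is exactly what guarantees that the small certificate $B\cup\{h\}$ is infeasible, rather than merely having a larger minimum, which is all one would get from an inclusion-maximal feasible $\G^*$; and you rightly flag that locality must be checked for concrete problems under the $\omega=\infty$ convention. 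For that last step you could bypass the abstract axiom entirely: since $x=\omega(\G^*)=\omega(B)$ lies in $\cap\G^*$ and $\cap(\G^*\cup\{h\})=\emptyset$, necessarily $x\notin h$, and uniqueness of the minimum of $\cap B$ in the linear order then forces $\omega(B\cup\{h\})\succ p$ whenever $B\cup\{h\}$ is feasible, which is all the maximality of $p$ requires. Two small bookkeeping points you already noticed and resolved correctly: a minimal $B\subseteq\G^*$ with $\omega(B)=\omega(\G^*)$ is indeed a basis in the sense of the paper (monotonicity gives $\omega(B-h)<\omega(B)$), hence $|B|\le\delta$; and the hypothesis must be read as quantifying over subsets of size at most $\delta+1$ (or one assumes $|\F|\ge\delta+1$), since the statement over subsets of size exactly $\delta+1$ is vacuous for small $\F$.
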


Now let us describe the combinatorial primitives required in these frameworks.
The only combinatorial primitive operation required with violator spaces is the \emph{violator test primitive},
which reports whether $h \in \HH$ violates $G \subset \HH$. 
Simplex-like combinatorial algorithms for  LP-type problems might also use a 
\emph{basis computation primitive},  corresponding to the pivot operation in linear programming:
given a subset $B+h \subset \HH$ of constraints, where $B$ is a basis
and $h$ is a violator of $B$, produce a new basis $B' \subseteq B+h$. 
Basis computations can always be accomplished via violator tests, in time
exponential in $\delta$, but for some LP-type problems they can be done more quickly.  
A simplex algorithm steps from basis to adjacent basis using basis computations; they avoid
cycling. Clarkson's algorithm  \cite{c-lvali-95}, however, can compute a basis for violator
spaces in which a series of basis computation steps might form a cycle.  
Acyclic violator spaces are precisely LP-type problems \cite{ViolatorSpaces2008}.   

Finally, we consider the running times of combinatorial LP algorithms. 
Clarkson 
stated the following result about linear programs and integer linear programs (ILPs):
\begin{theorem}[Clarkson \cite{c-lvali-95}]
Given an $n \times d$ matrix $A$, a vector $\mathbf b \in \R^n$ and the integer program $\min\{c^T \mathbf x: A\mathbf x\leq\mathbf b, \mathbf x\in \Z^d, \mathbf 0\leq \mathbf x \leq\mathbf u\}$,
one can find a solution to this problem in a expected number of steps of order $O(d^2n \log(n)) + d \log(n) O(d^{d/2})$. 
While the algorithm is in  general exponential, it gives the best complexity for solving integer linear programs when the number of variables $d$ is fixed. 
\end{theorem}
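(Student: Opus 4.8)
The plan is to recast the bounded integer program as an LP-type problem (Definition~\ref{def:LP-type}) and run Clarkson's random-sampling algorithm on it, with Doignon's theorem supplying the Helly-type input that keeps the relevant parameters under control. First I would take $\HH$ to be the multiset consisting of the halfspaces from the rows of $A\mathbf x\le\mathbf b$ together with the box inequalities $\mathbf 0\le\mathbf x\le\mathbf u$, and for $G\subseteq\HH$ set $\omega(G)$ to be the lexicographically smallest optimal lattice point of $\min\{c^T\mathbf x : \mathbf x\in\Z^d,\ \mathbf x\text{ satisfies }G\}$, with $\omega(G)=\infty$ when no feasible lattice point exists. The box makes every feasible subproblem contain only finitely many lattice points, so $\omega$ is well defined and every feasible infimum is attained. \emph{Monotonicity} is immediate. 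For \emph{locality}, if $F\subseteq G$ and $\omega(F)=\omega(G)\neq\infty$ then the optimal lattice point of $F$ already satisfies all of $G$, so a constraint violates $F$ precisely when it violates $G$; the case $\omega(F)=\omega(G)=\infty$ reduces, via Doignon's theorem (Theorem~\ref{doignon}), to the fact that infeasibility of a subfamily is witnessed by at most $2^d$ of its members. This is exactly Lemma~\ref{lem:LP-typeObs} read in reverse, and it also shows that the combinatorial parameters of this LP-type problem are bounded by functions of $d$ alone.

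Next I would apply Clarkson's two-phase reweighting scheme \cite{c-lvali-95}, which is valid for every LP-type problem, indeed for every violator space \cite{ViolatorSpaces2008}. In the outer phase one keeps a positive weight on each of the $n$ constraints; in each round one samples a multiset $R\subseteq\HH$ of size $\Theta(d^2)$ with probability proportional to the weights, computes $\omega(R)$, and then scans all $n$ constraints---each scan a violator test costing $O(d)$ operations---to find the set $V$ of constraints violated by $\omega(R)$. The classical multiplicative-weights analysis, together with Doignon's bound on how few constraints must certify the answer, shows that with constant probability $|V|$ is small enough to double the weights of $V$ and make progress, and that after $O(d\log n)$ successful rounds one reaches $V=\emptyset$, at which point consistency and locality force $\omega(R)$ to be the answer for all of $\HH$ (or $\omega(R)=\infty$, a certificate of infeasibility, which by Doignon's theorem surfaces once a $2^d$-subset witnessing infeasibility is sampled). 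Charging $O(n)$ violator tests of cost $O(d)$ to each of the $O(d\log n)$ rounds accounts for the first term $O(d^2 n\log n)$.

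It remains to bound the cost of solving the subproblems $\omega(R)$. Each sampled instance is an integer program in $d$ variables with only $O(d^2)$ constraints, a fixed-dimension instance that one solves by a base subroutine---enumeration over the candidate bases among those $O(d^2)$ constraints, or a Lenstra-type lattice algorithm---at a cost depending on $d$ but not on $n$, namely $O(d^{d/2})$. Over the $O(d\log n)$ rounds this contributes the second term $d\log(n)\,O(d^{d/2})$. Summing the two phases gives the stated expected running time, and correctness follows because the violator-space axioms guarantee that the basis returned in the final round is a basis for all of $\HH$; the algorithm is exponential because $d^{d/2}$ is, yet for fixed $d$ it is within a logarithmic factor of linear in $n$, which is the sense in which it is the best available for small $d$.

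The step I expect to be the real obstacle is the one flagged above: verifying \emph{locality} (and the well-definedness of $\omega$) for the genuinely non-convex feasible region of an integer program, where one cannot appeal to convexity because the lattice points satisfying $G$ need not be the lattice points of the polyhedron cut out by the continuous relaxation of $G$. The argument has to go directly through the Doignon--Bell--Scarf bound \cite{Doignon:1973ht, Bell:1977tm, Scarf:1977va}, and one must use the bounds $\mathbf 0\le\mathbf x\le\mathbf u$ to exclude the pathologies (unattained infima, recession directions) that would otherwise break monotonicity or locality; one also has to check that Clarkson's reweighting genuinely converges to the integer optimum rather than the optimum of the relaxation, which again is where the $2^d$ Helly number enters. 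Once the LP-type structure and these bounds are in place, the rest---the sampling lemma, the weight-doubling count, and the $O(d^2)$-constraint base instances---is the standard Clarkson machinery.
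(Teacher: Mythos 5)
The paper does not prove this statement at all: it is quoted directly from Clarkson \cite{c-lvali-95}, and the survey's only supporting material is the surrounding framework (Definition~\ref{concrete}, Lemma~\ref{lem:LP-typeObs}, Theorem~\ref{keytoolvio}, and the later remark that Doignon's theorem gives combinatorial dimension $2^d-1$ for integer programming, with the small subproblems handled by Lenstra-type algorithms). Your reduction of the bounded ILP to a concrete LP-type problem via a lexicographic order, with Doignon supplying the Helly-type bound, is exactly in the spirit of that framework (compare Theorem~\ref{LPmixedInt}), so the structural half of your sketch is sound and consistent with how the survey views the result.

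The genuine gap is in the running-time accounting. Your own reduction gives an LP-type problem whose combinatorial dimension is $2^d-1$, not $O(d)$: by Doignon, a basis certifying the integer optimum (or infeasibility) can need up to $2^d-1$ constraints. The generic Clarkson/violator-space analysis (Theorem~\ref{keytoolvio}) then yields an expected $O\left(\delta n + \delta^{O(\delta)}\right)$ violator tests with $\delta = 2^d-1$, and the multiplicative-weights round count in your outer phase is $O(\delta \log n) = O(2^d \log n)$, not the $O(d\log n)$ you assert; likewise the sampled subproblems would have $\Theta(\delta^2)=\Theta(4^d)$ constraints, not $O(d^2)$. Run as written, your scheme proves a bound of the form $O(2^d d\, n\log n)$ plus base-case costs, which is linear in $n$ for fixed $d$ but does not yield the claimed $O(d^2 n\log n) + d\log(n)\,O(d^{d/2})$. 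Clarkson's actual argument gets the polynomial-in-$d$ factor on the $n\log n$ term by running the sampling and reweighting at the level of the continuous relaxation (where bases have size $O(d)$) and invoking the Bell--Scarf/Doignon bound only to control the number and size of the residual small integer programs solved by a Lenstra/Kannan-type subroutine; that interplay between the LP dimension $d$ and the ILP Helly number $2^d$ is the missing idea, and without it the step ``after $O(d\log n)$ successful rounds one reaches $V=\emptyset$'' is unjustified for the LP-type problem you defined.
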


More generally, in terms of violator spaces, we have:
\begin{theorem}\cite[Theorem~27]{ViolatorSpaces2008} \label{keytoolvio}
Using Clarkson's algorithm, a basis of $\HH$ in a violator space $(\HH,\vi)$ can be found 
using an expected $O\left(\delta n + \delta^{O(\delta)}\right)$  calls to the violator test primitive. 
\end{theorem}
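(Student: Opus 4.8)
The plan is to run Clarkson's random-sampling scheme directly in the violator-space axioms and to bound the number of violator tests by the classical reweighting analysis. Recall that Clarkson's method is a composition of two sampling routines on top of a trivial base case: an outer \emph{iterated reweighting} loop that drives down the dependence on $n$, and an inner \emph{recursive random sampling} routine that solves small instances; at the bottom, an instance of $O(\delta)$ constraints is solved by brute force, trying every $\delta$-subset as a candidate basis. Every one of these routines accesses $(\HH,\vi)$ only through the violator test, so the analysis is pure bookkeeping of such tests.

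The combinatorial engine is the \emph{sampling lemma} for violator spaces: if $R$ is a uniform random sample of size $r$ from a finite violator space of combinatorial dimension $\delta$ on $n$ constraints, and $B$ is a canonically chosen basis of $R$, then the expected number of constraints violating $B$ is at most $\delta\cdot\frac{n-r}{r+1}$; the weighted version, used for reweighting, is analogous. I would prove this by the standard double counting over pairs $(Q,h)$ with $|Q|=r+1$, $h\in Q$, and $h\in\vi(Q\setminus\{h\})$, noting that the inner count is at most $\delta$ because the violated elements of $Q$ are contained in a basis of $Q$. The one subtlety is that, lacking an objective function, one must fix a canonical basis for every subset and check that ``$h$ violates $Q\setminus\{h\}$'' is well behaved under sampling, which is exactly where \emph{consistency} and \emph{locality} enter. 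I expect this lemma, rather than the arithmetic that follows, to be the main technical obstacle. The second structural fact needed is short: if $B$ is a basis of a sample $R$ and some constraint of $\HH$ violates $B$, then any fixed basis $B_0$ of $\HH$ has a constraint that violates $B$ as well; this follows by applying the locality axiom twice (first to $R\cup B_0$, then to $\HH$), using that a basis $B$ of $R$ satisfies $\vi(B)=\vi(R)$.

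Given these, the reweighting loop is analyzed as usual. Start with unit weights; in each round draw $R$ of size $r=\Theta(\delta^2)$ with probabilities proportional to the weights, compute a basis $B$ of $R$ via the inner routine, compute $V=\vi(B)\cap\HH$ with $n$ violator tests, and, if $w(V)\le w(\HH)/(3\delta)$, double the weights of the constraints in $V$; stop when $V=\emptyset$, at which point $B$ is a basis of $\HH$ by locality. The sampling lemma plus Markov's inequality make a round successful with probability at least $1/2$ regardless of the current weights, so the expected total number of rounds is at most twice the number of successful rounds. For the latter, a potential argument: each successful round multiplies $w(\HH)$ by at most $1+1/(3\delta)\le e^{1/(3\delta)}$, while by the structural fact it at least doubles the weight of some constraint of $B_0$, so after $k$ successful rounds $2^{k/\delta}\le w(B_0)\le w(\HH)\le n\,e^{k/(3\delta)}$, which forces $k=O(\delta\log n)$.

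It remains to assemble the cost. Each reweighting round costs $O(n)$ violator tests plus one inner solve on $\Theta(\delta^2)$ constraints, and the inner routine — recursion on geometrically shrinking samples, bottoming out in the brute-force base case — uses $\delta^{O(\delta)}$ violator tests on such an instance. Read off naively this gives $O(\delta n\log n+\delta^{O(\delta)}\log n)$; the extra $\log n$ on the linear term is removed by Clarkson's standard two-level refinement, in which the width-$n$ violator sweeps over $\HH$ are executed only $O(1)$ times in expectation at the top level and all logarithmic overhead is confined to subproblems of size $O(\delta^2)$, where it is absorbed into the $\delta^{O(\delta)}$ term. The result is the claimed expected bound of $O(\delta n+\delta^{O(\delta)})$ calls to the violator test primitive.
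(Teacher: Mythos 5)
The paper does not actually prove this theorem --- it quotes it as \cite[Theorem~27]{ViolatorSpaces2008} --- so your proposal can only be compared with the proof in that source, and it essentially reconstructs it: the G\"artner--Welzl sampling lemma transplanted to violator spaces (with the count inside the double counting bounded by $\delta$ because consistency and locality force every extreme element $h\in\vi(Q\setminus\{h\})$ into every basis of $Q$), the fact that if anything in $\HH$ violates a sample basis $B$ then some member of a fixed basis $B_0$ of $\HH$ does (your two applications of locality, using $\vi(B)=\vi(R)$, are exactly the right argument), and the reweighting potential argument giving $O(\delta\log n)$ successful rounds. These are the genuinely nontrivial ingredients, and you have identified and justified them correctly.

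The place where your write-up would not survive being made precise is the final assembly. In Clarkson's two-stage scheme the \emph{outer} stage is not the reweighting loop: it draws samples of size about $\delta\sqrt{n}$, accumulates violators, and terminates after an expected $O(\delta)$ --- not $O(1)$ --- rounds of full-width violator sweeps; this is precisely where the $O(\delta n)$ term comes from. The reweighting loop is the \emph{middle} stage, run on subproblems of size $O(\delta\sqrt{n})$, and only the brute-force base case operates on sets of size $O(\delta^2)$. As literally written (``$O(1)$ width-$n$ sweeps at the top, all logarithmic overhead confined to subproblems of size $O(\delta^2)$''), your last paragraph does not describe an algorithm achieving the bound. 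With the correct sizes the accounting gives $O(\delta n)+O\bigl(\delta^3\sqrt{n}\log n\bigr)+\delta^{O(\delta)}\,O\bigl(\delta^2\log n\bigr)$, and one still needs the routine case analysis comparing $n$ with $\delta$ to absorb the last two terms into $O\bigl(\delta n+\delta^{O(\delta)}\bigr)$. Two smaller wording fixes: in the sampling lemma the elements you count are the \emph{extreme} elements of $Q$, not ``the violated elements,'' and you need them to lie in one common basis (indeed in every basis) to conclude there are at most $\delta$ of them; and the weighted version is cleanest if you argue on the multiset obtained by cloning constraints, checking that cloning preserves the violator-space axioms and the combinatorial dimension.
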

The Ph.D thesis \cite{SkovronPhDThesis} shows that violator spaces 
give the most general framework in which Clarkson's algorithm converges to a solution
within this time bound.  

Simple analyses \cite{seidel1991small, sw-cblpr-92}
show that the randomized  
dual-simplex algorithm (also known as random facet) 
solves any LP-type problems in time using 
$O(n)$ violator tests and basis computations (where $n$ is the number of constraints), 
with an exponential upper bound on the running time in terms of $\delta$. 
This often gives an efficient algorithm 
when $\delta$ is a constant, to certify whether a family $\F$ of input objects has some Helly-type property.  
A sophisticated analysis of a combined algorithm, using the 
randomized simplex algorithm to solve the small subproblems in Clarkson's algorithm, 
provided the first sub-exponential bound for a combinatorial linear programming algorithm.  We say that the running time is sub-exponential in $\delta$ if 
its running time is exponential in some function in $o(\delta)$, such
as $\sqrt{\delta}$. 

\begin{theorem}\cite{msw-sblp-92,kalai1992subexponential}
The combined
algorithm requires
$O(e^{O(\delta \lg n)})$ basis computations and $O(ne^{O(\delta \lg n)})$ violation tests 
\end{theorem}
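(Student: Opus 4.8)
The plan is to decompose the combined algorithm into its two layers and to bound the cost of each separately: Clarkson's random-sampling reductions on the outside, and the randomized dual-simplex (random-facet) algorithm as the base solver on subproblems whose size depends only on $\delta$. By parts (i) and (ii) of the theorem of G\"artner, Matou\v{s}ek, R\"ust and \v{S}kovro\v{n} quoted above it is enough to argue in the violator-space setting, so fix a violator space $(\HH,\vi)$ with $|\HH|=n$ and combinatorial dimension $\delta$.

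First I would recall Clarkson's iterated-reweighting reduction, which is the mechanism behind Theorem~\ref{keytoolvio}: one keeps integer multiplicities on the constraints, and in each round draws a random multiset sample $R$ of size $\Theta(\delta^2)$, computes a basis $B$ of $R$ with the base solver, determines the violator set $\vi(B)$ using $n$ violator tests, and doubles the multiplicity of the constraints in $\vi(B)$ whenever that set is small. The sampling lemma bounds $\mathbb{E}\,|\vi(B)\cap\HH|$ by roughly $\delta$ times the current average multiplicity divided by $|R|-\delta$, and a standard potential argument then shows that $O(\delta\lg n)$ rounds suffice, in expectation, for the basis of the sample to become a basis of $\HH$. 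Hence this phase costs $O(\delta n\lg n)$ violator tests of overhead --- which is itself of order $n\,e^{O(\sqrt{\delta\lg n})}$, so it is harmless --- together with $O(\delta\lg n)$ calls to the base solver, each on an instance with $O(\delta^2)$ constraints and combinatorial dimension at most $\delta$.

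The heart of the argument is bounding the random-facet dual simplex on such a subproblem, say with $m=O(\delta^2)$ constraints. I would introduce $f(m,\delta)$, the expected number of pivot (basis-computation) steps, and write down the recurrence obtained by deleting one of the $m-\delta$ non-basis constraints uniformly at random before recursing, which is an inequality of Kalai / Matou\v{s}ek--Sharir--Welzl type:
\[
f(m,\delta)\ \le\ f(m-1,\delta)\ +\ \frac{1}{m-\delta}\sum_{k=0}^{m-\delta-1} f(k+\delta,\,\delta-1).
\]
The crux is solving this recurrence; the Kalai--MSW summation estimate yields $f(m,\delta)=e^{O(\sqrt{\delta\ln m})}$, so with $m=O(\delta^2)$ one gets $e^{O(\sqrt{\delta\ln\delta})}$ pivots per base solve, each pivot realized by at most $\mathrm{poly}(\delta)$ violator tests (or, if only the violator primitive is allowed, by $2^{O(\delta)}$ of them, which is still absorbed). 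This subexponential recurrence estimate is the one genuinely hard step; everything around it is the bookkeeping of Clarkson's reductions, already packaged in Theorem~\ref{keytoolvio}.

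To finish I would combine the layers: multiplying the $O(\delta\lg n)$ base solves by the $e^{O(\sqrt{\delta\ln\delta})}$ pivots each, and adding the overhead, gives $e^{O(\sqrt{\delta\lg n})}$ basis computations altogether; and since each round also spends $O(n)$ violator tests, the total number of violator tests is $O\bigl(n\,e^{O(\sqrt{\delta\lg n})}\bigr)$. These are the asserted bounds, which are subexponential in the combinatorial dimension~$\delta$.
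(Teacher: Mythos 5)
The paper does not actually prove this statement---it is quoted directly from Matou\v{s}ek--Sharir--Welzl and Kalai, and the surrounding text only records the consequence that sub-exponential primitives yield a sub-exponential algorithm---so there is no internal proof to compare against; I can only assess your reconstruction of the literature argument. Your decomposition is the standard and correct one: Clarkson's iterated-reweighting outer layer with samples of size $\Theta(\delta^2)$ and an expected $O(\delta\lg n)$ rounds (each costing $O(n)$ violator tests), with the random-facet algorithm as the base solver, whose expected pivot count is controlled by a Kalai/MSW-type recurrence. Combining these does give $e^{O(\sqrt{\delta\lg n})}$ basis computations and $O\bigl(n\,e^{O(\sqrt{\delta\lg n})}\bigr)$ violator tests, which is in fact the bound the theorem is meant to assert: as printed, $e^{O(\delta\lg n)}=n^{O(\delta)}$ is not sub-exponential in $\delta$ and is essentially what brute force over candidate bases already gives, so your square-root form is the meaningful (and intended) statement, and it trivially implies the printed one.

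Two caveats. First, the genuinely hard step---showing that the recurrence you display has solution $e^{O(\sqrt{\delta\ln m})}$---is invoked by name (``the Kalai--MSW summation estimate'') rather than carried out, so as a self-contained proof the crux is assumed; everything you do prove is the bookkeeping around it, and you say as much. Second, the parenthetical claim that simulating each basis computation by $2^{O(\delta)}$ violator tests ``is still absorbed'' is false in general: when $\delta\gg\lg n$ the term $2^{O(\delta)}$ dominates $n\,e^{O(\sqrt{\delta\lg n})}$. This does not damage the main argument, since the theorem counts basis computations and violation tests as separate primitives, but the aside should be dropped or restricted to the regime $\delta=O(\lg n)$.
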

When both primitive operations can be performed in sub-exponential time we get an 
overall sub-exponential time algorithm. 


Violator spaces are a powerful general framework. Recently in \cite{algebraicgeo-violatorspace}, the authors prove that problems about
the solvability of systems of non-linear polynomial equations can be approached via violator spaces and Clarkson's algorithm can be adapted to
this situation. 
The Helly numbers in question correspond to those discussed in Theorem \ref{thm:DF-Helly}.

\subsection{Using LP-type problems to prove Helly-type theorems}

As we have seen above, the Helly numbers, in the form of a combinatorial dimension, are important measures
of the scalability of various algorithms (both of LP-type problems and violator spaces). 
Amenta \cite{Amenta:1994gs} gave several constructions for objective functions that could be used
to form LP-type problems from existing Helly-type theorems. 
Often, either lexicographic ordering or a strictly convex function such as distance from the origin 
can be used to construct an operator $\preceq$ in the definition of a concrete LP-type problem, above. 
Using this idea, many of the Helly-type theorems in this survey lead easily to concrete LP-type problems, 
implying that they can be solved in linear time in fixed dimension. 
For instance, consider to Theorem~\ref{mixedint}, which leads to:  

\begin{theorem}
\label{LPmixedInt}
Let $\D$ be a finite multiset of convex sets in $\Z^{d-k} \times \R^k$,  
and let  $\preceq$ be a lexicographic order on  $\Z^{d-k} \times \R^k$. 
Then finding the minimum point of $\cap \D$, if one exists, is a concrete 
LP-type problem of combinatorial dimension at most $\delta = 2^{d-k} (k+1) - 1$. 
\end{theorem}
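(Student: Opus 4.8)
The plan is to verify the two axioms of Definition~\ref{concrete} for the triple $(\Z^{d-k}\times\R^k,\ \D,\ \preceq)$, and then to read off the combinatorial dimension from the Mixed-Integer Helly theorem (Theorem~\ref{mixedint}) via Lemma~\ref{lem:LP-typeObs}. First I would note that the ground set $X=\Z^{d-k}\times\R^k$ is linearly ordered by the lexicographic order $\preceq$: compare the integer coordinates in order, then the real coordinates. The constraints are the sets in $\D$ (a convex set $K$ is identified with $K\cap X$, i.e.\ its mixed-integer points). For any subfamily $\G\subseteq\D$, the set $\cap\G\cap X$ is the set of mixed-integer points of a convex body. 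The one nontrivial thing to check for Definition~\ref{concrete} is that whenever $\cap\G\cap X\neq\emptyset$, it has a \emph{unique} $\preceq$-minimum; call it $\omega(\G)$.

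The existence-and-uniqueness of the lexicographic minimum is the technical heart. I would argue it as follows. Write $C=\cap\G$, a closed convex set, and suppose $C$ contains a point of $X=\Z^{d-k}\times\R^k$. The integer part of such points lies in the projection of $C$ onto the first $d-k$ coordinates intersected with $\Z^{d-k}$; among all integer vectors $z$ for which the fiber $C_z=\{y\in\R^k:(z,y)\in C\}$ is nonempty, one must first see that there is a lexicographically smallest such $z$. This is where one needs a mild boundedness hypothesis: if $C$ is, say, a halfspace with irrational normal, the infimum over the integer part need not be attained, or the fiber $C_z$ need not attain its lexicographic infimum (a halfspace in $\R^k$ has no lexicographic minimum). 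The clean fix, and the one I expect the authors to use, is to take the sets in $\D$ to be compact (or at least to pass to the standard reduction where one works with the polytope $[-M,M]^d$ intersected with everything, $M$ large); then each nonempty $C_z$ is a compact convex set, its lexicographic minimum in $\R^k$ exists and is unique by a standard argument (minimize the first real coordinate over the compact set $C_z$, obtaining a compact face, then the second over that face, and so on, terminating after $k$ steps at a single point), and the set of feasible integer parts $z$ is finite, so a lexicographically smallest feasible $z$ exists. Combining, $\omega(\G)=(z^\ast,y^\ast)$ is well defined and unique. \textbf{This compactness/boundedness reduction is the main obstacle}: without it the statement is false, with it the argument is routine, so the work is in stating the hypothesis correctly and checking the reduction is harmless (it is, by the usual Helly argument: if $\cap\D\cap X=\emptyset$ then already a bounded subproblem is infeasible, and one may restrict to a large box).

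Having established that $(\Z^{d-k}\times\R^k,\D,\preceq)$ is a concrete LP-type problem, the combinatorial dimension is computed from the associated Helly number. By Lemma~\ref{lem:LP-typeObs}, a concrete LP-type problem of combinatorial dimension $\delta$ corresponds to a Helly-type theorem with Helly number $\delta+1$; conversely the combinatorial dimension is one less than the relevant Helly number. The Helly number governing nonempty intersection over $\Z^{d-k}\times\R^k$ is exactly $h(\R^k\times\Z^{d-k})=(k+1)2^{d-k}$ by Theorem~\ref{mixedint}. Hence $\delta+1 = 2^{d-k}(k+1)$, i.e.\ $\delta = 2^{d-k}(k+1)-1$. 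The only point to be careful about is that Lemma~\ref{lem:LP-typeObs} is phrased for nonempty intersection of constraints, which matches our $\omega$ (the value $\omega(\G)$ being ``$+\infty$'' precisely when $\cap\G\cap X=\emptyset$), so the correspondence applies verbatim and yields the claimed bound $\delta = 2^{d-k}(k+1)-1$.
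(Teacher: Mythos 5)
Your verification of the concrete LP-type axioms is fine as far as it goes, and your point about attainment of the lexicographic minimum is a fair quibble (the paper treats this part as immediate, hiding the issue behind the ``if one exists'' clause; note that uniqueness is automatic because $\preceq$ is a total order, so existence is the only real concern). But the main step --- bounding the combinatorial dimension --- has a genuine gap. You invoke Lemma~\ref{lem:LP-typeObs} ``conversely'': that lemma says a combinatorial dimension bound $\delta$ yields a Helly number of at most $\delta+1$, whereas you need the reverse implication, namely that the Helly number $h(\Z^{d-k}\times\R^k)=2^{d-k}(k+1)$ of Theorem~\ref{mixedint} forces every basis to have size at most $2^{d-k}(k+1)-1$. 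That reverse implication is not a formal consequence of the lemma, and it is not true for an arbitrary linear order on the ground set: the Helly number only constrains which families of constraints have empty intersection, while a basis is a statement about where the minimum of a nonempty intersection sits, so nothing a priori prevents a large basis even when minimally non-intersecting families are small.

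The missing argument, which is exactly what the paper supplies, converts a basis into a witness of non-intersection by adding one more convex constraint encoding the objective. If $B$ is a basis of $\G$ with $\omega(B)=\omega(\G)=x_G$, augment $B$ with the set $C_G$ of mixed-integer points strictly preceding $x_G$ in the lexicographic order (the paper writes $\succ$, but the intended set is the strict lexicographic lower set). The key geometric fact is that a strict lexicographic lower set is convex, so the augmented family is still a family of convex sets in $\Z^{d-k}\times\R^k$ and Theorem~\ref{mixedint} applies to it. Then $\cap\left(B\cup\{C_G\}\right)=\emptyset$ because $x_G$ is the minimum of $\cap B$, while every proper subfamily intersects: dropping $C_G$ leaves $x_G$, and dropping $h\in B$ leaves the point $\omega(B-h)\prec x_G$, which lies in $C_G$ by the defining property of a basis. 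The Helly number then gives $|B|+1\le 2^{d-k}(k+1)$, i.e.\ $|B|\le 2^{d-k}(k+1)-1$. It is precisely this interaction between the order and convexity (lexicographic lower sets being convex) that licenses passing from the Helly number to the combinatorial dimension; your proposal skips it entirely.
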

\begin{proof}
The fact that it is a concrete LP-type problem follows immediately Definition~\ref{concrete}. 
To establish the combinatorial dimension, consider any $\G \subseteq \F$
such that $\cap \G$ is non-empty with unique lexicographic minimum point $x_G \in X$.
Consider any basis $\B \subseteq \G$, and augment $\B$ with the open convex set 
$C_G = \{ x \succ x_G \} \cap \Z^{d-k} \times \R^k$.  
Then $\cap (B \cup C_G) = \emptyset$ while any of its proper subsets
are non-empty, so $|B| \leq 2^{d-k} (k+1)-1$.
\end{proof}

In a similar vein, De Loera et al. showed that every $S$-Helly number, and the corresponding $S$-Helly theorem, allow one to use
Clarkson's algorithm, by directly constructing a violator space.

\begin{theorem}[\cite{de2015calafiore}] 
Let $S \subseteq \real^d$ be a closed set with a finite Helly number $h(S)$. 
Assume that the violation test primitive can be
computed.  Using
Clarkson's algorithm, one can minimize 
$c^T x$, where $c$ is a linear function, 
subject to $f_i(x) 0$, $f_i$ convex constraint for all $i = 1,2,\ldots,n$, and for  $x \in  S$, 
can be found 
using an expected $O\left(\delta n + h(S)^{O(h(S))}\right)$  calls to the violator test primitive. 
Thus the algorithm is linear in $n$ and exponential in $h(S)$. 
\end{theorem}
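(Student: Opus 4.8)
The plan is to package the problem as a \emph{violator space} $(\HH,\vi)$ in the sense of Definition~\ref{defn:violator}, to bound its combinatorial dimension by $h(S)$, and then to invoke Theorem~\ref{keytoolvio}. Take $\HH=\{K_1,\dots,K_n\}$ with $K_i=\{x\in\R^d: f_i(x)\le 0\}$ the (closed, convex) feasible set of the $i$-th constraint, and for $G\subseteq\HH$ write $R(G):=S\cap\bigcap_{K_i\in G}K_i$. Refine the objective into a total order on $\R^d$ by declaring $x\prec y$ iff $(c^{T}x,x_1,\dots,x_d)$ precedes $(c^{T}y,y_1,\dots,y_d)$ lexicographically; because $S$ and each $K_i$ are closed, whenever $R(G)\neq\emptyset$ and $c^{T}(\cdot)$ is bounded below on it this order attains a unique minimum, which we call $\omega(G)$. (Unboundedness is pre-empted by adjoining once and for all the artificial half-space $\{c^{T}x\ge -M\}$ with $M$ huge, which never enters a basis at a genuine finite optimum; set $\omega(G)=\infty$ when $R(G)=\emptyset$.) Say that $h$ \emph{violates} $G$ when $\omega(G\cup\{h\})\succ\omega(G)$, and let $\vi(G)$ collect all such $h$. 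Consistency of $(\HH,\vi)$ is immediate, since $h\in G$ forces $R(G\cup\{h\})=R(G)$. Locality is the usual LP-type check: if $F\subseteq G$ and no member of $G$ violates $F$, then the point $\omega(F)$ lies in every $K_i\in G$ and hence in $R(G)\subseteq R(F)$, so $\omega(G)=\omega(F)$; a two-case split on whether $\omega(F)\in K_h$ then gives ``$h$ violates $F$ $\iff$ $h$ violates $G$'', i.e.\ $\vi(F)=\vi(G)$.

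The heart of the argument is the bound on the combinatorial dimension $\delta$. Let $B$ be a basis, i.e.\ an inclusion-minimal set with $\omega(B-h)\prec\omega(B)$ for every $h\in B$, and let $x_B:=\omega(B)\in R(B)$. Introduce the set $C_B:=\{x\in\R^d:(c^{T}x,x)\prec_{\mathrm{lex}}(c^{T}x_B,x_B)\}$; this is \emph{convex}, being the preimage of the convex set $\{z\in\R^{d+1}: z\prec_{\mathrm{lex}}w\}$ under the linear injection $x\mapsto(c^{T}x,x)$ (that $\{z\prec_{\mathrm{lex}}w\}$ is convex is an elementary exercise in comparing leading coordinates). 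Then the family $\F_B:=\{K_i: K_i\in B\}\cup\{C_B\}$ of convex sets will be inclusion-minimal among families whose intersection avoids $S$: one has $\bigcap\F_B\cap S=R(B)\cap C_B=\emptyset$ because $x_B$ is the $\prec$-least point of $R(B)$; deleting $C_B$ leaves $R(B)\ni x_B$; and deleting $K_j$ leaves $R(B-K_j)\cap C_B$, which is nonempty because minimality of $B$ supplies a point of $R(B-K_j)$ lying $\prec x_B$. By the defining property of the $S$-Helly number (Definition~\ref{definition-S-Helly}; see also Lemma~\ref{lem:hof}) such a minimal family has at most $h(S)$ members, so $|B|\le h(S)-1$; the one remaining case, $\omega(B)=\infty$, is handled directly (a minimal $S$-avoiding subfamily of the $K_i$ alone already has $\le h(S)$ members). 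Hence $\delta\le h(S)-1$, which for $S=\Z^{d-k}\times\R^{k}$ recovers the value appearing in Theorem~\ref{LPmixedInt}.

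It remains to run the algorithm. Since the violator-test primitive is computable by hypothesis, Theorem~\ref{keytoolvio} applies verbatim: Clarkson's algorithm computes a basis $B^{\ast}$ of $\HH$ using an expected $O(\delta n+\delta^{O(\delta)})$ violator tests, and $\omega(B^{\ast})=\omega(\HH)$ is precisely the sought minimizer (or the flag $\infty$ for infeasibility, resp.\ a tight artificial half-space signalling unboundedness). Substituting $\delta\le h(S)-1$ turns this into $O\bigl(h(S)\,n+h(S)^{O(h(S))}\bigr)$, linear in $n$ and exponential only in $h(S)$. The step I expect to cause the most trouble is the combinatorial-dimension bound, and within it the care needed around non-attained optima: one must make sure the lexicographic refinement yields a genuinely unique optimal point in the feasible bounded case, and that infeasibility and (pre-empted) unboundedness are recorded in $\omega$ in a way that preserves Locality while never enlarging a basis past $h(S)$.
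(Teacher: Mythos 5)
Your proposal is correct and follows essentially the same route the paper takes (and sketches around Theorem~\ref{LPmixedInt}): build a concrete LP-type problem/violator space via a lexicographic refinement of $c^Tx$ over $S\cap\bigcap_i K_i$, bound the combinatorial dimension by augmenting a basis with the convex set of strictly better points and invoking the definition of $h(S)$, then apply Clarkson's algorithm through Theorem~\ref{keytoolvio}. The attainment-of-minimum subtlety you flag is likewise glossed over in the paper's own parallel argument (which simply posits a unique lexicographic minimum), so your treatment matches the level of rigor of the source.
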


What is perhaps more surprising is that one can use algorithms that solve LP-type problems to prove Helly-type theorems. 
The following theorem, which we mentioned in Subsection~\ref{topologicalversions} (Theorem \ref{theorem-amenta-disjoint}), 
makes use of Lemma~\ref{lem:LP-typeObs}:

\begin{theorem}[Amenta \cite{Amenta:1996eu}] \label{morris}
Let $(X,\HH,\preceq)$ be a concrete LP-type problem of combinatorial dimension $\delta$ with the 
property that $\preceq$ is a total order on the points of $X$. 
Let $\F$ be a family of subsets of $X$ such that, for every $\G \subseteq \F$ with $\cap \G \neq \emptyset$,
the intersection $\cap \G$ is the disjoint union of at most $r$ elements of $\HH$. 
Then $(X,\F,\preceq)$ is a concrete LP-type problem of combinatorial dimension at most $r(\delta + 1) - 1$.
\end{theorem}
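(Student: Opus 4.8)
The plan is to verify Definition~\ref{concrete} for $(X,\F,\preceq)$ and then bound the combinatorial dimension of the resulting problem, using Lemma~\ref{lem:LP-typeObs} to move between the combinatorial dimension and the associated Helly number. First I would check the only non-trivial clause of Definition~\ref{concrete}: that for every $\G\subseteq\F$ with $\cap\G\neq\emptyset$, the set $\cap\G$ has a unique $\preceq$-minimum. By hypothesis $\cap\G=H_1\sqcup\cdots\sqcup H_m$ with $m\le r$ and each $H_i\in\HH$; since $(X,\HH,\preceq)$ is concrete LP-type, each singleton $\{H_i\}$ has a well-defined minimum $\omega(\{H_i\})$, so the minimum of $\cap\G$ is the $\preceq$-least of $\omega(\{H_1\}),\dots,\omega(\{H_m\})$, and disjointness forces these candidates to be pairwise distinct (a shared value would lie in two of the $H_i$). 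Hence the minimum of $\cap\G$ is attained at a single point, Definition~\ref{concrete} holds, and $\omega(\G)$ is defined for all $\G\subseteq\F$.

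To bound the combinatorial dimension I would analyze an arbitrary basis $B\subseteq\F$, i.e.\ a minimal subfamily with $\omega(B\setminus\{F\})\prec\omega(B)$ for all $F\in B$, aiming for $|B|\le r(\delta+1)-1$. As in the proof of Theorem~\ref{LPmixedInt}, I would first arrange $\cap B\neq\emptyset$ by adjoining, if necessary, the auxiliary upper-set constraint $\{x\in X:x\succ\omega(B)\}$ (the same device that drives Theorem~\ref{LPmixedInt}), and write $\cap B=H_1\sqcup\cdots\sqcup H_m$ with $m\le r$, $H_i\in\HH$; let $p=\omega(B)$, which lies in exactly one component, say $H_1$, with $p=\omega(\{H_1\})$. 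For each $F\in B$ minimality yields the minimum $q_F$ of $\cap(B\setminus\{F\})$ with $q_F\prec p$, and since $\cap B\subseteq\cap(B\setminus\{F\})$ with both disjoint unions of at most $r$ members of $\HH$, each component $H_i$ of $\cap B$ sits inside a single component of $\cap(B\setminus\{F\})$. I would then assign each $F$ to the component(s) of $\cap B$ whose host in $\cap(B\setminus\{F\})$ enlarges upon deleting $F$, and show that the constraints landing on a fixed $H_i$, after restriction to a neighborhood of $H_i$, form a basis-like configuration in a concrete LP-type problem of combinatorial dimension $\delta$ (that of $\HH$), hence number at most $\delta+1$ --- and at most $\delta$ for the component $H_1$ carrying $\omega(B)$, whose minimum point already occupies one of those slots. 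Summing over the at most $r$ components gives $|B|\le(r-1)(\delta+1)+\delta=r(\delta+1)-1$.

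The step I expect to be the main obstacle is making this assignment of constraints to components simultaneously well defined and tight enough to bound by $\delta+1$ per component. The disjointness hypothesis is precisely what rules out the failure mode behind the statement's need for it: without disjointness, deleting one constraint could merge two components of $\cap B$, so a single deleted constraint could be responsible for arbitrarily much and no per-component bound would survive. Controlling this is the delicate point that was mishandled in Morris's original proof~\cite{Morristhesis} and later repaired in the combinatorial setting by Eckhoff and Nischke (Theorem~\ref{theorem-eckhoff-nischke}); here the linear order $\preceq$ and the LP-type axioms of $(X,\HH,\preceq)$ should streamline it, since the constraints ``active'' at a component are exactly those whose removal lowers the $\preceq$-minimum of the relevant growing subfamily. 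Once the per-component bound is secured, the tally above yields $|B|\le r(\delta+1)-1$, and Lemma~\ref{lem:LP-typeObs} converts this, if one prefers, into the statement that $\F$ obeys a Helly theorem with Helly number $r(\delta+1)$.
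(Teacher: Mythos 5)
Your verification that $(X,\F,\preceq)$ is a concrete LP-type problem is fine and matches the paper, and your target count $(r-1)(\delta+1)+\delta=r(\delta+1)-1$ is the same accounting the paper uses. But the heart of the theorem is exactly the step you flag and leave open: that each component can be charged at most $\delta+1$ constraints of a basis ($\delta$ for the component carrying $\omega(B)$). The static assignment you propose --- attach $F$ to the component(s) of $\cap B$ whose host in $\cap(B\setminus\{F\})$ enlarges when $F$ is deleted --- does not deliver this. One deletion can enlarge or merge several components (disjointness of $\cap(B\setminus\{F\})$ as a union of at most $r$ members of $\HH$ does not prevent two components of $\cap B$ from landing in the same host, so the ``failure mode'' you claim is ruled out is still present), and conversely many constraints of a basis can enlarge a fixed $H_i$ without being among the at most $\delta$ members of $\HH$ that determine its minimum, so nothing in your picture caps the number of constraints assigned to $H_i$ at $\delta+1$. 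As written, the proposal is a correct outline with the decisive combinatorial step unproven.

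The paper closes this gap with a sequential, not static, bookkeeping. Maintain a holding set $S$: the current minimum point $x$ lies in one $\HH$-component $C_h$ for each remaining constraint $h$, and as the unique minimum of $\cap_h C_h$ it is determined by at most $\delta$ of these components; move the corresponding constraints of $B$ into $S$. Then delete from $B$ the constraint $h$ for which $\omega(S+B-h)$ is maximal; this $h$ is unique (the new optimum $x'$ lies in every other constraint but not in $h$), and $x'$ must lie in a \emph{different} component than $x$, because every constraint of $\HH$ determining $x$ within its component has already been locked into $S$. Thus each removal is matched to a newly revealed component: with at most $r$ components there are at most $r-1$ removals and at most $r\delta$ held constraints, giving $|B|\le r\delta+(r-1)=r(\delta+1)-1$. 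Some such greedy charging argument (or an equivalent, as in Eckhoff--Nischke) is what your proof still needs; the uniqueness of the removed constraint at each stage is precisely what makes the per-component charge well defined and tight.
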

\begin{proof}
We sketch the proof.  Once again, it is clear that $(X,\F,\preceq)$ is a concrete LP-type problem, so we just have to
establish the combinatorial dimension. 
We will count the constraints in 
any basis $B$ such that $\cap B \neq \emptyset$, as follows.   
The point $x$ achieving $\omega(B)$ is contained in one connected component
$C_h$ for each $h \in B$. 
As the unique minimum of $\cap C_h$, it corresponds to a basis consisting 
at most $\delta$ elements of $\HH$.  
We move the elements of $B$ corresponding to those basis elements from $B$ into a holding set $S$. 
Next, we remove the constraint $h \in B$ such that 
$\omega(S+B-h)$ is maximal; $h$ must be unique since the point $x'$
achieving $\omega(S+B-h) \in h'$, for all $h' \in S+B$, but $x' \not \in h$. 
We remove $h$ from $B$. 
The point $x'$ must lie in a different connected component of $S+B-h$ from $x$, since 
$x$ is the minimum point in its connected component and all the constraints of $\HH$ 
determining $x$ are held in $S$. 
Now we repeat; we count the $\leq d$ constraints of $B$ adjacent to $x'$, 
move those to $S$, and then look for another remaining element
of $B$ to remove. 
We continue this process until $B$ is empty; all elements have either been removed or moved into $S$. 
The final set $\cap S$ consists of at most $r$ elements of $\HH$, with the 
minimum of each is determined by at most $d$ constraints, so the size of $S$ is at most $rd$. 
At most $r-1$ elements were removed, each accounting for a new connected component of $\cap S$, 
so the combinatorial dimension is $r(\delta+1) - 1$. 
\end{proof}
Theorem \ref{morris} gives an easy proof of the theorem first conjectured by Gr\"unbaum and Motzkin \cite{Grunbaum:1961fd}
that we mentioned this back in Subsection \ref{topologicalversions}.

\begin{theorem}[Morris and Amenta \cite{Morristhesis},\cite{Amenta:1994gs}]
Let $\F$ be a family of sets in $\R^d$, such that the common intersect of any non-empty finite 
sub-family of $\F$ is the disjoint union of at most $r$ closed convex sets. 
Then $\F$ has a Helly number of at most $r(d+1)$.
\end{theorem}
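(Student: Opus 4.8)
The plan is to derive the statement from Amenta's Theorem~\ref{morris} together with Lemma~\ref{lem:LP-typeObs}. Since a Helly number only refers to finite subfamilies, it is enough to fix an arbitrary finite family $\F$ satisfying the hypothesis and to show that if every subfamily of size $r(d+1)$ has nonempty intersection, then $\cap\F\neq\emptyset$. To invoke Theorem~\ref{morris} I need a concrete LP-type problem $(\R^d,\HH,\preceq)$ whose combinatorial dimension is at most $d$, in which $\preceq$ is a \emph{total} order on $\R^d$, and where $\HH$ is a finite collection of closed convex sets large enough that $\cap\G$ is a disjoint union of at most $r$ members of $\HH$ for every $\G\subseteq\F$. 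The natural choice is to let $\HH$ be the finite set of all closed convex sets occurring as a connected component of some $\cap\G$ with $\G\subseteq\F$ and $\cap\G\neq\emptyset$; this is finite because $\F$ is.

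First I would fix the order. Choose a point $q\in\R^d$ and declare $x\preceq y$ when $|x-q|<|y-q|$, or $|x-q|=|y-q|$ and $x$ precedes $y$ in the lexicographic order on coordinates. This is a total order on $\R^d$: distinct points are either at different distances from $q$ or are separated by the coordinate order. Because $|\,\cdot-q|$ is strictly convex, every nonempty closed convex set $C$ has a unique nearest point to $q$, and this point is the $\preceq$-minimum of $C$; the coordinate tie-break only makes $\preceq$ total, it is never used to locate the minimum. Consequently, for every $\mathcal{A}\subseteq\HH$ with $\cap\mathcal{A}\neq\emptyset$ the set $\cap\mathcal{A}$ has a unique $\preceq$-minimum, and likewise every nonempty $\cap\G$ ($\G\subseteq\F$), being a finite disjoint union of nonempty closed convex sets, has one; so both $(\R^d,\HH,\preceq)$ and $(\R^d,\F,\preceq)$ satisfy the unique-minimum requirement of Definition~\ref{concrete}.

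The main step is to bound the combinatorial dimension of $(\R^d,\HH,\preceq)$ by $d$; this is essentially a repackaging of Lemma~\ref{lemma-hyperplane-containment}. Let $\mathcal{A}\subseteq\HH$ be finite with $\cap\mathcal{A}\neq\emptyset$ and let $x^{*}$ be its $\preceq$-minimum, i.e. the nearest point of $\cap\mathcal{A}$ to $q$. If $q\in\cap\mathcal{A}$ then $x^{*}=q$ and already $\omega(\emptyset)=\omega(\mathcal{A})$. Otherwise set $H^{+}=\{y:\langle x^{*}-q,\,y-x^{*}\rangle\ge 0\}$, a closed half-space with $x^{*}$ on its boundary. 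The variational characterization of the nearest point gives $\cap\mathcal{A}\subseteq H^{+}$, and every point of $H^{+}$ is at distance at least $|x^{*}-q|$ from $q$. By Lemma~\ref{lemma-hyperplane-containment} there is a subfamily $\mathcal{B}\subseteq\mathcal{A}$ with $|\mathcal{B}|\le d$ and $\cap\mathcal{B}\subseteq H^{+}$; since $x^{*}\in\cap\mathcal{B}$, the nearest point of $\cap\mathcal{B}$ to $q$ is again $x^{*}$, hence $\omega(\mathcal{B})=\omega(\mathcal{A})$. Applying this with $\mathcal{A}$ equal to any basis (which has $\cap\mathcal{A}\neq\emptyset$, as otherwise $\omega(\mathcal{A})=\infty$) shows by minimality that every basis has at most $d$ elements, so the combinatorial dimension is at most $d$.

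It remains to assemble the pieces. By the choice of $\HH$, every $\G\subseteq\F$ has $\cap\G$ a disjoint union of at most $r$ members of $\HH$, so Theorem~\ref{morris} applies and yields that $(\R^d,\F,\preceq)$ is a concrete LP-type problem of combinatorial dimension at most $r(d+1)-1$. Lemma~\ref{lem:LP-typeObs} then converts this into the Helly statement: $\cap\F\neq\emptyset$ whenever every subfamily of size $r(d+1)$ has nonempty intersection. Since $\F$ was an arbitrary finite family with the disjointness property, the Helly number of $\F$ is at most $r(d+1)$. The only real obstacle is the combinatorial-dimension bound of the third paragraph; the subtlety there — and the reason a plain lexicographic order will not do — is that $\preceq$ must be total on all of $\R^d$ yet still attain a minimum over possibly unbounded closed convex intersections, which is exactly what prepending the distance-to-$q$ coordinate secures.
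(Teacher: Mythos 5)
Your proposal is correct and takes essentially the same route as the paper, which obtains this result precisely by applying Theorem~\ref{morris} together with Lemma~\ref{lem:LP-typeObs}; you merely make explicit what the paper leaves implicit, namely the choice of the distance-to-$q$ order with lexicographic tie-break, the unique-minimum verification, and the bound $\delta\le d$ via Lemma~\ref{lemma-hyperplane-containment}. The only caveat is that your dimension count, like the paper's own proof of Theorem~\ref{morris}, tacitly restricts attention to bases with non-empty intersection, which is exactly the convention under which Lemma~\ref{lem:LP-typeObs} yields the Helly number $r(d+1)$.
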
 
We can apply Theorem~\ref{morris} to give new Helly-type results in other situations, for instance with Theorem~\ref{LPmixedInt},  above:
\begin{theorem}
Let $\D$ be the family of convex sets in $\Z^{d-k} \times \R^k$,  and
let  $\preceq$ be a lexicographic order on  $\Z^{d-k} \times \R^k$. 
Let $\F$ be a finite family of subsets of $X$ such that, for every $\G \subseteq \F$ with 
$\cap \G \neq \emptyset$,
the intersection $\cap \G$ is the disjoint union of at most $r$ elements of $\D$. 
Then  $\F$ has Helly number at most $r(2^{d-k} (k+1))$. 
\end{theorem}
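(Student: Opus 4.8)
The plan is to obtain this statement by the same two-step argument that produced the Morris--Amenta theorem earlier in this section (the case $X=\R^d$, $\delta=d$), only now feeding in the mixed-integer LP-type problem of Theorem~\ref{LPmixedInt} in place of ordinary linear programming.

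First I would invoke Theorem~\ref{LPmixedInt}: minimizing the lexicographic order $\preceq$ over intersections of convex sets in $\Z^{d-k}\times\R^k$ is a concrete LP-type problem $(X,\D,\preceq)$ of combinatorial dimension $\delta=2^{d-k}(k+1)-1$. Since $\preceq$ is the lexicographic order on $\Z^{d-k}\times\R^k$, it is a \emph{total} order on the points of $X$, so the hypothesis of Theorem~\ref{morris} on the ground structure is met. (As the Helly number only concerns finite families, one may restrict attention, for a given finite $\F$, to the finite sub-multiset of $\D$ consisting of the convex sets that actually occur in the relevant intersections; this makes the finiteness requirement in Definition~\ref{concrete} harmless.)

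Next, the hypothesis on $\F$ is exactly the hypothesis of Theorem~\ref{morris} with $\HH=\D$ and the given $r$: for every $\G\subseteq\F$ with $\cap\G\neq\emptyset$, the set $\cap\G$ is a disjoint union of at most $r$ convex sets in $\Z^{d-k}\times\R^k$. Applying Theorem~\ref{morris} therefore gives that $(X,\F,\preceq)$ is itself a concrete LP-type problem of combinatorial dimension at most
\[
r(\delta+1)-1 = r\cdot 2^{d-k}(k+1)-1 .
\]
Finally I would apply Lemma~\ref{lem:LP-typeObs}: a concrete LP-type problem of combinatorial dimension $\delta'$ is associated with a Helly-type theorem whose Helly number is $\delta'+1$. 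Taking $\delta'=r\cdot 2^{d-k}(k+1)-1$ yields that $\F$ has Helly number at most $r\cdot 2^{d-k}(k+1)$, which is the claim.

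There is no genuinely hard step: the substance lives entirely in Theorems~\ref{LPmixedInt} and~\ref{morris}. The only points that need a moment's care are verifying that the lexicographic order on $\Z^{d-k}\times\R^k$ is a total (not merely partial) order, so that Theorem~\ref{morris} applies, and the bookkeeping reduction to a finite sub-multiset of $\D$ so that Definition~\ref{concrete} is satisfied verbatim; both are routine.
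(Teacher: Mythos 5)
Your proposal is correct and follows exactly the route the paper intends: the theorem is stated in the paper as an immediate application of Theorem~\ref{morris} to the concrete LP-type problem of Theorem~\ref{LPmixedInt}, with Lemma~\ref{lem:LP-typeObs} converting the resulting combinatorial dimension $r\cdot 2^{d-k}(k+1)-1$ into the Helly number $r\cdot 2^{d-k}(k+1)$. Your added remarks about totality of the lexicographic order and restricting to a finite sub-multiset of $\D$ are exactly the routine checks the paper leaves implicit.
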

 
\begin{openproblem}
Are there S-Helly, integral, or mixed-integer versions of more general Helly-type theorems 
with algorithmic proofs?
\end{openproblem}

\subsection{Other Applications}

In addition to linear programming, there are a few other useful 
sub-exponential convex programming problems in higher dimensional spaces,
which use the dual-simplex algorithm to speed up the solution of the small sub-problems:
\begin{theorem}[G\"{a}rtner \cite{gartner1995subexponential}]
There is a sub-exponential basis computation primitive for the following LP-type problems: 
\begin{itemize}
\item Minimum enclosing ball:  Given $n$ points in $\R^d$, find the ball of minimum radius enclosing 
all the points.
\item Polytope distance: Given two polytopes in $\R^d$, specified by their $n$ vertices, find the distance
between them, or report that they intersect.
\item Maximum margin separator:  Find the linear separator 
between two sets of $n$ points in $\R^d$
that maximizes the distance between the 
separating hyperplane and either set, or report that they cannot be separated. 
This problem is relevant to Support Vector Machine classifiers \cite{balcazar2001provably}, 
and it corresponds to Kirchberger's Theorem.
\end{itemize}
\end{theorem}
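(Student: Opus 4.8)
The plan is to realize each of the three problems as a concrete LP-type problem, so that ``basis computation'' becomes a genuine pivot operation, and then to supply that pivot by a recursion whose cost is controlled by G\"{a}rtner's analysis of abstract optimization problems \cite{gartner1995subexponential}. First I would fix the LP-type structure in each case. For the minimum enclosing ball the constraints are the input points, $\omega(G)$ is the pair (radius of the smallest ball enclosing $G$, center of that ball) ordered lexicographically, a basis of $G$ is an inclusion-minimal set of points on the boundary sphere that determines it, and the Helly behaviour of balls gives combinatorial dimension $\delta \le d+1$. Polytope distance is the minimum-norm-point problem on the Minkowski difference $\conv\{a-b : a\in A,\ b\in B\}$, which is LP-type of combinatorial dimension $O(d)$ with squared distance (plus a lexicographic tie-break on the witness pair) as objective; the maximum-margin separator is LP-type of combinatorial dimension $O(d)$ with the margin width as objective, a Helly-type bound for it being Kirchberger's theorem. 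In all three cases the violation test, namely whether a given point lies outside the current ball or on the wrong side of the current separator, costs only $O(d)$ or $O(d^2)$ arithmetic operations, so the only ingredient missing for a subexponential solver (in the sense of the combined randomized algorithm discussed above) is a subexponential pivot.

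For the pivot I would argue as follows. Given a basis $B$ with $|B|\le\delta$ and a violator $h$, one must return a basis of $B\cup\{h\}$, a set of at most $\delta+1 = O(d)$ constraints; a direct method such as the move-to-front rule is only known to run in time exponential in $d$. The geometric key is that at the new optimum on $B\cup\{h\}$ the constraint $h$ enters as an active constraint: the new smallest enclosing ball passes through $h$, the new closest-pair witness touches $h$, the new maximum-margin slab is tight at $h$, and one checks this separately for each of the three geometries. Imposing that one equality turns the task into an LP-type problem on at most $|B|$ constraints but of combinatorial dimension at most $\delta-1$, and I would solve it by the same routine recursively, each level trading one forced-active constraint for one unit of combinatorial dimension. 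Packaging the pivot as an abstract optimization problem of dimension at most $\delta$ whose improvement oracle is precisely this recursive call, G\"{a}rtner's analysis then bounds the number of oracle calls at the top level by $e^{O(\sqrt{\delta\ln\delta})}$.

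\textbf{The hard part} will be making this recursion telescope correctly. A naive ``recurse on the dimension and multiply the $e^{O(\sqrt{\delta})}$ factors over all $O(d)$ levels'' blows up to something super-exponential in $d$, so one must instead interleave the reduction in the number of constraints with the reduction in combinatorial dimension in the balanced way G\"{a}rtner's abstract-optimization-problem algorithm prescribes, and verify that this balance still leaves the total arithmetic cost at $e^{O(\sqrt{\delta\ln\delta})}$. A secondary technical point is degeneracy: the lexicographic objectives above are single-valued only after a generic-position assumption, so the argument should be run under a symbolic perturbation of the input points, and one must check that each ``force $h$ active, drop one dimension'' step is compatible with that perturbation and preserves the monotonicity and locality axioms. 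Once these are handled, combining the subexponential pivot with the combined randomized algorithm discussed above yields the stated subexponential basis-computation primitive, and hence a subexponential algorithm, for all three problems.
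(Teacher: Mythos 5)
There is a genuine gap, and it sits exactly where you flagged it. The survey itself gives no proof of this statement---it is quoted from G\"artner's paper, whose actual content is the following two-part argument: (a) an \emph{abstract optimization problem} (a finite constraint set together with an improvement oracle that, for $F\subseteq G$, either certifies that $F$ is best among subsets of $G$ or returns a strictly better subset) can be solved with an expected subexponential number of oracle calls; and (b) for minimum enclosing ball, polytope distance, and the margin/Kirchberger problem, the improvement oracle can be implemented in \emph{polynomial} time by direct geometric computation on at most $\delta+1$ constraints (solving the equality-constrained quadratic subproblems over affine hulls and checking the sign/multiplier conditions for local optimality---the ``essentially the same computations'' the survey alludes to). Part (b) is the substantive geometric content of the theorem, and your proposal never supplies it: you define the improvement oracle to be ``precisely this recursive call'' to the same subexponential routine one dimension down, and then invoke G\"artner's analysis to bound the number of calls. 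That is circular. G\"artner's subexponential bound is a bound on the number of calls to an oracle whose cost is accounted for separately and polynomially; if the oracle is itself the recursion, the analysis you are citing simply does not apply, and the naive product over the $O(d)$ levels is, as you yourself note, super-exponential.

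Relatedly, your pivot scheme (``$h$ is active at the new optimum, impose that equality, recurse with combinatorial dimension $\delta-1$'') is essentially the Sharir--Welzl basis computation, which is only known to be exponential in $\delta$; the sentence claiming that ``interleaving the reduction in the number of constraints with the reduction in combinatorial dimension in the balanced way G\"artner's algorithm prescribes'' repairs this is an appeal to the very theorem being proved, not an argument. To close the gap you would need to (i) state the AOP framework and its subexponential oracle-call bound as the black box, and (ii) for each of the three problems exhibit the polynomial-time improvement oracle explicitly: for a candidate subset $F$ of at most $\delta+1$ points, compute the circumscribed ball (respectively the nearest pair of points in the affine hulls, the tight slab), test the Lagrange-multiplier/containment conditions, and if they fail produce an improving subset by dropping or adding the violated constraint. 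The LP-type formulations, combinatorial dimensions, violation-test costs, and the degeneracy/perturbation remark in your first and last paragraphs are all fine, but they are the easy part; without (ii) the stated theorem is assumed rather than proved.
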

All these geometric problems lead to essentially the same computations.

Halman \cite{halman2007simple} showed that several kinds of stochastic games  can be
formulated as LP-type problems and solved in sub-exponential time. In a major breakthrough, 
work based on properties of the Markov decision process has recently led to a new sub-exponential \emph{lower} bound
for the dual-simplex and other randomized combinatorial algorithms for linear programming
\cite{friedmann2014random}; earlier, Matou{\v{s}}ek had shown that there were LP-type 
problems that required expected subexponential time \cite{matouvsek1994lower}.

\begin{openproblem} 
What other natural problems can be solved in expected sub-exponential time by LP-type algorithms? 
\end{openproblem}

Many  geometric problems 
can be solved in time linear in the number of constraints when the 
dimension is fixed.  
Doignon's theorem implies that the combinatorial dimension of integer programming is $2^d-1$. 
Violator tests, or the small sub-problems in Clarkson's algorithm can be solved 
using Lenstra's  algorithm for integer programming 
in fixed dimension \cite{lenstra1983integer} (as pointed out by Clarkson). 
The complexity of Lenstra's algorithm is $O(m s + s^2)$, where $s$ is the bit-complexity of the input and 
$m$ is the number of input constraints;
this is not a combinatorial algorithm, but polynomial in $m$.  
So overall this gives a linear-time algorithm for integer programming in fixed dimension. 
An more efficient algorithm for the small subproblems was 
given by Eisenbrand \cite{eisenbrandfixeddim2}.

Often LP-type problems can also be cast as fixed-dimensional linear or convex programming problems, 
in which $\F$ corresponds to a set of convex constraints and the objective function $\omega$ is a
convex or lexicographic function.  For instance, finding a line transversal of a family 
$K_1,\dots,K_n$ of closed intervals parallel to the $y$-axis in the plane can be  formalized as 
a linear programming problem in the two-dimensional space of lines. 
N. Megiddo generalized this by formulating line transversals of boxes
in any fixed dimension as a constant number of linear programs \cite{megiddo1996program}. 
 
More interesting LP-type problems, corresponding to Helly-type theorems in which the 
sets in $\F$ are not necessarily convex, are often defined using a set-up analogous to quasi-convex programming. 
A good example is:
\begin{itemize}
\item Line transversal of disjoint convex disks in the plane: Given a set $P$ of disk centers the plane, 
with minimum distance one between each pair of points, 
determine the smallest radius $r < 1/2$ such that the family of congruent disks of radius $r$ 
centered at the points of $P$ has a line transversal \cite{Amenta:1994gs}, or report that the 
set has no transversal.  
\end{itemize}
Using the theorem that the Helly number for line transversals of unit disks in the plane is five
\cite{tverberg1989proof}, we find that the combinatorial dimension of this problem is also five, 
even though the dimension of the set of lines in the plane is two, and 
the set of transversals is not necessarily connected. 
Here is a general set-up, in which the constraints grow as a function of a parameter 
$\lambda$.

\begin{definition} 
A \emph{parameterized LP-type problem} is a concrete LP-type problem $(X,\HH,\preceq)$
in which the ground set
$X =  (\lambda', X')$, the linear order $\preceq = \lambda'$, and the level sets are nested.
That is, a level set $h_{\lambda} =  \{  x' :   (\lambda',x') \in \HH \mbox{ and } \lambda' = \lambda \}$
is contained in the level set $h_{\mu}$ for all $\mu \geq \lambda$. 
\end{definition}
\begin{theorem}[\cite{Amenta:1994gs}, Theorem 5.1]
Let $(X,\HH,\preceq)$ be a parameterized LP-type problem. 
When the minimum point $\omega(G)$ in a subset of constraints $G \in H$ always exists and is 
achieved at a unique point, and the Helly number of the level sets $h_\lambda$ for $h \in \HH$ 
is $\delta$, then the combinatorial dimension is also $\delta$.
\end{theorem}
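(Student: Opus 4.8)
The plan is to prove separately that the combinatorial dimension is at most $\delta$ and at least $\delta$; the upper bound is where all the work lies, and the lower bound is a short realization argument. Write each point of the ground set as a pair $(\lambda',x')$, so that $\preceq$ compares first coordinates; the hypothesis then says that for every $G\subseteq\HH$ the intersection $\cap G$ is nonempty and contains a \emph{unique} point of smallest first coordinate, which we call $\omega(G)=(\lambda_G,x_G)$. The one structural fact we lean on is the nesting of level sets: $h_\lambda\subseteq h_\mu$ whenever $\lambda\le\mu$. I read the hypothesis ``the Helly number of the level sets is $\delta$'' as: for every height $\lambda$ the family $\{h_\lambda:h\in\HH\}$ has Helly number at most $\delta$, with equality at some height.

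For the upper bound, let $B\subseteq\HH$ be a basis and put $\omega(B)=(\lambda^*,x^*)$. First I would record that for every $\lambda<\lambda^*$ the family $\{h_\lambda:h\in B\}$ has empty intersection, since a common point $z$ would give $(\lambda,z)\in\cap B$ with first coordinate below $\lambda^*$, contradicting minimality of $\omega(B)$. Next, for each $h\in B$ the basis property gives $\omega(B\setminus\{h\})\prec\omega(B)$, and I claim the first coordinate $\lambda_h$ of $\omega(B\setminus\{h\})$ is \emph{strictly} below $\lambda^*$: if $\lambda_h=\lambda^*$, then $\bigcap_{h'\in B\setminus\{h\}}h'_{\lambda^*}$, being the set of points of $\cap(B\setminus\{h\})$ of smallest first coordinate, must by uniqueness be a single point, and it already contains $x^*$, forcing $\omega(B\setminus\{h\})=\omega(B)$, a contradiction. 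Now set $\bar\lambda=\max_{h\in B}\lambda_h$, a maximum over a finite set, so $\bar\lambda<\lambda^*$. Writing $\omega(B\setminus\{h\})=(\lambda_h,y_h)$, the point $(\lambda_h,y_h)$ lies in every $h'\in B\setminus\{h\}$, so $y_h\in h'_{\lambda_h}\subseteq h'_{\bar\lambda}$ by nesting; hence every $(|B|-1)$-element subfamily of $\{h_{\bar\lambda}:h\in B\}$ is intersecting, while the whole family is not. If $|B|\ge\delta+1$, then any $\delta$-element subfamily of $\{h_{\bar\lambda}:h\in B\}$ is contained in one of those intersecting $(|B|-1)$-element subfamilies and is therefore intersecting; applying the Helly number $\delta$ of the level sets at height $\bar\lambda$ forces $\bigcap_{h\in B}h_{\bar\lambda}\ne\emptyset$, a contradiction. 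So $|B|\le\delta$.

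For the lower bound, use that the Helly number of the level sets actually equals $\delta$: at some height $\lambda_0$ there is a finite subfamily of $\{h_{\lambda_0}:h\in\HH\}$ in which every proper subfamily is intersecting but the whole is not, and a minimal such configuration has exactly $\delta$ members $h^{(1)}_{\lambda_0},\dots,h^{(\delta)}_{\lambda_0}$. By nesting, $\bigcap_i h^{(i)}_\lambda=\emptyset$ for all $\lambda\le\lambda_0$, so the feasibility hypothesis gives that $\omega(\{h^{(1)},\dots,h^{(\delta)}\})$ exists with first coordinate $\lambda^*>\lambda_0$; and each proper subfamily is intersecting already at height $\lambda_0$, so $\omega(\{h^{(i)}:i\ne j\})$ has first coordinate at most $\lambda_0<\lambda^*$ for every $j$. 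Thus $\{h^{(1)},\dots,h^{(\delta)}\}$ is a basis of size $\delta$, and the combinatorial dimension is at least $\delta$.

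The step I expect to be the main obstacle is the strictness claim $\lambda_h<\lambda^*$ in the upper bound. This is precisely where the hypothesis that $\omega(G)$ is attained at a \emph{unique} point is indispensable: without it, a second point of $\cap(B\setminus\{h\})$ sharing the optimal first coordinate $\lambda^*$ could serve as $\omega(B\setminus\{h\})$, and one could not manufacture a height $\bar\lambda$ strictly below $\lambda^*$ at which the level sets of $B$ are already infeasible. Everything else is bookkeeping with the nested family of level sets, together with the elementary monotonicity of intersections under adding sets, which is what allows a Helly bound stated at a single height to be applied to the containment-defined subfamilies occurring in the argument.
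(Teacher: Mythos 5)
Your argument is correct, but there is nothing in the survey to compare it against line by line: the survey states this result without proof, quoting it directly as Theorem~5.1 of Amenta's cited paper. On its own merits, your proof is sound and is essentially the intended mechanism. For the upper bound, the chain works: minimality of a basis $B$ together with uniqueness of the minimizing point rules out $\lambda_h=\lambda^*$, the finite maximum $\bar\lambda=\max_h\lambda_h<\lambda^*$ exists, nestedness pushes each witness point $y_h$ up to height $\bar\lambda$, so every $\delta$-element subfamily of $\{h_{\bar\lambda}:h\in B\}$ intersects while the whole family does not (any common point at a height below $\lambda^*$ would contradict the definition of $\omega(B)$), and the Helly number of the level-set family then caps $|B|$ at $\delta$. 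Your lower bound, turning a minimal critical Helly configuration at some height $\lambda_0$ into a basis of size $\delta$, is also fine, with the caveat (which you state explicitly) that it requires reading ``the Helly number of the level sets is $\delta$'' as exact and witnessed at a single height; under a weaker reading one only obtains the inequality, which is the algorithmically relevant direction anyway. Your diagnosis of where uniqueness of the minimizer is indispensable is exactly right. It is also worth noting that your method parallels the arguments the survey does spell out for related statements --- the proof bounding the combinatorial dimension of the mixed-integer problem by augmenting a basis with an extra constraint, and the lemma tying combinatorial dimension to a Helly number --- so your proof sits squarely in the paper's toolkit rather than introducing a genuinely different technique.
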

This is a generalization of quasi-convex programming, in which the level sets $h_{\lambda}$ are convex.  
Note that with this construction the Helly number and the combinatorial dimension are identical, 
unlike in the more straightforward definition of concrete LP-type problems in which the constraints themselves
have Helly-type property, rather than their level sets. 
Some useful parameterized LP-type problems include: 
\begin{itemize}
\item Given two convex polygons in the plane, 
find their Hausdorff distance \cite{amenta1994bounded} under translation; this is relevant to 
computer vision \cite{basri1997recognition}.
\item Covering with three squares (rectilinear 3-center problem):
Given a set of $n$ points in the plane, find the smallest three congruent squares that cover all of the 
points \cite{sharir1996rectilinear}.
\item Given a set of $n$ spheres, all contained in the unit ball in $\R^d$, find the M\"{o}bius transformation 
of the unit ball that maximizes the minimum radius of the transformed spheres \cite{bern2001optimal}.  This has
applications to graph layout on the sphere or on the hyperbolic plane. 
\end{itemize}

Finding a line transversal of balls in $\R^3$ can also be done in time linear in the number of constraints by Theorem \ref{cheongetal}.
That result applies to a more general family of sets, so-called 
``pairwise inflatable" balls.  It improves a result of
Holmsen, Katchalski and Lewis \cite{holmsen2003helly}. 
The lower bound for this Helly number is $2d-1$ \cite{cheong2012lower}.
\begin{openproblem}
Is there an LP-type problem of combinatorial dimension $2d-2$ for finding a line transversal of 
disjoint unit balls in $\R^3$, which could be used to improve this Helly number? 
\end{openproblem}

In general, the LP-type problems are associated with ``standard" LP-type problems. 
As we have seen in this survey, there are many related families of combinatorial geometric theorems. 
\begin{openproblem}
Are there non-trivial computational applications of fractional or volumetric Helly-type theorems, 
Hadwiger-type theorems, or
of $(p,q)$ theorems?
\end{openproblem}

Not all computational applications of Helly's theorem involve LP-type algorithms. 
\begin{theorem}[Centerpoint theorem \cite{Neumann-centerpoint, Rado-centerpoint}]
Let $P$ be a set of $n$ points in $\R^d$. 
There exists a point $p \in \R^d$  such that every closed halfspace 
with boundary passing through $p$ contains at least
$\lceil \frac{n}{d+1} \rceil$ of the points in $P$. 
\end{theorem}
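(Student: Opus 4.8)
The plan is to recast ``being a centerpoint'' as membership in a common point of a finite family of convex sets, and then to apply Helly's theorem. Set $k=\lceil n/(d+1)\rceil$, so that $k-1<n/(d+1)$. The first step is the reformulation: a point $p$ \emph{fails} to be a centerpoint exactly when some closed halfspace through $p$ contains at most $k-1$ points of $P$; sliding its bounding hyperplane onto $p$ and passing to the open complementary halfspace $H$, this says $p\notin H$ while $\abs{P\cap H}\ge n-k+1>\frac{d}{d+1}n$. Since open halfspaces are convex, $\conv(P\cap H)\subseteq H$, so $p\notin\conv(P\cap H)$; conversely, if $p\notin\conv(P\cap H)$ for some open halfspace $H$ with $\abs{P\cap H}>\frac{d}{d+1}n$, separating $p$ from $\conv(P\cap H)$ by a closed halfspace $G\ni p$ and then sliding $G$'s boundary onto $p$ produces a halfspace through $p$ with at most $n-\abs{P\cap H}<\frac{n}{d+1}\le k$ points of $P$. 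Hence $p$ is a centerpoint if and only if $p\in\conv(P\cap H)$ for every open halfspace $H$ with $\abs{P\cap H}>\frac{d}{d+1}n$.

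With this in hand, I would introduce the family
\[
\K=\bigsetcond{\conv(P\cap H)}{H\text{ an open halfspace},\ \abs{P\cap H}>\tfrac{d}{d+1}n}.
\]
Because $P$ is finite, only finitely many distinct subsets $P\cap H$ occur, so $\K$ is a finite family of (compact) convex sets, and it is nonempty since any halfspace containing all of $P$ qualifies. By the reformulation above it suffices to exhibit a point lying in every member of $\K$, which is precisely the kind of conclusion Helly's theorem provides, the ambient dimension being $d$.

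Next I would check the Helly hypothesis by a counting argument. Given members $\conv(P\cap H_1),\dots,\conv(P\cap H_{d+1})$ of $\K$, each $H_i$ satisfies $\abs{P\setminus H_i}=n-\abs{P\cap H_i}<\frac{n}{d+1}$, so
\[
\Bigl\lvert P\setminus\bigcap_{i=1}^{d+1}H_i\Bigr\rvert=\Bigl\lvert\bigcup_{i=1}^{d+1}(P\setminus H_i)\Bigr\rvert\le\sum_{i=1}^{d+1}\abs{P\setminus H_i}<(d+1)\cdot\frac{n}{d+1}=n,
\]
so there is a point $q\in P\cap\bigcap_i H_i$, and this $q$ lies in $\conv(P\cap H_i)$ for every $i$. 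Thus every $d+1$ members of $\K$ intersect, Helly's theorem yields a common point $p\in\bigcap\K$, and by the first step $p$ is a centerpoint, as desired.

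I do not expect a genuine analytic obstacle here; the one place that needs care is the equivalence in the first step, namely making the rounding in $k=\lceil n/(d+1)\rceil$ line up with the strict inequality $\abs{P\cap H}>\frac{d}{d+1}n$ in both directions and correctly handling the boundary case in which $p$ lies on a bounding hyperplane (which is why the witnessing halfspaces must be taken \emph{open}). The real content is the conceptual move of feeding Helly's theorem the polytopes $\conv(P\cap H)$ attached to the ``heavy'' halfspaces, rather than the halfspaces themselves; once that family is identified, both the reformulation and the verification of Helly's condition are short.
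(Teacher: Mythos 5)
Your argument is correct, and in fact the paper does not prove this statement at all: the Centerpoint theorem appears in the applications section only as a quoted classical result, cited to Rado and to Yaglom--Boltyanski{\u\i}/Neumann, so there is no in-paper proof to compare against. What you give is the standard Helly-based derivation: attach to each ``heavy'' open halfspace $H$ (one with $\abs{P\cap H}>\tfrac{d}{d+1}n$) the polytope $\conv(P\cap H)$, observe that only finitely many distinct sets arise, verify the Helly hypothesis by the union bound (any $d+1$ of the complements miss fewer than $n$ points, so some point of $P$ lies in all the halfspaces and hence in all the hulls --- and the same count handles subfamilies of size less than $d+1$, which Helly's statement in this survey formally requires), and then translate a common point back into the centerpoint property via the separation/sliding step. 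Your handling of the rounding, i.e.\ that an integer strictly below $\tfrac{n}{d+1}$ is at most $\lceil \tfrac{n}{d+1}\rceil-1$, is the only delicate spot and you treat it correctly, as is your insistence on open halfspaces so that a point on the bounding hyperplane is excluded. This is exactly the proof the cited sources (and standard references such as Matou\v{s}ek's book) give, so the proposal stands as a complete and correct substitute for the omitted proof.
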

Such a point $p$ is called a \emph{centerpoint}, and it can be seen as a generalization of the 
median to data in higher-dimensional Euclidean space. 
The \emph{Tukey depth} of an arbitrary point $p$ is the minimum number of 
points contained in a closed halfspace with boundary passing through $p$; 
a center point has Tukey depth $n/{(d+1)}$, but depending on the distribution 
the Tukey depth of a point might be as high as $n/2$. 
Finding or approximating center points or 
points of maximal Tukey depth is a difficult computational challenge in statistics,
since, naively, the number of halfspaces one needs to consider is $O(n^d)$. 
Using the dependencies between these half spaces leads to a more efficient algorithm
\cite{chan2004optimal} for points of maximal Tukey depth with running time $O(n^{d-1})$.
Finding approximate center points in sub exponential time
\cite{clarkson1996approximating,miller2009approximate,mulzer2013approximating} is very 
useful in partitioning problems for efficient  parallel computation, and of recent interest in statistics \cite{cuesta2008random}. 
The \emph{ham-sandwich theorem}  states that for any $d$ finite sets of points in $\R^d$ there is a hyperplane that bisects all of the sets at once, 
i.e., has at most half of the points on each side. The computational complexity of this problem and its connections to other problems
in convexity is discussed in \cite{apps-sandwich}.

\subsection{Chance-constrained and $S$-optimization} \label{soptimization}



In all the above applications and situations one has a randomized algorithm to solve a deterministic geometric optimization 
problem using discrete data (e.g., finitely many data points).  But, in some situations, the data points are uncertain, governed by a probability distribution, or part of a continuously parametrized set of constraints. Consider for example the following twist on the problem of smallest-radius enclosing ball of finitely many points: we are given points $(u_1,u_2,\dots,u_d) \in \R^d$, 
belonging to an unknown measurable set. Our goal is to find the center $ x$ of a ball of smallest radius $R$  that contains a ``large proportion'' of those points, i.e. we are willing to allow some of the points to be outside the ball.
The data points are uncertain, the bad news is  we may not  know explicitly the probability measure. 
We are trying to make a decision for the center $x$ and the radius $R$, that guarantees a large
proportion of the constraints are satisfied, but we allow some of the constraints not to be satisfied. 
In our example, there is a distance constraint per possible point, but not all the distance conditions have to be satisfied at once, we only expect this with high probability.
The optimization problem is a classical example of a \emph{chance-constrained convex optimization} problem. 

 \begin{equation*}
\begin{split}
     \min \quad & R \\
    \text{subject to} \quad & \operatorname{Pr}\left[\left\{  \sqrt{ \sum_1^d (x_i - u_i)^2 } - R  \leq 0\right\}\right] \geq 1-\epsilon,    x \in \R^{d+1}.
\end{split}
\end{equation*}
\vskip .1cm

In principle there are infinitely many constraints to consider, but it is it possible to get a reasonable solution, with only small proportion of constraints not satisfied,  using partial deterministic convex optimization problem consisting of $N$ samples. In our example, by selecting $N$ out of the given points and carrying a computation for those, we can 
get a candidate center and radius. The induced ball leaves only a small proportion of the points out.

Indeed, and idea presented by Calafiore and Campi  \cite{calafiorecampi2005,calafiorecampi2006} (one where Helly's theorem 
plays a significant role) is to consider only a finite number of sampled instances of the uncertainty affecting the system, and to solve the corresponding standard convex problem.  
For continuous real-valued variables, the number of scenarios $N$ that need be considered
is reasonably small and that the solution of the scenario problem has generalization properties, i.e., it satisfies with high
probability also unseen scenarios. They proved an efficient bound on the sample size $N$ needed for the partial convex optimization problem.
The size of $N$ increases slowly with the required probabilistic levels of success.

More formally, the original  family of convex optimization problems has convex constraints which are parameterized 
$\{ \min  g( { x})   \ \ \text{subject to}  \ f({ x},w) \leq 0, \ \ w \in \Delta\}.$
Here $g(x)$ is a convex objective function and $f$ is convex function in the variables $ x$ and parameterized 
using a measurable set $\Delta$ from which
the variables $w$ take their values.  Given a  {\em tolerance of risk} $\epsilon$ one wishes to find an optimal solution for

\begin{equation*}
\begin{split}
 CCP(\epsilon) =\min \quad & g( { x}) \\
    \text{subject to} \quad & Pr[\{ w :  f(x,w) \leq 0\}] \geq 1-\epsilon, \\
	& x \in K \, \text{convex set}, \\
    &{x} \in \R^d.
\end{split}
\end{equation*}

The goal is to find a solution that is optimal among all solutions that satisfy all but ``a few constraints''. In the Calafiori and Campi algorithm, one collects $N$ random 
independent identically distributed  samples $w^1, \dots, w^N$, and constructs a \emph{sampled convex program}

\begin{align*}
   SCP(N)= \min \quad & g( { x}) \\
    \text{subject to} \quad & f( x, w^i) \leq 0 ,\quad i=1,2,\ldots, N, \\
	&  x \in K \text{ convex set}, \\
	&  x \in \R^d.
\end{align*}

This leads to two natural questions.
\begin{itemize}
	\item How is the solution of $SCP(N)$ related to the solution for $CCP(\epsilon)$?
	\item How many samples need to be drawn in order to guarantee that the resulting randomized optimal solution from $SCP(N)$
violates only a { ``small portion''} of the full set of constraints?
\end{itemize}

Given  $x \in \R^d$, the \emph{probability of violation} of $x$ is $V (x):=Pr[\{w \in \Delta : f(x,w)>0 \}].$
A solution $x$ with small associated $V (x)$ is feasible for  ``most'' of the problem instances.
For example, if we take the uniform probability density  (with respect to Lebesgue's measure), then $V (x)$ is just  the volume of the set of parameters $w$ for which $f(x,w) \leq 0$  is violated. 
Given $\epsilon \in [0,1]$, if for $\bf x$ we have $\operatorname{Pr}[f( x, w) \leq 0] \geq 1-\epsilon$,  
we say it is an \emph{$\epsilon$-level feasible solution}.  This is precisely when $V(x)<\epsilon.$
If  $x^N$ is the optimal solution of $SCP(N)$.  $V(x^N)$ is a random variable in the space $\Delta^N$ with 
product probability measure $Pr\times Pr \times \dots \times Pr=Pr^N$.

\begin{theorem}[Feasibility of $x^N$] \label{originalCC}
Let  $0< \epsilon \leq 1$ (tolerance) , $0<\delta<1$ (lack of confidence). Then, either $CCP(\epsilon)$ is infeasible 
or when $N$ is sufficiently large. With probability at least $1-\delta$, the optimal solution $x^N$ of $SCP(N)$ is a 
feasible solution of $CCP(\epsilon)$ with $V(x^N) \leq \epsilon$. Here sufficiently large means:
$$N \geq \frac{2d}{\epsilon} \ln\left(\frac{1}{\epsilon}\right) + \frac{2}{\epsilon} \ln\left(\frac{1}{\delta}\right)+2d$$
\end{theorem}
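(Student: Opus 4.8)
The plan is to follow Calafiore and Campi, combining the Helly-type bound on the number of ``support constraints'' of a convex program with a probabilistic compression argument. First I would dispose of the trivial alternative: if $CCP(\epsilon)$ is infeasible there is nothing to prove, so assume it is feasible; then $SCP(N)$ is feasible for every realization of the samples, and after fixing a deterministic tie-breaking rule (say the lexicographic order on $\R^d$, so that the minimizer becomes unique) the optimum $x^N$ is well defined --- this is precisely the passage to a concrete LP-type problem as in Definition~\ref{concrete}. The structural input I need is the Helly-type fact that such a program, living in $\R^d$, has at most $d$ \emph{support constraints}: there is a set $I^\star \subseteq \{1,\dots,N\}$ with $|I^\star| \le d$ such that the optimum of $SCP(N)$ already coincides (value and, after tie-breaking, point) with the optimum of the program that retains only the constraints indexed by $I^\star$. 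This is just Helly's theorem for the convex sublevel sets $\{x : g(x) \le t\} \cap \{x : f(x,w^i) \le 0\}$, which have Helly number $d+1$ and hence combinatorial dimension $d$ (cf. Lemma~\ref{lem:LP-typeObs}). Padding $I^\star$ arbitrarily up to size exactly $d$ does not change the optimum, so we may assume $|I^\star|=d$.

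The heart of the argument is a union bound over the candidate support sets. For each $I \subseteq \{1,\dots,N\}$ with $|I| = d$, let $x_I$ denote the optimum of the convex program that uses only the constraints $\{f(\cdot,w^i) \le 0 : i \in I\}$; the key point is that $x_I$ is a function of the samples $\{w^i : i \in I\}$ \emph{only}. By the previous paragraph, on the event $\{x^N = x_I\}$ every remaining sample $w^j$, $j \notin I$, satisfies $f(x_I, w^j) \le 0$, since $x^N$ is feasible for all $N$ sampled constraints. Hence
\[
\{\,V(x^N) > \epsilon\,\} \ \subseteq\ \bigcup_{|I| = d} \bigl\{\, V(x_I) > \epsilon \ \text{ and }\ f(x_I, w^j) \le 0 \ \text{for all}\ j \notin I \,\bigr\}.
\]
Fixing $I$ and conditioning on $\{w^i : i \in I\}$, the point $x_I$ is frozen; if $V(x_I) > \epsilon$ then each of the $N - d$ independent remaining samples satisfies $f(x_I, w^j) \le 0$ with probability $1 - V(x_I) < 1 - \epsilon$, so the conditional probability of the $I$-th event is at most $(1-\epsilon)^{N-d}$. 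Averaging over the conditioning and summing over the $\binom{N}{d}$ choices of $I$ gives
\[
\Pr\bigl[\, V(x^N) > \epsilon \,\bigr] \ \le\ \binom{N}{d}\,(1-\epsilon)^{N-d}.
\]

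It then remains to check that the displayed lower bound on $N$ forces the right-hand side below $\delta$. Using $\binom{N}{d} \le (eN/d)^d$ and $1 - \epsilon \le e^{-\epsilon}$, it suffices that $d\ln(eN/d) - \epsilon(N-d) \le \ln \delta$, equivalently $\epsilon(N-d) \ge d\ln(eN/d) + \ln(1/\delta)$; with $N = \tfrac{2d}{\epsilon}\ln\tfrac{1}{\epsilon} + \tfrac{2}{\epsilon}\ln\tfrac{1}{\delta} + 2d$ one has $\epsilon(N-d) = 2d\ln\tfrac1\epsilon + 2\ln\tfrac1\delta + \epsilon d$, and the two factors of $2$ provide exactly the slack needed to dominate the logarithmic term $d\ln(eN/d)$ for all $\epsilon,\delta\in(0,1)$ --- a routine but slightly fiddly estimate which I would not expand here. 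I expect the genuine obstacle to lie not in this calculation but in the first paragraph: making the ``$|I^\star|\le d$'' claim fully rigorous when the optimum is attained non-uniquely or when fewer than $d$ constraints are active, and, relatedly, ensuring that in the union bound the samples outside $I$ are genuinely independent of $x_I$. The clean way around both is to work inside the LP-type/violator-space formalism of Section~2 with a fixed lexicographic objective, where ``basis of size $\le \delta = d$'' is exactly the combinatorial dimension and the required independence is built into the definition of $x_I$.
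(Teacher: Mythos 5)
The paper itself states this theorem without proof (it is quoted from Calafiore and Campi), and your skeleton is exactly the standard scenario-approach argument from that literature: reduce to the feasible case, fix a tie-breaking rule so $x^N$ is well defined, use the Helly/LP-type fact that a convex program in $\R^d$ has at most $d$ support constraints, and then condition on a candidate support set $I$ of size $d$ and take a union bound. That part of your write-up is sound (modulo the degeneracy and measurability points you yourself flag, which can indeed be handled in the LP-type formalism), and it correctly yields
$\Pr\bigl[V(x^N)>\epsilon\bigr]\le\binom{N}{d}(1-\epsilon)^{N-d}$.

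The genuine gap is the last step, which you dismiss as routine: the stated sample size does \emph{not} follow from this union bound by the estimates you propose, and in some parameter regimes it does not follow from the union bound at all. Concretely, your sufficient condition $\epsilon(N-d)\ge d\ln(eN/d)+\ln(1/\delta)$ fails badly at, e.g., $d=100$, $\epsilon=0.9$, $\delta=0.99$: the stated bound gives $N\approx 223$, so the left side is about $111$ while the right side is about $180$. Worse, the union-bound quantity itself can exceed $\delta$ at the stated $N$: for fixed $\epsilon$ and $\delta$ and large $d$ one has $N\approx\alpha d$ with $\alpha=2+\tfrac{2}{\epsilon}\ln\tfrac{1}{\epsilon}$, and
\[
\frac{1}{d}\ln\Bigl[\binom{N}{d}(1-\epsilon)^{N-d}\Bigr]\;\longrightarrow\;\alpha H\bigl(\tfrac1\alpha\bigr)+(\alpha-1)\ln(1-\epsilon),
\]
where $H$ is the natural entropy; at $\epsilon=0.3$ this limit is $\approx +0.034>0$, so for $\delta=0.9$ and $d$ large enough $\binom{N}{d}(1-\epsilon)^{N-d}>1>\delta$. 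Hence no sharpening of the binomial-coefficient or $(1-\epsilon)$ estimates can close your argument with the constant $\frac{2d}{\epsilon}\ln\frac1\epsilon+\frac2\epsilon\ln\frac1\delta+2d$. To get a constant of this form from the plain $d$-subset union bound one needs the slightly larger $\ln(2/\epsilon)$-type term that appears in Calafiore and Campi's own derivation; to recover the statement exactly as displayed you need a sharper tail estimate, e.g.\ the refinement of the conditioning argument that bounds $\Pr[V(x^N)>\epsilon]$ by the binomial tail $\sum_{i=0}^{d-1}\binom{N}{i}\epsilon^i(1-\epsilon)^{N-i}$ rather than by the single term $\binom{N}{d}(1-\epsilon)^{N-d}$. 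So the structural Helly/support-constraint half of your proposal is fine, but the quantitative half is not a routine estimate and, as written, is false.
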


\begin{theorem}[Optimality gap of $x^N$]
Let   $0< \epsilon \leq 1$ (tolerance) , $0<\delta<1$ (lack of confidence).  Define $\epsilon_1=1- (1-\delta)^{1/N}$.  Then  
\begin{enumerate}

\item Let $J^{\epsilon}$ be the optimal objective value of $CCP(\epsilon)$ and $J^N$ the optimal value of $SCP(N)$.
With probability at least $1-\delta$, if $SCP(N)$ is feasible, it holds that $J^\epsilon \leq J^N$.
 
\item $\epsilon_1 < \epsilon$ and if $CCP(\epsilon_1)$ is feasible, then with probability at least $1-\delta$ we have $J^N \leq J^{\epsilon_1}$. 

\end{enumerate}
\end{theorem}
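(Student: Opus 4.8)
The plan is to handle the two parts separately, since they draw on different ingredients: part~(1) is essentially a corollary of the feasibility statement already proved (Theorem~\ref{originalCC}), whereas part~(2) needs only an elementary independence computation together with a routine check that $\epsilon_1<\epsilon$.

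For part~(1): if $SCP(N)$ is infeasible there is nothing to prove, since $J^N=+\infty$ by convention; so suppose it is feasible and let $x^N$ be an optimizer, $g(x^N)=J^N$. I would invoke Theorem~\ref{originalCC}: for $N$ sufficiently large, with probability at least $1-\delta$ one has $V(x^N)\le\epsilon$, i.e. $\operatorname{Pr}[f(x^N,w)\le 0]\ge 1-\epsilon$, which is precisely the statement that $x^N$ is a feasible point of $CCP(\epsilon)$. On that event $g(x^N)\ge J^\epsilon$ by definition of $J^\epsilon$ as the infimum of $g$ over the feasible set of $CCP(\epsilon)$, and hence $J^\epsilon\le J^N$.

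For part~(2): first I would record that $\epsilon_1=1-(1-\delta)^{1/N}\to 0$ as $N\to\infty$, and that $\epsilon_1<\epsilon$ is equivalent to $N>\ln\!\big(1/(1-\delta)\big)\big/\ln\!\big(1/(1-\epsilon)\big)$, which is comfortably implied by the lower bound on $N$ in Theorem~\ref{originalCC} (using $\ln(1/(1-\epsilon))\ge\epsilon$). Next, assuming $CCP(\epsilon_1)$ feasible, let $x^\star$ attain $J^{\epsilon_1}$ (if the infimum is not attained, run the argument on an approximate minimizer and pass to the limit). Being $\epsilon_1$-level feasible means $V(x^\star)\le\epsilon_1$. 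Since $w^1,\dots,w^N$ are i.i.d., the probability that $x^\star$ satisfies every sampled constraint of $SCP(N)$ is
\[
\operatorname{Pr}^N\!\big[f(x^\star,w^i)\le 0\ \text{for all } i=1,\dots,N\big]=\big(1-V(x^\star)\big)^N\ge(1-\epsilon_1)^N=1-\delta,
\]
using $(1-\epsilon_1)^N=\big((1-\delta)^{1/N}\big)^N=1-\delta$. On that event $x^\star$ is a feasible point of $SCP(N)$, whence $J^N\le g(x^\star)=J^{\epsilon_1}$.

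The hard part is not located in this theorem at all: all the genuine work --- the Helly-type bound that a sampled convex program has at most $d$ support constraints, and the sampling/VC-style estimate built on it --- is already encapsulated in Theorem~\ref{originalCC}, so here it suffices to quote it. The only things that need care are the ``$\epsilon_1<\epsilon$'' estimate, the attainment caveat for $J^{\epsilon_1}$, and keeping straight which inequalities are deterministic versus holding only with probability $1-\delta$.
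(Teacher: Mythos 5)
You should first note that the survey states this theorem without proof, quoting it from Calafiore and Campi, so there is no in-paper argument to compare against; judged on its own terms, your reconstruction is the standard one and is correct in substance. Part (1) is indeed just Theorem \ref{originalCC} plus the observation that on the event $V(x^N)\le\epsilon$ the point $x^N$ is feasible for $CCP(\epsilon)$ (it lies in $K$ because $SCP(N)$ retains that constraint), whence $J^\epsilon\le g(x^N)=J^N$; and part (2) is the independence computation $\Pr\bigl[f(x^\star,w^i)\le 0,\ i=1,\dots,N\bigr]=(1-V(x^\star))^N\ge(1-\epsilon_1)^N=1-\delta$, with your nested-events remark (the events $\{J^N\le J^{\epsilon_1}+1/k\}$ decrease in $k$, so their intersection still has probability at least $1-\delta$) correctly disposing of the case where $J^{\epsilon_1}$ is not attained.

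The one step that does not hold as you state it is the claim that $\epsilon_1<\epsilon$ is ``comfortably implied'' by the sample-size bound of Theorem \ref{originalCC} via $\ln(1/(1-\epsilon))\ge\epsilon$. That reduction needs $N\ge\frac{1}{\epsilon}\ln\frac{1}{1-\delta}$, and the bound of Theorem \ref{originalCC} only contains the term $\frac{2}{\epsilon}\ln\frac{1}{\delta}$, which dominates $\frac{1}{\epsilon}\ln\frac{1}{1-\delta}$ only when $\delta\le(\sqrt{5}-1)/2$; for $\delta$ close to $1$ the implication fails (take $d=1$, $\epsilon=1/2$, $\delta=1-e^{-10}$: then $N=5$ meets the bound of Theorem \ref{originalCC}, yet $\epsilon_1=1-e^{-2}>\epsilon$). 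The correct statement is that $\epsilon_1<\epsilon$ holds precisely when $N>\ln(1/(1-\delta))/\ln(1/(1-\epsilon))$, a hypothesis the survey's formulation leaves implicit (it imposes no condition on $N$ at all in this theorem); so either add this condition on $N$ explicitly or restrict to, say, $\delta\le 1/2$, rather than deriving it from the feasibility bound. This is a small repair and does not affect the probabilistic core of your argument.
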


As it turns out, it was proved in \cite{de2015calafiore} that the results of Calafiore and Campi for real-valued variables can be extended in greater generality.
In Section \ref{Shelly} we have discussed $S$-Helly theorems for different proper subsets of $\R^d$.
The authors of \cite{de2015calafiore} introduced the notion of \emph{$S$-optimization}, a natural  generalization of continuous, integer, and mixed-integer optimization:

\begin{definition} 
Given $S \subset \R^d$, the optimization problem with equations and inequalities and variables taking values on $S$,
\begin{align*}
    \max \quad              & f( x)  \\
    \text{subject to} \quad & g_i( x) \leq 0, \quad i=1,2,\ldots, n, \\
                    & h_j( x) = 0,\quad j=1,2,\ldots, m,\\                  &{ x} \in S,
\end{align*}
will be called an {\em $S$-optimization problem.} 
\end{definition}

Clearly when $S=\R^d$ the $S$-optimization problem is the usual continuous optimization problem, 
 $S=\Z^d$ is just integer optimization, and $S=\Z^k \times \R^{d-k}$ is the case of mixed-integer optimization. 
When only linear constraints are present this is an {\em $S$-linear program.} When all constraints are convex we call
this an \emph{$S$-convex program}. 

To motivate the study of $S$-optimization for unusual sets $S$, below is an example that shows 
the  modeling power of using sophisticated  $S \subset \R^d$ (typically discrete sets). 

\begin{example}
Given a graph $G=(V,E)$, we reformulate the classic graph $K$-coloring query as the solvability of the following linear system of modular inequations:
For all $(i,j)$ in $E(G)$ consider the inequations $c_i \not \equiv c_j \mod K$. This is a system on $\abs{V}$ variables and it has a solution if and only if the graph is $K$-colorable.
Note that the set of points $ c=(c_1,\dots,c_{\abs{V}})$ with $c_i \equiv c_j \mod K$ is a lattice, which we call $L_{i,j}$. Therefore, solving our system of inequalities is equivalent to finding a $ c\in S=\Z^{\abs{V}}\setminus(\bigcup_{i,j} L_{i,j})$. Consequently, the problem of deciding $k$-colorability is equivalent to the problem of finding a solution to an $S$-linear system of equations, where the variables take values on $S$,  the set difference of a lattice and a union of several sublattices.
\end{example}

\begin{theorem} \label{gen-calafiorecampi} Let $S \subseteq \R^d$ be a set with a finite Helly number $h(S)$. Let $0< \epsilon \leq 1$ (tolerance), $0<\delta<1$ (distrust) be chosen parameters.
Let $f({ x},w)$ be a convex function in $ x$ and  measurable in $w$. Suppose there is an  optimal value $ x_*$ of the linear minimization chance-constrained problem 

\begin{equation*}
\begin{split}
  CCP(\epsilon)=  \min \quad & c^T  x \\
     \text{subject to} \quad & Pr[f( x, w) > 0] < \epsilon, \\
	& { x} \in K \, \text{convex set}, \\
    &{ x} \in S.
\end{split}
\end{equation*}

Then from a sufficiently large random sample of  $N$ different $i.i.d$ values for $w$ (specifically, $w^1, w^2,\dots,w^N$), $ x_*$ 
can be $\delta$-approximated by $ x_N$,  the optimal solution of the convex optimization problem 

\begin{align*}
  SCP(N)=  \min \quad & c^T  x \\
    \text{subject to} \quad & f( x, w^i) \leq 0 ,\quad i=1,2,\ldots, N, \\
	&  x \in K \text{ convex set}, \\
	&  x \in S.
\end{align*}
More precisely, if $x_N$ exists and  the size of the sample $N \geq \frac{2(h(S)-1)}{\epsilon} \ln(1/\epsilon) + \frac{2}{\epsilon} \ln(1/\delta)+2(h(S)-1),$ then
the undesirable event of high-infeasibility $V(x_N)> \epsilon$ has probability less than $\delta$ of occurring. 
\end{theorem}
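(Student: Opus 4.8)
The plan is to run the scenario (sampling) argument of Calafiore and Campi \cite{calafiorecampi2005, calafiorecampi2006}, in exactly the shape of the proof of Theorem \ref{originalCC}, feeding into it the $S$-Helly theorem in place of ordinary Helly's theorem. Throughout, set $\zeta := h(S)-1$. The only new ingredient is the observation that, for an $S$-convex optimization problem, the combinatorial dimension — equivalently, the number of support constraints of the optimum — is at most $h(S)-1$ rather than $d$.

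First I would pin down that combinatorial input. The feasible region of $SCP(N)$ is
\[
F_N \;=\; (K\cap S)\cap\bigcap_{i=1}^{N}\bigl(\{x: f(x,w^i)\le 0\}\cap S\bigr),
\]
an intersection of finitely many $S$-convex sets, since each $f(\cdot,w^i)$ is convex and $K$ is convex. Adjoining the objective $c^T x$, refined by a lexicographic tie-break among its minimizers so that the optimum (when it exists) is attained at a single point, turns this into a concrete LP-type problem in the sense of Definition \ref{concrete}. Because $h(S)<\infty$, the family of $S$-convex sets has Helly number $h(S)$; by Amenta's correspondence, Lemma \ref{lem:LP-typeObs}, used in the $S$-optimization setting exactly as in \cite{de2015calafiore}, the combinatorial dimension of this LP-type problem is at most $h(S)-1=\zeta$. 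Hence the optimal solution $x_N$ of $SCP(N)$ (assumed to exist) is determined by a subset $I$ of the $N$ sampled constraints with $|I|\le\zeta$, its support set; after the usual fixed tie-breaking convention we may take $|I|=\zeta$.

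With that in hand, the rest is the Calafiore--Campi compression argument with parameter $\zeta$. Fix a subset $I\subseteq\{1,\dots,N\}$ of size $\zeta$ and let $x_I$ be the optimum of the subprogram retaining $x\in K\cap S$ together with only the constraints indexed by $I$; conditioned on $(w^i)_{i\in I}$, the point $x_I$ is fixed and independent of the remaining $N-\zeta$ i.i.d.\ samples. If $I$ is the support set of $x_N$ then $x_N=x_I$ and $x_I$ satisfies every remaining constraint, an event of conditional probability at most $(1-\epsilon)^{N-\zeta}$ whenever $V(x_I)>\epsilon$. Taking expectations and a union bound over the $\binom{N}{\zeta}$ choices of $I$,
\[
\operatorname{Pr}\bigl[\,V(x_N)>\epsilon\,\bigr]\;\le\;\binom{N}{\zeta}(1-\epsilon)^{N-\zeta}.
\]
Imposing that the right-hand side be at most $\delta$, using $(1-\epsilon)^{N-\zeta}\le e^{-\epsilon(N-\zeta)}$ and a crude binomial estimate, the routine computation of \cite{calafiorecampi2005} shows that $N\ge \frac{2\zeta}{\epsilon}\ln\frac1\epsilon+\frac{2}{\epsilon}\ln\frac1\delta+2\zeta$ suffices; with $\zeta=h(S)-1$ this is precisely the stated sample size, and for such $N$ we obtain $\operatorname{Pr}[V(x_N)>\epsilon]<\delta$, as desired.

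I expect the only genuine obstacle to be the first step: making precise that replacing $\R^d$ by $S$ turns the combinatorial dimension $d$ into $h(S)-1$, and that the support set of the optimum is well defined even though a linear objective may be degenerate over intersections of $S$-convex sets. Both points are handled by the LP-type/violator-space framework recalled earlier — finiteness of $h(S)$ is exactly what forces the combinatorial dimension to be finite and equal to $h(S)-1$, and the lexicographic refinement of $c^T x$ restores uniqueness of the optimum and hence of its support — and this is what is carried out in \cite{de2015calafiore}. Everything after that is the Calafiore--Campi estimate applied verbatim.
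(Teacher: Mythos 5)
Your proposal is correct and takes essentially the same route as the paper, which defers the proof to \cite{de2015calafiore}: bound the number of support constraints of the $S$-convex program by $h(S)-1$ using the $S$-Helly number within the lexicographic LP-type set-up, and then run the Calafiore--Campi sampling-and-compression estimate verbatim with $d$ replaced by $h(S)-1$. One minor slip: Lemma~\ref{lem:LP-typeObs} gives the implication from combinatorial dimension to Helly number, whereas you need the converse, which is supplied exactly by the augmentation argument in the proof of Theorem~\ref{LPmixedInt} (adjoin the lexicographic strict sublevel set to a basis and apply the $S$-Helly theorem), as carried out in \cite{de2015calafiore}.
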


Recently in \cite{Gartner15}, G\"artner investigated how to use LP-type problems and violators spaces in the context of chance-constrained optimization.
Given a nondegenerate LP-type problem of combinatorial dimension $\delta$, he considered sampling, with the subsequent removal of the $k$ constraints that lead to 
the best improvement of objective function. 

To conclude we wish to discuss yet another way in which Helly numbers and the optimal Helly configurations (i.e., families that show the Helly number is smallest possible already)
appear in deterministic algorithms for mixed integer optimization. 

The theory presented in Subsection \ref{Shelly} directly influences the modern methods for solving mixed integer programming problems.
The most popular  technique to solve mixed-integer optimization problems
is \emph{branch-and-cut} (see the book \cite{confortietalbook} for an introduction). Branch-and-cut is based on three main ingredients: relaxations
of the problem (typically to continuous linear optimization), clever enumeration, and an appropriate tightening of the
formulation by adding cuts. Namely, tightening the formulation is done by adding new valid inequalities called \emph{cutting planes}. Commercial software uses a variety of cutting plane techniques, but
the most successful is the Gomory mixed-integer cuts introduced in  \cite{gomoryorigin}. 

Extending Gomory's technique has led to a large variety of work which relates to $S$-optimization
and the construction of \emph{maximal $S$-free sets}.  These are the $S$-facet polyhedra from Lemma \ref{lem:hollow}. In 2007,  Andersen, Louveaux, Weismantel and Wolsey (see \cite{andersenetal} for  the fundamental association to geometry)  put forward a new idea for the generation of cutting planes.  Essentially, they showed that, besides non-negativity constraints, the facet-defining inequalities are associated with \emph{splits} (a region between two parallel lines), triangles and quadrilaterals whose interior does not contain an integer point. This allows one to derive valid inequalities by exploiting the combined effect of two rows, instead of a single row. The extension of this model to any dimension (i.e., number of integer variables) was pioneered by Borozan and Cornu\'ejols \cite{borozancornuejols} and by Basu, Conforti, Cornu\'ejols and Zambelli \cite{basuetal}.
As described in Subsection \ref{Shelly}, the maximal $S$-free sets can be used to compute the $S$-Helly numbers, which bring a connection to integer optimization.

Once more one gets the striking result that the  facet-defining inequalities of an integer opimization problem are naturally associated with full-dimensional convex sets whose interior do not contain an integer point. Furthermore these sets are polyhedra.  Lov\'asz stated  earlier in \cite{lovaszlatticefree} that maximal convex sets whose interior does not contain an integer point are polyhedra, but the first complete proof appears in \cite{basuetal}, and an alternate proof can be found in \cite{averkovproofoflovasz}.

These proofs use the simultaneous approximation theorem of Dirichlet and Minkowski's convex body theorem. Important results from number theory, convex geometry and Helly numbers are  essential to get a computable formula for the cut-generating function. The details are explained in the recent excellent survey by \cite{basuetalsurvey}. This application to cut generation methods has stimulated several direct progress in the combinatorial geometry around Helly's theorem (see e.g., recent work in \cite{de2015helly}). Once more geometry comes to the help of
optimization.

\section*{Acknowlegements}
We thank the participants of the Mathematical Research Community  ``Algebraic and 
Geometric Methods in Discrete Applied Mathematics'' for their enthusiasm and support for this 
book project. The second author is grateful for the support received through an NSA grant. 
The first author gratefully acknowledges the support of NSF-IIS-1117663 and NSF-IIS-0964357. 

\bibliographystyle{amsalpha}

\bibliography{referencessecond.bib}

\end{document}